\numberwithin{equation}{section}
\newtheorem{theorem}{Theorem}[section]
\newtheorem{lemma}[theorem]{Lemma}
\newtheorem{proposition}[theorem]{Proposition}
\newtheorem{corollary}[theorem]{Corollary}
\newtheorem{question}[theorem]{Question}
\newtheorem*{theorem*}{Theorem}
\theoremstyle{remark}
\newtheorem{remark}[theorem]{Remark}
\newtheorem{example}[theorem]{Example}
\numberwithin{equation}{section}
\newcommand{\Q}[1]{\Psi(#1)}
\newcommand{\cQ}[1]{\bar{\Psi}(#1)}
\newcommand{\q}{\psi}
\newcommand{\Pkt}[1]{\Pi_{#1}}
\newcommand{\cPkt}[1]{\bar{\Pi}_{#1}}
\renewcommand{\r}{\pi}
\renewcommand{\S}[1]{\mathcal{S}_{#1}}
\newcommand{\+}{\oplus}
\renewcommand{\L}[1]{{}^L#1}
\newcommand{\D}[1]{\widehat{#1}}
\newcommand{\Gal}[1]{\Gamma_{#1}}
\renewcommand{\Im}{\text{Im}\,}
\newcommand{\Ind}{\text{Ind}}
\newcommand{\Cent}{\text{Cent}}
\newcommand{\e}{\varepsilon}
\newcommand{\Rep}{\text{Rep}}
\newcommand{\Jac}{\text{Jac}}
\newcommand{\ul}{\underline{l}}
\newcommand{\ueta}{\underline{\eta}}
\begin{document}
\title[A combinatorial solution to M{\oe}glin's parametrization of Arthur packets]
{A combinatorial solution to M{\oe}glin's parametrization of Arthur packets for p-adic quasisplit $Sp(N)$ and $O(N)$}

\author{Bin Xu}

\address{Yau Mathematical Sciences Center and Department of Mathematics \\ Tsinghua University, Beijing, China}
\email{binxu@tsinghua.edu.cn}

\subjclass[2010]{22E50 (primary); 11F70 (secondary)}
\keywords{symplectic and orthogonal group, Arthur packet, Jacquet functor}


\maketitle

\begin{abstract}
We develop a general procedure to study the combinatorial structure of Arthur packets for $p$-adic quasisplit $Sp(N)$ and $O(N)$ following the works of M{\oe}glin. This will allow us to answer many delicate questions concerning the Arthur packets of these groups, for example the size of the packets.
\end{abstract}

\section{Introduction}
\label{sec: introduction}

Let $F$ be a $p$-adic field and $G$ be a quasisplit symplectic or special orthogonal group, i.e., $G = Sp(2n), SO(2n+1)$ and $SO(2n, \eta)$. Here $\eta$ is a quadratic character associated with a quadratic extension $E/F$ by the local class field theory, and $SO(2n, \eta)$ is the outer form of the split $SO(2n)$ with respect to $E/F$ and an outer automorphism $\theta_{0}$ induced from the conjugate action of $O(2n)$. We let $\theta_{0} = id$ in other cases, and write $\Sigma_{0} = \langle\theta_{0}\rangle$, $G^{\Sigma_{0}} = G \rtimes \Sigma_{0}$. So for $G = SO(2n, \eta)$, $G^{\Sigma_{0}} \cong O(2n, \eta)$. For simplicity, we will denote $G(F)$ by $G$, which should not cause any confusion in the context. Let $\D{G}$ be the complex dual group of $G$, and $\L{G}$ be the Langlands dual group of $G$. Here we can simplify the Langlands dual groups as in the following table:
\begin{center}
\begin{spacing}{1.5}
\begin{tabular}{| c | c | }
     \hline
      $G$                        &      $\L{G}$              \\
     \hline
      $Sp(2n)$                &      $SO(2n+1, \mathbb{C})$  \\
     \hline
      $SO(2n+1)$           &      $Sp(2n, \mathbb{C})$ \\
     \hline
      $SO(2n, \eta)$       &      $SO(2n, \mathbb{C}) \rtimes \Gal{E/F}$     \\     
     \hline
\end{tabular}
\end{spacing}
\end{center}
In the last case, we will fix an isomorphism $SO(2n, \mathbb{C}) \rtimes \Gal{E/F} \cong O(2n, \mathbb{C})$. So in either of these cases, there is a natural embedding $\xi_{N}$ of $\L{G}$ into $GL(N, \mathbb{C})$ up to $GL(N, \mathbb{C})$-conjugacy, where $N = 2n+1$ if $G = Sp(2n)$ or $N = 2n$ otherwise. Let $W_{F}$ be the Weil group, the local Langlands group can be defined to be 
\[
L_{F} := W_{F} \times SL(2, \mathbb{C}).
\]
An Arthur parameter of $G$ is a $\D{G}$-conjugacy class of admissible homomorphisms
\[
\underline{\q}: L_{F} \times SL(2, \mathbb{C}) \longrightarrow \L{G},
\]
such that $\underline{\q}|_{W_{F}}$ is bounded. We denote the set of Arthur parameters of $G$ by $\Q{G}$. Let $\D{\theta}_{0}$ be the dual automorphism of $\theta_{0}$, then $\Sigma_{0}$ acts on $\Q{G}$ through $\D{\theta}_{0}$, and we denote the corresponding set of $\Sigma_{0}$-orbits by $\cQ{G}$. Let $\Pkt{}(G)$ be the set of equivalence classes of irreducible admissible representations of $G$, and we denote by $\cPkt{}(G)$ the set of $\Sigma_{0}$-orbits in $\Pkt{}(G)$. For $\q \in \cQ{G}$, Arthur \cite{Arthur:2013} shows there exists a finite ``multi-set" $\cPkt{\q}$ of elements in $\cPkt{}(G)$, which is related to certain twisted character on $GL(N)$ through the twisted endoscopic character identity (cf. \cite{Xu:Apacket}, Section 4). We call $\cPkt{\q}$ an Arthur packet of $G$. M{\oe}glin \cite{Moeglin:2011} constructs the elements in $\cPkt{\q}$, and shows it is in fact {\bf multiplicity free}. As a result, we can also define $\Pkt{\q}^{\Sigma_{0}}$ to be the set of irreducible representations of $G^{\Sigma_{0}}$, whose restriction to $G$ have irreducible constituents in $\cPkt{\q}$. To understand the structure of $\Pkt{\q}^{\Sigma_{0}}$, we need to introduce the set $Jord(\q)$ of Jordan blocks associated with $\q$.

For $\q \in \cQ{G}$, by composing with $\xi_{N}$ we get an equivalence class of $N$-dimensional self-dual representation of $L_{F} \times SL(2, \mathbb{C})$. So we can decompose $\q$ as follows
\begin{align}
\label{eq: Arthur parameter}
\q = \bigoplus_{i=1}^{r} l_{i} \q_{i} = \bigoplus_{i=1}^{r}l_{i}(\rho_{i} \otimes \nu_{a_{i}} \otimes \nu_{b_{i}}).
\end{align}
Here $\rho_{i}$ are equivalence classes of irreducible unitary representations of $W_{F}$, which can be identified with irreducible unitary supercuspidal representations of $GL(d_{\rho_{i}})$ under the local Langlands correspondence (cf. \cite{HarrisTaylor:2001}, \cite{Henniart:2000}, and \cite{Scholze:2013}). And $\nu_{a_{i}}$ (resp. $\nu_{b_{i}}$) are the $(a_{i}-1)$-th (resp. $(b_{i}-1)$-th) symmetric power representations of $SL(2, \mathbb{C})$. The irreducible constituent $\rho_{i} \otimes \nu_{a_{i}} \otimes \nu_{b_{i}}$ has dimension $n_{i} = n_{(\rho_{i}, a_{i}, b_{i})}$ and multiplicity $l_{i}$. We define the multi-set of Jordan blocks for $\q$ as follows,
\[
Jord(\q) := \{(\rho_{i}, a_{i}, b_{i}) \text{ with multiplicity } l_{i}: 1 \leqslant i \leqslant r \}.
\]
Moreover, for any  $\rho$ let us define
\[
Jord_{\rho}(\q) := \{(\rho', a', b') \in Jord(\q): \rho' = \rho\}.
\]
One can define the parity for self-dual irreducible unitary representations $\rho$ of $W_{F}$ as in (\cite{Xu:cusp}, Section 3). Then we say $(\rho_{i}, a_{i}, b_{i}) $ is of {\bf orthogonal type} if $a_{i} + b_{i}$ is even when $\rho_{i}$ is of orthogonal type, and $a_{i} + b_{i}$ is odd when $\rho_{i}$ is of symplectic type. Similarly we say $(\rho_{i}, a_{i}, b_{i})$ is of {\bf symplectic type} if $a_{i} + b_{i}$ is odd when $\rho_{i}$ is of orthogonal type, and $a_{i} + b_{i}$ is even when $\rho_{i}$ is of symplectic type. Let $\q_{p}$ be the parameter whose Jordan blocks consist of those in $Jord(\q)$ with the same parity as $\D{G}$, and 
let $\q_{np}$ be any parameter such that 
\[
\q = \q_{np} \+ \q_{p} \+ \q_{np}^{\vee},
\]
where $\q_{np}^{\vee}$ is the dual of $\q_{np}$. We also denote by $Jord(\q)_{p}$ the set of Jordan blocks in $Jord(\q_{p})$ without multiplicity. Then let us define
\[
\D{\S{\q^{>}}^{\Sigma_{0}}} = \{\e(\cdot) \in (\mathbb{Z}/2\mathbb{Z})^{Jord(\q_{p})} : \prod_{(\rho, a, b) \in Jord(\q_{p})} \e(\rho, a, b) = 1\}.
\]
and 
\[
\D{\S{\q}^{\Sigma_{0}}} = \{\e \in \D{\S{\q^{>}}^{\Sigma_{0}}} : \e(\rho, a, b) = \e(\rho', a', b') \text{ if } (\rho, a, b) = (\rho', a', b') \text{ in } Jord(\q)_{p}\}.
\]
If we choose a representative $\underline{\q}: L_{F} \times SL(2, \mathbb{C}) \rightarrow \L{G}$, then one can show $\D{\S{\q}^{\Sigma_{0}}}$ is canonically isomorphic to the group of characters of the component group of 
\[
\Cent(\Im \underline{\q}, \D{G} \rtimes \langle\D{\theta_{0}}\rangle) / Z(\D{G})^{\Gal{F}}.
\]
So we will also call elements in $\D{\S{\q}^{\Sigma_{0}}}$ (and also $\D{\S{\q^{>}}^{\Sigma_{0}}}$) characters. It follows from Arthur's theory that there is a canonical way to associate any irreducible representation in $\Pkt{\q}^{\Sigma_{0}}$ with an element $\e \in \D{\S{\q}^{\Sigma_{0}}}$ (cf. \cite{Xu:Apacket}, Section 8). Let us denote the direct sum of all irreducible representations associated with $\e \in \D{\S{\q}^{\Sigma_{0}}}$ by $\r^{\Sigma_{0}}_{W}(\q, \e)$, then
\begin{align}
\label{eq: W parametrization}
\Pkt{\q}^{\Sigma_{0}} = \bigoplus_{\e \in \D{\S{\q}^{\Sigma_{0}}}} \r^{\Sigma_{0}}_{W}(\q, \e),
\end{align}
where we identify $\Pkt{\q}^{\Sigma_{0}}$ with the direct sum of all its elements.

M{\oe}glin's construction of $\Pkt{\q}^{\Sigma_{0}}$ comes with a parametrization by $\D{\S{\q^{>}}^{\Sigma_{0}}}$. It also depends on some total order $>_{\q}$ on $Jord_{\rho}(\q_{p})$ for each $\rho$. To describe the condition on $>_{\q}$, we need to write the Jordan blocks differently. For $(\rho, a, b) \in Jord(\q_{p})$, let us write $A = (a+b)/2-1$, $B = |a-b|/2$, and set $\zeta = \zeta_{a, b} = \text{Sign}(a - b)$ if $a \neq b$ and arbitrary otherwise. Then we can denote $(\rho, a, b)$ also by $(\rho, A, B, \zeta)$. We say $>_{\q}$ is ``admissible" if it satisfies
\begin{align*}
(\mathcal{P}): \quad & \forall (\rho, A, B, \zeta), (\rho, A', B', \zeta') \in Jord(\q_{p}) \text{ with } A > A', B > B' \text{ and } \zeta = \zeta', \\
& \text{ then } (\rho, A, B, \zeta) >_{\q} (\rho, A', B', \zeta').
\end{align*}
Since the sign $\zeta$ is relevant in this condition, M{\oe}glin's parametrization will also depend on the choice of $\zeta_{a, b}$, when $a = b$. First, we have
\begin{align}
\label{eq: M parametrization}
\Pkt{\q}^{\Sigma_{0}} = \bigoplus_{\e \in \D{\S{\q^{>}}^{\Sigma_{0}}}} \r^{\Sigma_{0}}_{M, >_{\q}}(\q, \e).
\end{align}
The following theorem gives the connection between \eqref{eq: M parametrization} and \eqref{eq: W parametrization}.

\begin{theorem}[\cite{Xu:Apacket}, Theorem 8.9]
\label{thm: M/W}
There exists a character of $\e^{M/W}_{\q} \in \D{\S{\q^{>}}^{\Sigma_{0}}}$ such that 
\[
\r^{\Sigma_{0}}_{M, >_{\q}}(\q, \e) = \begin{cases}
                           \r^{\Sigma_{0}}_{W}(\q, \e \e^{M/W}_{\q}), & \text{ if $\e \e^{M/W}_{\q} \in \D{\S{\q}^{\Sigma_{0}}}$,} \\
                            0, & \text{ otherwise.}
                          \end{cases}
\]
\end{theorem}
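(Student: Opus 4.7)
The plan is to proceed by induction on the ``size'' of $Jord(\q)$, exploiting M{\oe}glin's explicit recursive construction of the representations $\r^{\Sigma_{0}}_{M, >_{\q}}(\q, \e)$. Both parametrizations \eqref{eq: M parametrization} and \eqref{eq: W parametrization} exhaust the same underlying set $\Pkt{\q}^{\Sigma_{0}}$, and each is rigidified by a variant of the twisted endoscopic character identity against $GL(N)$; the difference between them is essentially a matter of normalization, so the comparison reduces to tracking how this normalization discrepancy propagates through M{\oe}glin's algorithm.

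First I would dispose of the non-good-parity part. For $\q = \q_{np} \+ \q_{p} \+ \q_{np}^{\vee}$, both parametrizations are compatible with parabolic induction from a Levi containing a $GL$-factor carrying $\q_{np}$, while the component groups $\D{\S{\q^{>}}^{\Sigma_{0}}}$ and $\D{\S{\q}^{\Sigma_{0}}}$ depend only on $\q_{p}$, so this reduction is essentially formal. For $\q$ of good parity I would induct on $\sum_{(\rho, a, b) \in Jord(\q)}(a+b)$. The base case is when $\q$ is elementary (for instance, when each Jordan block has $A = B$, so that $\q$ is tempered), where both parametrizations agree up to a character that one can compute directly from Arthur's tempered normalization, already carried out in \cite{Xu:preprint4}. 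For the inductive step, take the maximal $(\rho, A, B, \zeta) \in Jord(\q_{p})$ under $>_{\q}$ with $B > 0$, and let $\q'$ be obtained by replacing this block with $(\rho, A-1, B-1, \zeta)$. M{\oe}glin constructs $\r^{\Sigma_{0}}_{M, >_{\q}}(\q, \e)$ as a distinguished irreducible constituent of a parabolically induced representation built from $\r^{\Sigma_{0}}_{M, >_{\q'}}(\q', \e')$ for an explicit $\e'$, and the parallel operation applied on the Whittaker side produces a representation attached to a prescribed character shift on $\D{\S{\q'}^{\Sigma_{0}}}$, dictated by the endoscopic character identity. Matching the two shifts yields $\e^{M/W}_{\q}$ as $\e^{M/W}_{\q'}$ times a correction depending only on $(\rho, A, B, \zeta)$ and on $>_{\q}$; crucially this correction is multiplicative in $\e$, hence defines a character of $\D{\S{\q^{>}}^{\Sigma_{0}}}$.

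The ``otherwise $= 0$'' assertion then follows from a counting argument: M{\oe}glin's multiplicity-one theorem implies that the number of nonzero terms in \eqref{eq: M parametrization} equals $|\D{\S{\q}^{\Sigma_{0}}}|$, which is exactly the cardinality of the coset $\{\e \in \D{\S{\q^{>}}^{\Sigma_{0}}} : \e \e^{M/W}_{\q} \in \D{\S{\q}^{\Sigma_{0}}}\}$; since the representations on the Whittaker side are pairwise distinct, the remaining parameters on the Moeglin side must give zero. The main obstacle I anticipate is verifying that the per-step correction really is a character of $\D{\S{\q^{>}}^{\Sigma_{0}}}$ and not a more elaborate function of $\e$; this multiplicativity rests on the linearity of the endoscopic character identity under Jacquet modules together with careful bookkeeping of the sign $\zeta$, particularly in the ambiguous case $a = b$ where the choice of $\zeta_{a, b}$ affects $>_{\q}$ but not $\q$ itself, and one must check that $\e^{M/W}_{\q}$ transforms coherently under a change of this choice.
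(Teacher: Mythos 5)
You should note at the outset that this paper does not prove Theorem~\ref{thm: M/W} at all: it is quoted from \cite{Xu:preprint4}, Theorem 8.9, and the comparison character is obtained there in two stages, $\e^{M/W}_{\q} = \e^{MW/W}_{\q}\,\e^{M/MW}_{\q}$, by inserting the M{\oe}glin--Waldspurger normalization between M{\oe}glin's parametrization and Arthur's and computing each discrepancy explicitly (the resulting formulas are exactly the ones recalled in Section~\ref{sec: introduction}). So there is no in-paper proof to compare against, and your sketch has to be judged against that reference.

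As a strategy your induction has a reasonable skeleton (reduce to good parity, settle a base case, descend through dominating parameters and Jacquet functors), but the decisive steps are asserted rather than proved. First, you treat both sides as ``rigidified by a variant of the twisted endoscopic character identity'', so that the comparison is a normalization bookkeeping exercise; this begs the question, since $\r^{\Sigma_{0}}_{M,>_{\q}}(\q,\e)$ is defined purely representation-theoretically via \eqref{eq: M construction general}, and the whole content of the theorem is that this construction is governed by the endoscopic identities at all. Second, the crux --- that the per-block correction obtained by ``matching the two shifts'' is independent of $\e$ and multiplicative, hence a character of $\D{\S{\q^{>}}^{\Sigma_{0}}}$ --- is exactly what you flag as the main obstacle and do not resolve; the true answer is the rather intricate pair $\e^{MW/W}_{\q}$, $\e^{M/MW}_{\q}$, whose dependence on the parities of $(a,b)$, on $>_{\q}$ and on the choices $\zeta_{a,b}$ when $a=b$ does not fall out of the formal argument you give. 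Third, your base case conflates elementary with tempered: elementary means $A=B$ for every block (e.g.\ $a=1$ with $b$ arbitrary is allowed), and the comparison there is itself a nontrivial theorem of M{\oe}glin \cite{Moeglin:2006}, not a consequence of Arthur's tempered normalization; likewise the inductive step needs compatibility of the $W$-side parametrization with $\Jac_{(\rho,A,B,\zeta)\mapsto(\rho,A-1,B-1,\zeta)}$, a substantive input of the same nature as Proposition 8.3 of \cite{Xu:preprint4} quoted in Section~\ref{sec: notation}. Finally, the vanishing statement does follow once the matching is known for every $\e$ with $\e\e^{M/W}_{\q}\in\D{\S{\q}^{\Sigma_{0}}}$ --- the matched terms already exhaust $\Pkt{\q}^{\Sigma_{0}}$, so multiplicity one forces the remaining $\r^{\Sigma_{0}}_{M,>_{\q}}(\q,\e)$ to vanish --- but your cardinality count as written presumes both that every $\r^{\Sigma_{0}}_{W}(\q,\e)$ is nonzero and that each nonzero term is irreducible, neither of which you have established and neither of which is needed.
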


In \cite{Xu:Apacket}, we define 
\begin{align}
\label{eq: M/W}
\e^{M/W}_{\q} := \e^{MW/W}_{\q}  \e^{M/MW}_{\q},
\end{align}
for $\e^{MW/W}_{\q}$ and $\e^{M/MW}_{\q}$ in $\D{\S{\q^{>}}^{\Sigma_{0}}}$. Here we will recall the definition of these characters. 

To define $\e^{MW/W}_{\q}$, we need to first define a set $\mathcal{Z}_{MW/W}(\q)$ of {\bf unordered pairs} of Jordan blocks from $Jord(\q_{p})$ as follows. We call a pair $\{(\rho, a, b), (\rho', a', b') \in Jord(\q_{p})\}$ is contained in $\mathcal{Z}_{MW/W}(\q)$ if and only if $\rho = \rho'$, and it is in one of the following situations.

\begin{enumerate}

\item Case: $a, b$ are even and $a', b'$ are odd. 

   \begin{enumerate}
   
   \item If $\zeta_{a, b} = -1$ and 
             \(
            \begin{cases}
                      & \zeta_{a', b'} = -1 \Rightarrow (\rho, a, b) >_{\q} (\rho, a', b'), a > a' \\
                      & \zeta_{a', b'} = +1 \Rightarrow a > a'          
             \end{cases}
             \)
   \item If $\zeta_{a, b} = \zeta_{a', b'} = +1$ and \(
            \begin{cases}
                      & (\rho, a, b) >_{\q} (\rho, a', b') \Rightarrow a' > a, b > b' \\
                      & (\rho, a, b) <_{\q} (\rho, a', b') \Rightarrow a > a', b > b'          
             \end{cases}
             \)
                
   \end{enumerate}

\item Case : $a$ is odd, $b$ is even and $a'$ is even, $b'$ is odd. 

   \begin{enumerate}
   
   \item If $\zeta_{a, b} = -1$ and 
             \(
            \begin{cases}
                      & \zeta_{a', b'} = -1 \Rightarrow (\rho, a, b) >_{\q} (\rho, a', b'), a < a' \\
                      & \zeta_{a', b'} = +1 \text{ and } \begin{cases}
                                                        (\rho, a, b) >_{\q} (\rho, a', b') \Rightarrow a < a' \\        
                                                        (\rho, a, b) <_{\q} (\rho, a', b') \Rightarrow a > a'
                                                      \end{cases}
             \end{cases}
             \)
   
   \item If $\zeta_{a, b} = \zeta_{a', b'} = +1$ and \(
            \begin{cases}
                      & (\rho, a, b) >_{\q} (\rho, a', b') \Rightarrow a < a', b > b' \\
                      & (\rho, a, b) <_{\q} (\rho, a', b') \Rightarrow a > a', b > b'          
             \end{cases}
             \)
                
   \end{enumerate}

\end{enumerate}
For $(\rho, a, b) \in Jord(\q_{p})$, let
\[
\mathcal{Z}_{MW/W}(\q)_{(\rho, a, b)} := \{(\rho', a', b') \in Jord(\q_{p}) : \text{ the pair of } (\rho, a, b) \text { and } (\rho', a', b') \text { lies in } \mathcal{Z}_{MW/W}(\q)\}.
\]
Then we can define
\[
\e^{MW/W}_{\q}(\rho, a, b) := (-1)^{|\mathcal{Z}_{MW/W}(\q)_{(\rho, a, b)}|}.
\]
Next, we define $\e^{M/MW}_{\q}$ according to the following rule. Let $(\rho, a, b) \in Jord(\q_{p})$.

\begin{enumerate}

\item If $a + b$ is odd, $\e^{M/MW}_{\q}(\rho, a, b) = 1$.

\item If $a + b$ is even, let 
\[
m = \sharp \{(\rho, a', b') \in Jord(\q): a', b' \text{ odd}, \zeta_{a', b'} = -1, (\rho, a', b') >_{\q} (\rho, a, b) \},
\] 
and 
\[
n = \sharp \{(\rho, a' , b')  \in Jord(\q): a', b' \text{ odd}, (\rho, a', b') <_{\q} (\rho, a, b) \}.
\] 
Then
\[
\e^{M/MW}_{\q}(\rho, a, b) = \begin{cases}
                                              1 &\text{ if $a, b$ even, } \\
                                              (-1)^{m} &\text{ if $a, b$ odd, $\zeta_{a, b} = +1$, } \\
                                              (-1)^{m + n} &\text{ if $a, b$ odd, $\zeta_{a, b} = -1$. }
                                              \end{cases}
\]

\end{enumerate}

M{\oe}glin further parametrizes the irreducible constituents in $\r^{\Sigma_{0}}_{M, >_{\q}}(\q, \e)$. To describe that, we need to briefly go through all the stages of M{\oe}glin's construction of $\Pkt{\q}^{\Sigma_{0}}$. Let us denote by $\q_{d}$ the composition of $\q$ with 
\[
\Delta: W_{F} \times SL(2, \mathbb{C}) \rightarrow W_{F} \times SL(2, \mathbb{C}) \times SL(2, \mathbb{C}),
\]
which is the diagonal embedding of $SL(2, \mathbb{C})$ into $SL(2, \mathbb{C}) \times SL(2, \mathbb{C})$ when restricted to $SL(2, \mathbb{C})$, and is identity on $W_{F}$. It is easy to see
\[
Jord(\q_{d}) = \cup_{(\rho, A, B. \zeta) \in Jord(\q)} \cup_{C \in [B, A]} \{ (\rho, C, C, +1) \}. 
\]
We call $\q$ has {\bf discrete diagonal restriction} if $\q = \q_{p}$ and $Jord(\q_{d})$ is multiplicity free. Note the second condition is equivalent to saying the intervals $[B, A], [B', A']$ do not intersect for any $(\rho, A, B, \zeta), (\rho, A', B', \zeta')$ in $Jord_{\rho}(\q)$. In this case, $Jord_{\rho}(\q)$ has a natural order $>_{\q}$, namely
\[
(\rho, A, B, \zeta) >_{\q} (\rho, A', B', \zeta') \text{ if and only if } A > A'.
\]
Among the parameters with discrete diagonal restriction, we call $\q$ is {\bf elementary} if $A = B$ for all $(\rho, A, B, \zeta) \in Jord(\q)$. For the elementary parameters, M{\oe}glin \cite{Moeglin:2006} shows $\r^{\Sigma_{0}}_{M, >_{\q}}(\q, \e)$ is irreducible. 

Suppose $\q$ has discrete diagonal restriction, M{\oe}glin shows the irreducible constitutes of $\r^{\Sigma_{0}}_{M, >_{\q}}(\q, \e)$ can be parametrized by pairs of integer-valued functions $(\ul, \ueta)$ over $Jord(\q)$, such that 
\begin{align}
\label{eq: refined parameter}
\ul(\rho, A, B, \zeta) \in [0, [(A-B+1)/2]] \text{ and } \ueta(\rho, A, B, \zeta) \in \{\pm 1\},
\end{align}
and
\begin{align}
\label{eq: endoscopic character formula}
\e(\rho, A, B, \zeta) = \e_{\ul, \ueta}(\rho, A, B, \zeta) := \ueta(\rho, A, B, \zeta)^{A - B + 1} (-1)^{[(A-B+1)/2] + \ul(\rho, A, B, \zeta)}.
\end{align}
Moreover,
\begin{align*}
\r_{M, >_{\q}}^{\Sigma_{0}}(\q, \ul, \ueta) & \hookrightarrow \times_{(\rho, A, B, \zeta) \in Jord(\q)} 
       \begin{pmatrix}
              \zeta B & \cdots & -\zeta A \\
              \vdots &  & \vdots \\
              \zeta (B + \ul(\rho, A, B, \zeta) - 1) & \cdots & - \zeta (A - \ul(\rho, A, B, \zeta) + 1)
       \end{pmatrix} \\
& \times \r_{M}^{\Sigma_{0}}\Big(\cup_{(\rho, A, B, \zeta) \in Jord(\q)} \cup_{C \in [B + \ul(\rho, A, B, \zeta), A - \ul(\rho, A, B, \zeta)]} (\rho, C, C, \ueta(\rho, A, B, \zeta)(-1)^{C - B - \ul(\rho, A, B, \zeta)}, \zeta)\Big)
\end{align*}
as the unique irreducible subrepresentation (see \eqref{eq: generalized Speh} and \eqref{eq: induced module}). There is an obvious equivalence relation to be made here on pairs $(\ul, \ueta)$, namely 
\[
(\ul, \ueta) \sim_{\Sigma_{0}} (\ul', \ueta')
\]
if and only if $\ul = \ul'$ and $(\ueta/\ueta') (\rho, A, B, \zeta)= 1$ unless $\ul(\rho, A, B, \zeta) = (A - B +1)/2$. Then 
\begin{align}
\label{eq: refined decomposition}
\r^{\Sigma_{0}}_{M, >_{\q}}(\q, \e) = \bigoplus_{\{(\ul, \ueta): \, \e = \e_{\ul, \ueta} \}/\sim_{\Sigma_{0}}} \r^{\Sigma_{0}}_{M, >_{\q}}(\q, \ul, \ueta).
\end{align}

To get to the more general case $\q = \q_{p}$, we need to choose an admissible order $>_{\q}$ on $Jord(\q)$. We can index $Jord_{\rho}(\q)$ such that 
\[
(\rho, A_{i}, B_{i}, \zeta_{i}) >_{\q} (\rho, A_{i-1}, B_{i-1}, \zeta_{i-1}).
\]
We say $\q_{\gg}$ dominates $\q$ with respect to $>_{\q}$ if $Jord_{\rho}(\q_{\gg})$ consists of $(\rho, A_{\gg, i}, B_{i} + T_{\gg, i}, \zeta_{\gg, i}) := (\rho, A_{i} + T_{i}, B_{i} + T_{i}, \zeta_{i})$ for $T_{i} \geqslant 0$, and inherits the same admissible order $>_{\q}$. We can further choose $\q_{\gg}$ to have discrete diagonal restriction with the natural order $>_{\q}$. After identifying $Jord(\q)$ with $Jord(\q_{\gg})$ in the natural way, we can define for any pair of functions $(\ul, \ueta)$ satisfying \eqref{eq: refined parameter} and \eqref{eq: endoscopic character formula},
\begin{align}
\label{eq: M construction general}
\r_{M, >_{\q}}^{\Sigma_{0}}(\q, \ul, \ueta) := \circ_{\{\rho: Jord_{\rho}(\q) \neq \emptyset \}} \circ_{(\rho, A_{i}, B_{i}, \zeta_{i}) \in Jord_{\rho}(\q)} \Jac_{(\rho, A_{i} + T_{i}, B_{i} + T_{i}, \zeta_{i}) \mapsto (\rho, A_{i}, B_{i}, \zeta_{i})} \r_{M, >_{\q}}^{\Sigma_{0}}(\q_{\gg}, \ul, \ueta),
\end{align}
where $i$ is decreasing in the composition of Jacquet functors (see \eqref{eq: shifting functor}). (This definition is different from that in (\cite{Xu:Apacket}, Section 8), for there we take a total order $>_{\q}$ on $Jord(\q)$. But it follows from Lemma~\ref{lemma: permuting Jacquet 1} that only the restriction of $>_{\q}$ to $Jord_{\rho}(\q)$ for each $\rho$ matters, and the two definition will give the same result.) Then M{\oe}glin shows the following facts (cf. \cite{Xu:Apacket}, Proposition 8.5 and Corollary 8.7):

\begin{enumerate}

\item $\r_{M, >_{\q}}^{\Sigma_{0}}(\q, \ul, \ueta)$ only depends on the choice of order $>_{\q}$, and it is either irreducible or zero.

\item If $\r_{M, >_{\q}}^{\Sigma_{0}}(\q, \ul, \ueta) \cong \r_{M, >_{\q}}^{\Sigma_{0}}(\q, \ul', \ueta') \neq 0$, then $(\ul, \ueta) \sim_{\Sigma_{0}}  (\ul', \ueta')$. 

\item The decomposition \eqref{eq: refined decomposition} still holds.

\end{enumerate}

Finally, for general $\q \in \cQ{G}$, we have
\[
\r^{\Sigma_{0}}_{M, >_{\q}}(\q, \e) = \Big(\times_{(\rho, a, b) \in Jord(\q_{np})}Sp(St(\rho, a), b) \Big) \rtimes \r^{\Sigma_{0}}_{M, >_{\q}}(\q_{p}, \e)
\]
(see \eqref{eq: Speh}). Moreover, M{\oe}glin shows 
\[
\r^{\Sigma_{0}}_{M, >_{\q}}(\q, \ul, \ueta) := \Big(\times_{(\rho, a, b) \in Jord(\q_{np})}Sp(St(\rho, a), b) \Big) \rtimes \r^{\Sigma_{0}}_{M, >_{\q}}(\q_{p}, \ul, \ueta)
\]
is irreducible (cf. \cite{Moeglin1:2006}, Theorem 6), when $\r^{\Sigma_{0}}_{M, >_{\q}}(\q_{p}, \ul, \ueta) \neq 0$.

To summarize, for $\q \in \cQ{G}$ we can refine the decomposition \eqref{eq: M parametrization} as follows
\begin{align}
\label{eq: M refined parametrization}
\Pkt{\q}^{\Sigma_{0}} = \bigoplus_{\{(\ul, \ueta): \,  \prod_{(\rho, a, b) \in Jord(\q_{p})} \e_{\ul, \ueta}(\rho,a ,b) = 1 \}/\sim_{\Sigma_{0}}} \r^{\Sigma_{0}}_{M, >_{\q}}(\q, \ul, \ueta).
\end{align}
where $\r^{\Sigma_{0}}_{M, >_{\q}}(\q, \ul, \ueta)$ is either irreducible or zero. So it is natural to ask the following question: 
\begin{question}
\label{que: nonvanishing}
When $\r^{\Sigma_{0}}_{M, >_{\q}}(\q, \ul, \ueta) \neq 0$?  
\end{question}

The main goal of this paper is to answer this question. An explicit answer to this question will certainly allow us to count the size of the Arthur packets (see Example~\ref{eg: three intervals}). In fact, it contains more information than that. For example, one can use it to determine the zeros of local normalized intertwining operators, which is important for describing the residue spectrum of automorphic forms (see \cite{Moeglin:2011}). Another example is to use it to characterize the image of the local theta correspondence of type I in many cases (see \cite{Moeglin2:2011}). We hope the results and techniques of this paper will open the door for investigating many other delicate questions concerning the Arthur packets.

Now we will describe our results. In the simplest case, one can consider $Jord(\q_{p})$ consisting of two Jordan blocks $(\rho, A_{2}, B_{2}, \zeta), (\rho, A_{1}, B_{1}, \zeta)$ satisfying $A_{2} \geqslant A_{1}, B_{2} \geqslant B_{1}$. Let the order be
\[
(\rho, A_{2}, B_{2}, \zeta) >_{\q} (\rho, A_{1}, B_{1}, \zeta).
\]
In Proposition~\ref{prop: basic}, we will prove $\r^{\Sigma_{0}}_{M, >_{\q}}(\q, \ul, \ueta) \neq 0$ if and only if
\begin{align*}
\begin{cases}
\eta_{2} = (-1)^{A_{1} - B_{1}}\eta_{1}       & \Rightarrow A_{2} - l_{2} \geqslant A_{1} - l_{1}, \quad B_{2} + l_{2} \geqslant B_{1} + l_{1},  \\
\eta_{2} \neq (-1)^{A_{1} - B_{1}}\eta_{1}  & \Rightarrow B_{2} + l_{2} > A_{1} - l_{1}.   
\end{cases}           
\end{align*}
Suppose we change the assumption such that $B_{2} \leqslant B_{1}$. Then in Lemma~\ref{lemma: sufficient condition}, we will prove $\r^{\Sigma_{0}}_{M, >_{\q}}(\q, \ul, \ueta) \neq 0$ if and only if 
\begin{align*}
\begin{cases}
\eta_{2} = (-1)^{A_{1} - B_{1}}\eta_{1}       & \Rightarrow 0 \leqslant l_{2} - l_{1} \leqslant (A_{2} - B_{2}) - (A_{1} - B_{1}),  \\
\eta_{2} \neq (-1)^{A_{1} - B_{1}}\eta_{1}  & \Rightarrow l_{2} + l_{1} > A_{1} - B_{1}.   
\end{cases} 
\end{align*}
The general case is much more complicated, because we also need to take account of pairs of Jordan blocks, which are not necessarily in adjacent positions under the chosen order. To do so, we will need to change the order, and the point is the parametrization will change as well. So in the end, we will develop a procedure (cf. Section~\ref{sec: procedure}), and it will give rise to some explicit combinatorial conditions on $(\ul, \ueta)$  like those in the case above. In Appendix~\ref{sec: example}, we give an example to demonstrate how it works. 

The key input in our procedure is an explicit formula describing how the parametrization changes with respect to the change of order $>_{\q}$ when $\q = \q_{p}$ (cf. Section~\ref{sec: change of order}). Since this result could have interests by itself, we would like to state it here. Let us consider any two adjacent Jordan blocks $(\rho, A_i, B_i, \zeta_{i})$ $(i = 1, 2)$ under an admissible order $>_{\psi}$ with
\[
(\rho, A_2, B_2, \zeta_{2}) >_{\psi} (\rho, A_1, B_1, \zeta_{1}).
\]
Suppose the new order $>'_{\psi}$ obtained by switching the two is still admissible. Then by definition, either $\zeta_{1} \neq \zeta_{2}$ or one of $\{[B_{i}, A_{i}]\}_{i = 1, 2}$ is included in the other. Let us define $\q_{-}$ by
\[
Jord(\q_{-}) = Jord(\q) \backslash \{(\rho, A_{2}, B_{2}, \zeta_{2}), (\rho, A_{1}, B_{1}, \zeta_{1})\}.
\]

\begin{theorem}[cf. Theorem~\ref{thm: change order equal sign} and Theorem~\ref{thm: change order unequal sign}]

Suppose 
\[
\r^{\Sigma_{0}}_{M, >_{\q}}(\q, \ul, \ueta) = \r^{\Sigma_{0}}_{M, >'_{\q}}(\q, \ul', \ueta') \neq 0,
\]
then the restrictions of $(\ul, \ueta)$ and $(\ul', \ueta')$ to $Jord(\q_{-})$ are equivalent ($\sim_{\Sigma_{0}}$) and the following conditions are satisfied.

\begin{enumerate}

\item If $\zeta_{1} = \zeta_{2}$, it suffices to consider the case $[B_2, A_2] \supseteq [B_1, A_1]$. Then we are in one of the following situations.

\begin{enumerate}

\item If $\eta_{2} \neq (-1)^{A_{1} - B_{1}} \eta_{1}$ and $\eta'_{1} = (-1)^{A_{2} - B_{2}} \eta'_{2}$, then
         \[
         \begin{cases}
         l_{1} = l'_{1} \\
          l_{2} - l'_{2} = (A_{1} - B_{1} - 2l_{1}) + 1 \\
         \eta'_{1} = (-1)^{A_{2} - B_{2}} \eta_{1} 
         \end{cases}
         \]
         
\item If $\eta_{2} = (-1)^{A_{1} - B_{1}} \eta_{1}$ and $\eta'_{1} \neq (-1)^{A_{2} - B_{2}}\eta'_{2}$, then 
         \[
         \begin{cases}
         l_{1} = l'_{1} \\
         l'_{2} - l_{2} = (A_{1} - B_{1} - 2l_{1}) + 1 \\
         \eta'_{1} = (-1)^{A_{2} - B_{2}} \eta_{1} 
         \end{cases}
         \]         
         
\item If $\eta_{2} = (-1)^{A_{1} - B_{1}} \eta_{1}$ and $\eta'_{1} = (-1)^{A_{2} - B_{2}}\eta'_{2}$, then 
         \[
         \begin{cases}
         l_{1} = l'_{1} \\
         (l'_{2} - l'_{1}) + (l_{2} - l_{1}) = (A_{2} - B_{2}) - (A_{1} - B_{1}) \\
         \eta'_{1} = (-1)^{A_{2} - B_{2}} \eta_{1} 
         \end{cases}
         \]          

\end{enumerate}

\item If $\zeta_{1} \neq \zeta_{2}$, then 
\[
\begin{cases}
l'_{2} = l_{2} \\
l'_{1} = l_{1} \\
\eta_{2} = (-1)^{A_{1} - B_{1} + 1} \eta'_{2} \\
\eta_{1} = (-1)^{A_{2} - B_{2} + 1} \eta'_{1}
\end{cases}
\]

\end{enumerate}
In both cases, we have denoted
\[
l_{i} = \ul(\rho, A_{i}, B_{i}, \zeta_{i}), \quad l'_{i} = \ul'(\rho, A_{i}, B_{i}, \zeta_{i}),
\] 
and 
\[
\eta_{i} = \ueta(\rho, A_{i}, B_{i}, \zeta_{i}), \quad \eta'_{i} = \ueta'(\rho, A_{i}, B_{i}, \zeta_{i}).
\] 
for $i = 1, 2$.

\end{theorem}

{\bf Acknowledgements}: This work was mainly done when I was a member of IAS during the academic year 2014-2015, and I was supported by the National Science Foundation number agreement No. DMS-1128155 and DMS-1252158. I also want to thank C.M{\oe}glin for useful comments.


\section{Conventions}
\label{sec: convention}

Now we want to set up some conventions for this paper. We follow the notations in the introduction. Since only $\q_{p}$ is relevant in answering Question~\ref{que: nonvanishing}, we will assume $\q = \q_{p}$ in the rest of the paper. We will also require $(\ul, \ueta)$ to always satisfy 
\begin{align}
\label{eq: quasisplit}
\prod_{(\rho, a, b) \in Jord(\q)} \e_{\ul, \ueta}(\rho,a ,b) = 1.
\end{align}
So we will not write down this condition later in the paper. 

In many arguments of the paper, we need to fix a self-dual irreducible unitary supercuspidal representation $\rho$ of $GL(d_{\rho})$. So if $\q_{\gg}$ is a dominating parameter of $\q$, we would like to define
\[
\Jac_{X^{c}}  := \circ_{\rho' \neq \rho} \circ_{(\rho', A', B', \zeta') \in Jord_{\rho'}(\q)} \Jac_{(\rho', A'_{\gg}, B'_{\gg}, \zeta') \mapsto (\rho', A', B', \zeta')}
\]
and 
\[
\mathcal{C}_{X^{c}} := \times_{\rho' \neq \rho} \times_{(\rho', A', B', \zeta') \in Jord_{\rho'}(\q)} \begin{pmatrix}
              \zeta' B'_{\gg} & \cdots & \zeta'(B' + 1) \\
              \vdots &  & \vdots \\
              \zeta' A'_{\gg} & \cdots & \zeta'(A' + 1)
       \end{pmatrix}.
\]       
Since we are taking $\rho' \neq \rho$ in $\Jac_{X^{c}}$ (resp. $\mathcal{C}_{X^{c}}$), it will ``commute" with all kinds of Jacquet functors (resp. induced modules) defined with respect to $\rho$ in our arguments (see Lemma~\ref{lemma: permuting Jacquet 1}, Corollary~\ref{cor: permuting general linear}). Later we will use this property freely without mentioning it.


Finally, for a fixed $\rho$, we often need to put apart some subset of $Jord_{\rho}(\q)$ in different ways. Here we want to quantify the corresponding notions. 

\begin{enumerate}


\item
Suppose $(\rho, A, B, \zeta) \in Jord_{\rho}(\q)$ and $r$ is a positive integer, we say $(\rho, A, B, \zeta)$ (or $[A, B]$) is in level $r$ ``far away", if 
\[
B > 2^{r} \cdot \sum_{(\rho, A', B', \zeta') \in Jord_{\rho}(\q)}(A' - B' +1),
\]
and we write 
\[
(\rho, A, B, \zeta) \gg_{r} 0 \, \text{ or } \, (\rho, A, B, \zeta) \gg 0 \text{ when $r = 1$.}
\]

\item
Suppose $(\rho, A, B, \zeta) \in Jord_{\rho}(\q)$ and $r$ is a positive integer, we say $(\rho, A, B, \zeta)$ (or $[A, B]$) is in level $r$ ``far away" from a subset $J$ of $Jord_{\rho}(\q)$, if
\[
B > 2^{r \, |J|} \cdot \Big(\sum_{(\rho, A', B', \zeta') \in J} A' + |J| \sum_{(\rho, A', B', \zeta') \in Jord_{\rho}(\q)}(A' - B' + 1) \Big),
\]
and we write 
\[
(\rho, A, B, \zeta) \gg_{r} J \, \text{ or } \, (\rho, A, B, \zeta) \gg J \text{ when $r = 1$.}
\]

\item
For a subset $J$ of $Jord_{\rho}(\q)$, we denote its complement in $Jord_{\rho}(\q)$ by $J^{c}$. We say $J$ is ``separated" from $J^{c}$, if the following conditions are satisfied.

   \begin{enumerate}
   
   \item For any $(\rho, A, B, \zeta) \in J$ and $(\rho, A', B', \zeta') \in J^{c}$, 
   \[
   \text{either $B' > A$ or $B > A'$.}
   \]
   
   \item For any admissible order $>_{J}$ on $J$, there exists a dominating set of Jordan blocks $J_{\gg}$ of $J$ with discrete diagonal restriction, such that for any $(\rho, A, B, \zeta) \in J$ and $(\rho, A', B', \zeta') \in J^{c}$, 
   \[
   \text{if $B' > A$ then $B' > A_{\gg}$.}
   \]
      
   \item There exists an admissible order $>_{J^{c}}$ on $J^{c}$, under which one can find a dominating set of Jordan blocks $J^{c}_{\gg}$ of $J^{c}$ with discrete diagonal restriction, such that for any $(\rho, A, B, \zeta) \in J$ and $(\rho, A', B', \zeta') \in J^{c}$, 
   \[
   \text{ if $B > A'$ then $B > A'_{\gg}$.}
   \]

   \end{enumerate}

\end{enumerate}
In application, what is important is only the fact that these notions (``far away", ``separated") can be quantified, but not the specific way that we quantify them. For example, once we can measure what it means for some Jordan blocks to be ``far away" from all the others, we can just take them as far as we want in practice.

\section{Parabolic induction and Jacquet module}
\label{sec: induction and restriction}

We will review the notations in \cite{Xu:Apacket}, \cite{Xu:cusp}. For $GL(n)$, let us take $B$ to be the group of upper-triangular matrices and $T$ to be the group of diagonal matrices, then the standard Levi subgroup $M$ can be identified with 
\[
GL(n_{1}) \times \cdots \times GL(n_{r})
\]
for any partition of $n = n_{1} + \cdots + n_{r}$ as follows
\[
\begin{pmatrix}
GL(n_{1})&& \\
&\ddots & \\
&& GL(n_{r})\\
\end{pmatrix}
\]
\begin{align*}
(g_{1}, \cdots, g_{r}) \longrightarrow \text{diag}\{g_{1}, \cdots, g_{r}\}.
\end{align*}
For $\r = \r_{1} \otimes \cdots \otimes \r_{r}$, where $\r_{i}$ is a finite-length admissible representation of $GL(n_{i})$ for $1 \leqslant i \leqslant r$, we denote the normalized parabolic induction $\Ind_{P}^{G} (\r)$ by 
\[
\r_{1} \times \cdots \times \r_{r}.
\] 
 
An irreducible supercuspidal representation of a general linear group can always be written in a unique way as $\rho ||^{x} : = \rho \otimes |\det(\cdot)|^{x}$ for an irreducible unitary supercuspidal representation $\rho$ and a real number $x$. For a finite length arithmetic progression of real numbers of common length $1$ or $-1$
\[
x, \cdots, y
\]
and an irreducible unitary supercuspidal representation $\rho$ of $GL(d_{\rho})$, it is a general fact that 
\[
\rho||^{x} \times \cdots \times \rho||^{y}
\]
has a unique irreducible subrepresentation, denoted by $\langle \rho; x, \cdots, y \rangle$ or $\langle x, \cdots, y \rangle$. If $x \geqslant y$, it is called a Steinberg representation; if $x < y$, it is called a Speh representation. Such sequence of ordered numbers is called a {\bf segment}, and we denote it by $[x, y]$ or $\{x, \cdots, y\}$. In particular, when $x = -y > 0$, we can let $a = 2x + 1 \in \mathbb{Z}$ and write 
\[
St(\rho, a) := \langle \frac{a-1}{2}, \cdots, -\frac{a-1}{2} \rangle.
\]
We also define a {\bf generalized segment} to be a matrix 
\begin{align}
\label{eq: generalized segment}
  \begin{bmatrix}
   x_{11} & \cdots & x_{1n} \\
   \vdots &  & \vdots \\
   x_{m1} & \cdots & x_{mn}
  \end{bmatrix}
\end{align}
such that each row is a decreasing (resp. increasing) segment and each column is an increasing (resp. decreasing) segment. The normalized induction
\[
\times_{i \in [1, m]} \langle \rho; x_{i1}, \cdots, x_{in} \rangle
\]
has a unique irreducible subrepresentation, and we denote it by $\langle \rho; \{x_{ij}\}_{m \times n} \rangle$. If there is no ambiguity with $\rho$, we will also write it as $\langle \{x_{ij}\}_{m \times n} \rangle$ or
\begin{align}
\label{eq: generalized Speh}
  \begin{pmatrix}
   x_{11} & \cdots & x_{1n} \\
   \vdots &  & \vdots \\
   x_{m1} & \cdots & x_{mn}
  \end{pmatrix}
\end{align}
Moreover, 
\[
\langle \rho; \{x_{ij}\}_{m \times n} \rangle \cong \langle \rho; \{x_{ij}\}^{T}_{m \times n} \rangle
\] 
where $\{x_{ij}\}^{T}_{m \times n}$ is the transpose of $\{x_{ij}\}_{m \times n}$. The dual of $\langle \rho; \{x_{ij}\}_{m \times n} \rangle$ is 
\[
\langle \rho; \{x_{ij}\}_{m \times n} \rangle^{\vee} \, \cong \, 
\begin{pmatrix}
   - x_{mn} & \cdots & - x_{m1} \\
   \vdots &  & \vdots \\
   - x_{1n} & \cdots & - x_{11}
  \end{pmatrix}.
\]
Let $a, b$ be positive integers, we define $Sp(St(\rho, a), b)$ to be the unique irreducible subrepresentation of 
\begin{align}
\label{eq: Speh}
St(\rho, a)||^{-(b-1)/2} \times St(\rho, a)||^{-(b-3)/2} \times \cdots \times St(\rho, a)||^{(b-1)/2}.
\end{align}
Then one can see $Sp(St(\rho, a), b)$ is given by the following generalized segment
\begin{align*}
  \begin{bmatrix}
   (a-b)/2 & \cdots & 1-(a+b)/2 \\
   \vdots &  & \vdots \\
   (a+b)/2-1 & \cdots & -(a-b)/2
  \end{bmatrix}.
\end{align*}

If $G = Sp(2n)$, let us define it with respect to 
\[
\begin{pmatrix} 
      0 & -J_{n} \\
      J_{n} &  0 
\end{pmatrix},
\]
where 
\[
J_{n} = 
\begin{pmatrix}
        &&1\\
        &\iddots&\\
        1&&&
\end{pmatrix}.
\]
Let us take $B$ to be subgroup of upper-triangular matrices in $G$ and $T$ to be subgroup of diagonal matrices in $G$,  then the standard Levi subgroup $M$ can be identified with 
\[
GL(n_{1}) \times \cdots \times GL(n_{r}) \times G_{-}
\] 
for any partition $n = n_{1} + \cdots + n_{r} + n_{-}$ and $G_{-} = Sp(2n_{-})$ as follows
\[
\begin{pmatrix}
GL(n_{1})&&&&&&0 \\
&\ddots &&&&& \\
&& GL(n_{r})&&&&\\
&&&G_{-} &&&\\
&&&&GL(n_{r})&& \\
&&&&&\ddots&\\
0&&&&&&GL(n_{1})
\end{pmatrix}
\]
\begin{align}
\label{eq: embedding}
(g_{1}, \cdots g_{r}, g) \longrightarrow \text{diag}\{g_{1}, \cdots, g_{r}, g, {}_tg^{-1}_{r}, \cdots, {}_tg^{-1}_{1}\},
\end{align}
where ${}_tg_{i} = J_{n_{i}}{}^tg_{i}J^{-1}_{n_{i}}$ for $1 \leqslant i \leqslant r$. Note $n_{-}$ can be $0$, in which case we simply write $Sp(0) = 1$. For $\r = \r_{1} \otimes \cdots \otimes \r_{r} \otimes \sigma$, where $\r_{i}$ is a finite-length admissible representation of $GL(n_{i})$ for $1 \leqslant i \leqslant r$ and $\sigma$ is a finite-length admissible representation of $G_{-}$, we denote the normalized parabolic induction $\Ind_{P}^{G}(\r)$ by 
\begin{align}
\label{eq: induced module}
\r_{1} \times \cdots \times \r_{r} \rtimes \sigma.
\end{align}
These notations can be easily extended to special orthogonal groups. If $G = SO(N)$ split, we define it with respect to $J_{N}$. When $N$ is odd, the situation is exactly the same as the symplectic case. When $N = 2n$, there are two distinctions. First, the standard Levi subgroups given through the embedding \eqref{eq: embedding} do not exhaust all standard Levi subgroups of $SO(2n)$. To get all of them, we need to take the $\theta_{0}$-conjugate of $M$ given in \eqref{eq: embedding}, where 
\[
\theta_{0} = 
\begin{pmatrix}
1 &&&&& \\
& \ddots &&&& \\
&&& 1 && \\
&& 1 &&& \\
&&&& \ddots & \\
&&&&& 1
\end{pmatrix}.
\]
Note $M^{\theta_{0}} \neq M$ only when $n_{-} = 0$ and $n_{r} > 1$. In this paper, we will only take those Levi subgroups $M$ given in \eqref{eq: embedding}. Second, if the partition $n = n_{1} + \cdots + n_{r} + n_{-}$ satisfies $n_{r} =1$ and $n_{-} = 0$, then we can rewrite it as $n = n_{1} + \cdots + n_{r - 1} + n'_{-}$ with $n'_{-} = 1$, and the corresponding Levi subgroup is the same. This is because $GL(1) \cong SO(2)$. If $G = SO(2n, \eta)$, the standard Levi subgroups of $SO(2n, \eta)$ will be the outer form of those $\theta_{0}$-stable standard Levi subgroups of $SO(2n)$. In particular, they can be identified with $GL(n_{1}) \times \cdots \times GL(n_{r}) \times SO(n_{-}, \eta)$ and $n_{-} \neq 0$.

Next we want to define parabolic induction and Jacquet module for the category $\Rep(G^{\Sigma_{0}})$ of finite-length representations of $G^{\Sigma_{0}}$. Let $P = MN$ be a standard parabolic subgroup of $G$. If $M$ is $\theta_{0}$-stable, we write $M^{\Sigma_{0}} := M \rtimes \Sigma_{0}$. Otherwise, we let $M^{\Sigma_{0}} = M$.
Suppose $\sigma^{\Sigma_{0}} \in \Rep(M^{\Sigma_{0}})$, $\r^{\Sigma_{0}} \in \Rep(G^{\Sigma_{0}})$. 

\begin{enumerate}

\item If $M^{\theta_{0}} = M$, we define the normalized parabolic induction $\Ind^{G^{\Sigma_{0}}}_{P^{\Sigma_{0}}} \sigma^{\Sigma_{0}}$ to be the extension of the representation $\Ind^{G}_{P}(\sigma^{\Sigma_{0}}|_{M})$ by an induced action of $\Sigma_{0}$, and we define the normalized Jacquet module $\Jac_{P^{\Sigma_{0}}} \r^{\Sigma_{0}}$ to be the extension of the representation $\Jac_{P}(\r^{\Sigma_{0}}|_{G})$ by an induced action of $\Sigma_{0}$.

\item If $M^{\theta_{0}} \neq M$, we define the normalized parabolic induction $\Ind^{G^{\Sigma_{0}}}_{P^{\Sigma_{0}}} \sigma^{\Sigma_{0}}$ to be $\Ind^{G^{\Sigma_{0}}}_{G} \Ind^{G}_{P}(\sigma^{\Sigma_{0}}|_{M})$, and we define the normalized Jacquet module $\Jac_{P^{\Sigma_{0}}} \r^{\Sigma_{0}}$ to be $\Jac_{P}(\r^{\Sigma_{0}}|_{G})$.

\end{enumerate}
Let $\rho$ be an irreducible unitary supercuspidal representation of $GL(d_{\rho})$, and $M = GL(d_{\rho}) \times G_{-}$ be the Levi component of a standard maximal parabolic subgroup $P$ of $G$. For $\r^{\Sigma_{0}} \in \Rep(G^{\Sigma_{0}})$, we can decompose the semisimplification of the Jacquet module
\[
s.s. \Jac_{P^{\Sigma_{0}}}(\r^{\Sigma_{0}}) = \bigoplus_{i} \tau_{i} \otimes \sigma_{i},
\] 
where $\tau_{i} \in \Rep(GL(d_{\rho}))$ and $\sigma_{i} \in \Rep(G_{-}^{\Sigma_{0}})$, both of which are irreducible. We define $\Jac_{x}\r^{\Sigma_{0}}$ for any real number $x$ to be 
\begin{align}
\label{eq: modified Jacquet}
\Jac_{x}(\r) = \bigoplus_{\tau_{i} = \rho||^{x}} \sigma_{i}.
\end{align}
If we have an ordered sequence of real numbers $\{x_{1}, \cdots, x_{s}\}$, we can define
\[
\Jac_{x_{1}, \cdots, x_{s}}\r^{\Sigma_{0}} = \Jac_{x_{s}} \circ \cdots \circ \Jac_{x_{1}} \r^{\Sigma_{0}}.
\]
For a generalized segment $X$ (cf. \eqref{eq: generalized segment}), we define $\Jac_{X}: = \circ_{x \in X} \Jac_{x}$, where $x$ ranges over $X$ from top to bottom and left to right. Similarly, we can define $\Jac^{op}_{x}$ analogous to $\Jac_{x}$, but with respect to $\rho^{\vee}$ and the standard Levi subgroup $GL(n_{-}) \times GL(d_{\rho^{\vee}})$.

For $\q \in \cQ{G}$, let $\q_{\gg}$ be a dominating parameter of $\q$ with respect to certain admissible order $>_{\q}$. For $(\rho, A, B, \zeta) \in Jord(\q)$, we define
\begin{align}
\label{eq: shifting functor}
\Jac_{(\rho, A_{\gg}, B_{\gg}, \zeta) \mapsto (\rho, A, B, \zeta)} : = \Jac_{X^{\gg}_{(\rho, A, B, \zeta)}}
\end{align}
where 
\begin{align*}
X^{\gg}_{(\rho, A, B, \zeta)} =
  \begin{bmatrix}
   \zeta B_{\gg} & \cdots & \zeta (B + 1) \\
   \vdots &  & \vdots \\
   \zeta A_{\gg} & \cdots & \zeta (A+1)
  \end{bmatrix}.
\end{align*}

The following lemmas are very useful when we want to permute the Jacquet functors defined in \eqref{eq: modified Jacquet}.

\begin{lemma}[\cite{Xu:cusp}, Lemma 5.6]
\label{lemma: permuting Jacquet}
If $\r^{\Sigma_{0}} \in \Rep(G^{\Sigma_{0}})$ and $|x - y| \neq 1$, then 
\[
\Jac_{x, y} \r^{\Sigma_{0}} = \Jac_{y, x} \r^{\Sigma_{0}}.
\]
\end{lemma}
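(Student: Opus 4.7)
The plan is to reduce the lemma to the well-known fact that when $|x - y| \neq 1$, the cuspidal representations $\rho\|\cdot\|^x$ and $\rho\|\cdot\|^y$ of $GL(d_\rho)$ are \emph{unlinked}, so $\rho\|\cdot\|^x \times \rho\|\cdot\|^y$ is irreducible on $GL(2d_\rho)$ and isomorphic to $\rho\|\cdot\|^y \times \rho\|\cdot\|^x$.

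The first step is to rewrite both $\Jac_{x,y}\r^{\Sigma_0}$ and $\Jac_{y,x}\r^{\Sigma_0}$ as isotypic components of a single Jacquet module. Let $P$ be the standard parabolic of $G$ with Levi $M = GL(d_\rho) \times GL(d_\rho) \times G_{--}$, which lies inside the maximal standard parabolic $P_1$ with Levi $M_1 = GL(d_\rho) \times G_{-}$. Transitivity of Jacquet modules gives $\Jac_{P^{\Sigma_0}} = \Jac_{(M_1 \cap P)^{\Sigma_0}} \circ \Jac_{P_1^{\Sigma_0}}$, and since an irreducible cuspidal representation of $GL(d_\rho)$ has no further Jacquet module, unwinding \eqref{eq: modified Jacquet} yields, for every irreducible $\sigma^{\Sigma_0}$ of $G_{--}^{\Sigma_0}$,
\[
[\Jac_{x,y}\r^{\Sigma_0} : \sigma^{\Sigma_0}] = \big[s.s.\Jac_{P^{\Sigma_0}}\r^{\Sigma_0} : \rho\|\cdot\|^x \otimes \rho\|\cdot\|^y \otimes \sigma^{\Sigma_0}\big],
\]
and symmetrically for $\Jac_{y,x}\r^{\Sigma_0}$ with the two $GL(d_\rho)$-factors swapped.

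The second step is to show these two multiplicities coincide. I would apply second adjunction (Bernstein) to obtain
\[
\dim \Hom_{M^{\Sigma_0}}(\Jac_{P^{\Sigma_0}}\r^{\Sigma_0},\, \rho\|\cdot\|^x \otimes \rho\|\cdot\|^y \otimes \sigma^{\Sigma_0}) = \dim \Hom_{G^{\Sigma_0}}(\r^{\Sigma_0},\, \rho\|\cdot\|^x \times \rho\|\cdot\|^y \rtimes \sigma^{\Sigma_0}),
\]
and then invoke the unlinking isomorphism $\rho\|\cdot\|^x \times \rho\|\cdot\|^y \cong \rho\|\cdot\|^y \times \rho\|\cdot\|^x$ to conclude that the right-hand side is symmetric under swapping $x$ and $y$. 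Equivalently, one can proceed via the Bernstein--Zelevinsky geometric lemma applied to any parabolic induction of which $\r$ is a subquotient: the Weyl element of $N_G(M)/M$ exchanging the two copies of $GL(d_\rho)$ then interchanges the two prescribed irreducible subquotients of the semisimplified Jacquet module with equal multiplicity, precisely because $\rho\|\cdot\|^x$ and $\rho\|\cdot\|^y$ are unlinked.

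The $\Sigma_0$-equivariance is automatic, since $\theta_0$ preserves the Levi factorization of $M$ and commutes with the Weyl swap of the two $GL(d_\rho)$-factors. The main technical obstacle is the passage from Hom-dimensions (produced by second adjunction) to composition-series multiplicities in the semisimplification; this requires restricting to a single Bernstein block and using unlinking of $\rho\|\cdot\|^x$ and $\rho\|\cdot\|^y$ to ensure that the two candidate irreducibles appear symmetrically as both subrepresentations and quotients of the relevant induced representations.
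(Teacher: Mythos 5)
The fact you put at the center --- that for $|x-y|\neq 1$ the cuspidal data $\rho||^{x},\rho||^{y}$ are unlinked, so $\rho||^{x}\times\rho||^{y}$ is irreducible and isomorphic to $\rho||^{y}\times\rho||^{x}$ --- is indeed the engine of the lemma (the paper itself gives no proof here, deferring to \cite{Xu:preprint3}, Lemma 5.6). Your first step is also fine: by exactness and transitivity of Jacquet functors, $[\Jac_{x,y}\,\pi^{\Sigma_{0}}:\sigma^{\Sigma_{0}}]$ equals the multiplicity of $\rho||^{x}\otimes\rho||^{y}\otimes\sigma^{\Sigma_{0}}$ in the semisimplified Jacquet module along the Levi $GL(d_{\rho})\times GL(d_{\rho})\times G_{--}$. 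The gap is in the second step. The adjunction you invoke (which is Frobenius reciprocity for $(\Jac_{P},\Ind_{P})$, not Bernstein's second adjunction) computes $\dim\Hom_{M^{\Sigma_{0}}}\big(\Jac_{P^{\Sigma_{0}}}\pi^{\Sigma_{0}},\,\rho||^{x}\otimes\rho||^{y}\otimes\sigma^{\Sigma_{0}}\big)$, i.e.\ the multiplicity of this irreducible as a \emph{quotient} of the unsemisimplified Jacquet module; that can be strictly smaller than its multiplicity in the semisimplification, which is what $\Jac_{x,y}$ is defined through, so symmetry of the Hom-dimensions does not give symmetry of the multiplicities you need. The ``equivalently'' variant has the same problem in a different guise: the geometric lemma applied to an induced representation merely computes the Jacquet module of that induced representation, and the contributions of the other subquotients cannot be separated from those of $\pi^{\Sigma_{0}}$. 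You flag this passage as ``the main technical obstacle'', but the proposed remedy (restricting to a Bernstein block and asking the two constituents to occur symmetrically as subs and quotients) is not an argument; it is exactly the point that needs proof.

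The standard way to close the gap --- and the argument behind the cited lemma --- stays entirely at the level of semisimplifications and factors the two-step Jacquet functor through the intermediate Levi $GL(2d_{\rho})\times G_{--}$. Write $s.s.\,\Jac_{Q^{\Sigma_{0}}}\pi^{\Sigma_{0}}=\bigoplus_{j}\tau_{j}\otimes\sigma_{j}^{\Sigma_{0}}$ for this maximal parabolic. Any $\tau_{j}$ whose further Jacquet module to $GL(d_{\rho})\times GL(d_{\rho})$ contains $\rho||^{x}\otimes\rho||^{y}$ or $\rho||^{y}\otimes\rho||^{x}$ has cuspidal support $\{\rho||^{x},\rho||^{y}\}$, and when $|x-y|\neq 1$ (the case $x=y$ being trivial) the Zelevinsky classification leaves only one such irreducible of $GL(2d_{\rho})$, namely $\rho||^{x}\times\rho||^{y}\cong\rho||^{y}\times\rho||^{x}$, whose semisimplified Jacquet module is $\rho||^{x}\otimes\rho||^{y}\,\oplus\,\rho||^{y}\otimes\rho||^{x}$. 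Hence each $\tau_{j}\otimes\sigma_{j}^{\Sigma_{0}}$ contributes equally to $\Jac_{x,y}$ and to $\Jac_{y,x}$, and by transitivity and exactness the two semisimple representations coincide; no Hom-space bookkeeping or block decomposition is required. Your $\Sigma_{0}$-equivariance remark carries over verbatim to this argument.
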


\begin{lemma}
\label{lemma: permuting Jacquet 1}
Let $\rho, \rho'$ be two distinct unitary irreducible supercuspidal representations of general linear groups, and $x, y$ be any two real numbers. For $\r^{\Sigma_{0}} \in \Rep(G^{\Sigma_{0}})$, 
\[
\Jac'_{y} \circ \Jac_{x} \r^{\Sigma_{0}} = \Jac_{x} \circ \Jac'_{y} \r^{\Sigma_{0}},
\]
where $\Jac_{x}$ (resp. $\Jac'_{y}$) is defined with respect to $\rho$ (resp. $\rho'$).
\end{lemma}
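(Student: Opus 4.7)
The plan is to reduce the asserted commutativity to an isomorphism between two parabolically induced representations, whereupon the hypothesis $\rho \ne \rho'$ enters via the Bernstein--Zelevinsky non-linkage of the relevant one-point segments.

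First, by transitivity of the Jacquet functor applied to the chain $P_{2}^{\Sigma_{0}} \subset P_{1}^{\Sigma_{0}}$, where $P_{1}$ has Levi $M_{1} = GL(d_{\rho}) \times G_{-}$ and $P_{2}$ has Levi $M_{2} = GL(d_{\rho}) \times GL(d_{\rho'}) \times G_{--}$ with the $GL(d_{\rho})$ block in the outermost position, one unwinds the definition \eqref{eq: modified Jacquet} (applied iteratively to $\Jac_{x}$ and then $\Jac'_{y}$) to obtain, for each irreducible $\sigma^{\Sigma_{0}} \in \Rep(G_{--}^{\Sigma_{0}})$,
\[
[\Jac'_{y} \Jac_{x} \r^{\Sigma_{0}} : \sigma^{\Sigma_{0}}] \; = \; [\Jac_{P_{2}^{\Sigma_{0}}}(\r^{\Sigma_{0}}) : \rho||^{x} \otimes \rho'||^{y} \otimes \sigma^{\Sigma_{0}}].
\]
Letting $P'_{2}$ denote the standard parabolic with the same Levi $M_{2}$ but with $GL(d_{\rho'})$ in the outermost position (so that $P'_{2}{}^{\Sigma_{0}} \subset P'_{1}{}^{\Sigma_{0}}$, where $P'_{1}$ has Levi $GL(d_{\rho'}) \times G_{-}$), the symmetric argument yields
\[
[\Jac_{x} \Jac'_{y} \r^{\Sigma_{0}} : \sigma^{\Sigma_{0}}] \; = \; [\Jac_{P'_{2}{}^{\Sigma_{0}}}(\r^{\Sigma_{0}}) : \rho'||^{y} \otimes \rho||^{x} \otimes \sigma^{\Sigma_{0}}].
\]

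Next, by Frobenius reciprocity at the level of the Grothendieck group, each of the right-hand sides above equals the multiplicity of $\r^{\Sigma_{0}}$ as a subquotient of the corresponding parabolically induced representation, namely
\[
[\r^{\Sigma_{0}} : \rho||^{x} \times \rho'||^{y} \rtimes \sigma^{\Sigma_{0}}] \quad \text{and} \quad [\r^{\Sigma_{0}} : \rho'||^{y} \times \rho||^{x} \rtimes \sigma^{\Sigma_{0}}]
\]
respectively. Since $\rho \ne \rho'$, the one-point segments $\{\rho||^{x}\}$ and $\{\rho'||^{y}\}$ have disjoint inertial support, and so are not linked in the sense of Bernstein--Zelevinsky. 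The standard intertwining operator therefore gives a $GL(d_{\rho} + d_{\rho'})$-isomorphism $\rho||^{x} \times \rho'||^{y} \cong \rho'||^{y} \times \rho||^{x}$; inducing further to $G^{\Sigma_{0}}$, we obtain an isomorphism $\rho||^{x} \times \rho'||^{y} \rtimes \sigma^{\Sigma_{0}} \cong \rho'||^{y} \times \rho||^{x} \rtimes \sigma^{\Sigma_{0}}$ of $G^{\Sigma_{0}}$-modules. The two multiplicities of $\r^{\Sigma_{0}}$ therefore coincide for every irreducible $\sigma^{\Sigma_{0}}$, and since both $\Jac'_{y} \Jac_{x} \r^{\Sigma_{0}}$ and $\Jac_{x} \Jac'_{y} \r^{\Sigma_{0}}$ are semisimple by construction, the claimed equality follows.

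The main obstacle is to verify the Grothendieck-level Frobenius identity $[\Jac_{P^{\Sigma_{0}}}(\r^{\Sigma_{0}}) : \pi^{\Sigma_{0}}] = [\r^{\Sigma_{0}} : \Ind_{P^{\Sigma_{0}}}^{G^{\Sigma_{0}}}(\pi^{\Sigma_{0}})]$ in the $\Sigma_{0}$-equivariant category. This reduces, through the compatibility of parabolic induction and the Jacquet functor with the $\Sigma_{0}$-action described in Section~\ref{sec: notation}, to the standard non-equivariant statement for $\Rep(G)$, which is a classical consequence of character theory for admissible representations.
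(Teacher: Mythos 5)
Your overall reduction has the right shape: identifying $[\Jac'_{y}\Jac_{x}\r^{\Sigma_{0}}:\sigma^{\Sigma_{0}}]$ with the multiplicity of $\rho||^{x}\otimes\rho'||^{y}\otimes\sigma^{\Sigma_{0}}$ in $\Jac_{P_{2}^{\Sigma_{0}}}(\r^{\Sigma_{0}})$ (and symmetrically for the other ordering) is fine, and the hypothesis $\rho\ncong\rho'$ does indeed enter through the non-linkage/irreducibility of $\rho||^{x}\times\rho'||^{y}$. The proof breaks, however, at the bridge you use between the two orderings: the ``Grothendieck-level Frobenius identity'' $[\Jac_{P^{\Sigma_{0}}}(\r^{\Sigma_{0}}):\pi^{\Sigma_{0}}]=[\r^{\Sigma_{0}}:\Ind_{P^{\Sigma_{0}}}^{G^{\Sigma_{0}}}(\pi^{\Sigma_{0}})]$ is false. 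Frobenius reciprocity is an adjunction of $\Hom$ spaces; it does not compute multiplicities of subquotients. Already for $GL(2)$ with $P=B$ and $\r=\mathbf{1}$ the normalized Jacquet module is the single character $\delta_{B}^{-1/2}$ of the torus, so the left-hand side vanishes for $\pi=\delta_{B}^{1/2}$, while $\mathbf{1}$ is a subquotient of $\Ind_{B}^{GL(2)}\delta_{B}^{1/2}$, so the right-hand side is $1$. Since your argument invokes this identity for both parabolics and then cancels the two induced representations against each other via the intertwining isomorphism, the conclusion does not follow as written; this is a genuine gap, not a presentational one (and it is not the ``classical consequence of character theory'' your last paragraph defers to).

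The correct way to finish from your first display — and this is essentially what the paper does by quoting the argument of Lemma~\ref{lemma: permuting Jacquet} (\cite{Xu:preprint3}, Lemma 5.6) — is to stay on the Levi side rather than pass to induced representations of $G^{\Sigma_{0}}$. Factor both compositions through the parabolic whose Levi is $GL(d_{\rho}+d_{\rho'})\times G_{--}$, so that each multiplicity is computed from the Jacquet module $\Jac$ of $\r^{\Sigma_{0}}$ along that parabolic followed by a further Jacquet functor inside the $GL(d_{\rho}+d_{\rho'})$-factor. Because $\rho\ncong\rho'$, the only irreducible representation $\tau$ of $GL(d_{\rho}+d_{\rho'})$ whose Jacquet module contains $\rho||^{x}\otimes\rho'||^{y}$ (equivalently $\rho'||^{y}\otimes\rho||^{x}$ for the opposite block ordering) is $\tau\cong\rho||^{x}\times\rho'||^{y}\cong\rho'||^{y}\times\rho||^{x}$, and it contains each of these with multiplicity one; hence both of your multiplicities equal the multiplicity of $\tau\otimes\sigma^{\Sigma_{0}}$ in that intermediate Jacquet module, which gives the lemma. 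So the irreducibility of $\rho||^{x}\times\rho'||^{y}$ must be exploited inside the $GL$-factor of the Levi, not through a multiplicity form of Frobenius reciprocity; the $\Sigma_{0}$-equivariance issues you mention are comparatively harmless once this is fixed.
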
 

\begin{proof}
The proof is the same as Lemma~\ref{lemma: permuting Jacquet}.
\end{proof}

There are some explicit formulas for computing the Jacquet modules in the case of classical groups and general linear groups (cf. \cite{Xu:cusp}, Section 5). Since we will use them quite often, let us recall them here. We will fix a unitary irreducible supercuspidal representation $\rho$ of $GL(d_{\rho})$, and take $``\overset{s.s.}{=}"$ for equality after semisimplification.

For any decreasing segment $\{a, \cdots, b\}$ and $\zeta = \pm1$,
\[
\Jac_{x} \langle \rho'; \zeta a, \cdots, \zeta b \rangle = \begin{cases}
                                       \langle \rho'; \zeta (a-1), \cdots, \zeta b \rangle, & \text{ if $x = \zeta a$ and $\rho' \cong \rho$,}\\
                                       0,  & \text{ otherwise; }
                                       \end{cases}
\]
and
\[
\Jac^{op}_{x} \langle \rho'; \zeta a, \cdots, \zeta b \rangle = \begin{cases}
                                       \langle \rho'; \zeta a, \cdots, \zeta (b+1) \rangle, & \text{ if $x = \zeta b$ and $\rho' \cong \rho^{\vee}$,}\\
                                       0,  & \text{ otherwise. }
                                       \end{cases}
\]

Suppose $\r_{i} \in \Rep(GL(n_{i}))$ for $i = 1$ or $2$, we have
\[
\Jac_{x} (\r_{1} \times \r_{2}) \overset{s.s.}{=} (\Jac_{x} \r_{1}) \times \r_{2} \+ \r_{1} \times (\Jac_{x} \r_{2}),
\] 
and 
\[
\Jac^{op}_{x} (\r_{1} \times \r_{2}) \overset{s.s.}{=} (\Jac^{op}_{x} \r_{1}) \times \r_{2} \+ \r_{1} \times (\Jac^{op}_{x} \r_{2}).
\]

Suppose $\r^{\Sigma_{0}} \in \Rep(G)$ and $\tau \in \Rep(GL(d))$, we have

\begin{align*}
\Jac_{x} (\tau \rtimes \r^{\Sigma_{0}}) \overset{s.s.}{=} (\Jac_{x} \tau) \rtimes \r^{\Sigma_{0}} \+ (\Jac^{op}_{-x} \tau) \rtimes \r^{\Sigma_{0}} \+ \tau \rtimes \Jac_{x} \r^{\Sigma_{0}}.
\end{align*}

Finally, we want to recall the following vanishing result for Jacquet modules of elements in the Arthur packets. This will become very useful when we want to simplify the results of Jacquet modules after applying the above formulas.

\begin{proposition}[\cite{Xu:Apacket}, Proposition 8.3]

Suppose $\q \in \cQ{G}$ and $\r^{\Sigma_{0}} \in \Pkt{\q}^{\Sigma_{0}}$. Let $\rho$ be a unitary irreducible supercuspidal representation of $GL(d_{\rho})$.

\begin{enumerate}

\item For $\zeta \in \{\pm 1\}$ and segment $[x, y]$ with $0 \leqslant x \leqslant y$, 
\[
\Jac_{\zeta x, \cdots, \zeta y} \, \r^{\Sigma_{0}} = 0,
\]
unless there exists a sequence of Jordan blocks $\{(\rho, A_{i}, B_{i}, \zeta)\}_{i=1}^{n} \subseteq Jord_{\rho}(\q)$ such that $B_{1} = x, A_{n} > y$, and $B_{i} \leqslant B_{i+1} \leqslant A_{i} + 1$.

\item For $x \in \mathbb{R}$, let $m = \sharp \{ (\rho, A, B, \zeta) \in Jord(\q): \zeta B = x \}$. If $n > m$, then 
\[
\Jac_{\underbrace{x, \cdots, x}_{n}} \, \r^{\Sigma_{0}} = 0.
\]

\end{enumerate}

\end{proposition}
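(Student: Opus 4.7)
The plan is to reduce to the case where $\q$ has discrete diagonal restriction and then exploit the explicit description of $\r^{\Sigma_{0}}$ as a subrepresentation of an induced module built from generalized segments, together with the Jacquet module formulas recalled above.

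First I would pass to a dominating parameter. Fix an admissible order $>_{\q}$ and a dominating $\q_{\gg}$ with discrete diagonal restriction, so that by \eqref{eq: M construction general} every $\r^{\Sigma_{0}} \in \Pkt{\q}^{\Sigma_{0}}$ is obtained by applying the shifting Jacquet functors $\Jac_{(\rho, A_{\gg, i}, B_{\gg, i}, \zeta_{i}) \mapsto (\rho, A_{i}, B_{i}, \zeta_{i})}$ to some $\lif{\r}^{\Sigma_{0}} \in \Pkt{\q_{\gg}}^{\Sigma_{0}}$. Using Lemmas~\ref{lemma: permuting Jacquet} and \ref{lemma: permuting Jacquet 1}, the extracting sequence $\Jac_{\zeta x, \ldots, \zeta y}$ can be commuted past the bulk of these shifting functors, and the boundary interactions (where the indices become adjacent) convert a chain of Jordan blocks for $\q_{\gg}$ into one for $\q$ satisfying the required conditions. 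The contribution from $\q_{np}$, expressed via factors $Sp(St(\rho, a), b)$, is handled by direct computation on its defining generalized segment: it produces no new chains starting with a corner $\zeta B = x$ that are not already accounted for by $\q_{p}$. Hence it suffices to treat $\q = \q_{p}$ with discrete diagonal restriction.

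In this case $\r^{\Sigma_{0}}_{M, >_{\q}}(\q, \ul, \ueta)$ is the unique irreducible subrepresentation of
\[
\times_{(\rho, A, B, \zeta) \in Jord(\q)} X_{(\rho, A, B, \zeta)} \rtimes \sigma^{\Sigma_{0}},
\]
where each $X_{(\rho, A, B, \zeta)}$ is a generalized segment with $\zeta B$ at its top-left corner and $\sigma^{\Sigma_{0}}$ lies in an elementary Arthur packet. Since Jacquet functors send subrepresentations into Jacquet modules of the ambient induced module, I would expand $\Jac_{\zeta x}$ iteratively via the product formulas of Section~\ref{sec: notation}. Each application strips a $\zeta x$ either from the top-left corner of some $X_{(\rho, A_{i}, B_{i}, \zeta)}$, forcing $B_{i} = x$, or from $\sigma^{\Sigma_{0}}$, forcing $(\rho, x, x, \zeta)$ to appear in $Jord(\q_{d})$ (hence arising from an $A_{i} = B_{i} = x$ block of $\q$). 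To peel off the whole arithmetic progression $\zeta x, \zeta(x+1), \ldots, \zeta y$ in order, the currently exposed corner must advance by one at each step: starting from $B_{1} = x$, after removing $\zeta k$ the next corner is $\zeta(k+1)$, which either continues within the same block or transfers to a new block $(\rho, A_{i+1}, B_{i+1}, \zeta)$ with $B_{i} \leq B_{i+1} \leq A_{i} + 1$. The progression only reaches $\zeta y$ if $A_{n} > y$, which gives part (1).

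Part (2) follows by the same bookkeeping specialized to constant sequences. Each extraction of $\zeta x$ must come from a block whose current top-left corner equals $\zeta x$; after one such extraction the corner advances to $\zeta(B+1)$ and that block cannot produce another $\zeta x$. Only blocks with $\zeta B = x$ in $Jord(\q)$ contribute, and there are exactly $m$ of them, so at most $m$ successive $\Jac_{x}$'s can survive. The main obstacle will be the combinatorial bookkeeping of how contributions from $\sigma^{\Sigma_{0}}$ and from distinct generalized segments sharing a corner combine without cancellation; this is controlled by M{\oe}glin's multiplicity-free theorem together with a careful separation of Jacquet layers across distinct Jordan blocks via central characters of the Levi components.
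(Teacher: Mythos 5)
First, note that the paper itself contains no proof of this statement: it is imported verbatim from (\cite{Xu:preprint4}, Proposition 8.3) and used as a black box, so your proposal can only be measured on its own terms. On those terms it has a genuine gap at its core step. In the discrete-diagonal-restriction case you expand $\Jac_{\zeta x}$ of the ambient induced module via the formula $\Jac_{x} (\tau \rtimes \r^{\Sigma_{0}}) \overset{s.s.}{=} (\Jac_{x} \tau) \rtimes \r^{\Sigma_{0}} \+ (\Jac^{op}_{-x} \tau) \rtimes \r^{\Sigma_{0}} \+ \tau \rtimes \Jac_{x} \r^{\Sigma_{0}}$, but your bookkeeping only records the first and third terms ("top-left corner of some $X$, forcing $B_{i}=x$, or from $\sigma^{\Sigma_{0}}$"). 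The $\Jac^{op}$ term is nonzero exactly when $-\zeta x$ is the lower-right entry of one of the generalized segments, i.e. when $x = A - \ul(\rho,A,B,\zeta) + 1$ for some block, and such $x$ need not be any $B_{i}$. Concretely, for a single block $(\rho, A, B, +1)$ with $A > B$ and $l = 1$, the packet element embeds in $<B, \cdots, -A> \rtimes \sigma$, and the semisimplified Jacquet module of this induced representation at $x = A$ is nonzero because $\Jac^{op}_{-A}<B, \cdots, -A> = <B, \cdots, -A+1>$, whereas the proposition asserts $\Jac_{A}$ of the packet element itself vanishes. So exactness of the Jacquet functor only gives you a necessary condition strictly weaker than the one stated; the entire content of the proposition is that these $\Jac^{op}$-type (and other off-chain) constituents of the Jacquet module of the big induced representation do not occur in the distinguished irreducible subrepresentation, and neither M{\oe}glin's multiplicity-freeness of the packet nor an appeal to "central characters of Levi components" yields that — multiplicity one in the packet says nothing about which subquotients of the induced module's Jacquet module lie in the chosen subrepresentation.

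The reduction step is also not secured. Lemma~\ref{lemma: permuting Jacquet} only commutes $\Jac_{x}$ and $\Jac_{y}$ when $|x-y| \neq 1$, while the shifting functors $\Jac_{(\rho, A_{\gg}, B_{\gg}, \zeta) \mapsto (\rho, A, B, \zeta)}$ consist of blocks of consecutive coordinates ending at $\zeta(B+1), \cdots, \zeta(A+1)$; the arithmetic progression $\zeta x, \cdots, \zeta y$ you want to extract will meet these at distance one, so it cannot in general be pushed past them, and the "boundary interactions" you defer are precisely where the chain condition $B_{i} \leqslant B_{i+1} \leqslant A_{i}+1$ (with the correct $A_{n}$ bound, relative to $\q$ rather than $\q_{\gg}$) would have to be produced — nothing in the proposal produces it. Similarly, the claim that the $Sp(St(\rho,a),b)$ factors from $\q_{np}$ "produce no new chains" is both unsubstantiated and misdirected: those factors do contribute corners, which is exactly why the statement is phrased in terms of $Jord(\q)$ and not $Jord(\q_{p})$. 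As it stands the proposal reproves only a weak cuspidal-support-type constraint, not the proposition.
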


\section{Some irreducibility results}
\label{sec: irreducibility}

In this section, we want to recall some irreducibility results. We will start with general linear groups. For any two segments $[x, y]$ and $[x', y']$ such that $(x - y)(x' - y') \geqslant 0$, we say they are {\bf linked} if as sets $[x, y] \nsubseteq [x', y']$, $[x', y'] \nsubseteq [x, y]$, and $[x, y]  \cup [x', y']$ can form a segment after imposing the same order. The following theorem is fundamental in determining the reducibility of an induced representation of $GL(n)$.

\begin{theorem}[Zelevinsky \cite{Zelevinsky:1980}]
\label{thm: irreducibility for induced representation of GL(n)}
For unitary irreducible supercuspidal representations $\rho, \rho'$ of general linear groups, and segments $[x, y]$, $[x', y']$ such that $(x - y)(x' - y') \geqslant 0$, 
\[
\langle \rho; x, \cdots, y \rangle \times \langle \rho'; x', \cdots, y' \rangle
\]
is reducible if and only if $\rho \cong \rho'$ and $[x, y], [x', y']$ are linked. In case it is reducible, it consists of the unique irreducible subrepresentations of
\[
\langle \rho; x, \cdots, y \rangle \times \langle \rho; x', \cdots, y' \rangle \text{ and } \langle \rho; x', \cdots, y' \rangle \times \langle \rho; x, \cdots, y \rangle.
\]
\end{theorem}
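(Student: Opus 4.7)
The plan is to prove Zelevinsky's theorem via a Jacquet-module analysis using the formulas recalled in Section~\ref{sec: notation}, leveraging the Leibniz-type identity $\Jac_{x}(\tau_{1}\times\tau_{2}) \overset{s.s.}{=} (\Jac_{x}\tau_{1})\times\tau_{2} \+ \tau_{1}\times(\Jac_{x}\tau_{2})$ together with the explicit formula for $\Jac_{x}$ on a single segment. First, when $\rho\not\cong\rho'$ the identity $\Jac_{x}<\rho';\zeta a,\ldots,\zeta b> = 0$ collapses the Leibniz formula to a single term at every endpoint, which forces the Jacquet modules of every irreducible subquotient to agree with those of the full induction; hence the induction is irreducible. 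Thus I reduce to $\rho\cong\rho'$, and up to transposing the generalized-segment picture I may assume both segments are decreasing.

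For the not-linked direction, $[x,y]$ and $[x',y']$ are either nested or ``disjoint with a gap''. I would proceed by induction on $|x-y|+|x'-y'|$: applying the Leibniz formula together with the single-segment Jacquet formula, one verifies that there is a \emph{unique} sequence of Jacquet functors extracting the full cuspidal support from the induction, which forces the induced representation to have a unique irreducible subrepresentation coinciding with its unique irreducible quotient, whence irreducibility. For the linked direction, say $x>x'$, $y>y'$, and $y\leqslant x'+1$, so that $[x,y]\cup[x',y']=[x,y']$ and $[x,y]\cap[x',y']=[x',y]$ (possibly empty). I would exhibit, for each of the two orderings, an explicit irreducible subrepresentation attached to an appropriate generalized segment whose rows are $[x,\ldots,y']$ and $[x',\ldots,y]$; a careful Jacquet-module computation, comparing the actions of $\Jac_{x}$ and $\Jac_{y}$ on the two candidate subrepresentations, shows that they have distinct Jacquet modules and are therefore non-isomorphic, so the induction is reducible with at least these two composition factors.

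The main obstacle is the final bookkeeping step in the linked case: showing that these two non-isomorphic irreducible subrepresentations are the \emph{only} composition factors, each of multiplicity one, rather than the tips of a longer composition series. A clean way to handle this is Zelevinsky's theory of derivatives, where the highest derivative of a segment representation is again a shorter segment representation, and the derivative satisfies a Leibniz rule with respect to induction; a double induction on the lengths of the two segments then reduces the multiplicity count to trivial base cases of very short segments. Alternatively, one can analyze the standard intertwining operator $<\rho;x,\ldots,y>\times<\rho;x',\ldots,y'> \to <\rho;x',\ldots,y'>\times<\rho;x,\ldots,y>$ at the reducibility point and prove that its image and kernel are each irreducible, which directly yields the semi-simplification claim.
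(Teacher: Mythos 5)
First, note that the paper does not prove this statement at all: Theorem~\ref{thm: irreducibility for induced representation of GL(n)} is imported verbatim from Zelevinsky \cite{Zelevinsky:1980} and used as a black box (the only nearby original content is Corollary~\ref{cor: permuting general linear}, which merely exploits irreducibility). So there is no in-paper argument to compare with, and your proposal has to be judged on its own terms. Its overall shape — Jacquet-module (geometric lemma) bookkeeping for the non-linked direction, explicit constituents plus a multiplicity count for the linked direction — is the classical Bernstein--Zelevinsky route, so the strategy is reasonable; but as written it has two genuine gaps.

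The first is logical: you repeatedly infer irreducibility from ``unique irreducible subrepresentation coinciding with the unique irreducible quotient.'' That implication is false without a multiplicity statement — a module with socle $\cong$ cosocle $\cong A$ and an extra middle factor satisfies your hypothesis. What is actually needed is the pattern of Lemma~\ref{lemma: strategy for irreducibility}: the socle must occur with multiplicity one in the semisimplification (typically extracted from a multiplicity-one Jacquet-module computation), and then the sub-equals-quotient argument forces irreducibility. The same soft spot appears in your $\rho\not\cong\rho'$ step, where ``the Jacquet modules of every irreducible subquotient agree with those of the full induction'' is not by itself an irreducibility proof. The second gap is that the heart of the theorem — that in the linked case the induced representation has length exactly two, with the two displayed subrepresentations as its full list of constituents — is exactly the point you defer to ``Zelevinsky's theory of derivatives'' or an unexamined intertwining-operator analysis. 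That deferral is a citation of Zelevinsky's own proof rather than an argument, so the proposal does not actually close the statement it set out to prove. Relatedly, your description of the two constituents is off: only one of them is attached to the union/intersection pair $\{[x,y'],[x',y]\}$ (and even that pair need not be a generalized segment in the rectangular sense of \eqref{eq: generalized segment}); the other constituent is the irreducible representation attached to the original multisegment $\{[x,y],[x',y']\}$, so the planned ``distinct Jacquet modules'' comparison must be set up against that representation, not against a second ladder-shaped one.
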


To extend this theorem to generalized segments, we have to extend the notion of ``link" first. For any two generalized segments $\{x_{ij}\}_{m \times n}$ and $\{y_{ij}\}_{m' \times n'}$ with the same monotone properties for the rows and columns, we say they are {\bf linked} if $[x_{m1}, x_{1n}]$, $[y_{m'1}, y_{1n'}]$ are linked, and the four sides of the rectangle formed by $\{x_{ij}\}_{m \times n}$ do not have inclusive relations with the corresponding four sides of the rectangle formed by $\{y_{ij}\}_{m' \times n'}$ (e.g., $[x_{11}, x_{1n}] \nsubseteq [y_{11}, y_{1n'}] \text{ and } [x_{11}, x_{1n}] \nsupseteq [y_{11}, y_{1n'}]$, etc). It is easy to check that if $\{x_{ij}\}_{m \times n}$ and $\{y_{ij}\}_{m' \times n'}$ are linked, then $\{x_{ij}\}^{T}_{m \times n}$ and $\{y_{ij}\}^{T}_{m' \times n'}$ are also linked. So for generalized segments $\{x_{ij}\}_{m \times n}$ and $\{y_{ij}\}_{m' \times n'}$ with different monotone properties for the rows and columns, we say they are {\bf linked} if $\{x_{ij}\}^{T}_{m \times n}$ and $\{y_{ij}\}_{m' \times n'}$ are linked, or equivalently $\{x_{ij}\}_{m \times n}$ and $\{y_{ij}\}^{T}_{m' \times n'}$ are linked. One can check this notion of ``link" is equivalent to the one in \cite{MW:1989}. 

\begin{theorem}[M{\oe}glin-Waldspurger \cite{MW:1989}]
\label{thm: generalized irreducibility for induced representation of GL(n)}
For unitary irreducible supercuspidal representations $\rho, \rho'$ of general linear groups, and generalized segments $\{x_{ij}\}_{m \times n}$, $\{y_{ij}\}_{m' \times n'}$,
\[
\langle \rho; \{x_{ij}\}_{m \times n}\rangle \times \langle \rho'; \{y_{ij}\}_{m' \times n'} \rangle
\]
is irreducible unless $\rho \cong \rho'$ and $\{x_{ij}\}_{m \times n}, \{y_{ij}\}_{m' \times n'}$ are linked.
\end{theorem}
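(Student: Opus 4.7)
The plan is to prove the theorem by induction on the total size $mn + m'n'$ of the two generalized segments, with the base case $n = n' = 1$ (or $m = m' = 1$, by transposing) being exactly Theorem~\ref{thm: irreducibility for induced representation of GL(n)}. The first reduction is the case $\rho \not\cong \rho'$. Here no twist $\rho||^{x}$ is isomorphic to any twist $\rho'||^{y}$, so the Bernstein-Zelevinsky derivative / $\Jac_{z}$ computations for the product decompose cleanly into a contribution from each tensor factor. By Frobenius reciprocity, any nontrivial subrepresentation of the product must have the same cuspidal support, and a standard comparison of Jacquet modules on both sides forces irreducibility. So the nontrivial work is the case $\rho \cong \rho'$, which I assume henceforth.

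The key representation-theoretic input is the embedding
\[
<\rho; \{x_{ij}\}_{m \times n}> \hookrightarrow <\rho; x_{11}, \ldots, x_{1n}> \times \cdots \times <\rho; x_{m1}, \ldots, x_{mn}>
\]
as the unique irreducible subrepresentation, together with the transpose identity $<\rho;\{x_{ij}\}_{m \times n}> \cong <\rho;\{x_{ij}\}_{m \times n}^{T}>$ which yields an analogous column-wise embedding. Similar embeddings hold for $<\rho;\{y_{ij}\}_{m'\times n'}>$. The induced representation $<\rho;X> \times <\rho;Y>$ therefore embeds into a product of $m+m'$ segment representations (or $n+n'$, after transposition). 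If we can find orientations of $X$ and $Y$ for which every constituent segment of $X$ is Zelevinsky-unlinked from every constituent segment of $Y$, then Theorem~\ref{thm: irreducibility for induced representation of GL(n)} lets us commute the $X$-factors past the $Y$-factors pairwise, giving $<\rho;X> \times <\rho;Y> \cong <\rho;Y> \times <\rho;X>$. A standard socle/cosocle argument — both products have a common irreducible submodule and a common irreducible quotient, and equal semisimplified Jacquet modules by induction — then forces irreducibility.

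The combinatorial heart of the argument is to check that the negation of ``linked'' (as defined before the statement) is exactly the condition that allows such an orientation. If the outer segments $[x_{m1}, x_{1n}]$ and $[y_{m'1}, y_{1n'}]$ are Zelevinsky-unlinked, then either they are disjoint and non-adjoinable, or one contains the other; in both cases every row-segment of $X$ is unlinked from every row-segment of $Y$ directly. If the outer segments are linked but one side of the $X$-rectangle is contained in the corresponding side of the $Y$-rectangle, then transposing one of the two generalized segments (and using $<\rho;X> \cong <\rho;X^{T}>$) converts the row-decomposition of one into a column-decomposition, and a direct check shows this rearrangement produces pairwise unlinked segments.

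The main obstacle is this last ``mixed orientation'' case: one must verify carefully that the side-inclusion condition is precisely what is needed to make the transposed picture work, and that the residual combinatorial configurations (near the overlap of the two rectangles) do not produce hidden linked pairs. The induction handles the remaining Jacquet-module bookkeeping: one computes $\Jac_{x}$ of both sides for $x$ at the corners of the rectangles, uses the explicit formulas recalled after Lemma~\ref{lemma: permuting Jacquet 1}, and invokes the inductive hypothesis on the strictly smaller generalized segments that appear. Comparing multiplicities in the Jacquet modules of $<\rho;X> \times <\rho;Y>$ and of its would-be composition factors then rules out any proper subquotient, completing the proof.
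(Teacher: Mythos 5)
You should first note that the paper does not prove this statement at all: Theorem~\ref{thm: generalized irreducibility for induced representation of GL(n)} is imported from \cite{MW:1989} and used as a black box, so there is no internal proof to compare your sketch with; it has to stand on its own. It does not, because the combinatorial claim at its heart is false. You assert that if the outer segments $[x_{m1},x_{1n}]$ and $[y_{m'1},y_{1n'}]$ are unlinked, then (possibly after transposing) every row of $X$ is Zelevinsky-unlinked from every row of $Y$. Take
\[
X=\begin{pmatrix} 2 & 1 \\ 3 & 2 \end{pmatrix}, \qquad Y=\begin{pmatrix} 2 & 1 \end{pmatrix}.
\]
Here the outer segment $[2,1]$ of $Y$ is contained in the outer segment $[3,1]$ of $X$, so $X$ and $Y$ are \emph{not} linked and the theorem asserts $<\rho;X>\times<\rho;Y>$ is irreducible. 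But the row $[3,2]$ of $X$ is linked with the row $[2,1]$ of $Y$, and transposing either or both matrices does not help ($[2,3]$ is linked with $[2,1]$, and $[3,2]$ is linked with $[1]$); in every orientation some pair of constituent segments is linked. So Theorem~\ref{thm: irreducibility for induced representation of GL(n)} cannot be used to commute the factors pairwise exactly in a family of cases the theorem must cover. The fallback you implicitly need -- commuting the whole representation $<\rho;Y>$ past a single row of $X$ -- is itself an instance of the theorem whose non-linkedness is again of this nested/overlapping type, so the induction does not close there without a genuinely new idea; this nested configuration is precisely where the real work of \cite{MW:1989} lies.

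There is a second, independent gap in the final step. Even when all row pairs are unlinked, an isomorphism between the ambient products of row segments obtained by reordering does not descend to an isomorphism $<\rho;X>\times<\rho;Y>\cong<\rho;Y>\times<\rho;X>$ of the subrepresentations, and "common irreducible submodule, common irreducible quotient, same semisimplified Jacquet modules" does not by itself force irreducibility. What does work is the criterion the paper isolates in Lemma~\ref{lemma: strategy for irreducibility}: one must show the candidate socle is the \emph{unique} irreducible subrepresentation, that it occurs with multiplicity one in the semisimplification, and that it also occurs as a quotient; the first two points require the explicit $\Jac_{x}$ vanishing and multiplicity computations (as in the proof of Proposition~\ref{prop: general irreducibility}) that your sketch only gestures at. Without those verifications the "socle/cosocle argument" can fail silently, so even in the favorable orientation case your argument is incomplete.
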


We will be mostly using the following corollary of this theorem.

\begin{corollary}
\label{cor: permuting general linear}
Let $\rho, \rho'$ be unitary irreducible supercuspidal representations of general linear groups, and $\{x_{ij}\}_{m \times n}$, $\{y_{ij}\}_{m' \times n'}$ be generalized segments. Suppose $\rho \ncong \rho'$, or $\{x_{ij}\}_{m \times n}, \{y_{ij}\}_{m' \times n'}$ are not linked, then
\[
\langle \rho; \{x_{ij}\}_{m \times n} \rangle \times \langle \rho'; \{y_{ij}\}_{m' \times n'} \rangle \quad \cong \quad \langle \rho'; \{y_{ij}\}_{m' \times n'}\rangle \times \langle \rho; \{x_{ij}\}_{m \times n} \rangle.
\]
\end{corollary}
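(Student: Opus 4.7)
The plan is to read off the corollary directly from Theorem~\ref{thm: generalized irreducibility for induced representation of GL(n)}. The first observation is that the hypothesis is manifestly symmetric in the two generalized segments: the relation ``linked'' depends only on the unordered pair, and of course $\rho \ncong \rho'$ is symmetric. Consequently Theorem~\ref{thm: generalized irreducibility for induced representation of GL(n)} applies in either order and tells us that both $<\rho; \{x_{ij}\}_{m \times n}> \times <\rho'; \{y_{ij}\}_{m' \times n'}>$ and $<\rho'; \{y_{ij}\}_{m' \times n'}> \times <\rho; \{x_{ij}\}_{m \times n}>$ are irreducible.

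To identify these two irreducibles I would invoke the standard fact that parabolic induction descends to a commutative multiplication on the Grothendieck group of admissible representations of $\bigsqcup_{n} GL(n, F)$; this is part of the Bernstein-Zelevinsky Hopf-algebra structure and can be proved directly from the geometric lemma (or, equivalently, by comparing characters via the Weyl integration formula on a Levi). For any two finite-length admissible $A$ and $B$ we therefore have $[A \times B] = [B \times A]$ in the Grothendieck group. Since both sides of the desired isomorphism are irreducible and represent the same class there, they must be isomorphic as smooth representations.

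The only conceptual step to justify is the Bernstein-Zelevinsky commutativity, which is classical. If one prefers to avoid even this, the same conclusion can be obtained by the standard intertwining operator $A \times B \to B \times A$ attached to the block-swap Weyl element: it is a nonzero intertwiner, so the irreducibility of $A \times B$ forces its kernel to vanish and the irreducibility of $B \times A$ forces its image to exhaust the target, yielding the isomorphism directly. Either way, the bulk of the work is already contained in Theorem~\ref{thm: generalized irreducibility for induced representation of GL(n)}; there is no real obstacle, only the bookkeeping of recognizing the symmetry of the hypothesis and quoting commutativity in the Grothendieck group.
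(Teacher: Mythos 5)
Your proposal is correct and follows essentially the same route as the paper: the paper likewise notes that a Weyl group element carries the inducing representation $<\rho; \{x_{ij}\}_{m \times n}> \otimes <\rho'; \{y_{ij}\}_{m' \times n'}>$ to $<\rho'; \{y_{ij}\}_{m' \times n'}> \otimes <\rho; \{x_{ij}\}_{m \times n}>$ (so the two induced representations have the same class), and then concludes from the irreducibility of both sides provided by Theorem~\ref{thm: generalized irreducibility for induced representation of GL(n)}, exactly as you do via the symmetry of the linkedness condition.
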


\begin{proof}
One just needs to notice there a Weyl group action transform the inducing representation $\langle \rho; \{x_{ij}\}_{m \times n}\rangle \otimes \langle \rho'; \{y_{ij}\}_{m' \times n'} \rangle$ to $\langle \rho'; \{y_{ij}\}_{m' \times n'}\rangle \otimes \langle \rho; \{x_{ij}\}_{m \times n} \rangle$. Then the corollary follows from the fact that both induced representations are irreducible.
\end{proof}

Next, let us consider $G^{\Sigma_{0}}$.

\begin{lemma}[\cite{Moeglin:2011}, Lemma 8.2]
\label{lemma: basic irreducibility}

Let $\q \in \cQ{G}$ and $\r^{\Sigma_{0}} \in \Pkt{\q}^{\Sigma_{0}}$. For any self-dual irreducible unitary supercuspidal representation $\rho$ of $GL(d_{\rho})$ and real number $x$, 
\[
\rho||^{x} \rtimes \r^{\Sigma_{0}}
\]
is irreducible, provided for all $(\rho, A, B, \zeta) \in Jord_{\rho}(\q)$, we have either
\[
B > |x|  \text{ or } |x| > A+1.
\]
\end{lemma}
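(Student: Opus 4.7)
Set $V := \rho||^{x} \rtimes \r^{\Sigma_{0}}$, induced from the standard maximal parabolic $P = MN$ of $G^{\Sigma_{0}}$ with Levi $M^{\Sigma_{0}}$, where $M = GL(d_{\rho}) \times G_{-}$. The strategy is a Jacquet-module length argument: bound the semisimple length of $\Jac_{P^{\Sigma_{0}}} V$ using the vanishing proposition at the end of Section~2, then rule out the non-irreducible case by Frobenius reciprocity.

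First, the hypothesis excludes $B = |x|$ from $Jord_{\rho}(\q)$: the alternative $B > |x|$ forbids equality, while $|x| > A+1$ combined with $B \leq A$ forces $B = |x| \leq A < |x|$, a contradiction. Applying the singleton-segment version of the vanishing proposition to $[|x|, |x|]$ yields $\Jac_{|x|} \r^{\Sigma_{0}} = \Jac_{-|x|} \r^{\Sigma_{0}} = 0$. Plugging this into
\[
\Jac_{y}(\tau \rtimes \sigma) \overset{s.s.}{=} (\Jac_{y} \tau) \rtimes \sigma + (\Jac^{op}_{-y} \tau) \rtimes \sigma + \tau \rtimes \Jac_{y} \sigma
\]
with $\tau = \rho||^{x}$, and using $\rho \cong \rho^{\vee}$ to evaluate $\Jac^{op}_{y} \rho||^{x}$, gives $\Jac_{x} V \overset{s.s.}{=} \r^{\Sigma_{0}}$ and $\Jac_{-x} V \overset{s.s.}{=} \r^{\Sigma_{0}}$ when $x \neq 0$ (and $\Jac_{0} V \overset{s.s.}{=} 2 \r^{\Sigma_{0}}$ when $x = 0$). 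Hence the semisimple Jacquet module of $V$ to $M^{\Sigma_{0}}$ equals $(\rho||^{x} + \rho||^{-x}) \otimes \r^{\Sigma_{0}}$, of length at most two. By the geometric lemma, every irreducible composition factor of $V$ has nonzero Jacquet to $M^{\Sigma_{0}}$, so $V$ has at most two composition factors.

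To rule out length two when $x \neq 0$, suppose $V$ has distinct composition factors $\sigma_{1}, \sigma_{2}$ with semisimple Jacquets $\rho||^{x} \otimes \r^{\Sigma_{0}}$ and $\rho||^{-x} \otimes \r^{\Sigma_{0}}$ respectively. Since each semisimple Jacquet has length one, the honest Jacquet of each $\sigma_{i}$ at $P$ (and at the opposite $\bar{P}$) is itself irreducible, equal to $\rho||^{x_{i}} \otimes \r^{\Sigma_{0}}$. Frobenius reciprocity and Bernstein's second adjointness then identify subrepresentations and quotients of $V$ precisely as those $\sigma_{i}$ with $x_{i} = x$; hence $\sigma_{2}$ is neither a sub nor a quotient of $V$, which is impossible for a composition factor in a length-two composition series. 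Hence $V$ is irreducible. The edge case $x = 0$ requires separate treatment, exploiting the absence of Jordan blocks $(\rho, a, a)$ (equivalently $B = 0$) from $Jord_{\rho}(\q)$. The main obstacle is the final length-two analysis, particularly correct application of the Frobenius and Bernstein adjunctions in the $\Sigma_{0}$-equivariant setting of Section~2 and the isolated treatment of $x = 0$.
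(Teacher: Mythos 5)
There is a genuine gap, and it sits in both halves of your argument. First, the length bound. The semisimplified Jacquet module of $V=\rho||^{x}\rtimes\r^{\Sigma_{0}}$ at $P$ is \emph{not} $(\rho||^{x}\oplus\rho||^{-x})\otimes\r^{\Sigma_{0}}$ unless $\r^{\Sigma_{0}}$ is supercuspidal: the geometric lemma also produces terms $\rho'||^{z}\otimes(\rho||^{x}\rtimes\sigma')$ for every $\rho'||^{z}\otimes\sigma'$ occurring in the Jacquet module of $\r^{\Sigma_{0}}$ along a maximal parabolic of $G_{-}$. The hypothesis only kills the terms whose $GL(d_{\rho})$-factor is $\rho||^{\pm x}$, so what you actually know is that $\Jac_{x}V$ and $\Jac_{-x}V$ each have length one; to conclude ``at most two composition factors'' you would still need to show that every irreducible subquotient of $V$ has nonvanishing $\Jac_{x}$ or $\Jac_{-x}$, which is not automatic (having $\rho||^{x}$ in the cuspidal support does not force it) and is not argued. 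Second, and more fatally, the final adjointness step is wrong. Bernstein's second adjointness identifies quotients of $V=\mathrm{Ind}_{P}(\rho||^{x}\otimes\r^{\Sigma_{0}})$ via the Jacquet module along the \emph{opposite} parabolic, and $\Jac_{\bar P}(\sigma)$ is the conjugate of $\Jac_{P}(\sigma)$ by the Weyl element carrying $P$ to $\bar P$; on the Levi $GL(d_{\rho})\times G_{-}$ this conjugation replaces the $GL$-factor by its twisted dual, i.e.\ exchanges $\rho||^{x}$ and $\rho||^{-x}$. So the factor $\sigma_{2}$ with $\Jac_{P}(\sigma_{2})\supseteq\rho||^{-x}\otimes\r^{\Sigma_{0}}$ satisfies $\Jac_{\bar P}(\sigma_{2})\supseteq\rho||^{x}\otimes\r^{\Sigma_{0}}$ and is therefore precisely a quotient of $V$; no contradiction arises. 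A sanity check shows the argument cannot be repaired along these lines: it uses the hypothesis only through $\Jac_{\pm x}\r^{\Sigma_{0}}=0$, but that vanishing also holds for $\r^{\Sigma_{0}}$ supercuspidal at genuine reducibility points (e.g.\ $|x|=A+1$ for some $(\rho,A,B,\zeta)\in Jord_{\rho}(\q)$), where $\rho||^{x}\rtimes\r^{\Sigma_{0}}$ really is reducible of length two. The condition $B>|x|$ or $|x|>A+1$ is there to avoid the reducibility points attached to the Jordan blocks, not merely to kill Jacquet modules, and any correct proof must use it beyond that.

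For comparison: the paper does not reprove this lemma (it is quoted from M{\oe}glin), and the strategy it records is Lemma~\ref{lemma: strategy for irreducibility}: show $V$ has a unique irreducible subrepresentation $\sigma$, occurring with multiplicity one in $s.s.(V)$ (this is where $\Jac_{\pm x}\r^{\Sigma_{0}}=0$ enters, exactly as in your first step), and then show the same $\sigma$ also embeds into $\rho||^{-x}\rtimes\r^{\Sigma_{0}}$, which forces $\sigma$ to be both sub and quotient and hence $V$ to be irreducible. Establishing that second embedding is where the full strength of the Jordan-block condition is needed (in the paper's generalization, Proposition~\ref{prop: general irreducibility}, it is done by commuting segments past $\r^{\Sigma_{0}}$ and dualizing, which again requires $|x|$ to avoid $[B,A+1]$). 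If you want to salvage your write-up, keep your Jacquet-module computation as the ``unique subrepresentation, multiplicity one'' step and then supply an actual argument for $\sigma\hookrightarrow\rho||^{-x}\rtimes\r^{\Sigma_{0}}$; the length-two dichotomy plus adjointness alone will not do it.
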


We will not give the proof of this lemma here, but we would like to discuss the idea behind the proof. Let $\tau$ be an irreducible representation of $GL(d)$, and $\r^{\Sigma_{0}}$ be an irreducible representation of $G^{\Sigma_{0}}$. To show $\tau \rtimes \r^{\Sigma_{0}}$ is irreducible, there is the following criterion. 

\begin{lemma}
\label{lemma: strategy for irreducibility}
Suppose there exits a unique irreducible subrepresentation
\[
\sigma \hookrightarrow \tau \rtimes \r^{\Sigma_{0}}
\]
such that $\sigma$ is multiplicity free in $s.s. (\tau \rtimes \r^{\Sigma_{0}})$, and
\[
\sigma \hookrightarrow \tau^{\vee} \rtimes \r^{\Sigma_{0}}.
\]
Then $\tau \rtimes \r^{\Sigma_{0}}$ is irreducible.
\end{lemma}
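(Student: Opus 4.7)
The plan is to argue by contradiction: suppose $\tau \rtimes \r^{\Sigma_{0}}$ is reducible, and show that the three hypotheses together force a contradiction. The basic idea is that the second embedding $\sigma \hookrightarrow \tau^{\vee} \rtimes \r^{\Sigma_{0}}$ can be dualized to produce a surjection $\tau \rtimes \r^{\Sigma_{0}} \twoheadrightarrow \sigma$, making $\sigma$ simultaneously a subrepresentation and a quotient of $\tau \rtimes \r^{\Sigma_{0}}$. Combined with multiplicity-freeness of $\sigma$ in the semisimplification, this will force $\sigma$ to split off and exhaust the whole representation.

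To carry this out, I would first use that normalized parabolic induction commutes with the contragredient functor, giving $(\tau \rtimes \r^{\Sigma_{0}})^{\vee} \cong \tau^{\vee} \rtimes (\r^{\Sigma_{0}})^{\vee}$. Then I would invoke the MVW involution for classical groups, which identifies every irreducible admissible representation with its own contragredient, so $(\r^{\Sigma_{0}})^{\vee} \cong \r^{\Sigma_{0}}$ and $\sigma^{\vee} \cong \sigma$. In particular $(\tau \rtimes \r^{\Sigma_{0}})^{\vee} \cong \tau^{\vee} \rtimes \r^{\Sigma_{0}}$, and dualizing the given embedding $\sigma \hookrightarrow \tau^{\vee} \rtimes \r^{\Sigma_{0}}$ produces a surjection $\tau \rtimes \r^{\Sigma_{0}} \twoheadrightarrow \sigma$.

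Now I would consider the composition
\[
\sigma \hookrightarrow \tau \rtimes \r^{\Sigma_{0}} \twoheadrightarrow \sigma,
\]
which by Schur's lemma is either zero or an isomorphism. If it vanishes, the subcopy of $\sigma$ lies entirely inside the kernel of the surjection, and $\sigma$ then contributes at least twice to $s.s.(\tau \rtimes \r^{\Sigma_{0}})$ (once inside the kernel, once as the quotient), contradicting the multiplicity-free hypothesis. Hence the composition is an isomorphism, and $\sigma$ splits off as a direct summand of $\tau \rtimes \r^{\Sigma_{0}}$. Since by hypothesis $\sigma$ is the unique irreducible subrepresentation, the socle of $\tau \rtimes \r^{\Sigma_{0}}$ equals $\sigma$; any nonzero complementary summand would therefore possess a nontrivial socle disjoint from $\sigma$, which is impossible. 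Thus $\tau \rtimes \r^{\Sigma_{0}} = \sigma$, contradicting our assumption of reducibility.

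The main obstacle I anticipate is justifying the MVW-type self-duality statement in the $\Sigma_{0}$-equivariant setting, i.e.\ for irreducible representations of $G^{\Sigma_{0}}$ rather than of $G$ alone. This should reduce to the classical MVW theorem for $G$ together with the compatibility of the outer automorphism $\theta_{0}$ with the contragredient functor, so that the $\Sigma_{0}$-action on $\r$ is preserved under dualization; once this is in place, the rest of the argument is a formal consequence of socle/cosocle bookkeeping and Schur's lemma.
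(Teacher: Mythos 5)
Your overall strategy is the same as the paper's: produce a surjection $\tau \rtimes \r^{\Sigma_{0}} \twoheadrightarrow \sigma$ from the second embedding, combine it with the embedding $\sigma \hookrightarrow \tau \rtimes \r^{\Sigma_{0}}$ and multiplicity one in the semisimplification to split $\sigma$ off as a direct summand, and then use uniqueness of the irreducible subrepresentation to rule out a nonzero complement. The Schur's lemma bookkeeping and the socle argument in your last two steps are correct and are just a more detailed version of the paper's two sentences.

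The genuine problem is the justification of the dualization step. You claim that the MVW involution ``identifies every irreducible admissible representation with its own contragredient,'' so that $(\r^{\Sigma_{0}})^{\vee} \cong \r^{\Sigma_{0}}$ and $\sigma^{\vee} \cong \sigma$. That is false for the symplectic case, which is one of the two families the lemma must cover: for $G = Sp(2n)$ (where $\Sigma_{0}$ is trivial) the MVW theorem only gives $\pi^{\vee} \cong \pi^{\delta}$ for an element $\delta \in GSp(2n,F)$ with similitude factor $-1$, and this conjugation is not inner when $-1 \notin (F^{\times})^{2}$; already for $Sp(2,F) = SL(2,F)$ there are irreducible representations that are not self-dual (members of an $L$-packet permuted nontrivially by the class of $-1$ in $F^{\times}/(F^{\times})^{2}$). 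So the chain $(\tau^{\vee}\rtimes\r^{\Sigma_{0}})^{\vee} \cong \tau \rtimes (\r^{\Sigma_{0}})^{\vee} \cong \tau \rtimes \r^{\Sigma_{0}}$ and $\sigma^{\vee}\cong\sigma$ does not hold as stated. The standard repair, and what the paper implicitly invokes when it asserts that $\sigma \hookrightarrow \tau^{\vee}\rtimes\r^{\Sigma_{0}}$ makes $\sigma$ a quotient of $\tau\rtimes\r^{\Sigma_{0}}$, is to use the full MVW involution $X \mapsto (X^{\vee})^{\delta}$ as a contravariant exact functor which fixes the isomorphism class of every irreducible and for which $\delta$ can be chosen to normalize the standard Levi $GL(d)\times G_{-}$ compatibly with the inducing data; applying this functor to the given embedding yields exactly the needed surjection $\tau \rtimes \r^{\Sigma_{0}} \twoheadrightarrow \sigma$ without any self-duality of $\sigma$ or $\r^{\Sigma_{0}}$. (In the orthogonal cases your self-duality claim is actually fine, since there the MVW element can be taken inside $G^{\Sigma_{0}}$ itself.) With that substitution the rest of your argument goes through and coincides with the paper's proof.
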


\begin{proof}
Since $\sigma \hookrightarrow \tau^{\vee} \rtimes \r^{\Sigma_{0}}$, we know $\tau \rtimes \r^{\Sigma_{0}}$ has a quotient isomorphic to $\sigma$. Then by the fact that
\(
\sigma \hookrightarrow \tau \rtimes \r^{\Sigma_{0}}
\)
and $\sigma$ is multiplicity free in $s.s. (\tau \rtimes \r^{\Sigma_{0}})$, we see $\sigma$ is a direct summand of $\tau \rtimes \r^{\Sigma_{0}}$. This means $\tau \rtimes \r^{\Sigma_{0}}$ necessarily has another irreducible subrepresentation. But this contradicts to the uniqueness of $\sigma$.
\end{proof}

By the same idea, we can generalize Lemma~\ref{lemma: basic irreducibility} to the following proposition. 

\begin{proposition}
\label{prop: general irreducibility}
Let $\q \in \cQ{G}$ and $\r^{\Sigma_{0}} \in \Pkt{\q}^{\Sigma_{0}}$. For any self-dual irreducible unitary supercuspidal representation $\rho$ of $GL(d_{\rho})$, and
\[
\tau = 
\begin{pmatrix}
   \zeta x & \cdots & \zeta x' \\
   \vdots &  & \vdots \\
   \zeta y & \cdots & \zeta y'
\end{pmatrix}
\]
such that $y \geqslant x \geqslant x' > 0$ and $\zeta = \pm 1$, if for all $(\rho, A, B, \zeta) \in Jord_{\rho}(\q)$, we have either
\[
B > y  \text{ or } x' > A+1.
\]
then
\(
\tau \rtimes \r^{\Sigma_{0}}
\)
is irreducible. Moreover, 
\begin{align}
\label{eq: dualizing classical}
\tau \rtimes \r^{\Sigma_{0}} \cong \tau^{\vee} \rtimes \r^{\Sigma_{0}}
\end{align}
in this case.
\end{proposition}

\begin{proof}
Taking conjugation by elements in $G^{\Sigma_{0}}$, one can transform the inducing representation $\tau \otimes \r^{\Sigma_{0}}$ to $\tau^{\vee} \otimes \r^{\Sigma_{0}}$. So $\tau \rtimes \r^{\Sigma_{0}} \cong \tau^{\vee} \rtimes \r^{\Sigma_{0}}$ if $\tau \rtimes \r^{\Sigma_{0}}$ is irreducible. To apply Lemma~\ref{lemma: strategy for irreducibility}, let us choose an irreducible subrepresentation
\(
\sigma \hookrightarrow \tau \rtimes \r^{\Sigma_{0}}.
\)
Let 
\[
X = 
\begin{bmatrix}
   \zeta x & \cdots & \zeta x' \\
   \vdots &  & \vdots \\
   \zeta y & \cdots & \zeta y'
\end{bmatrix},
\]
then by our assumption, $\Jac_{z} \r^{\Sigma_{0}} = 0$ for any $z \in X$. Also because $y \geqslant x \geqslant x' > 0$, we have
\[
\Jac_{X}(\tau \rtimes \r^{\Sigma_{0}}) \overset{s.s.}{=} (\Jac_{X} \tau) \rtimes \r^{\Sigma_{0}} = \r^{\Sigma_{0}}.
\]
This means $\sigma$ is the unique irreducible subrepresentation of $\tau \rtimes \r^{\Sigma_{0}}$, and it is multiplicity free in $s.s.(\tau \rtimes \r^{\Sigma_{0}})$. Then it suffices for us to show 
\(
\sigma \hookrightarrow \tau^{\vee} \rtimes \r^{\Sigma_{0}}.
\)
By Lemma~\ref{lemma: basic irreducibility}, we have
\begin{align*}
\sigma 
& \hookrightarrow 
\begin{pmatrix}
   \zeta x & \cdots & \zeta x' \\
   \vdots &  & \vdots \\
   \zeta (y-1) & \cdots & \zeta (y'-1)
\end{pmatrix}
\times \rho||^{\zeta y} \times \cdots \times \rho||^{\zeta y'} \rtimes \r^{\Sigma_{0}} \\
& \cong
\begin{pmatrix}
   \zeta x & \cdots & \zeta x' \\
   \vdots &  & \vdots \\
   \zeta (y-1) & \cdots & \zeta (y'-1)
\end{pmatrix}
\times \rho||^{\zeta y} \times \cdots \times \rho||^{\zeta (y' +1)} \times \rho||^{-\zeta y'} \rtimes \r^{\Sigma_{0}} \\
& \cong
\rho||^{-\zeta y'} \times 
\begin{pmatrix}
   \zeta x & \cdots & \zeta x' \\
   \vdots &  & \vdots \\
   \zeta (y-1) & \cdots & \zeta (y'-1)
\end{pmatrix}
\times \rho||^{\zeta y} \times \cdots \times \rho||^{\zeta (y' + 1)} \rtimes \r^{\Sigma_{0}} \\
& \cdots \quad \cdots \\
& \cong 
\rho||^{-\zeta y'} \times \cdots \times \rho||^{-\zeta y} \times
\begin{pmatrix}
   \zeta x & \cdots & \zeta x' \\
   \vdots &  & \vdots \\
   \zeta (y-1) & \cdots & \zeta (y'-1)
\end{pmatrix}
\rtimes \r^{\Sigma_{0}} 
\end{align*}
By induction on $y-x$, we can assume 
\[
\sigma' := \begin{pmatrix}
   \zeta x & \cdots & \zeta x' \\
   \vdots &  & \vdots \\
   \zeta (y-1) & \cdots & \zeta (y'-1)
\end{pmatrix}
\rtimes \r^{\Sigma_{0}} 
\]
is irreducible. Then 
\[
\sigma' \cong \begin{pmatrix}
  - \zeta (y'-1) & \cdots & - \zeta (y-1) \\
   \vdots &  & \vdots \\
  - \zeta x' & \cdots & - \zeta x
\end{pmatrix}
\rtimes \r^{\Sigma_{0}} 
\]
as we have seen in the beginning. Since
\(
\Jac_{z} \sigma' = 0
\)
for $z \in [-\zeta y', -\zeta y]$, then
\[
\Jac_{-\zeta y', \cdots, -\zeta y} (\rho||^{-\zeta y'} \times \cdots \times \rho||^{-\zeta y} \times \sigma') = \sigma'.
\]
Therefore, 
\[
\sigma \hookrightarrow \rho||^{-\zeta y'} \times \cdots \times \rho||^{-\zeta y} \times \sigma'
\]
as the unique irreducible subrepresentation. It follows 
\begin{align*}
\sigma  \hookrightarrow \langle -\zeta y', \cdots,  -\zeta y \rangle \rtimes \sigma' 
\cong \langle -\zeta y', \cdots,  -\zeta y \rangle \rtimes 
\begin{pmatrix}
  - \zeta (y'-1) & \cdots & - \zeta (y-1) \\
   \vdots &  & \vdots \\
  - \zeta x' & \cdots & - \zeta x
\end{pmatrix}
\rtimes \r^{\Sigma_{0}}
\end{align*}
as the unique irreducible subrepresentation. Hence
\[
\sigma  \hookrightarrow 
\begin{pmatrix}
  - \zeta y' & \cdots & - \zeta y \\
   \vdots &  & \vdots \\
  - \zeta x' & \cdots & - \zeta x
\end{pmatrix}
\rtimes \r^{\Sigma_{0}}
\]
This finishes the proof.

\end{proof}


\section{Basic case and generalization}
\label{sec: basic}

We describe the {\bf basic case} as follows. Let us fix a self-dual unitary irreducible supercuspidal representation $\rho$ of $GL(d_{\rho})$. There exists
\[
\{(\rho, A_{2}, B_{2}, \zeta_{2}), (\rho, A_{1}, B_{1}, \zeta_{1})\} \subseteq Jord(\q)
\]
such that
\(
A_{2} \geqslant A_{1}, B_{2} \geqslant B_{1},
\)
and $\zeta_{1} = \zeta_{2} = \zeta$. These two Jordan blocks are ``separated" from the other blocks in $Jord_{\rho}(\q)$. Moreover, let
\[
Jord(\q_{-}) = Jord(\q) \backslash \{(\rho, A_{2}, B_{2}, \zeta_{2}), (\rho, A_{1}, B_{1}, \zeta_{1})\},
\]
we require $\q_{-}$ has discrete diagonal restriction. We can extend the natural order on $Jord(\q_{-})$ to $Jord(\q)$ as follows 
\[
(\rho, A, B, \zeta) >_{\q} (\rho, A', B', \zeta') \text{ if and only if } A \geqslant A'.
\]
In particular, 
\[
(\rho, A_{2}, B_{2}, \zeta_{2}) >_{\q} (\rho, A_{1}, B_{1}, \zeta_{1}).
\]

For functions $\ul(\rho, A, B, \zeta) \in [0, [(A-B+1)/2]]$ and $\ueta(\rho, A, B, \zeta) \in \mathbb{Z}/2\mathbb{Z}$ on $Jord(\q)$, we denote 
\[
l_{1} = \ul(\rho, A_{1}, B_{1}, \zeta_{1}), \quad l_{2} = \ul(\rho, A_{2}, B_{2}, \zeta_{2}),
\] 
and 
\[
\eta_{1} = \ueta(\rho, A_{1}, B_{1}, \zeta_{1}), \quad \eta_{2} = \ueta(\rho, A_{2}, B_{2}, \zeta_{2}).
\]

\begin{lemma}[M{\oe}glin]
\label{lemma: fundamental case}
In the basic case, suppose 
\[
[A_{2}, B_{2}] = [A_{1}, B_{1}],
\]
then  $\r^{\Sigma_{0}}_{M, >_{\q}}(\q, \ul, \ueta) \neq 0$  if and only if  
\begin{align*}
\begin{cases}
\eta_{2} = (-1)^{A_{1} - B_{1}}\eta_{1}       & \Rightarrow l_{1} = l_{2},  \\
\eta_{2} \neq (-1)^{A_{1} - B_{1}}\eta_{1}  & \Rightarrow l_{1} = l_{2} = (A_{1} - B_{1} + 1)/2.
\end{cases}                                                                                                                                                                                                                                                                                                                                
\end{align*}

\end{lemma}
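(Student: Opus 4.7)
The plan is to use the dominating-parameter construction \eqref{eq: M construction general} with $T_1 = 0$ and $T_2 = T$ for $T$ sufficiently large, so that $\q_{\gg}$ has Jordan blocks $(\rho, A_1, B_1, \zeta)$, $(\rho, A_1+T, B_1+T, \zeta)$ together with those of $\q_-$, and has discrete diagonal restriction. By the ``separation'' hypothesis on the two distinguished blocks, combined with Lemma~\ref{lemma: permuting Jacquet 1} and Corollary~\ref{cor: permuting general linear}, the contribution from $Jord(\q_-)$ commutes with every operation relevant below, so the question reduces to a local calculation involving only the two blocks $(\rho, A_1, B_1, \zeta)$ and $(\rho, A_1+T, B_1+T, \zeta)$. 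In this local setting, $\r^{\Sigma_0}_{M,>_\q}(\q_{\gg}, \ul, \ueta)$ is the unique irreducible subrepresentation of $\tau_2 \times \tau_1 \rtimes \sigma_c$, where each $\tau_i$ is the generalized Speh representation attached to the $i$-th block via \eqref{eq: generalized Speh}, and $\sigma_c$ is the elementary representation built from the central blocks $(\rho, C, C, \eta_i(-1)^{C-B_1-l_i}, \zeta)$ for $C \in [B_1+l_1, A_1-l_1]$ and $C \in [B_1+T+l_2, A_1+T-l_2]$.

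Next I would apply the Jacquet functor $\Jac_{(\rho, A_1+T, B_1+T, \zeta) \mapsto (\rho, A_1, B_1, \zeta)}$ from \eqref{eq: shifting functor} to $\tau_2 \times \tau_1 \rtimes \sigma_c$. Using the explicit Jacquet-module formulas recalled in Section~\ref{sec: notation}, together with the support-vanishing statement (\cite{Xu:preprint4}, Proposition 8.3) at the end of that section, one obtains a signed sum of finitely many ``alignment terms,'' each corresponding to a choice of how the coordinates $\zeta(B_1+1), \ldots, \zeta(A_1+T)$ are extracted among $\tau_2$, the dual version of $\tau_1$, and $\sigma_c$. Support vanishing kills every pattern in which an extracted coordinate $\zeta(B_1+k)$ falls outside the Jordan support of $\q$, leaving only a short list of surviving alignments whose analysis is essentially combinatorial.

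Step 2 is the case-by-case analysis. When $\eta_2 = (-1)^{A_1-B_1}\eta_1$, I expect the unique surviving alignment at $l_1 = l_2$ to produce exactly $\r^{\Sigma_0}_{M,>_\q}(\q, \ul, \ueta)$ through a consistent merging of the two central parts into a single elementary $\sigma_c$, while for $l_1 \neq l_2$ the only candidate alignments force linked generalized segments whose contributions cancel by the irreducibility criterion Theorem~\ref{thm: generalized irreducibility for induced representation of GL(n)}. When $\eta_2 \neq (-1)^{A_1-B_1}\eta_1$, the signs attached to the central blocks at the shared value $C = A_1$ (respectively $C = B_1+T+l_2$) are incompatible between the two contributions, so any partial absorption of $\tau_1$ or $\tau_2$ into $\sigma_c$ is blocked; nonvanishing then forces both central pieces to be empty, which is equivalent to $l_1 = l_2 = (A_1-B_1+1)/2$.

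The main obstacle will be the bookkeeping in Step 1: tracking the numerous summands produced by the Jacquet functor, repeatedly invoking Corollary~\ref{cor: permuting general linear} and Corollary~\ref{cor: dualizing classical} to commute factors past each other, and verifying in each surviving alignment that the resulting subquotient either matches the target representation $\r^{\Sigma_0}_{M,>_\q}(\q, \ul, \ueta)$ or vanishes after semisimplification. Once this bookkeeping is carried out, the case analysis of Step 2 is dictated directly by the sign constraints in the definition of the central part $\sigma_c$ and by the maximality condition $l_i = (A_1 - B_1 + 1)/2$ that empties the central interval.
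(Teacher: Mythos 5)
Your setup (shift the second block by a large $T$, embed $\r^{\Sigma_{0}}_{M, >_{\q}}(\q_{\gg}, \ul, \ueta)$ as the unique irreducible subrepresentation of $\tau_{2} \times \tau_{1} \rtimes \sigma_{c}$, and recover $\r^{\Sigma_{0}}_{M, >_{\q}}(\q, \ul, \ueta)$ by applying $\Jac_{(\rho, A_{1}+T, B_{1}+T, \zeta) \mapsto (\rho, A_{1}, B_{1}, \zeta)}$) matches the paper's starting point, but the core of your argument has a genuine gap. You propose to decide vanishing by computing the Jacquet module of the \emph{full} induced representation $\tau_{2} \times \tau_{1} \rtimes \sigma_{c}$ and arguing that in the bad cases the surviving ``alignment terms cancel.'' Two problems: first, the semisimplified Jacquet module obtained from the geometric lemma is a sum with nonnegative multiplicities — there are no signs, so nothing can cancel; second, and more fundamentally, that Jacquet module is essentially never zero (one can always strip the coordinates $\zeta(B_{1}+T), \ldots, \zeta(A_{1}+1)$ off $\tau_{2}$ itself), so its computation says nothing about whether the Jacquet module of the particular irreducible subrepresentation $\r^{\Sigma_{0}}_{M, >_{\q}}(\q_{\gg}, \ul, \ueta)$ vanishes. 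Detecting where that subrepresentation sits inside the Jacquet module of the ambient induced representation is exactly the hard point, and it cannot be bypassed by bookkeeping of alignments plus the support-vanishing proposition.

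The paper's proof supplies what your sketch is missing: it argues by induction on $A_{1}-B_{1}$, first using the shift-and-Jacquet step to land in the \emph{unequal-length} situation where Proposition~\ref{prop: basic} (the inductive input) applies, which yields only the rough bounds $|l_{1}-l_{2}|\leqslant 1$, resp.\ $l_{1}+l_{2}+1 > A_{1}-B_{1}$; the remaining borderline cases ($l_{1}=l_{2}\pm 1$, and $l_{1}=l_{2}=(A_{1}-B_{1})/2$ with $\eta_{2}\neq(-1)^{A_{1}-B_{1}}\eta_{1}$) are then eliminated one by one by refined embeddings in which generalized segments are interchanged and dualized via Proposition~\ref{prop: general irreducibility} and Corollary~\ref{cor: dualizing classical}, and ``combined'' using vanishing statements $\Jac_{x}\,\pi = 0$ for the representation $\pi$ itself (not for the induced representation), until one reaches an induced representation whose relevant Jacquet module is genuinely zero. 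Your proposal neither sets up this induction nor engages with the borderline eliminations, and the claim that the sign constraints on $\sigma_{c}$ ``directly dictate'' the conclusion (e.g.\ forcing $l_{1}=l_{2}=(A_{1}-B_{1}+1)/2$ in the unequal-sign case) is not substantiated — that is precisely the part of the lemma that takes several pages of delicate work in the appendix. As it stands, the proposal would not close.
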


This lemma is in (\cite{Moeglin1:2006}, Lemma 3.4) and it is fundamental for all the results that we are going to derive in this paper. The lemma can also be generalized as follows.


\begin{proposition}
\label{prop: basic}
In the basic case, if $\r^{\Sigma_{0}}_{M, >_{\q}}(\q, \ul, \ueta) \neq 0$, then
\begin{align}
\label{eq: basic condition}
\begin{cases}
\eta_{2} = (-1)^{A_{1} - B_{1}}\eta_{1}       & \Rightarrow A_{2} - l_{2} \geqslant A_{1} - l_{1}, \quad B_{2} + l_{2} \geqslant B_{1} + l_{1},  \\
\eta_{2} \neq (-1)^{A_{1} - B_{1}}\eta_{1}  & \Rightarrow B_{2} + l_{2} > A_{1} - l_{1}.   
\end{cases}                                                                                                                                                                                                                                                                                                                                
\end{align}
Conversely, if \eqref{eq: basic condition} is satisfied, then $\r^{\Sigma_{0}}_{M, >_{\q}}(\q, \ul, \ueta) \neq 0$, moreover
\begin{align*}
\r^{\Sigma_{0}}_{M, >_{\q}}(\q, \ul, \ueta) & \hookrightarrow 
      \begin{pmatrix}
              \zeta B_{2} & \cdots & -\zeta A_{2} \\
              \vdots &  & \vdots \\
              \zeta (B_{2} + l_{2} - 1) & \cdots & - \zeta (A_{2} - l_{2} + 1)
       \end{pmatrix} 
\times       
       \begin{pmatrix}
              \zeta B_{1} & \cdots & -\zeta A_{1} \\
              \vdots &  & \vdots \\
              \zeta (B_{1} + l_{1} - 1) & \cdots & - \zeta (A_{1} - l_{1} + 1)
       \end{pmatrix} \\
& \times \r^{\Sigma_{0}}_{M, >_{\q}}\Big(\q_{-}, \ul_{-}, \ueta_{-}; (\rho, A_{2} - l_{2}, B_{2} + l_{2}, 0, \eta_{2}, \zeta), (\rho, A_{1} - l_{1}, B_{1} + l_{1}, 0, \eta_{1}, \zeta)\Big).       
\end{align*}

\end{proposition}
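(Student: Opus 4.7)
The plan is to reduce the proposition to Lemma~\ref{lemma: fundamental case} by passing to a dominating parameter with discrete diagonal restriction and then descending via Jacquet functors. Since $\q_{-}$ already has DDR and the two distinguished blocks are separated from $Jord(\q_{-})$, I would dominate $\q$ by shifting only these two blocks, setting $(\rho, A_{i}, B_{i}, \zeta) \mapsto (\rho, A_{i} + T_{i}, B_{i} + T_{i}, \zeta)$ with $T_{2} \geqslant T_{1} \gg 0$ so that the resulting parameter $\q_{\gg}$ has DDR and inherits the admissible order $>_{\q}$. The definition \eqref{eq: M construction general} then expresses $\r^{\Sigma_{0}}_{M, >_{\q}}(\q, \ul, \ueta)$ as the image of $\r^{\Sigma_{0}}_{M, >_{\q_{\gg}}}(\q_{\gg}, \ul, \ueta)$ under the two-step Jacquet descent with respect to the two distinguished blocks.

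For sufficiency, I would start from the explicit DDR embedding of $\r^{\Sigma_{0}}_{M, >_{\q_{\gg}}}(\q_{\gg}, \ul, \ueta)$ into a product of generalized segments times an elementary packet element, as displayed just above \eqref{eq: refined decomposition}. Applying the two descent Jacquet functors in the prescribed order and using Lemma~\ref{lemma: permuting Jacquet} together with Corollary~\ref{cor: permuting general linear} to commute general linear factors past one another, I would peel off the ``outer'' segments of size $T_{i}$ and extract the stated embedding. The ``tail'' representation
\[
\r^{\Sigma_{0}}_{M, >_{\q}}\Big(\q_{-}, \ul_{-}, \ueta_{-}; (\rho, A_{2} - l_{2}, B_{2} + l_{2}, 0, \eta_{2}, \zeta), (\rho, A_{1} - l_{1}, B_{1} + l_{1}, 0, \eta_{1}, \zeta)\Big)
\]
is then itself of basic-case form with the two blocks now carrying the ``compressed'' data, and its non-vanishing is governed precisely by Lemma~\ref{lemma: fundamental case} applied to the collapsed pair. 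Matching its non-vanishing criterion against hypothesis \eqref{eq: basic condition} confirms the constructed embedding is non-zero.

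For necessity, I would argue by contrapositive. Assuming \eqref{eq: basic condition} fails, I would apply a tailored sequence of Jacquet functors -- $\Jac$ along a descending segment bridging $B_{2}$ and $A_{1}$ in the first subcase, and a ``cross'' sequence meeting at the interface $B_{2} + l_{2}, \ldots, A_{1} - l_{1}$ in the second subcase -- that reduces $\r^{\Sigma_{0}}_{M, >_{\q}}(\q, \ul, \ueta)$ to a configuration where Lemma~\ref{lemma: fundamental case} forces vanishing. The last proposition of Section~\ref{sec: notation} provides the essential control: Jacquet modules along sequences that do not match any valid chain of Jordan blocks must vanish, so any surviving term must sit inside a configuration fitting the hypotheses of Lemma~\ref{lemma: fundamental case}, and the sign/length incompatibility then yields zero.

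The main obstacle is the second subcase $\eta_{2} \neq (-1)^{A_{1}-B_{1}}\eta_{1}$, where the two ``compressed'' blocks would overlap if $B_{2}+l_{2} \leqslant A_{1}-l_{1}$. Here one must track carefully how the two generalized segments of the descent Jacquet functors interleave, and show that the overlapping region produces a pair of equal-$[A,B]$ blocks carrying incompatible signs, at which point Lemma~\ref{lemma: fundamental case} kills the representation. Keeping the generalized segment bookkeeping consistent while commuting the Jacquet functors, and identifying precisely the step at which the basic-case lemma is invoked, is the delicate part of the proof; the rest is routine manipulation of induced modules using Section~\ref{sec: irreducibility}.
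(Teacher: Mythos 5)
There is a real gap in the sufficiency half of your plan. You claim the non-vanishing of the ``tail''
\[
\r^{\Sigma_{0}}_{M, >_{\q}}\Big(\q_{-}, \ul_{-}, \ueta_{-}; (\rho, A_{2} - l_{2}, B_{2} + l_{2}, 0, \eta_{2}, \zeta), (\rho, A_{1} - l_{1}, B_{1} + l_{1}, 0, \eta_{1}, \zeta)\Big)
\]
is ``governed precisely by Lemma~\ref{lemma: fundamental case} applied to the collapsed pair.'' But Lemma~\ref{lemma: fundamental case} only treats the case of \emph{equal} segments $[A_{2},B_{2}]=[A_{1},B_{1}]$, whereas the collapsed blocks generically satisfy $[A_{2}-l_{2}, B_{2}+l_{2}]\neq[A_{1}-l_{1}, B_{1}+l_{1}]$; the lemma says nothing about them. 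The non-vanishing of this $l_{1}=l_{2}=0$ configuration is exactly the base case of the proposition itself, and the paper has to prove it separately by two explicit embedding constructions (splitting according to whether $A_{2}-B_{2}\leqslant A_{1}-B_{1}$ or not). Moreover, even granting the tail is nonzero, your argument does not show that the Jacquet descent defining $\r^{\Sigma_{0}}_{M, >_{\q}}(\q, \ul, \ueta)$ from the dominating parameter is nonzero: the generalized segments involved ($[\zeta B_{2},-\zeta(A_{2}+T)]$-type blocks versus the $[A_{1},B_{1}]$-block and the shifting block) are linked in general, so Corollary~\ref{cor: permuting general linear} alone does not let you ``peel off the outer segments.'' The paper's proof spends most of its effort precisely here: it interchanges the pieces $(I)$, $(II)$, $(III)$ using containment of segments, isolates a factor $(IV)$, and replaces it by its dual via Proposition~\ref{prop: general irreducibility} and Corollary~\ref{cor: dualizing classical}, before the desired embedding and non-vanishing can be read off. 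None of that machinery appears in your outline.

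The necessity half is also structurally off from what actually works. The paper does not argue by contrapositive; it assumes non-vanishing and runs a direct induction, first shrinking $l_{2}$ (when $A_{2}>A_{1}$) or $l_{1}$ (when $B_{2}>B_{1}$) by one via a dominating shift and a Jacquet functor known to be nonzero, and then, in the residual cases, it uses a block-splitting trick: a block $(\rho,A,B,0,\eta,\zeta)$ is rewritten as a union of shorter blocks (e.g.\ $(\rho,A_{1},B_{2},0,(-1)^{B_{2}-B_{1}}\eta_{1},\zeta)$ and $(\rho,B_{2}-1,B_{1},0,\eta_{1},\zeta)$) so that a pair with \emph{equal} segments appears and Lemma~\ref{lemma: fundamental case} finally applies; the last case ($l_{1}=l_{2}=0$, opposite signs, $B_{2}\leqslant A_{1}$) is killed by a direct Jacquet-module vanishing computation, not by the lemma. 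Your sketch of ``a tailored sequence of Jacquet functors'' plus ``a pair of equal-$[A,B]$ blocks carrying incompatible signs'' gestures at the right endpoint, but it omits the induction that shrinks $l_{1},l_{2}$, the block-splitting device that manufactures the equal-segment pair, and the separate vanishing argument in the disjointness case, so as written it does not constitute a proof.
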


We will give the proof of Proposition~\ref{prop: basic} in Appendix~\ref{sec: basic}. Next we would like to generalize the basic case to the following situation. Suppose we can index $Jord_{\rho}(\q)$ for each $\rho$ such that 
$A_{i} \geqslant A_{i-1}$ and $B_{i} \geqslant B_{i-1}$. Moreover we can divide $Jord_{\rho}(\q)$ into chunks of 
\begin{align}
\label{eq: chunk}
\{(\rho, A_{i}, B_{i}, \zeta_{i}), (\rho, A_{i-1}, B_{i-1}, \zeta_{i-1})\} \text{ with $\zeta_{i} = \zeta_{i-1}$, or } \{(\rho, A_{j}, B_{j}, \zeta_{j})\},
\end{align}
such that each of them is ``separated" from the others in $Jord_{\rho}(\q)$. We call this the {\bf generalized basic case}. There is a natural order $>_{\q}$ on $Jord_{\rho}(\q)$, i.e.,
\[
(\rho, A_{i}, B_{i}, \zeta_{i}) >_{\q} (\rho, A_{i-1}, B_{i-1}, \zeta_{i-1}).
\]

\begin{proposition}
\label{prop: basic general}
In the generalized basic case, $\r^{\Sigma_{0}}_{M, >_{\q}}(\q, \ul, \ueta) \neq 0$ if and only if the condition \eqref{eq: basic condition} is satisfied for each chunk of pair $\{(\rho, A_{i}, B_{i}, \zeta_{i}), (\rho, A_{i-1}, B_{i-1}, \zeta_{i-1})\}$ in \eqref{eq: chunk} for all $\rho$.
\end{proposition}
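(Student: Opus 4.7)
My approach is to use the ``separated'' condition to decouple the computation of $\r^{\Sigma_{0}}_{M, >_{\q}}(\q, \ul, \ueta)$ into independent instances of the basic case Proposition~\ref{prop: basic} (for the pair chunks) and the elementary case (for the singleton chunks). Since the pair chunks impose condition~\eqref{eq: basic condition} and the singleton chunks impose no condition and cannot vanish, both directions of the proposition would follow at once.

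The first step is to choose a dominating parameter $\q_{\gg}$ with discrete diagonal restriction, but with shift amounts $T_i$ selected so that different chunks become ``far away'' from each other after the shift (which the separated condition allows). On $\q_{\gg}$ we have the explicit embedding of $\r^{\Sigma_{0}}_{M, >_{\q}}(\q_{\gg}, \ul, \ueta)$ into an induced representation provided in the introduction. Next, I would regroup the Jacquet functors in \eqref{eq: M construction general} chunk-by-chunk: within each chunk one retains the decreasing-$i$ order, and across chunks one swaps freely using Lemma~\ref{lemma: permuting Jacquet 1} (for different $\rho$) and Lemma~\ref{lemma: permuting Jacquet} (after separation, entries coming from distinct chunks differ by more than one in absolute value). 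By the parenthetical remark following \eqref{eq: M construction general}, this regrouping does not alter the output.

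Working chunk-by-chunk, I would then process the pair chunks (and singletons) in any convenient order. For a single pair chunk, Corollary~\ref{cor: permuting general linear} together with Proposition~\ref{prop: general irreducibility} allows the already-processed chunks and the as-yet unprocessed (still shifted far away) chunks to be pulled out of the way, so that the chunk-wise Jacquet functor acts on what is essentially an Arthur packet representation for a parameter with only the current pair unresolved at $\rho$; this matches exactly the hypotheses of Proposition~\ref{prop: basic}. That result then gives nonvanishing iff condition~\eqref{eq: basic condition} holds for the pair, and also yields an explicit formula for the output that can be reintegrated with the factors from the other chunks. A singleton chunk is handled identically by reduction to an elementary one-block case, which is always nonzero and imposes no condition. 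Composing these steps across all chunks and all $\rho$ gives both directions of the proposition simultaneously.

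The main obstacle will be the book-keeping that ensures the separated hypothesis really does justify each of the commutations: at every stage one must verify that the generalized segments from a processed (or future) chunk are unlinked from those of the current chunk in the sense of Theorem~\ref{thm: generalized irreducibility for induced representation of GL(n)}, and that the intermediate representations still have vanishing Jacquet modules at the exponents arising in the current chunk's step. Both are ensured by the quantitative ``far away'' choice of the $T_i$'s built into the separated condition, but making this bookkeeping explicit across the recursive steps is where most of the work of a complete proof will lie.
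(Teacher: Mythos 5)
Your proposal is correct in outline and follows essentially the same route as the paper: pass to a dominating parameter that opens each chunk, use the separation inequalities to commute the chunk-wise generalized segments/Jacquet functors and to force each chunk's Jacquet functor onto the relevant factor, and reduce every pair chunk to Proposition~\ref{prop: basic} (singleton chunks contributing nothing). The paper merely packages the sufficiency direction as an induction on the number of intersecting pairs, transferring nonvanishing through the definitional embedding by exactly the two verifications you flag (unlinkedness and vanishing of the Jacquet modules at the current chunk's exponents), so your plan differs only in bookkeeping, not in substance.
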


\begin{proof}
We will first prove the sufficiency of the nonvanishing condition by induction on the number of intersected pairs in $Jord(\q)$. Let $\rho$ be fixed. For $Jord_{\rho}(\q)$, suppose $n$ is the biggest integer such that $[A_{n}, B_{n}]$ and $[A_{n-1}, B_{n-1}]$ intersects. Let
\[
Jord(\q_{-}) = Jord(\q) \backslash \{(\rho, A_{n}, B_{n}, \zeta_{n})\}
\]
By induction we can assume 
\[
\r^{\Sigma_{0}}_{M, >_{\q}}\Big(\q_{-}, \ul_{-}, \ueta_{-}; (\rho, A_{n} + T_{n}, B_{n} + T_{n}, l_{n}, \eta_{n}, \zeta_{n})\Big) \neq 0
\]
for the smallest $T_{n}$ such that $[A_{n} + T_{n}, B_{n} + T_{n}]$ does not intersect with $[A_{n-1}, B_{n-1}]$. For those intersected pairs $\{(\rho, A_{i}, B_{i}, \zeta_{i}), (\rho, A_{i-1}, B_{i-1}, \zeta_{i-1})\}$, we can put them apart by shifting $(\rho, A_{i}, B_{i}, \zeta_{i})$ to $(\rho, A_{i} + T_{i}, B_{i} + T_{i}, \zeta_{i})$ again for the smallest $T_{i}$. Let us write $T_{j} = 0$ for those $(\rho, A_{j}, B_{j}, \zeta_{j})$ remained in $Jord_{\rho}(\q)$. As a result we can get a parameter $\q_{\gg}$ dominating $\q$ with discrete diagonal restriction such that 
\[
(\rho, A_{\gg, i}, B_{\gg, i}, \zeta_{i}) = (\rho, A_{i} + T_{i}, B_{i} + T_{i}, \zeta_{i}).
\] 
Then
\begin{align*}
\r^{\Sigma_{0}}_{M, >_{\q}}(\q_{\gg}, \ul, \ueta) & \hookrightarrow \times_{i \neq n} 
       \begin{pmatrix}
              \zeta_{i} (B_{i} + T_{i}) & \cdots & \zeta_{i}(B_{i} + 1) \\
              \vdots &  & \vdots \\
              \zeta_{i} (A_{i} + T_{i}) & \cdots & \zeta_{i}(A_{i} + 1)
       \end{pmatrix} \times \mathcal{C}_{X^{c}} \\
& \rtimes  \r^{\Sigma_{0}}_{M, >_{\q}}\Big(\q_{-}, \ul_{-}, \ueta_{-}; (\rho, A_{n} + T_{n}, B_{n} + T_{n}, l_{n}, \eta_{n}, \zeta_{n})\Big),      
\end{align*}       
where $i$ is increasing. We would like to show 
\[
\Jac_{(\rho, A_{n} + T_{n}, B_{n} + T_{n}, \zeta_{n}) \mapsto (\rho, A_{n}, B_{n}, \zeta_{n})} \r^{\Sigma_{0}}_{M, >_{\q}}\Big(\q_{-}, \ul_{-}, \ueta_{-}; (\rho, A_{n} + T_{n}, B_{n} + T_{n}, l_{n}, \eta_{n}, \zeta_{n})\Big) \neq 0.
\]
Note by our assumption, 
\[
\Jac_{(\rho, A_{n} + T_{n}, B_{n} + T_{n}, \zeta_{n}) \mapsto (\rho, A_{n}, B_{n}, \zeta_{n})} \r^{\Sigma_{0}}_{M, >_{\q}}(\q_{\gg}, \ul, \ueta) \neq 0.
\]
So after we apply the same Jacquet functor to the full induced representation above, we should get something nonzero. To compute this Jacquet module, one notes $B_{n} + 1 > A_{i} + T_{i}$ for $T_{i} \neq 0$, so it can only be  
\begin{align*}
& \times_{i \neq n}   \begin{pmatrix}
              \zeta_{i} (B_{i} + T_{i}) & \cdots & \zeta_{i}(B_{i} + 1) \\
              \vdots &  & \vdots \\
              \zeta_{i} (A_{i} + T_{i}) & \cdots & \zeta_{i}(A_{i} + 1)
       \end{pmatrix} 
\times \mathcal{C}_{X^{c}} \rtimes \Jac_{(\rho, A_{n} + T_{n}, B_{n} + T_{n}, \zeta_{n}) \mapsto (\rho, A_{n}, B_{n}, \zeta_{n})} \\ & \r^{\Sigma_{0}}_{M, >_{\q}}\Big(\q_{-}, \ul_{-}, \ueta_{-}; (\rho, A_{n} + T_{n}, B_{n} + T_{n}, l_{n}, \eta_{n}, \zeta_{n})\Big) \neq 0.    
\end{align*}       
This gives what we want. 

Next for the necessity of the nonvanishing condition, we can assume $\r^{\Sigma_{0}}_{M, >_{\q}}(\q, \ul, \ueta) \neq 0$. We still fix $\rho$ and choose a dominating parameter $\q_{\gg}$ with discrete diagonal restriction in the way as above. Then by definition
\begin{align*}
\r^{\Sigma_{0}}_{M, >_{\q}}(\q_{\gg}, \ul, \ueta) & \hookrightarrow \times_{i} 
       \begin{pmatrix}
              \zeta_{i} (B_{i} + T_{i}) & \cdots & \zeta_{i}(B_{i} + 1) \\
              \vdots &  & \vdots \\
              \zeta_{i} (A_{i} + T_{i}) & \cdots & \zeta_{i}(A_{i} + 1)
       \end{pmatrix}  \times \mathcal{C}_{X^{c}} \rtimes  \r^{\Sigma_{0}}_{M, >_{\q}}(\q, \ul, \ueta).      
\end{align*}   
It is easy to see that those generalized segments in the induced representations are not linked. So we can change their orders in the induction. In particular, we can take any generalized segment to the front. As a result 
\[
\Jac_{(\rho, A_{i} + T_{i}, B_{i} + T_{i}, \zeta_{i}) \mapsto (\rho, A_{i}, B_{i}, \zeta_{i})} \r^{\Sigma_{0}}_{M, >_{\q}}(\q_{\gg}, \ul, \ueta) \neq 0.
\]
for any $i$. This gives the condition that we want with respect to $\rho$. By varying $\rho$, we prove the necessity of the condition.

\end{proof}

\begin{remark}
\label{rk: basic general}
Suppose a subset of Jordan blocks of $Jord_{\rho}(\q)$ satisfies the condition in the generalized basic case, then we say the Jordan blocks in this set have ``good shape".
\end{remark}

\subsection{Some necessary conditions on nonvanishing}
\label{subsec: necessary condition}

In this section, we want to use Proposition~\ref{prop: basic} to give some necessary conditions on the nonvanishing of $\r^{\Sigma_{0}}_{M, >_{\q}}(\q, \ul, \ueta)$ in general. Let us fix $\rho$ and index the Jordan blocks in $Jord_{\rho}(\q)$ such that 
\[
(\rho, A_{i}, B_{i}, \zeta_{i}) >_{\q} (\rho, A_{i-1}, B_{i-1}, \zeta_{i-1}).
\]
Let $(\rho, A_{k}, B_{k}, \zeta_{k}) >_{\q} (\rho, A_{k-1}, B_{k-1}, \zeta_{k-1})$ be two adjacent blocks under the order $>_{\q}$ and $\zeta_{k} = \zeta_{k-1}$.

\begin{lemma}
Suppose $A_{k} \geqslant A_{k-1}$ and $B_{k} \geqslant B_{k-1}$. If $\r^{\Sigma_{0}}_{M, >_{\q}}(\q, \ul, \ueta) \neq 0$, then $l_{k}, \eta_{k}, l_{k-1}, \eta_{k-1}$ satisfy the condition \eqref{eq: basic condition}.
\end{lemma}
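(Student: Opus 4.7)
The plan is to reduce the lemma to the necessity direction of Proposition~\ref{prop: basic general} via an intermediate parameter $\q_{\gg}$ of $\q$ in which the pair $\{(\rho, A_k, B_k, \zeta_k), (\rho, A_{k-1}, B_{k-1}, \zeta_{k-1})\}$ becomes an isolated pair-chunk of a generalized basic case configuration, while the data governing condition~\eqref{eq: basic condition} is preserved.

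I would construct $\q_{\gg}$ by shifting both pair members by the same large amount $T^{*}$ (so $T_{k} = T_{k-1} = T^{*}$), while assigning to each remaining block $i \in Jord_{\rho}(\q)$ a shift $T_{i}$ respecting $>_{\q}$: $T_{i} < T^{*}$ for $i <_{\q} k-1$ and $T_{i} > T^{*}$ for $i >_{\q} k$, with consecutive shifts well spaced. Analogous independent shifts are assigned for each $\rho' \neq \rho$; by Lemma~\ref{lemma: permuting Jacquet 1} and Corollary~\ref{cor: permuting general linear} these do not interact with the $\rho$-analysis. When $T^{*}$ is taken large relative to $\sum_{j}(A_{j} - B_{j} + 1)$, the inherited order on $Jord(\q_{\gg})$ is admissible (for same-$\zeta$ blocks the simultaneous inequalities $A_{i} + T_{i} > A_{j} + T_{j}$ and $B_{i} + T_{i} > B_{j} + T_{j}$ still force $i >_{\q} j$, because the shifts respect $>_{\q}$), the pair is ``separated'' from every other block of $Jord_{\rho}(\q_{\gg})$, and the other blocks are pairwise ``separated''; hence $\q_{\gg}$ falls into the generalized basic case with the pair as the unique pair-chunk and the remaining blocks as singleton chunks. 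The crucial feature is that because $T_{k} = T_{k-1}$, every quantity entering condition~\eqref{eq: basic condition} for the pair---$A_{k} - A_{k-1}$, $B_{k} - B_{k-1}$, the parity $(-1)^{A_{k-1} - B_{k-1}}$, and the comparisons $A_{k} - l_{k}$ vs $A_{k-1} - l_{k-1}$, $B_{k} + l_{k}$ vs $B_{k-1} + l_{k-1}$, and $B_{k} + l_{k}$ vs $A_{k-1} - l_{k-1}$---is translation-invariant, so the condition on $\q_{\gg}$ for the pair is literally the same as the condition on $\q$.

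Choosing a common further dominating parameter $\q_{\gg\gg}$ with discrete diagonal restriction that dominates both $\q$ and $\q_{\gg}$, the definition~\eqref{eq: M construction general} together with the commutativity of Jacquet functors for unlinked data yields
\[
\r^{\Sigma_{0}}_{M, >_{\q}}(\q, \ul, \ueta) \;=\; \Jac_{\q_{\gg} \to \q} \, \r^{\Sigma_{0}}_{M, >_{\q}}(\q_{\gg}, \ul, \ueta),
\]
so the nonvanishing hypothesis forces $\r^{\Sigma_{0}}_{M, >_{\q}}(\q_{\gg}, \ul, \ueta) \neq 0$. Proposition~\ref{prop: basic general} then delivers condition~\eqref{eq: basic condition} for every chunk of $\q_{\gg}$; specializing to the distinguished pair and invoking translation invariance gives the conclusion. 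The main technical point is combinatorial: admissibility prevents shifting blocks $i <_{\q} k-1$ with $\zeta_{i} = \zeta_{k}$ past the pair, bounding $T_{i}$ above by roughly $T^{*} + A_{k-1} - A_{i}$; taking $T^{*}$ large enough (in particular larger than $\max_{i} A_{i} - B_{k-1}$ and the analogous bounds for blocks with $i >_{\q} k$) leaves enough room to separate every non-pair block from the pair while preserving admissibility, per the quantifications of ``separated'' and ``far away'' in Section~\ref{sec: introduction}.
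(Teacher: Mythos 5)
Your overall strategy is the same as the paper's: bring the two blocks $(\rho,A_{k},B_{k},\zeta_{k})$, $(\rho,A_{k-1},B_{k-1},\zeta_{k-1})$ to a common shift so that they form an isolated pair in a (generalized) basic-case configuration, invoke the necessity direction of Proposition~\ref{prop: basic} (or Proposition~\ref{prop: basic general}), and use translation invariance of \eqref{eq: basic condition}. The construction of $\q_{\gg}$, the admissibility check, and the final translation step are all fine. The gap is in the single sentence on which everything rests: the claim that
\[
\r^{\Sigma_{0}}_{M, >_{\q}}(\q, \ul, \ueta) \;=\; \Jac_{\q_{\gg} \to \q}\, \r^{\Sigma_{0}}_{M, >_{\q}}(\q_{\gg}, \ul, \ueta)
\]
(hence $\r^{\Sigma_{0}}_{M, >_{\q}}(\q_{\gg}, \ul, \ueta) \neq 0$) follows from \eqref{eq: M construction general} ``together with the commutativity of Jacquet functors for unlinked data.'' Your $\q_{\gg}$ does not have discrete diagonal restriction (when $[A_{k},B_{k}]$ meets $[A_{k-1},B_{k-1}]$, which is the only interesting case), so $\r^{\Sigma_{0}}_{M, >_{\q}}(\q_{\gg}, \ul, \ueta)$ is itself defined through a further dominating parameter $\q_{\gg\gg}$ in which block $k$ must be pushed strictly above $A_{k-1}+T^{*}$. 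Comparing the composition defining $\r^{\Sigma_{0}}_{M,>_{\q}}(\q,\ul,\ueta)$ from $\q_{\gg\gg}$ with the regrouped composition $(\q_{\gg\gg}\to\q_{\gg})$ followed by $(\q_{\gg}\to\q)$ requires commuting the partial shifting functor of block $k$ (coordinates $\zeta_{k}x$, $x\in[B_{k}+T^{*}+1, A_{k}+T^{\gg\gg}_{k}]$) past the full shifting functors of the blocks lying above the pair. Whenever such a block has the same sign $\zeta$, its coordinate range $[\zeta(B_{i}+1),\zeta(A_{i}+T_{i})]$ contains the coordinates of block $k$'s partial segment, so neither Lemma~\ref{lemma: permuting Jacquet}/\ref{lemma: permuting Jacquet 1} nor Corollary~\ref{cor: permuting general linear} applies: the data are not unlinked, and the regrouping is not formal. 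In fact, proving exactly this nonvanishing — that block $k$ can be brought down to the same shift as block $k-1$ without annihilating the representation — is the entire content of the paper's proof: one embeds $\r^{\Sigma_{0}}_{M,>_{\q}}(\q_{\gg},\ul,\ueta)$ into an explicit induced representation, splits block $k$'s shifting segment into sub-segments, and interchanges them using the specific inequalities ($[A_{k}+1,B_{k}+1]\supseteq[A_{k-1}+1,B_{k-1}+1]$-type containments and $B_{k}+T_{k-1}+1>A_{i}+T_{i}+1$ for $i<k-1$) and vanishing of Jacquet modules, before Proposition~\ref{prop: basic} can be cited. As written, your proposal assumes this crux rather than proving it; to repair it you would need to carry out an explicit Jacquet-module computation of the same kind (or at least the weaker statement that the composite functor $\Jac_{\q_{\gg\gg}\to\q_{\gg}}$ does not kill the representation, argued via the cuspidal-support/vanishing results), which is precisely the step the paper supplies.
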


\begin{proof}
Let $\q_{\gg}$ be a dominating parameter with discrete diagonal restriction. We also define $\q^{(k)}$ from $\q_{\gg}$ by shifting $(\rho, A_{i} + T_{i}, B_{i} + T_{i}, \zeta_{i})$ back to $(\rho, A_{i}, B_{i}, \zeta_{i})$ for $i \leqslant k$. Then
\begin{align*}
\r^{\Sigma_{0}}_{M, >_{\q}}(\q_{\gg}, \ul, \ueta) & \hookrightarrow \times_{i < k-1}
      \begin{pmatrix}
              \zeta_{i} (B_{i} + T_{i}) & \cdots & \zeta_{i}(B_{i} + 1) \\
              \vdots &  & \vdots \\
              \zeta_{i} (A_{i} + T_{i}) & \cdots & \zeta_{i}(A_{i} + 1)
       \end{pmatrix} \times
       \underbrace{\begin{pmatrix}
              \zeta_{k-1} (B_{k-1} + T_{k-1}) & \cdots & \zeta_{k-1}(B_{k-1} + 1) \\
              \vdots &  & \vdots \\
              \zeta_{k-1} (A_{k-1} + T_{k-1}) & \cdots & \zeta_{k-1}(A_{k-1} + 1)
       \end{pmatrix}}_{I} \\ 
& \times 
      \underbrace{\begin{pmatrix}
              \zeta_{k} (B_{k} + T_{k}) & \cdots & \zeta_{k}(B_{k} + 1) \\
              \vdots &  & \vdots \\
              \zeta_{k} (A_{k} + T_{k}) & \cdots & \zeta_{k}(A_{k} + 1)
       \end{pmatrix}}_{II}  \rtimes  \r^{\Sigma_{0}}_{M, >_{\q}}(\q^{(k)}, \ul, \ueta) \\
& \hookrightarrow  \times_{i < k-1}
      \begin{pmatrix}
              \zeta_{i} (B_{i} + T_{i}) & \cdots & \zeta_{i}(B_{i} + 1) \\
              \vdots &  & \vdots \\
              \zeta_{i} (A_{i} + T_{i}) & \cdots & \zeta_{i}(A_{i} + 1)
       \end{pmatrix} \times
     \underbrace{\begin{pmatrix}
              \zeta_{k-1} (B_{k-1} + T_{k-1}) & \cdots & \zeta_{k-1}(B_{k-1} + 1) \\
              \vdots &  & \vdots \\
              \zeta_{k-1} (A_{k-1} + T_{k-1}) & \cdots & \zeta_{k-1}(A_{k-1} + 1)
       \end{pmatrix}}_{I} \\
& \times 
      \underbrace{\begin{pmatrix}
              \zeta_{k} (B_{k} + T_{k}) & \cdots &  \zeta_{k}(B_{k} + T_{k-1} + 1) & \cdots & \zeta_{k}(B_{k} + 1) \\
              \vdots &  & \vdots  & & \vdots \\
              \zeta_{k} (A_{k-1} + T_{k}) & \cdots &  \zeta_{k}(A_{k-1} + T_{k-1} + 1) & \cdots & \zeta_{k}(A_{k-1} + 1)
       \end{pmatrix}}_{II_{1}} \\
& \times 
       \underbrace{\begin{pmatrix}
              \zeta_{k} (A_{k-1} + T_{k} + 1) & \cdots & \zeta_{k}(A_{k-1} + T_{k-1} + 2) \\
              \vdots &  & \vdots \\
              \zeta_{k} (A_{k} + T_{k}) & \cdots & \zeta_{k}(A_{k} + T_{k-1} + 1)
       \end{pmatrix}}_{II_{2}} \\
& \times
        \underbrace{\begin{pmatrix}
              \zeta_{k} (A_{k-1} + T_{k-1} + 1) & \cdots & \zeta_{k}(A_{k-1} + 2) \\
              \vdots &  & \vdots \\
              \zeta_{k} (A_{k} + T_{k-1}) & \cdots & \zeta_{k}(A_{k} + 1)
       \end{pmatrix}}_{II_{3}}  \rtimes  \r^{\Sigma_{0}}_{M, >_{\q}}(\q^{(k)}, \ul, \ueta),
\end{align*}  
where $i$ increases. Note $(I)$ is interchangeable with $(II_{1})$ and $(II_{2})$, and $B_{k} + T_{k-1} + 1 > A_{i} + T_{i} + 1$ for $i < k-1$. As a result, 
\[
\Jac_{(\rho, A_{k} + T_{k}, B_{k} + T_{k}, \zeta_{k}) \mapsto (\rho, A_{k} + T_{k-1}, B_{k} + T_{k-1}, \zeta_{k})} \r^{\Sigma_{0}}_{M, >_{\q}}(\q_{\gg}, \ul, \ueta) \neq 0.
\]
Then by Proposition~\ref{prop: basic}, $l_{k}, \eta_{k}, l_{k-1}, \eta_{k-1}$ satisfy the condition \eqref{eq: basic condition}.
\end{proof}

\begin{lemma}
\label{lemma: necessary condition sup}
Suppose $[A_{k}, B_{k}] \supseteq [A_{k-1}, B_{k-1}]$. If $\r^{\Sigma_{0}}_{M, >_{\q}}(\q, \ul, \ueta) \neq 0$, then $l_{k}, \eta_{k}, l_{k-1}, \eta_{k-1}$ satisfy the following condition: 
\begin{align}
\label{eq: necessary condition sup}
\begin{cases}
\eta_{k} = (-1)^{A_{k-1} - B_{k-1}}\eta_{k-1}       & \Rightarrow 0 \leqslant l_{k} - l_{k-1} \leqslant (A_{k} - B_{k}) - (A_{k-1} - B_{k-1}),  \\
\eta_{k} \neq (-1)^{A_{k-1} - B_{k-1}}\eta_{k-1}  & \Rightarrow l_{k} + l_{k-1} > A_{k-1} - B_{k-1}.   
\end{cases} 
\end{align}
\end{lemma}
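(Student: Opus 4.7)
The plan is to reduce to the basic case (Proposition~\ref{prop: basic}) by an auxiliary shift that turns the overlapping pair into one satisfying the basic-case hypothesis. Set $T := B_{k-1} - B_{k}$; the assumption $[A_{k}, B_{k}] \supseteq [A_{k-1}, B_{k-1}]$ gives $T \geqslant 0$, and $A_{k} \geqslant A_{k-1}$ forces $A_{k} + T \geqslant A_{k-1}$. Let $\q'$ be the parameter obtained from $\q$ by replacing the block $(\rho, A_{k}, B_{k}, \zeta_{k})$ with $(\rho, A_{k} + T, B_{k} + T, \zeta_{k}) = (\rho, A_{k} + T, B_{k-1}, \zeta_{k})$ and keeping the other blocks the same; extend $>_{\q}$ to an admissible order $>_{\q'}$ in which this new block stays just above $(\rho, A_{k-1}, B_{k-1}, \zeta_{k-1})$. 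The pair in $\q'$ now satisfies $A_{k} + T \geqslant A_{k-1}$, $B_{k} + T \geqslant B_{k-1}$, $\zeta_{k} = \zeta_{k-1}$, i.e., it is in basic-case configuration.

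First I would prove that $\r^{\Sigma_{0}}_{M, >_{\q}}(\q, \ul, \ueta) \neq 0$ implies $\r^{\Sigma_{0}}_{M, >_{\q'}}(\q', \ul, \ueta) \neq 0$. Take a dominating parameter $\q_{\gg}$ of $\q$ with discrete diagonal restriction and shifts $T_{i}$ chosen with $T_{k} > T$ (and all other $T_{i}$ sufficiently large); then $\q_{\gg}$ also dominates $\q'$, with block $k$ shifted by $T_{k} - T$ relative to $\q'$. By the definition~\eqref{eq: M construction general}, $\r^{\Sigma_{0}}_{M, >_{\q}}(\q, \ul, \ueta)$ is obtained from $\r^{\Sigma_{0}}_{M, >_{\q}}(\q_{\gg}, \ul, \ueta)$ by a composition of Jacquet functors, and I would factor this composition as
\[
\Jac_{(\rho, A_{k} + T, B_{k-1}, \zeta_{k}) \mapsto (\rho, A_{k}, B_{k}, \zeta_{k})} \circ (\text{the remaining Jacquets})
\]
so that the remaining Jacquets produce $\r^{\Sigma_{0}}_{M, >_{\q'}}(\q', \ul, \ueta)$. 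Legitimacy of the rearrangement relies on repeatedly commuting Jacquet functors corresponding to distinct supercuspidal representations or to non-adjacent real parts, using Lemma~\ref{lemma: permuting Jacquet}, Lemma~\ref{lemma: permuting Jacquet 1}, and Corollary~\ref{cor: permuting general linear}, mirroring the argument in the previous lemma's proof.

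With the nonvanishing of $\r^{\Sigma_{0}}_{M, >_{\q'}}(\q', \ul, \ueta)$ in hand, I would further enlarge the dominating shifts for the remaining blocks of $Jord_{\rho}(\q')$ so that the pair $\{(\rho, A_{k} + T, B_{k-1}, \zeta_{k}), (\rho, A_{k-1}, B_{k-1}, \zeta_{k-1})\}$ is separated from the rest, and then invoke Proposition~\ref{prop: basic general} (with this pair as one chunk) to extract the basic condition \eqref{eq: basic condition} for it:
\[
\begin{cases}
\eta_{k} = (-1)^{A_{k-1} - B_{k-1}}\eta_{k-1}       & \Rightarrow (A_{k} + T) - l_{k} \geqslant A_{k-1} - l_{k-1}, \quad (B_{k} + T) + l_{k} \geqslant B_{k-1} + l_{k-1}, \\
\eta_{k} \neq (-1)^{A_{k-1} - B_{k-1}}\eta_{k-1}  & \Rightarrow (B_{k} + T) + l_{k} > A_{k-1} - l_{k-1}.
\end{cases}
\]
Substituting $B_{k} + T = B_{k-1}$ and $(A_{k} + T) - A_{k-1} = (A_{k} - B_{k}) - (A_{k-1} - B_{k-1})$ gives precisely \eqref{eq: necessary condition sup}.

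The main obstacle will be the Jacquet-factorization step: because the intervals $[A_{k}, B_{k}]$ and $[A_{k-1}, B_{k-1}]$ overlap heavily rather than being essentially side-by-side, the generalized segments appearing in the embedding of $\r^{\Sigma_{0}}_{M, >_{\q}}(\q_{\gg}, \ul, \ueta)$ arising from the shifts of blocks $k$ and $k-1$ are close to being linked, so one must carefully split the segment for block $k$ into sub-rectangles (analogously to the decomposition $(II) = (II_{1})(II_{2})(II_{3})$ in the previous lemma's proof) and verify the requisite non-linking conditions in order to isolate the outer Jacquet cleanly.
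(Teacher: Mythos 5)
Your overall strategy is the paper's: align the two $B$'s by a shift of size $T=B_{k-1}-B_{k}$, observe that the aligned pair is then in basic-case position, apply the basic-case nonvanishing condition, and substitute back (your final substitution is correct and gives \eqref{eq: necessary condition sup}). However, both technical steps, as you set them up, have genuine gaps. First, the factorization
\[
\r^{\Sigma_{0}}_{M, >_{\q}}(\q, \ul, \ueta) \;=\; \Jac_{(\rho, A_{k}+T, B_{k-1}, \zeta_{k}) \mapsto (\rho, A_{k}, B_{k}, \zeta_{k})} \circ \big(\text{composition defining } \r^{\Sigma_{0}}_{M,>_{\q'}}(\q',\ul,\ueta)\big)
\]
requires commuting the final block-$k$ descent past the descents of the \emph{other} blocks that occur after block $k$ in the composition \eqref{eq: M construction general}. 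Those descents end at the original positions of those blocks, so their entries (e.g.\ $\zeta_{i}(B_{i}+1),\dots$) can be equal or adjacent to entries of $[\zeta(B_{k}+1),\zeta(A_{k}+T)]$ no matter how large you take the $T_{i}$; Lemmas~\ref{lemma: permuting Jacquet}, \ref{lemma: permuting Jacquet 1} and Corollary~\ref{cor: permuting general linear} therefore do not cover this rearrangement. The paper never performs such a commutation: it derives the analogous nonvanishing from the embedding of $\r^{\Sigma_{0}}_{M,>_{\q}}(\q_{\gg},\ul,\ueta)$ into an induced representation and the interchangeability of the two generalized segments $(I)$ and $(II)$, which is exactly where the hypothesis $[A_{k},B_{k}]\supseteq[A_{k-1},B_{k-1}]$ (inclusion of the right-hand columns, hence non-linking) is used; in your write-up this hypothesis never actually enters the argument.

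Second, even granting $\r^{\Sigma_{0}}_{M,>_{\q'}}(\q',\ul,\ueta)\neq 0$, you cannot invoke Proposition~\ref{prop: basic} or \ref{prop: basic general} for $\q'$: the aligned pair sits at the original level $B_{k-1}$, where it is in general not separated from the remaining blocks (those below it in the order may overlap it and each other, so $\q'_{-}$ need not have discrete diagonal restriction, and $Jord_{\rho}(\q')$ is not a union of separated chunks). ``Enlarging the dominating shifts for the remaining blocks'' does not repair this: shifts are only upward, so pushing the lower blocks up moves them into or past the pair and destroys admissibility of the inherited order, while pushing the upper blocks does nothing about the overlaps below. This is precisely why the paper aligns the pair at the \emph{dominating} level, i.e.\ it lowers block $k$ only to $(\rho, A_{k}+T_{k-1}+B_{k-1}-B_{k}, B_{k-1}+T_{k-1},\zeta_{k})$ next to $(\rho, A_{k-1}+T_{k-1}, B_{k-1}+T_{k-1},\zeta_{k-1})$, keeping every other block of $\q_{\gg}$ in place; there the pair is automatically separated from the rest, the complement keeps discrete diagonal restriction, and Proposition~\ref{prop: basic} applies directly. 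To make your argument work you should either adopt that variant (align at level $B_{k-1}+T_{k-1}$), or, after obtaining $\q'$, re-dominate by shifting the pair \emph{together} (and the higher blocks further) before applying Proposition~\ref{prop: basic}; as written, the proof does not go through.
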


\begin{proof}
Let $\q_{\gg}$ be a dominating parameter with discrete diagonal restriction. We also define $\q^{(k)}$ from $\q_{\gg}$ by shifting $(\rho, A_{i} + T_{i}, B_{i} + T_{i}, \zeta_{i})$ back to $(\rho, A_{i}, B_{i}, \zeta_{i})$ for $i \leqslant k$. Then
\begin{align*}
\r^{\Sigma_{0}}_{M, >_{\q}}(\q_{\gg}, \ul, \ueta) & \hookrightarrow \times_{i < k-1}
      \begin{pmatrix}
              \zeta_{i} (B_{i} + T_{i}) & \cdots & \zeta_{i}(B_{i} + 1) \\
              \vdots &  & \vdots \\
              \zeta_{i} (A_{i} + T_{i}) & \cdots & \zeta_{i}(A_{i} + 1)
       \end{pmatrix} \times
       \underbrace{\begin{pmatrix}
              \zeta_{k-1} (B_{k-1} + T_{k-1}) & \cdots & \zeta_{k-1}(B_{k-1} + 1) \\
              \vdots &  & \vdots \\
              \zeta_{k-1} (A_{k-1} + T_{k-1}) & \cdots & \zeta_{k-1}(A_{k-1} + 1)
       \end{pmatrix}}_{I} \\ 
& \times 
      \underbrace{\begin{pmatrix}
              \zeta_{k} (B_{k} + T_{k}) & \cdots & \zeta_{k}(B_{k} + 1) \\
              \vdots &  & \vdots \\
              \zeta_{k} (A_{k} + T_{k}) & \cdots & \zeta_{k}(A_{k} + 1)
       \end{pmatrix}}_{II}  \rtimes  \r^{\Sigma_{0}}_{M, >_{\q}}(\q^{(k)}, \ul, \ueta), 
\end{align*}  
where $i$ increases. Note $(I)$ and $(II)$ are interchangeable due to $[A_{k} + 1, B_{k} + 1] \supseteq [A_{k-1} + 1, B_{k-1} + 1]$. Since $B_{k} + T_{k-1} + 1 > A_{i} + T_{i} + 1$ for $i < k-1$, we have
\[
\Jac_{(\rho, A_{k} + T_{k}, B_{k} + T_{k}, \zeta_{k}) \mapsto (\rho, A_{k} + T_{k-1}, B_{k} + T_{k-1}, \zeta_{k})} \r^{\Sigma_{0}}_{M, >_{\q}}(\q_{\gg}, \ul, \ueta) \neq 0.
\]
In particular, 
\[
\Jac_{(\rho, A_{k} + T_{k}, B_{k} + T_{k}, \zeta_{k}) \mapsto (\rho, A_{k} + T_{k-1} + B_{k-1} - B_{k}, B_{k-1} + T_{k-1}, \zeta_{k})} \r^{\Sigma_{0}}_{M, >_{\q}}(\q_{\gg}, \ul, \ueta) \neq 0.
\]
By Proposition~\ref{prop: basic}, the condition \eqref{eq: basic condition} is satisfied for $(\rho, A_{k} + T_{k-1} + B_{k-1} - B_{k}, B_{k-1} + T_{k-1}, l_{k}, \eta_{k}, \zeta_{k})$ and $(\rho, A_{k-1} + T_{k-1}, B_{k-1} + T_{k-1}, l_{k-1}, \eta_{k-1}, \zeta_{k-1})$, i.e.,
\begin{itemize}

\item If $\eta_{k} = (-1)^{A_{k-1} - B_{k-1}}\eta_{k-1}$, then 
\[
\begin{cases}
& (A_{k} + T_{k-1} + B_{k-1} - B_{k}) - l_{k} \geqslant (A_{k-1} + T_{k-1}) - l_{k-1} \Rightarrow  l_{k} - l_{k-1} \leqslant (A_{k} - B_{k}) - (A_{k-1} - B_{k-1}), \\ \\

& (B_{k-1} + T_{k-1}) + l_{k} \geqslant (B_{k-1} + T_{k-1}) + l_{k-1} \Rightarrow  l_{k} - l_{k-1} \geqslant 0.
\end{cases}
\]

\item If $\eta_{k} \neq (-1)^{A_{k-1} - B_{k-1}}\eta_{k-1}$, then
\[
(B_{k-1} + T_{k-1}) + l_{k} > (A_{k-1} + T_{k-1}) - l_{k-1} \Rightarrow  l_{k} + l_{k-1} > A_{k-1} - B_{k-1}.
\]

\end{itemize}

This finishes the proof.

\end{proof}

\begin{lemma}
\label{lemma: necessary condition sub}
Suppose $[A_{k}, B_{k}] \subseteq [A_{k-1}, B_{k-1}]$. If $\r^{\Sigma_{0}}_{M, >_{\q}}(\q, \ul, \ueta) \neq 0$, then $l_{k}, l_{k-1}, \eta_{k}, \eta_{k-1}$ satisfy the following condition: 
\begin{align}
\label{eq: necessary condition sub}
\begin{cases}
\eta_{k} = (-1)^{A_{k-1} - B_{k-1}}\eta_{k-1}       & \Rightarrow 0 \leqslant l_{k-1} - l_{k} \leqslant (A_{k-1} - B_{k-1}) - (A_{k} - B_{k}),  \\
\eta_{k} \neq (-1)^{A_{k-1} - B_{k-1}}\eta_{k-1}  & \Rightarrow l_{k} + l_{k-1} > A_{k} - B_{k}.   
\end{cases} 
\end{align}
\end{lemma}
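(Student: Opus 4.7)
The plan is to follow the strategy of Lemma~\ref{lemma: necessary condition sup}, substituting the target of the partial Jacquet so that the two blocks end up in basic-case position. I choose a dominating parameter $\q_{\gg}$ of $\q$ with discrete diagonal restriction inheriting $>_{\q}$, define $\q^{(k)}$ from $\q_{\gg}$ by shifting back the Jordan blocks $(\rho, A_{i}+T_{i}, B_{i}+T_{i}, \zeta_{i})$ to $(\rho, A_{i}, B_{i}, \zeta_{i})$ for $i \leqslant k$, and write the usual embedding
\[
\r^{\Sigma_{0}}_{M, >_{\q}}(\q_{\gg}, \ul, \ueta) \hookrightarrow \cdots \times (I) \times (II) \rtimes \r^{\Sigma_{0}}_{M, >_{\q}}(\q^{(k)}, \ul, \ueta),
\]
where $(I)$ and $(II)$ are the shift segments attached to $(\rho, A_{k-1}, B_{k-1}, \zeta)$ and $(\rho, A_{k}, B_{k}, \zeta)$ respectively.

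The interchangeability of $(I)$ and $(II)$ should still hold, but for a different reason than in the $\supseteq$ case. In the $\subseteq$ case the diagonals of the two rectangles genuinely $1$-link once $B_{k} > B_{k-1}$, so one cannot argue from nested diagonals; instead, the right sides satisfy $[A_{k}+1, B_{k}+1] \subseteq [A_{k-1}+1, B_{k-1}+1]$, which is an inclusive relation between corresponding sides. By the definition of linked generalized segments preceding Theorem~\ref{thm: generalized irreducibility for induced representation of GL(n)}, any such inclusive relation prevents linkage, and Corollary~\ref{cor: permuting general linear} yields the commutation. The size estimate $B_{k} + T_{k-1} + 1 > A_{i} + T_{i} + 1$ for $i < k-1$ still follows from $B_{k} \geqslant B_{k-1}$ and the discrete diagonal restriction, so the partial Jacquet below will not be obstructed by the segments for smaller blocks.

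Next I apply
\[
\Jac_{(\rho, A_{k} + T_{k}, B_{k} + T_{k}, \zeta) \mapsto (\rho, A_{k-1} + T_{k-1},\, A_{k-1} + T_{k-1} - A_{k} + B_{k}, \zeta)}
\]
to $\r^{\Sigma_{0}}_{M, >_{\q}}(\q_{\gg}, \ul, \ueta)$. The target is chosen to equate the $A$-coordinates of the shifted $k$ and $k-1$ blocks (the analogue of equating $B$-coordinates in Lemma~\ref{lemma: necessary condition sup}), and $T_{k} - T_{k-1} \geqslant A_{k-1} - A_{k}$ (coming from discrete diagonal restriction) guarantees the Jacquet is well defined. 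Because the full Jacquet for block $k$ from $(A_{k} + T_{k}, B_{k} + T_{k})$ down to $(A_{k}, B_{k})$ factors through this partial one and the full composite is non-zero (it produces $\r^{\Sigma_{0}}_{M, >_{\q}}(\q,\ul,\ueta) \neq 0$), the partial Jacquet is itself non-zero. The resulting pair
\[
(\rho, A_{k-1}+T_{k-1},\, A_{k-1}+T_{k-1}-A_{k}+B_{k},\, l_{k}, \eta_{k}, \zeta), \quad (\rho, A_{k-1}+T_{k-1},\, B_{k-1}+T_{k-1},\, l_{k-1}, \eta_{k-1}, \zeta)
\]
falls into the basic case of Proposition~\ref{prop: basic}: the $A$-coordinates agree, and the first $B$-coordinate is at least the second because $A_{k-1}-A_{k} \geqslant 0 \geqslant B_{k-1}-B_{k}$ in the $\subseteq$ case. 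Substituting $A_{1}-B_{1} = A_{k-1}-B_{k-1}$ and $A_{2}-B_{2} = A_{k}-B_{k}$ into \eqref{eq: basic condition} then translates to exactly \eqref{eq: necessary condition sub}.

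The main obstacle is the interchangeability in the second paragraph. Unlike the $\supseteq$ case, where nested diagonals make the two generalized segments trivially unlinked, here one has to go through the finer link criterion for generalized segments and verify that an inclusive relation on a single pair of corresponding sides is in fact sufficient to preclude linkage. The degenerate subcases $B_{k}=B_{k-1}$ (in which the diagonals are themselves nested) and $A_{k}=B_{k}$ (in which $(II)$ collapses to a single row of length $T_{k}$) both slot into the same framework without requiring separate treatment.
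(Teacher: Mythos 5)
Your proposal is correct and follows essentially the same route as the paper: choose a dominating parameter with discrete diagonal restriction, interchange the two shift segments $(I)$ and $(II)$ using the inclusion $[A_{k}+1, B_{k}+1] \subseteq [A_{k-1}+1, B_{k-1}+1]$ of their last columns, apply the partial Jacquet functor sending block $k$ to $(\rho,\, A_{k-1}+T_{k-1},\, B_{k}+T_{k-1}+A_{k-1}-A_{k},\, \zeta_{k})$ (the same target as the paper's, just written differently), and then invoke Proposition~\ref{prop: basic} and translate the resulting inequalities into \eqref{eq: necessary condition sub}. The only cosmetic difference is your longer discussion of the link criterion for generalized segments, which the paper disposes of in one line by the same column-inclusion observation.
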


\begin{proof}

Let $\q_{\gg}$ be a dominating parameter with discrete diagonal restriction. We also define $\q^{(k)}$ from $\q_{\gg}$ by shifting $(\rho, A_{i} + T_{i}, B_{i} + T_{i}, \zeta_{i})$ back to $(\rho, A_{i}, B_{i}, \zeta_{i})$ for $i \leqslant k$. Then
\begin{align*}
\r^{\Sigma_{0}}_{M, >_{\q}}(\q_{\gg}, \ul, \ueta) & \hookrightarrow \times_{i < k-1}
      \begin{pmatrix}
              \zeta_{i} (B_{i} + T_{i}) & \cdots & \zeta_{i}(B_{i} + 1) \\
              \vdots &  & \vdots \\
              \zeta_{i} (A_{i} + T_{i}) & \cdots & \zeta_{i}(A_{i} + 1)
       \end{pmatrix} \times
       \underbrace{\begin{pmatrix}
              \zeta_{k-1} (B_{k-1} + T_{k-1}) & \cdots & \zeta_{k-1}(B_{k-1} + 1) \\
              \vdots &  & \vdots \\
              \zeta_{k-1} (A_{k-1} + T_{k-1}) & \cdots & \zeta_{k-1}(A_{k-1} + 1)
       \end{pmatrix}}_{I} \\ 
& \times 
      \underbrace{\begin{pmatrix}
              \zeta_{k} (B_{k} + T_{k}) & \cdots & \zeta_{k}(B_{k} + 1) \\
              \vdots &  & \vdots \\
              \zeta_{k} (A_{k} + T_{k}) & \cdots & \zeta_{k}(A_{k} + 1)
       \end{pmatrix}}_{II}  \rtimes  \r^{\Sigma_{0}}_{M, >_{\q}}(\q^{(k)}, \ul, \ueta), 
\end{align*}  
where $i$ increases. Note $(I)$ and $(II)$ are interchangeable due to $[A_{k} + 1, B_{k} + 1] \subseteq [A_{k-1} + 1, B_{k-1} + 1]$. Since $B_{k} + T_{k-1} + 1 > A_{i} + T_{i} + 1$ for $i < k-1$, we have
\[
\Jac_{(\rho, A_{k} + T_{k}, B_{k} + T_{k}, \zeta_{k}) \mapsto (\rho, A_{k} + T_{k-1}, B_{k} + T_{k-1}, \zeta_{k})} \r^{\Sigma_{0}}_{M, >_{\q}}(\q_{\gg}, \ul, \ueta) \neq 0.
\]
In particular, 
\[
\Jac_{(\rho, A_{k} + T_{k}, B_{k} + T_{k}, \zeta_{k}) \mapsto (\rho, A_{k-1} + T_{k-1}, B_{k} + T_{k-1} + A_{k-1} - A_{k}, \zeta_{k})} \r^{\Sigma_{0}}_{M, >_{\q}}(\q_{\gg}, \ul, \ueta) \neq 0.
\]
By Proposition~\ref{prop: basic}, the condition \eqref{eq: basic condition} is satisfied for $(\rho,  A_{k-1} + T_{k-1}, B_{k} + T_{k-1} + A_{k-1} - A_{k}, l_{k}, \eta_{k}, \zeta_{k})$ and $(\rho, A_{k-1} + T_{k-1}, B_{k-1} + T_{k-1}, l_{k-1}, \eta_{k-1}, \zeta_{k-1})$, i.e.,
\begin{itemize}

\item If $\eta_{k} = (-1)^{A_{k-1} - B_{k-1}}\eta_{k-1}$, then 
\[
\begin{cases}
& (A_{k-1} + T_{k-1}) - l_{k} \geqslant (A_{k-1} + T_{k-1}) - l_{k-1}  \Rightarrow  l_{k-1} - l_{k} \geqslant 0, \\ \\

& (B_{k} + T_{k-1} + A_{k-1} - A_{k}) + l_{k} \geqslant (B_{k-1} + T_{k-1}) + l_{k-1} \Rightarrow  l_{k-1} - l_{k} \leqslant (A_{k-1} - B_{k-1}) - (A_{k} - B_{k}).
\end{cases}
\]

\item If $\eta_{k} \neq (-1)^{A_{k-1} - B_{k-1}}\eta_{k-1}$, then
\[
(B_{k} + T_{k-1} + A_{k-1} - A_{k}) + l_{k} > (A_{k-1} + T_{k-1}) - l_{k-1} \Rightarrow  l_{k} + l_{k-1} > A_{k} - B_{k}.
\]

\end{itemize}

This finishes the proof.

\end{proof}

\section{Change of order formulas}
\label{sec: change of order}

For $\q = \q_{p} \in \cQ{G}$, we want to show how M{\oe}glin's parametrization of elements in $\Pkt{\q}^{\Sigma_{0}}$ changes as we change the order $>_{\q}$. So we will fix an admissible order $>_{\q}$ and we also fix a self-dual unitary irreducible supercuspidal representation $\rho$ of $GL(d_{\rho})$. We index the Jordan blocks in $Jord_{\rho}(\q)$ such that 
\[
(\rho, A_{i}, B_{i}, \zeta_{i}) >_{\q} (\rho, A_{i-1}, B_{i-1}, \zeta_{i-1}).
\]
Let $(\rho, A_{k}, B_{k}, \zeta_{k}) >_{\q} (\rho, A_{k-1}, B_{k-1}, \zeta_{k-1})$ be two adjacent blocks under the order $>_{\q}$. We denote by $>'_{\q}$ the order obtained from $>_{\q}$ by switching $(\rho, A_{k-1}, B_{k-1}, \zeta_{k-1})$ and $(\rho, A_{k}, B_{k}, \zeta_{k})$. And we assume $>'_{\q}$ is also admissible. Then we are in the following two cases.

\subsection{Case $\zeta_{k} = \zeta_{k-1}$}

In this case, we can assume without loss of generality that $[A_{k}, B_{k}] \supseteq [A_{k-1}, B_{k-1}]$. 
For functions $\ul(\rho, A, B, \zeta) \in [0, [(A-B+1)/2]]$ and $\ueta(\rho, A, B, \zeta) \in \mathbb{Z}/2\mathbb{Z}$ on $Jord(\q)$, we denote 
\[
l_{k} = \ul(\rho, A_{k}, B_{k}, \zeta_{k}), \quad l_{k-1} = \ul(\rho, A_{k-1}, B_{k-1}, \zeta_{k-1}),
\] 
and 
\[
\eta_{k} = \ueta(\rho, A_{k}, B_{k}, \zeta_{k}), \quad \eta_{k-1} = \ueta(\rho, A_{k-1}, B_{k-1}, \zeta_{k-1}).
\] 
From $(\ul, \ueta)$ satisfying \eqref{eq: necessary condition sup}, we want to construct another pair $(\ul', \ueta')$ such that 
\[
\ul'(\cdot) = \ul(\cdot) \text{ and } \ueta'(\cdot) = \ueta(\cdot)
\] 
over $Jord(\q) \backslash \{(\rho, A_{k}, B_{k}, \zeta_{k}), (\rho, A_{k-1}, B_{k-1}, \zeta_{k-1})\}$. Let us denote 
\[
l'_{k} = \ul'(\rho, A_{k}, B_{k}, \zeta_{k}), \quad l'_{k-1} = \ul'(\rho, A_{k-1}, B_{k-1}, \zeta_{k-1}),
\] 
and 
\[
\eta'_{k} = \ueta'(\rho, A_{k}, B_{k}, \zeta_{k}), \quad \eta'_{k-1} = \ueta'(\rho, A_{k-1}, B_{k-1}, \zeta_{k-1}).
\] 
Then we define $l'_{k}, l'_{k-1}, \eta'_{k}, \eta'_{k-1}$ according to the following formulas.

\begin{itemize}

\item If $\eta_{k} \neq (-1)^{A_{k-1} - B_{k-1}} \eta_{k-1}$, then $\eta'_{k-1} = (-1)^{A_{k} - B_{k}} \eta'_{k}$ and 
         \[
         \begin{cases}
         l_{k-1} = l'_{k-1} \\
          l_{k} - l'_{k} = (A_{k-1} - B_{k-1} - 2l_{k-1}) + 1 \\
         \eta'_{k-1} = (-1)^{A_{k} - B_{k}} \eta_{k-1} 
         \end{cases}
         \]
         
\item If $\eta_{k} = (-1)^{A_{k-1} - B_{k-1}} \eta_{k-1}$ and 
         \[
         l_{k} - l_{k-1} < (A_{k} - B_{k})/2 - (A_{k-1} - B_{k-1}) + l_{k-1},
         \] 
         then $\eta'_{k-1} \neq (-1)^{A_{k} - B_{k}}\eta'_{k}$ and 
         \[
         \begin{cases}
         l_{k-1} = l'_{k-1} \\
         l'_{k} - l_{k} = (A_{k-1} - B_{k-1} - 2l_{k-1}) + 1 \\
         \eta'_{k-1} = (-1)^{A_{k} - B_{k}} \eta_{k-1} 
         \end{cases}
         \]         
         
\item If $\eta_{k} = (-1)^{A_{k-1} - B_{k-1}} \eta_{k-1}$ and 
         \[
         l_{k} - l_{k-1} \geq (A_{k} - B_{k})/2 - (A_{k-1} - B_{k-1}) + l_{k-1},
         \] 
         then $\eta'_{k-1} = (-1)^{A_{k} - B_{k}}\eta'_{k}$ and 
         \[
         \begin{cases}
         l_{k-1} = l'_{k-1} \\
         (l'_{k} - l'_{k-1}) + (l_{k} - l_{k-1}) = (A_{k} - B_{k}) - (A_{k-1} - B_{k-1}) \\
         \eta'_{k-1} = (-1)^{A_{k} - B_{k}} \eta_{k-1} 
         \end{cases}
         \]         
               
\end{itemize}

         
         
         
One can check $(\ul', \ueta')$ satisfies \eqref{eq: necessary condition sub}. We denote this transformation by $S^{+}$. We can also define its ``inverse" $S^{-}$, namely we start with any $(\ul', \ueta')$ satisfying \eqref{eq: necessary condition sub}, and we define $l_{k}, l_{k-1}, \eta_{k}, \eta_{k-1}$ according to the following formulas.

\begin{itemize}

\item If $\eta'_{k-1} \neq (-1)^{A_{k} - B_{k}} \eta'_{k}$, then $\eta_{k} = (-1)^{A_{k-1} - B_{k-1}}\eta_{k-1}$ and 
         \[
         \begin{cases}
         l_{k-1} = l'_{k-1} \\
          l'_{k} - l_{k} = (A_{k-1} - B_{k-1} - 2l_{k-1}) + 1 \\
         \eta'_{k-1} = (-1)^{A_{k} - B_{k}} \eta_{k-1} 
         \end{cases}
        \]
         
\item If $\eta'_{k-1} = (-1)^{A_{k} - B_{k}} \eta'_{k}$ and 
         \[
         l'_{k} - l'_{k-1} < (A_{k} - B_{k})/2 - (A_{k-1} - B_{k-1}) + l'_{k-1},
         \] 
         then $\eta_{k} \neq (-1)^{A_{k-1} - B_{k-1}}\eta_{k-1}$ and 
         \[
         \begin{cases}
         l_{k-1} = l'_{k-1} \\
         l_{k} - l'_{k} = (A_{k-1} - B_{k-1} - 2l_{k-1}) + 1 \\
         \eta'_{k-1} = (-1)^{A_{k} - B_{k}} \eta_{k-1} 
         \end{cases}
        \]         
         
\item If $\eta'_{k-1} = (-1)^{A_{k} - B_{k}} \eta'_{k}$ and 
         \[
         l'_{k} - l'_{k-1} \geq (A_{k} - B_{k})/2 - (A_{k-1} - B_{k-1}) + l'_{k-1},
         \] 
         then $\eta_{k} = (-1)^{A_{k-1} - B_{k-1}}\eta_{k-1}$ and 
         \[
         \begin{cases}
         l_{k-1} = l'_{k-1} \\
         (l'_{k} - l'_{k-1}) + (l_{k} - l_{k-1}) = (A_{k} - B_{k}) - (A_{k-1} - B_{k-1}) \\
         \eta'_{k-1} = (-1)^{A_{k} - B_{k}} \eta_{k-1} 
         \end{cases}
         \]         
         
\end{itemize}


         
         

One can also check $(\ul, \ueta)$ satisfies \eqref{eq: necessary condition sup}. Moreover, we have 

\[
S^{-} \circ S^{+} (\ul, \ueta) \sim_{\Sigma_{0}} (\ul, \ueta),
\]
and
\[
S^{+} \circ S^{-} (\ul', \ueta') \sim_{\Sigma_{0}} (\ul', \ueta').
\]
So $S^{+}$ (resp. $S^{-}$) induces a bijection between $(\ul, \eta)$ satisfying \eqref{eq: necessary condition sup} and $(\ul', \ueta')$ satisfying \eqref{eq: necessary condition sub} modulo the equivalence relation $\sim_{\Sigma_{0}}$ on both sides.

\begin{theorem}
\label{thm: change order equal sign}
Suppose $(\ul', \ueta') = S^{+} (\ul, \ueta)$, then
\[
\r^{\Sigma_{0}}_{M, >_{\q}}(\q, \ul, \ueta) = \r^{\Sigma_{0}}_{M, >'_{\q}}(\q, \ul', \ueta').
\]
\end{theorem}

Let $\q_{\gg}$ be a dominating parameter of $\q$ such that $Jord_{\rho}(\q_{\gg}) = Jord_{\rho}(\q)$, and $Jord_{\rho'}(\q_{\gg})$ has discrete diagonal restriction for $\rho' \neq \rho$. Then
\[
\r^{\Sigma_{0}}_{M, >_{\q}}(\q, \ul, \ueta) = \Jac_{X^{c}} \r^{\Sigma_{0}}_{M, >_{\q}}(\q_{\gg}, \ul, \ueta), 
\]
and
\[
\r^{\Sigma_{0}}_{M, >'_{\q}}(\q, \ul', \ueta') = \Jac_{X^{c}} \r^{\Sigma_{0}}_{M, >'_{\q}}(\q_{\gg}, \ul', \ueta').
\]
So it suffices to prove the proposition for such $\q_{\gg}$. Therefore, in the following discussions of the proof of this proposition, we will always assume $Jord_{\rho'}(\q)$ has discrete diagonal restriction for $\rho' \neq \rho$, and if we choose some dominating $\q_{\gg}$ of $\q$, we will always assume $Jord_{\rho'}(\q_{\gg}) = Jord_{\rho'}(\q)$ for $\rho' \neq \rho$.

\subsubsection{Reduction}

Let $(\ul', \ueta') = S^{+} (\ul, \ueta)$. We want to reduce the proposition to the following case:
\begin{align}
\label{eq: reduction change order equal sign}
\text{ $(\rho, A_{i}, B_{i}, \zeta_{i}) \gg (\rho, A_{i-1}, B_{i-1}, \zeta_{i-1})$ for $i \neq k$, $(\rho, A_{k}, B_{k}, \zeta_{k}) \gg (\rho, A_{k-2}, B_{k-2}, \zeta_{k -2})$ and $0$. }
\end{align}
We will do this in two steps. First we will reduce it to the case:
\begin{align}
\label{eq: reduction change order equal sign 1}
\text{ $(\rho, A_{i}, B_{i}, \zeta_{i}) \gg (\rho, A_{i-1}, B_{i-1}, \zeta_{i-1})$ for $i > k$, $(\rho, A_{k}, B_{k}, \zeta_{k}) \gg \cup_{j=1}^{k-2}\{(\rho, A_{j}, B_{j}, \zeta_{j})\}$ and $0$. }
\end{align}
Let us choose a dominating parameter $\q_{\gg}$ with respect to $>_{\q}$ such that $T_{i} =  0$ for $i < k-1$,  
\[
\text{ $(\rho, A_{i} + T_{i}, B_{i} + T_{i}, \zeta_{i}) \gg (\rho, A_{i-1} + T_{i-1}, B_{i-1} + T_{i-1}, \zeta_{i-1})$ for $i \geqslant k$ }
\]
and
\[
(\rho, A_{k} + T_{k-1}, B_{k} + T_{k-1}, \zeta_{k}) \gg \cup_{j=1}^{k-2}\{(\rho, A_{j}, B_{j}, \zeta_{j}) \text{ and } 0.
\]
From $\q_{\gg}$, we can obtain a dominating parameter $\q'_{\gg}$ with respect to $>'_{\q}$ such that $T'_{i} = T_{i}$ for $i \neq k, k-1$, and  
\(
T'_{k} = T_{k-1}, T'_{k-1} = T_{k}.
\)
Let us also denote $T_{k-1}$ by $T$, and construct $\q^{T}_{\gg}$ from $\q_{\gg}$ by changing $T_{k}$ to $T$. Let $\q^{(k)}_{\gg}$ be obtained from $\q_{\gg}$ by changing $T_{k}, T_{k-1}$ to zero.

Suppose $\r^{\Sigma_{0}}_{M, >_{\q}}(\q, \ul, \ueta) \neq 0$, then 
\begin{align*}
\r^{\Sigma_{0}}_{M, >_{\q}}(\q_{\gg}, \ul, \ueta) & \hookrightarrow 
       \begin{pmatrix}
              \zeta_{k-1} (B_{k-1} + T_{k-1}) & \cdots & \zeta_{k-1}(B_{k-1} + 1) \\
              \vdots &  & \vdots \\
              \zeta_{k-1} (A_{k-1} + T_{k-1}) & \cdots & \zeta_{k-1}(A_{k-1} + 1)
       \end{pmatrix} \times
       \begin{pmatrix}
              \zeta_{k} (B_{k} + T_{k}) & \cdots & \zeta_{k}(B_{k} + 1) \\
              \vdots &  & \vdots \\
              \zeta_{k} (A_{k} + T_{k}) & \cdots & \zeta_{k}(A_{k} + 1)
       \end{pmatrix} \\       
& \rtimes  \r^{\Sigma_{0}}_{M, >_{\q}}(\q^{(k)}_{\gg}, \ul, \ueta),
\end{align*}  
where the two generalized segments are interchangeable. So $\r^{\Sigma_{0}}_{M, >_{\q}}(\q^{T}_{\gg}, \ul, \ueta) \neq 0$, and 
\begin{align*}
\r^{\Sigma_{0}}_{M, >_{\q}}(\q^{T}_{\gg}, \ul, \ueta) & \hookrightarrow 
      \begin{pmatrix}
              \zeta_{k} (B_{k} + T) & \cdots & \zeta_{k}(B_{k} + 1) \\
              \vdots &  & \vdots \\
              \zeta_{k} (A_{k} + T) & \cdots & \zeta_{k}(A_{k} + 1)
       \end{pmatrix} \\
  & \times 
       \begin{pmatrix}
              \zeta_{k-1} (B_{k-1} + T) & \cdots & \zeta_{k-1}(B_{k-1} + 1) \\
              \vdots &  & \vdots \\
              \zeta_{k-1} (A_{k-1} + T) & \cdots & \zeta_{k-1}(A_{k-1} + 1)
       \end{pmatrix}  \rtimes  \r^{\Sigma_{0}}_{M, >_{\q}}(\q^{(k)}_{\gg}, \ul, \ueta).
\end{align*}  
By \eqref{eq: reduction change order equal sign 1}, 
\[
\r^{\Sigma_{0}}_{M, >'_{\q}}(\q^{T}_{\gg}, \ul', \ueta') = \r^{\Sigma_{0}}_{M, >_{\q}}(\q^{T}_{\gg}, \ul, \ueta) \neq 0.
\] 
Then
\begin{align*}
\r^{\Sigma_{0}}_{M, >'_{\q}}(\q'_{\gg}, \ul', \ueta') & \hookrightarrow 
       \begin{pmatrix}
              \zeta_{k-1} (B_{k-1} + T'_{k-1}) & \cdots & \zeta_{k-1}(B_{k-1} + T +1 ) \\
              \vdots &  & \vdots \\
              \zeta_{k-1} (A_{k-1} + T'_{k-1}) & \cdots & \zeta_{k-1}(A_{k-1} + T + 1)
       \end{pmatrix}  \rtimes  \r^{\Sigma_{0}}_{M, >'_{\q}}(\q^{T}_{\gg}, \ul', \ueta') \\
& \hookrightarrow       
       \underbrace{\begin{pmatrix}
            \zeta_{k-1} (B_{k-1} + T'_{k-1}) & \cdots & \zeta_{k-1}(B_{k-1} + T +1 ) \\
              \vdots &  & \vdots \\
              \zeta_{k-1} (A_{k-1} + T'_{k-1}) & \cdots & \zeta_{k-1}(A_{k-1} + T + 1)
       \end{pmatrix}}_{I} \\
& \times 
      \underbrace{\begin{pmatrix}
              \zeta_{k} (B_{k} + T) & \cdots & \zeta_{k}(B_{k} + 1) \\
              \vdots &  & \vdots \\
              \zeta_{k} (A_{k} + T) & \cdots & \zeta_{k}(A_{k} + 1)
       \end{pmatrix}}_{II}
    \times
       \underbrace{\begin{pmatrix}
              \zeta_{k-1} (B_{k-1} + T) & \cdots & \zeta_{k-1}(B_{k-1} + 1) \\
              \vdots &  & \vdots \\
              \zeta_{k-1} (A_{k-1} + T) & \cdots & \zeta_{k-1}(A_{k-1} + 1)
       \end{pmatrix}}_{III} \\       
& \rtimes  \r^{\Sigma_{0}}_{M, >_{\q}}(\q^{(k)}_{\gg}, \ul, \ueta).
\end{align*}
We can interchange $(II)$ and $(III)$. If $B_{k-1} \neq B_{k}$, then $\Jac_{\zeta_{k-1} (B_{k-1} + T)}\r^{\Sigma_{0}}_{M, >'_{\q}}(\q'_{\gg}, \ul', \ueta') = 0$. So we can ``combine" $(I)$ and $(III)$, i.e.,
\begin{align*}
\r^{\Sigma_{0}}_{M, >'_{\q}}(\q'_{\gg}, \ul', \ueta') & \hookrightarrow       
       \underbrace{\begin{pmatrix}
            \zeta_{k-1} (B_{k-1} + T'_{k-1}) & \cdots & \zeta_{k-1}(B_{k-1} +1 ) \\
              \vdots &  & \vdots \\
              \zeta_{k-1} (A_{k-1} + T'_{k-1}) & \cdots & \zeta_{k-1}(A_{k-1} + 1)
       \end{pmatrix}}_{I + III} 
 \times 
      \underbrace{\begin{pmatrix}
              \zeta_{k} (B_{k} + T) & \cdots & \zeta_{k}(B_{k} + 1) \\
              \vdots &  & \vdots \\
              \zeta_{k} (A_{k} + T) & \cdots & \zeta_{k}(A_{k} + 1)
       \end{pmatrix}}_{II} \\     
& \rtimes  \r^{\Sigma_{0}}_{M, >_{\q}}(\q^{(k)}_{\gg}, \ul, \ueta).
\end{align*}
If $B_{k-1} = B_{k}$, let us write
\begin{align*}
\r^{\Sigma_{0}}_{M, >'_{\q}}(\q'_{\gg}, \ul', \ueta') & \hookrightarrow 
       \underbrace{\begin{pmatrix}
            \zeta_{k-1} (B_{k-1} + T'_{k-1}) & \cdots & \zeta_{k-1}(B_{k-1} + T + 1) \\
              \vdots &  & \vdots \\
              \zeta_{k-1} (A_{k-1} + T'_{k-1}) & \cdots & \zeta_{k-1}(A_{k-1} + T + 1)
       \end{pmatrix}}_{I}  \\     
& \times 
       \begin{pmatrix}
              \zeta_{k-1} (B_{k-1} + T) \\
              \vdots \\
              \zeta_{k-1} (A_{k-1} + T) 
       \end{pmatrix}     
   \times  
       \underbrace{\begin{pmatrix}
              \zeta_{k-1} (B_{k-1} + T - 1) & \cdots & \zeta_{k-1}(B_{k-1} + 1) \\
              \vdots &  & \vdots \\
              \zeta_{k-1} (A_{k-1} + T -1) & \cdots & \zeta_{k-1}(A_{k-1} + 1)
       \end{pmatrix}}_{III_{-}}      \\
&\times
       \underbrace{\begin{pmatrix}
              \zeta_{k} (B_{k} + T) & \cdots & \zeta_{k}(B_{k} + 1) \\
              \vdots &  & \vdots \\
              \zeta_{k} (A_{k} + T) & \cdots & \zeta_{k}(A_{k} + 1)
       \end{pmatrix}}_{II} \rtimes  \r^{\Sigma_{0}}_{M, >_{\q}}(\q^{(k)}_{\gg}, \ul, \ueta).
\end{align*}
There exists an irreducible constituent $\sigma$ of 
\begin{align*}
       \underbrace{\begin{pmatrix}
            \zeta_{k-1} (B_{k-1} + T'_{k-1}) & \cdots & \zeta_{k-1}(B_{k-1} + T + 1) \\
              \vdots &  & \vdots \\
              \zeta_{k-1} (A_{k-1} + T'_{k-1}) & \cdots & \zeta_{k-1}(A_{k-1} + T + 1)
       \end{pmatrix}}_{I}  
\times 
       \begin{pmatrix}
              \zeta_{k-1} (B_{k-1} + T) \\
              \vdots \\
              \zeta_{k-1} (A_{k-1} + T) 
       \end{pmatrix}     
\end{align*}
such that 
\begin{align*}
\r^{\Sigma_{0}}_{M, >'_{\q}}(\q'_{\gg}, \ul', \ueta') \hookrightarrow 
       \sigma   
  & \times  
       \underbrace{\begin{pmatrix}
              \zeta_{k-1} (B_{k-1} + T - 1) & \cdots & \zeta_{k-1}(B_{k-1} + 1) \\
              \vdots &  & \vdots \\
              \zeta_{k-1} (A_{k-1} + T -1) & \cdots & \zeta_{k-1}(A_{k-1} + 1)
       \end{pmatrix}}_{III_{-}}      \\
&\times
       \underbrace{\begin{pmatrix}
              \zeta_{k} (B_{k} + T) & \cdots & \zeta_{k}(B_{k} + 1) \\
              \vdots &  & \vdots \\
              \zeta_{k} (A_{k} + T) & \cdots & \zeta_{k}(A_{k} + 1)
       \end{pmatrix}}_{II} \rtimes  \r^{\Sigma_{0}}_{M, >_{\q}}(\q^{(k)}_{\gg}, \ul, \ueta).
\end{align*}
Suppose $\Jac_{\zeta_{k-1}(B_{k-1} + T)} \sigma \neq 0$, then $\Jac_{\zeta_{k-1}(B_{k-1} + T)} \sigma$ is contained in 
\begin{align*}
       \underbrace{\begin{pmatrix}
            \zeta_{k-1} (B_{k-1} + T'_{k-1}) & \cdots & \zeta_{k-1}(B_{k-1} + T + 1) \\
              \vdots &  & \vdots \\
              \zeta_{k-1} (A_{k-1} + T'_{k-1}) & \cdots & \zeta_{k-1}(A_{k-1} + T + 1)
       \end{pmatrix}}_{I}  
\times 
       \begin{pmatrix}
              \zeta_{k-1} (B_{k-1} + T + 1) \\
              \vdots \\
              \zeta_{k-1} (A_{k-1} + T) 
       \end{pmatrix}     
\end{align*}
which is irreducible. So
\begin{align*}
\sigma \hookrightarrow \rho||^{\zeta_{k-1}(B_{k-1} + T)}  \times 
        \underbrace{\begin{pmatrix}
            \zeta_{k-1} (B_{k-1} + T'_{k-1}) & \cdots & \zeta_{k-1}(B_{k-1} + T + 1) \\
              \vdots &  & \vdots \\
              \zeta_{k-1} (A_{k-1} + T'_{k-1}) & \cdots & \zeta_{k-1}(A_{k-1} + T + 1)
       \end{pmatrix}}_{I}  
\times 
       \begin{pmatrix}
              \zeta_{k-1} (B_{k-1} + T + 1) \\
              \vdots \\
              \zeta_{k-1} (A_{k-1} + T) 
       \end{pmatrix}.
\end{align*}
Hence
\begin{align*}
\r^{\Sigma_{0}}_{M, >'_{\q}}(\q'_{\gg}, \ul', \ueta') & \hookrightarrow \rho||^{\zeta_{k-1}(B_{k-1} + T)}  \times 
       \underbrace{\begin{pmatrix}
            \zeta_{k-1} (B_{k-1} + T'_{k-1}) & \cdots & \zeta_{k-1}(B_{k-1} + T + 1) \\
              \vdots &  & \vdots \\
              \zeta_{k-1} (A_{k-1} + T'_{k-1}) & \cdots & \zeta_{k-1}(A_{k-1} + T + 1)
       \end{pmatrix}}_{I}  \\     
& \times 
       \begin{pmatrix}
              \zeta_{k-1} (B_{k-1} + T + 1) \\
              \vdots \\
              \zeta_{k-1} (A_{k-1} + T) 
       \end{pmatrix}     
   \times  
       \underbrace{\begin{pmatrix}
              \zeta_{k} (B_{k} + T) & \cdots & \zeta_{k}(B_{k} + 1) \\
              \vdots &  & \vdots \\
              \zeta_{k} (A_{k} + T) & \cdots & \zeta_{k}(A_{k} + 1)
       \end{pmatrix}}_{II}\\
&\times
       \underbrace{\begin{pmatrix}
              \zeta_{k-1} (B_{k-1} + T - 1) & \cdots & \zeta_{k-1}(B_{k-1} + 1) \\
              \vdots &  & \vdots \\
              \zeta_{k-1} (A_{k-1} + T -1) & \cdots & \zeta_{k-1}(A_{k-1} + 1)
       \end{pmatrix}}_{III_{-}}   \rtimes  \r^{\Sigma_{0}}_{M, >_{\q}}(\q^{(k)}_{\gg}, \ul, \ueta).
\end{align*}
If $A_{k} = A_{k-1}$, then $[A_{k}, B_{k}] = [A_{k-1}, B_{k-1}]$, and there is nothing to prove. So we can assume $A_{k} > A_{k-1}$. Then
\begin{align*}
\r^{\Sigma_{0}}_{M, >'_{\q}}(\q'_{\gg}, \ul', \ueta') & \hookrightarrow \rho||^{\zeta_{k-1}(B_{k-1} + T)}  
\times 
       \begin{pmatrix}
              \zeta_{k} (B_{k} + T)  \\
              \vdots \\
              \zeta_{k} (A_{k} + T) 
       \end{pmatrix}  
 \times   
       \underbrace{\begin{pmatrix}
            \zeta_{k-1} (B_{k-1} + T'_{k-1}) & \cdots & \zeta_{k-1}(B_{k-1} + T + 1) \\
              \vdots &  & \vdots \\
              \zeta_{k-1} (A_{k-1} + T'_{k-1}) & \cdots & \zeta_{k-1}(A_{k-1} + T + 1)
       \end{pmatrix}}_{I}  \\     
& \times 
       \begin{pmatrix}
              \zeta_{k-1} (B_{k-1} + T + 1) \\
              \vdots \\
              \zeta_{k-1} (A_{k-1} + T) 
       \end{pmatrix}     
   \times  
       \underbrace{\begin{pmatrix}
              \zeta_{k} (B_{k} + T - 1) & \cdots & \zeta_{k}(B_{k} + 1) \\
              \vdots &  & \vdots \\
              \zeta_{k} (A_{k} + T -1) & \cdots & \zeta_{k}(A_{k} + 1)
       \end{pmatrix}}_{II_{-}}\\
&\times
       \underbrace{\begin{pmatrix}
              \zeta_{k-1} (B_{k-1} + T - 1) & \cdots & \zeta_{k-1}(B_{k-1} + 1) \\
              \vdots &  & \vdots \\
              \zeta_{k-1} (A_{k-1} + T -1) & \cdots & \zeta_{k-1}(A_{k-1} + 1)
       \end{pmatrix}}_{III_{-}}   \rtimes  \r^{\Sigma_{0}}_{M, >_{\q}}(\q^{(k)}_{\gg}, \ul, \ueta).
\end{align*}
As a result, we have 
\[
\Jac_{\zeta_{k-1} (B_{k-1} + T), \zeta_{k}(B_{k} + T)}\r^{\Sigma_{0}}_{M, >'_{\q}}(\q'_{\gg}, \ul', \ueta') \neq 0,
\]
which is impossible. Therefore, we must have $\Jac_{\zeta_{k-1}(B_{k-1} + T)} \sigma = 0$, and hence
\begin{align*}
\sigma =       
        \underbrace{\begin{pmatrix}
            \zeta_{k-1} (B_{k-1} + T'_{k-1}) & \cdots & \zeta_{k-1}(B_{k-1} + T) \\
              \vdots &  & \vdots \\
              \zeta_{k-1} (A_{k-1} + T'_{k-1}) & \cdots & \zeta_{k-1}(A_{k-1} + T)
       \end{pmatrix}}_{I_{+}}    
\end{align*}
In this case, 
\[
\Jac_{\zeta_{k-1} (B_{k-1} + T - 1)}\r^{\Sigma_{0}}_{M, >'_{\q}}(\q'_{\gg}, \ul', \ueta') = 0. 
\]
So we again have
\begin{align*}
\r^{\Sigma_{0}}_{M, >'_{\q}}(\q'_{\gg}, \ul', \ueta') & \hookrightarrow       
       \underbrace{\begin{pmatrix}
            \zeta_{k-1} (B_{k-1} + T'_{k-1}) & \cdots & \zeta_{k-1}(B_{k-1} +1 ) \\
              \vdots &  & \vdots \\
              \zeta_{k-1} (A_{k-1} + T'_{k-1}) & \cdots & \zeta_{k-1}(A_{k-1} + 1)
       \end{pmatrix}}_{(I + III)} 
 \times 
      \underbrace{\begin{pmatrix}
              \zeta_{k} (B_{k} + T) & \cdots & \zeta_{k}(B_{k} + 1) \\
              \vdots &  & \vdots \\
              \zeta_{k} (A_{k} + T) & \cdots & \zeta_{k}(A_{k} + 1)
       \end{pmatrix}}_{II} \\     
& \rtimes  \r^{\Sigma_{0}}_{M, >_{\q}}(\q^{(k)}_{\gg}, \ul, \ueta).
\end{align*}
Since $[\zeta_{k-1} (A_{k-1} + T'_{k-1}), \zeta_{k-1}(A_{k-1} + 1)] \supseteq [\zeta_{k} (A_{k} + T), \zeta_{k}(A_{k} + 1)]$, we can interchange $(I+III)$ and $(II)$. Therefore,
\[
\r^{\Sigma_{0}}_{M, >'_{\q}}(\q^{(k)}_{\gg}, \ul', \ueta') = \r^{\Sigma_{0}}_{M, >_{\q}}(\q^{(k)}_{\gg}, \ul, \ueta).
\]
After applying $\circ_{i>k} \Jac_{(\rho, A_{i} + T_{i}, B_{i} + T_{i}, \zeta_{i}) \mapsto (\rho, A_{i}, B_{i}, \zeta_{i})}$ to both sides, we get
\[
\r^{\Sigma_{0}}_{M, >_{\q}}(\q, \ul, \ueta) = \r^{\Sigma_{0}}_{M, >'_{\q}}(\q, \ul', \ueta').
\]

Secondly, we want to further reduce it to \eqref{eq: reduction change order equal sign}. So let us assume we are in case \eqref{eq: reduction change order equal sign 1}. We can choose a dominating parameter $\q_{\gg}$ with discrete diagonal restriction so that $T_{i} = 0$ for $i >k$ and $i = k-1$. We also require 
\[
\text{ $(\rho, A_{i} + T_{i}, B_{i} + T_{i}, \zeta_{i}) \gg (\rho, A_{i-1} + T_{i-1}, B_{i-1} + T_{i-1}, \zeta_{i-1})$ for $i < k$}, 
\]
and 
\[
(\rho, A_{k}, B_{k}, \zeta_{k}) \gg (\rho, A_{k-2} + T_{k-2}, B_{k-2} + T_{k-2}, \zeta_{k -2}) \text{ and } 0.
\]
Suppose $\r^{\Sigma_{0}}_{M, >_{\q}}(\q, \ul, \ueta) \neq 0$, then
\begin{align*}
\r^{\Sigma_{0}}_{M, >_{\q}}(\q_{\gg}, \ul, \ueta) & \hookrightarrow \times_{i<k-1}
       \underbrace{\begin{pmatrix}
              \zeta_{i} (B_{i} + T_{i}) & \cdots & \zeta_{i}(B_{i} + 1) \\
              \vdots &  & \vdots \\
              \zeta_{i} (A_{i} + T_{i}) & \cdots & \zeta_{i}(A_{i} + 1)
       \end{pmatrix}}_{I_{i}} 
  \times 
      \underbrace{\begin{pmatrix}
              \zeta_{k} (B_{k} + T_{k}) & \cdots & \zeta_{k}(B_{k} + 1) \\
              \vdots &  & \vdots \\
              \zeta_{k} (A_{k} + T_{k}) & \cdots & \zeta_{k}(A_{k} + 1)
       \end{pmatrix}}_{II} \\
& \rtimes  \r^{\Sigma_{0}}_{M, >_{\q}}(\q, \ul, \ueta),
\end{align*}   
where $i$ increases. Since $B_{k} + 1 > A_{i} + T_{i} + 1$ for $i < k-1$, we can interchange $(II)$ with $(I_{i})$. Let $\q^{(k)}_{\gg}$ be obtained from $\q_{\gg}$ by changing $T_{k}$ to zero.
Then
\begin{align*}
\r^{\Sigma_{0}}_{M, >_{\q}}(\q^{(k)}_{\gg}, \ul, \ueta) & \hookrightarrow \times_{i<k-1}
       \begin{pmatrix}
              \zeta_{i} (B_{i} + T_{i}) & \cdots & \zeta_{i}(B_{i} + 1) \\
              \vdots &  & \vdots \\
              \zeta_{i} (A_{i} + T_{i}) & \cdots & \zeta_{i}(A_{i} + 1)
       \end{pmatrix} \rtimes  \r^{\Sigma_{0}}_{M, >_{\q}}(\q, \ul, \ueta),
\end{align*}   
By \eqref{eq: reduction change order equal sign}, 
\[
\r^{\Sigma_{0}}_{M, >_{\q}}(\q^{(k)}_{\gg}, \ul, \ueta) = \r^{\Sigma_{0}}_{M, >'_{\q}}(\q^{(k)}_{\gg}, \ul', \ueta') \neq 0.
\]
Since 
\[
\Jac_{(\rho, A_{k} + T_{k}, B_{k} + T_{k}, \zeta_{k}) \mapsto (\rho, A_{k}, B_{k}, \zeta_{k})}
\]
commutes with 
\[
\circ_{i < k-1}\Jac_{(\rho, A_{i} + T_{i}, B_{i} + T_{i}, \zeta_{k}) \mapsto (\rho, A_{i}, B_{i}, \zeta_{i})},
\]
we have 
\[
\circ_{i < k-1}\Jac_{(\rho, A_{i} + T_{i}, B_{i} + T_{i}, \zeta_{k}) \mapsto (\rho, A_{i}, B_{i}, \zeta_{i})} \r^{\Sigma_{0}}_{M, >_{\q}}(\q^{(k)}_{\gg}, \ul, \ueta) = \r^{\Sigma_{0}}_{M, >_{\q}}(\q, \ul, \ueta)
\]
Similarly, 
\[
\circ_{i < k-1}\Jac_{(\rho, A_{i} + T_{i}, B_{i} + T_{i}, \zeta_{k}) \mapsto (\rho, A_{i}, B_{i}, \zeta_{i})} \r^{\Sigma_{0}}_{M, >'_{\q}}(\q^{(k)}_{\gg}, \ul', \ueta') = \r^{\Sigma_{0}}_{M, >'_{\q}}(\q, \ul', \ueta')
\]
So 
\[
\r^{\Sigma_{0}}_{M, >_{\q}}(\q, \ul, \ueta) = \r^{\Sigma_{0}}_{M, >'_{\q}}(\q, \ul', \ueta').
\] 
This finishes our reduction step.

\subsubsection{Critical case}

From the previous reduction, we can now assume \eqref{eq: reduction change order equal sign}:
\begin{align*}
\text{ $(\rho, A_{i}, B_{i}, \zeta_{i}) \gg (\rho, A_{i-1}, B_{i-1}, \zeta_{i-1})$ for $i \neq k$, $(\rho, A_{k}, B_{k}, \zeta_{k}) \gg (\rho, A_{k-2}, B_{k-2}, \zeta_{k -2})$ and $0$. }
\end{align*}
In this critical case, we can actually get the nonvanishing condition.

\begin{lemma}
\label{lemma: sufficient condition}
Suppose we are in case \eqref{eq: reduction change order equal sign}.
\begin{enumerate}

\item 
$\r^{\Sigma_{0}}_{M, >_{\q}}(\q, \ul, \ueta) \neq 0$ if and only if 
\begin{align*}
\begin{cases}
\eta_{k} = (-1)^{A_{k-1} - B_{k-1}}\eta_{k-1}       & \Rightarrow 0 \leqslant l_{k} - l_{k-1} \leqslant (A_{k} - B_{k}) - (A_{k-1} - B_{k-1}),  \\
\eta_{k} \neq (-1)^{A_{k-1} - B_{k-1}}\eta_{k-1}  & \Rightarrow l_{k} + l_{k-1} > A_{k-1} - B_{k-1}.   
\end{cases} 
\end{align*}

\item 
$\r^{\Sigma_{0}}_{M, >'_{\q}}(\q, \ul', \ueta') \neq 0$ if and only if 
\begin{align*}
\begin{cases}
\eta'_{k-1} = (-1)^{A_{k} - B_{k}}\eta'_{k}       & \Rightarrow 0 \leqslant l'_{k} - l'_{k-1} \leqslant (A_{k} - B_{k}) - (A_{k-1} - B_{k-1}),  \\
\eta'_{k-1} \neq (-1)^{A_{k} - B_{k}}\eta'_{k}  & \Rightarrow l'_{k} + l'_{k-1} > A_{k-1} - B_{k-1}.   
\end{cases} 
\end{align*}
\end{enumerate}
\end{lemma}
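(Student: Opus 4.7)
My plan is to dispatch necessity first and then focus on sufficiency, which is the substantive content. The necessity in both (1) and (2) follows essentially for free from Section~\ref{subsec: necessary condition}: in (1) the adjacent pair $(\rho, A_k, B_k, \zeta_k) >_{\q} (\rho, A_{k-1}, B_{k-1}, \zeta_{k-1})$ with $[A_k, B_k] \supseteq [A_{k-1}, B_{k-1}]$ is the exact hypothesis of Lemma~\ref{lemma: necessary condition sup}, which delivers the stated conclusion; for (2) the two blocks are swapped under $>'_{\q}$, so the dominant block $(\rho, A_{k-1}, B_{k-1}, \zeta_{k-1})$ has the smaller interval, and Lemma~\ref{lemma: necessary condition sub} (with its indices matched to ours by swapping $k \leftrightarrow k-1$) yields precisely the primed condition.

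For sufficiency I plan to treat only case (1); case (2) is symmetric by relabelling. I will choose a dominating parameter $\q_{\gg}$ with discrete diagonal restriction by shifting only $(\rho, A_k, B_k, \zeta_k)$ to $(\rho, A_k + T_k, B_k + T_k, \zeta_k)$ for $T_k$ large, which the far-away hypothesis \eqref{eq: reduction change order equal sign} permits. After the shift, the pair $\{(\rho, A_k + T_k, B_k + T_k, \zeta_k), (\rho, A_{k-1}, B_{k-1}, \zeta_{k-1})\}$ sits in the basic case of Section~\ref{sec: basic} (separated from the rest, with $A_k + T_k \geqslant A_{k-1}$ and now also $B_k + T_k \geqslant B_{k-1}$), and the inequalities \eqref{eq: basic condition} hold trivially because $T_k$ is huge. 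Proposition~\ref{prop: basic} then gives $\r^{\Sigma_{0}}_{M, >_{\q}}(\q_{\gg}, \ul, \ueta) \neq 0$ for every admissible $(\ul, \ueta)$, together with an explicit injection into a product of two generalized segments (of widths $l_k$ and $l_{k-1}$, attached to the shifted block and to $(\rho, A_{k-1}, B_{k-1}, \zeta_{k-1})$ respectively) crossed with a smaller Arthur-packet representation $\sigma$.

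The heart of the argument is to apply $\Jac_{(\rho, A_k + T_k, B_k + T_k, \zeta_k) \mapsto (\rho, A_k, B_k, \zeta_k)}$ to both sides of this injection and show that the resulting Jacquet module of the right-hand side is nonzero precisely under the stated hypotheses. The operator strips off characters $\rho||^{\zeta_k z}$ for $z$ running through $X^{\gg}_{(\rho, A_k, B_k, \zeta_k)}$; most of these can only be supplied by the first generalized segment (from the shifted block), but the rows meeting the overlap interval $[A_{k-1}+1, A_k]$ can mix with the second segment or with $\sigma$. I plan to use Corollary~\ref{cor: permuting general linear} and Proposition~\ref{prop: general irreducibility} to commute the non-linked pieces, and the vanishing of $\Jac_{\zeta_k z}\sigma$ when $\zeta_k z$ does not match a $B$ or an $A+1$ of a Jordan block of the reduced parameter, to isolate the obstruction. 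A parity split on $\eta_k$ versus $(-1)^{A_{k-1}-B_{k-1}}\eta_{k-1}$ should then exhibit $0 \leqslant l_k - l_{k-1} \leqslant (A_k - B_k) - (A_{k-1} - B_{k-1})$, respectively $l_k + l_{k-1} > A_{k-1} - B_{k-1}$, as exactly the conditions that keep the Jacquet module nonzero.

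The main obstacle will be the combinatorial bookkeeping inside this Jacquet calculation, particularly at the extreme values $l_k = [(A_k-B_k+1)/2]$ or $l_{k-1} = [(A_{k-1}-B_{k-1}+1)/2]$, where the corresponding $\ueta$-sign is only defined modulo the equivalence $\sim_{\Sigma_{0}}$. To keep the edge cases coherent I expect to exploit the bijection $S^+$ (defined just before Proposition~\ref{prop: change order equal sign}) between $(\ul, \ueta)$ satisfying \eqref{eq: necessary condition sup} and $(\ul', \ueta')$ satisfying \eqref{eq: necessary condition sub} modulo $\sim_{\Sigma_{0}}$: it directly pairs up the data that must yield the same representation under the two orders, makes the symmetry between (1) and (2) transparent, and ensures that no admissible pair $(\ul, \ueta)$ satisfying the stated conditions produces the zero representation.
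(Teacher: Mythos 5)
Your necessity argument is exactly the paper's (Lemma~\ref{lemma: necessary condition sup} for (1), Lemma~\ref{lemma: necessary condition sub} with indices swapped for (2)), but your sufficiency plan has a genuine gap. You propose to apply $\Jac_{(\rho, A_{k}+T_{k}, B_{k}+T_{k}, \zeta_{k}) \mapsto (\rho, A_{k}, B_{k}, \zeta_{k})}$ to the embedding of $\r^{\Sigma_{0}}_{M, >_{\q}}(\q_{\gg}, \ul, \ueta)$ furnished by Proposition~\ref{prop: basic} and to ``show that the resulting Jacquet module of the right-hand side is nonzero'' under the stated hypotheses. But nonvanishing of the Jacquet module of the ambient induced representation says nothing about the Jacquet module of the irreducible subrepresentation $\r^{\Sigma_{0}}_{M, >_{\q}}(\q_{\gg}, \ul, \ueta)$, which is what defines $\r^{\Sigma_{0}}_{M, >_{\q}}(\q, \ul, \ueta)$; exactness of the Jacquet functor only gives the implication in the useless direction (vanishing upstairs forces vanishing downstairs, i.e.\ necessity, which you already have). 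To prove sufficiency directly you would need to rearrange until $\r^{\Sigma_{0}}_{M, >_{\q}}(\q_{\gg}, \ul, \ueta)$ itself embeds into an induced representation with the shifting segment $X^{\gg}_{(\rho,A_{k},B_{k},\zeta_{k})}$ in front, so that Frobenius reciprocity yields the nonvanishing — this is what the sufficiency half of Proposition~\ref{prop: basic} does when the blocks are separated, and producing such an embedding when $[A_{k},B_{k}]$ genuinely overlaps $[A_{k-1},B_{k-1}]$ and $l_{k}, l_{k-1}$ interact with the overlap is precisely the hard content your sketch does not supply. Your closing appeal to $S^{+}$ to ``ensure that no admissible pair produces the zero representation'' is moreover circular: the change-of-order formula (Proposition~\ref{prop: change order equal sign}) is proved in the critical case only after, and partly by means of, this lemma, and the purely combinatorial bijection between parameter sets carries no nonvanishing information by itself.

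For contrast, the paper avoids any direct Jacquet computation here by a counting argument: Lemma~\ref{lemma: necessary condition sup} bounds $|\Pkt{\q}^{\Sigma_{0}}|$ above by the number of pairs $(\ul,\ueta)$ satisfying the stated condition; on the other hand the functor $\Jac_{(\rho, A_{k}, B_{k}, \zeta) \mapsto (\rho, A_{k-1}, B_{k}-A_{k}+A_{k-1}, \zeta)}$ sends $\Pkt{\q}^{\Sigma_{0}}$ onto $\Pkt{\q^{*}}^{\Sigma_{0}}$, where $\q^{*}$ (the block $[A_{k},B_{k}]$ slid down so its top is $A_{k-1}$) is in the basic case under $>'_{\q}$, so Proposition~\ref{prop: basic} computes $|\Pkt{\q^{*}}^{\Sigma_{0}}|$ exactly, and this count coincides with the upper bound; hence every pair satisfying the necessary condition must contribute a nonzero representation. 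If you want to keep a direct approach you would have to reproduce embedding constructions of the type in Proposition~\ref{prop: basic} (interchanges of non-linked generalized segments plus dualization via Corollary~\ref{cor: dualizing classical}) for the overlapping configuration, which is substantially more work than the two-line counting step you are replacing.
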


\begin{proof}
We will only show $(1)$, and $(2)$ is similar. One first notes the necessity of the nonvanishing condition has been shown in Lemma~\ref{lemma: necessary condition sup}, so we get an upper bound for the size of the packet $|\Pkt{\q}^{\Sigma_{0}}|$. In fact we can also get a lower bound for it. Let us define $\q^{*}$ by changing $(\rho, A_{k}, B_{k}, \zeta)$ to $(\rho, A_{k-1}, B_{k} - A_{k} + A_{k-1}, \zeta)$. Then the functor $\Jac_{(\rho, A_{k}, B_{k}, \zeta) \mapsto (\rho, A_{k-1}, B_{k} - A_{k} + A_{k-1}, \zeta)}$ induces a surjection from $\Pkt{\q}^{\Sigma_{0}}$ to $\Pkt{\q^{*}}^{\Sigma_{0}}$:
\[
\r^{\Sigma_{0}}_{M, >_{\q^{*}}}(\q, \ul, \ueta) = \Jac_{(\rho, A_{k}, B_{k}, \zeta) \mapsto (\rho, A_{k-1}, B_{k} - A_{k} + A_{k-1}, \zeta)} \r^{\Sigma_{0}}_{M, >_{\q}}(\q, \ul, \ueta).
\]
So $|\Pkt{\q^{*}}^{\Sigma_{0}}| < |\Pkt{\q}^{\Sigma_{0}}|$. By Proposition~\ref{prop: basic}, we have $\r^{\Sigma_{0}}_{M, >'_{\q}}(\q^{*}, \ul', \ueta') \neq 0$ if and only if
\begin{align*}
\begin{cases}
\eta'_{k-1} = (-1)^{A_{k} - B_{k}}\eta'_{k}       & \Rightarrow 0 \leqslant l'_{k} - l'_{k-1} \leqslant (A_{k-1} - (B_{k} - A_{k} + A_{k-1})) - (A_{k-1} - B_{k-1}),  \\
\eta'_{k-1} \neq (-1)^{A_{k} - B_{k}}\eta'_{k}  & \Rightarrow l'_{k} + l'_{k-1} > A_{k-1} - B_{k-1}.   
\end{cases} 
\end{align*}
Comparing this condition with the necessary condition for $\r^{\Sigma_{0}}_{M, >_{\q}}(\q, \ul, \ueta) \neq 0$, one can easily see that $|\Pkt{\q^{*}}^{\Sigma_{0}}|$ is equal to the upper bound for $|\Pkt{\q}^{\Sigma_{0}}|$. Therefore, $|\Pkt{\q}^{\Sigma_{0}}|$ must be equal to its upper bound, i.e., the necessary condition for $\r^{\Sigma_{0}}_{M, >_{\q}}(\q, \ul, \ueta) \neq 0$ is also sufficient.
\end{proof}

Now we begin to prove the change of order formula in this case. Let us define $\q_{-}$ by
\[
Jord(\q_{-}) = Jord(\q) \backslash \{(\rho, A_{k}, B_{k}, \zeta_{k}), (\rho, A_{k-1}, B_{k-1}, \zeta_{k-1})\},
\]
then $\q_{-}$ has discrete diagonal restriction. Let $\zeta = \zeta_{k} = \zeta_{k-1}$. 
We are going to break the proof into four steps.

{\bf Step One:} We want to show if $\r^{\Sigma_{0}}_{M, >_{\q}}(\q, \ul, \ueta) = \r^{\Sigma_{0}}_{M, >'_{\q}}(\q, \ul', \ueta') \neq 0$, then we can choose $(\ul', \ueta')$ within its ($\sim_{\Sigma_{0}}$) equivalence class such that
\[
\ul'(\cdot) = \ul(\cdot) \text{ and } \ueta'(\cdot) = \ueta(\cdot)
\] 
over $Jord(\q_{-})$. 

\begin{itemize}

\item Suppose $\ul (\cdot) = 0$ over $Jord(\q_{-})$. We can define $\q_{e_{-}}$ by
\[
Jord(\q_{e_{-}}) := \cup_{(\rho^{\sharp}, A^{\sharp}, B^{\sharp}, \zeta^{\sharp}) \in Jord(\q_{-})} \cup_{C^{\sharp} \in [A^{\sharp}, B^{\sharp}]} \{(\rho^{\sharp}, C^{\sharp}, C^{\sharp}, \zeta^{\sharp})\} 
\]
And we define $\q_{e}$ by adding $(\rho, A_{k}, B_{k}, \zeta_{k}), (\rho, A_{k-1}, B_{k-1}, \zeta_{k-1})$. From $(\ul, \ueta)$, we obtain $(\ul_{e}, \ueta_{e})$ such that
\[
\r^{\Sigma_{0}}_{M, >_{\q}}(\q, \ul, \ueta) = \r^{\Sigma_{0}}_{M, >_{\q}}(\q_{e}, \ul_{e}, \ueta_{e}).
\]
Suppose  
\[
\r^{\Sigma_{0}}_{M, >'_{\q}}(\q_{e}, \ul'_{e}, \ueta'_{e}) = \r^{\Sigma_{0}}_{M, >_{\q}}(\q_{e}, \ul_{e}, \ueta_{e}).
\] 
By computing $\e^{M/W}_{\q_{e}}$ with respect to $>_{\q}$ and $>'_{\q}$ (cf. \eqref{eq: M/W}), one finds 
\[
\ueta'_{e}(\cdot) = \ueta_{e}(\cdot)
\] 
over $Jord(\q_{e_{-}})$. Therefore, if we let
\[
\ul'(\cdot) = \ul(\cdot) = 0 \text{ and } \ueta'(\cdot) = \ueta(\cdot)
\] 
over $Jord(\q_{-})$, and 
\[
\ul'(\cdot) = \ul'_{e}(\cdot) \text{ and } \ueta'(\cdot) = \ueta'_{e}(\cdot)
\]
over $(\rho, A_{k}, B_{k}, \zeta_{k}), (\rho, A_{k-1}, B_{k-1}, \zeta_{k-1})$, then
\[
\r^{\Sigma_{0}}_{M, >'_{\q}}(\q, \ul', \ueta')  = \r^{\Sigma_{0}}_{M, >'_{\q}}(\q_{e}, \ul'_{e}, \ueta'_{e}) = \r^{\Sigma_{0}}_{M, >_{\q}}(\q, \ul, \ueta).
\]

\item Let $(\q_{0}, \ul_{0})$ be obtained from $(\q, \ul)$ by changing $(\rho^{\sharp}, A^{\sharp}, B^{\sharp}, \zeta^{\sharp})$ to $(\rho^{\sharp}, A^{\sharp} - l^{\sharp}, B^{\sharp} + l^{\sharp}, \zeta^{\sharp})$ and letting $\ul_{0}(\rho^{\sharp}, A^{\sharp} - l^{\sharp}, B^{\sharp} + l^{\sharp}, \zeta^{\sharp}) = 0$ for all $(\rho^{\sharp}, A^{\sharp}, B^{\sharp}, \zeta^{\sharp}) \in Jord(\q_{-})$, where $l^{\sharp} = \ul(\rho^{\sharp}, A^{\sharp}, B^{\sharp}, \zeta^{\sharp})$. Then
\[
\r^{\Sigma_{0}}_{M, >_{\q}}(\q, \ul, \ueta) \hookrightarrow \times_{(\rho^{\sharp}, A^{\sharp}, B^{\sharp}, \zeta^{\sharp}) \in Jord(\q_{-})} \tau(\rho^{\sharp}, A^{\sharp} - l^{\sharp}, B^{\sharp} + l^{\sharp}, \zeta^{\sharp}) \rtimes \r^{\Sigma_{0}}_{M, >_{\q}}(\q_{0}, \ul_{0}, \ueta),
\]
as the unique irreducible subrepresentation, where 
\[
\tau(\rho^{\sharp}, A^{\sharp} - l^{\sharp}, B^{\sharp} + l^{\sharp}, \zeta^{\sharp}) := \begin{pmatrix}
                \zeta^{\sharp} B^{\sharp} & \cdots & -\zeta^{\sharp} A^{\sharp} \\
                 \vdots &  & \vdots \\
                \zeta^{\sharp} (B^{\sharp} + l^{\sharp} - 1) & \cdots & - \zeta^{\sharp} (A^{\sharp} - l^{\sharp} + 1)
                \end{pmatrix}.
\]
Suppose $\r^{\Sigma_{0}}_{M, >_{\q}}(\q_{0}, \ul_{0}, \ueta) = \r^{\Sigma_{0}}_{M, >'_{\q}}(\q_{0}, \ul'_{0}, \ueta')$. We know from the previous discussion that
\[
\ul'_{0}(\cdot) = \ul_{0}(\cdot) = 0 \text{ and } \ueta'(\cdot) = \ueta(\cdot) 
\]
over $Jord(\q_{0}) \backslash \{(\rho, A_{k}, B_{k}, \zeta_{k}), (\rho, A_{k-1}, B_{k-1}, \zeta_{k-1})\}$. From $\ul'_{0}$ we can obtain $\ul'$ such that 
\[
\ul'(\rho^{\sharp}, A^{\sharp}, B^{\sharp}, \zeta^{\sharp}) = l^{\sharp} = \ul(\rho^{\sharp}, A^{\sharp}, B^{\sharp}, \zeta^{\sharp})
\]
for $(\rho^{\sharp}, A^{\sharp}, B^{\sharp}, \zeta^{\sharp}) \in Jord(\q_{-})$.
Then
\[
\r^{\Sigma_{0}}_{M, >'_{\q}}(\q, \ul', \ueta') \hookrightarrow \times_{(\rho^{\sharp}, A^{\sharp}, B^{\sharp}, \zeta^{\sharp}) \in Jord(\q_{-})} \tau(\rho^{\sharp}, A^{\sharp} - l^{\sharp}, B^{\sharp} + l^{\sharp}, \zeta^{\sharp})  \rtimes \r^{\Sigma_{0}}_{M, >'_{\q}}(\q_{0}, \ul'_{0}, \ueta').
\]
as the unique irreducible subrepresentation. Therefore, $\r^{\Sigma_{0}}_{M, >_{\q}}(\q, \ul, \ueta) = \r^{\Sigma_{0}}_{M, >'_{\q}}(\q, \ul', \ueta')$. This finishes the first step.

\end{itemize}

{\bf Step Two:} We want to give some restrictions on $(\ul', \ueta')$ in terms of $(\ul, \ueta)$, when $\r^{\Sigma_{0}}_{M, >_{\q}}(\q, \ul, \ueta) = \r^{\Sigma_{0}}_{M, >'_{\q}}(\q, \ul', \ueta') \neq 0$. From the previous step, we can asssume 
\[
\ul'(\cdot) = \ul(\cdot) \text{ and } \ueta'(\cdot) = \ueta(\cdot)
\] 
over $Jord(\q_{-})$. Next we will consider the partial cuspidal support of $\r^{\Sigma_{0}}_{M, >_{\q}}(\q, \ul, \ueta)$, which is defined as follows. Recall the cuspidal support of an irreducible admissible representation $\pi^{\Sigma_{0}}$ of $G^{\Sigma_{0}}$ is a $G^{\Sigma_{0}}$-conjugacy class of pairs $(M^{\Sigma_{0}}, \sigma^{\Sigma_{0}})$, where 
\[
M^{\Sigma_{0}} \cong \prod_{i} GL(n_{i}) \times G^{\Sigma_{0}}_{-}
\] 
is a Levi subgroup of $G^{\Sigma_{0}}$, and 
\[
\sigma^{\Sigma_{0}} \cong (\otimes_{i} \tau_{i}) \otimes \pi^{\Sigma_{0}}_{-}
\]
is an irreducible supercuspidal representation of $M^{\Sigma_{0}}$. We call $(G^{\Sigma_{0}}_{-}, \pi^{\Sigma_{0}}_{-})$ the partial cuspidal support of $\pi^{\Sigma_{0}}$. The partial cuspidal support of $\r^{\Sigma_{0}}_{M, >_{\q}}(\q, \ul, \ueta)$ can be computed as follows. Let $\psi_{\gg}$ be a dominating parameter of $\psi$ with discrete diagonal restriction, obtained by shifting $(\rho, A_{k}, B_{k}, \zeta_{k})$ to $(\rho, A_{k} + T_{k}, B_{k} + T_{k}, \zeta_{k})$. Then $\r^{\Sigma_{0}}_{M, >_{\q}}(\q, \ul, \ueta)$ has the same partial cuspidal support as $\r^{\Sigma_{0}}_{M, >_{\q}}(\q_{\gg}, \ul, \ueta)$. Combined with the generalized Aubert involution (cf. \cite[Definition 6.15]{Xu:Apacket}), it is also equal to that of 
\begin{align}
\label{eq: partial cuspidal support}
\r^{\Sigma_{0}}_{M, >_{\q}}\Big(\q^{0}_{\gg}, 0, \ueta\Big),
\end{align}
where $Jord(\q^{0}_{\gg})$ is obtained from $Jord(\q_{\gg})$ by changing $(\rho^{\sharp}, A^{\sharp}, B^{\sharp}, \zeta^{\sharp})$ to $(\rho^{\sharp}, A^{\sharp} - l^{\sharp}, B^{\sharp} + l^{\sharp}, +)$ for $l^{\sharp} = \ul(\rho^{\sharp}, A^{\sharp}, B^{\sharp}, \zeta^{\sharp})$.  This is a discrete series representation and its cuspidal support can be computed as in \cite{Xu:cusp}. To handle the combinatorics involved, we define an operation $(\ast)$ on the equivalence classes of $(\rho, A, B, 0, \eta, +) \sim (\rho, A + T, B + T, 0, \eta, +)$ as follows:
\[
(\rho, A, B, 0, \eta, +) \ast (\rho, A', B', 0, \eta', +) \sim (\rho, A^{*}, B^{*}, 0, \eta^{*}, +).
\]
\begin{enumerate}

\item If $\eta \neq (-1)^{A' - B'}\eta'$, then 
\[
\begin{cases}
A^{*} - B^{*} = (A - B) + (A' - B') + 1\\
\eta^{*} = \eta'
\end{cases}                 
\]                  

\item If $\eta = (-1)^{A' - B'}\eta'$ and 
\begin{enumerate}

\item $A - B > A' - B' \Rightarrow \begin{cases}
A^{*} - B^{*} = (A - B) - (A' - B') - 1 \\
\eta^{*} \neq \eta'
\end{cases}$

\item $A - B < A' - B' \Rightarrow \begin{cases}
A^{*} - B^{*} = (A' - B') - (A - B) - 1 \\
\eta^{*} = \eta'
\end{cases}$

\end{enumerate}                           
                     
\end{enumerate}
If $\eta = (-1)^{A' - B'}\eta'$ and $A - B = A' - B'$, we define the product to be $\emptyset$. We can force $\emptyset$ to be the identity element under this operation, then it is easy to check that $\{\emptyset\} \sqcup \{(\rho, A, B, 0, \eta, +)\}/_{\sim}$ forms a group. The partial cuspidal support of \eqref{eq: partial cuspidal support} is equal to that of 
\begin{align*}
\r^{\Sigma_{0}}_{M, >_{\q}}\Big(\q^{\sharp}, 0, \ueta^{\sharp}; \, \ast_{i}(\rho, A_{i} - l_{i}, B_{i} + l_{i}, 0, \eta_{i}, +)\Big),
\end{align*}
where $Jord(\q^{\sharp}) := Jord(\psi^{0}_{\gg}) \backslash Jord_{\rho}(\psi^{0}_{\gg}) $, $\ueta^{\sharp}$ is obtained by restriction, and the product $\ast$ is taken in the decreasing order with respect to $>_{\q}$. In the same way, one can show the partial cuspidal support of $\r^{\Sigma_{0}}_{M, >'_{\q}}(\q, \ul', \ueta')$ is equal to that of
\[
\r^{\Sigma_{0}}_{M, >'_{\q}}\Big(\q^{\sharp}, 0, \ueta^{\sharp}; \, \ast_{i}(\rho, A_{i} - l'_{i}, B_{i} + l'_{i}, 0, \eta'_{i}, +)\Big),
\]
where the product $\ast$ is taken in the decreasing order with respect to $>'_{\q}$. As a result, we must have 
\[
\ast_{i}(\rho, A_{i} - l_{i}, B_{i} + l_{i}, 0, \eta_{i}, +) \sim \ast_{i}(\rho, A_{i} - l'_{i}, B_{i} + l'_{i}, 0, \eta'_{i}, +).
\]
It follows
\[
(\rho, A_{k} - l_{k}, B_{k} + l_{k}, 0, \eta_{k}, +) \ast (\rho, A_{k-1} - l_{k-1}, B_{k-1} + l_{k-1}, 0, \eta_{k-1}, +)  
\]
is equivalent to 
\[
(\rho, A_{k-1} - l'_{k-1}, B_{k-1} + l'_{k-1}, 0, \eta'_{k-1}, +) \ast (\rho, A_{k} - l'_{k}, B_{k} + l'_{k}, 0, \eta'_{k}, +).
\]
So we are necessarily in one of the following situations.

\begin{enumerate}

\item If $\eta_{k} \neq (-1)^{A_{k-1} -B_{k-1}}\eta_{k-1}$ and $\eta'_{k-1} = (-1)^{A_{k} - B_{k}} \eta'_{k}$, then one of the following cases is satisfied.
        \\
        \begin{enumerate}
        
        \item $\eta_{k-1} = (-1)^{A_{k} - B_{k}} \eta'_{k-1}$ and \\
        \[
        (A_{k-1} - B_{k-1} - 2l_{k-1}) + (A_{k} - B_{k} - 2l_{k}) + 2 = (A_{k} - B_{k} - 2l'_{k}) - (A_{k-1} - B_{k-1} - 2l'_{k-1})
        \]
        i.e.,
        \[
        (l_{k} + l_{k-1}) - (l'_{k} - l'_{k-1})= A_{k-1} - B_{k-1} + 1.
        \]
        \\
        \item $\eta_{k-1} \neq (-1)^{A_{k} - B_{k}} \eta'_{k-1}$ and \\
        \[
        (A_{k-1} - B_{k-1} - 2l_{k-1}) + (A_{k} - B_{k} - 2l_{k}) + 2 = (A_{k-1} - B_{k-1} - 2l'_{k-1}) - (A_{k} - B_{k} - 2l'_{k}) 
        \]
         i.e.,
        \[
        (l_{k} + l_{k-1}) + (l'_{k} - l'_{k-1})= A_{k} - B_{k} + 1.
        \]        
         \\              
        \end{enumerate}

\item If $\eta_{k} = (-1)^{A_{k-1} -B_{k-1}}\eta_{k-1}$ and $\eta'_{k-1} \neq (-1)^{A_{k} - B_{k}} \eta'_{k}$, then one of the following cases is satisfied.
        \\
        \begin{enumerate}
        
        \item $\eta_{k-1} = (-1)^{A_{k} - B_{k}} \eta'_{k-1}$ and \\
        \[
        (A_{k} - B_{k} - 2l_{k}) - (A_{k-1} - B_{k-1} - 2l_{k-1}) = (A_{k-1} - B_{k-1} - 2l'_{k-1}) + (A_{k} - B_{k} - 2l'_{k}) + 2
        \]
        i.e.,
        \[
        (l'_{k} + l'_{k-1}) - (l_{k} - l_{k-1})= A_{k-1} - B_{k-1} + 1.
        \]        
        \\
        
        \item $\eta_{k-1} \neq (-1)^{A_{k} - B_{k}} \eta'_{k-1}$ and \\
        \[
        (A_{k-1} - B_{k-1} - 2l_{k-1}) - (A_{k} - B_{k} - 2l_{k}) = (A_{k-1} - B_{k-1} - 2l'_{k-1}) + (A_{k} - B_{k} - 2l'_{k}) + 2
        \]
        i.e.,
        \[
        (l'_{k} + l'_{k-1}) + (l_{k} - l_{k-1})= A_{k} - B_{k} + 1.
        \]        
        \\
                       
        \end{enumerate}

\item If $\eta_{k} = (-1)^{A_{k-1} -B_{k-1}}\eta_{k-1}$ and $\eta'_{k-1} = (-1)^{A_{k} - B_{k}} \eta'_{k}$, then one of the following cases is satisfied.
        \\
        \begin{enumerate}
        
        \item $\eta_{k-1} = (-1)^{A_{k} - B_{k}} \eta'_{k-1}$ and \\
        \[
        (A_{k-1} - B_{k-1} - 2l_{k-1}) - (A_{k} - B_{k} - 2l_{k}) = (A_{k} - B_{k} - 2l'_{k}) - (A_{k-1} - B_{k-1} - 2l'_{k-1})
        \]
        i.e.,
        \[
        (l_{k} - l_{k-1}) + (l'_{k} - l'_{k-1}) = (A_{k} - B_{k}) - (A_{k-1} - B_{k-1}).
        \]
        \\
                
        
        \item $\eta_{k-1} \neq (-1)^{A_{k} - B_{k}} \eta'_{k-1}$ and \\
        \[
        (A_{k-1} - B_{k-1} - 2l_{k-1}) - (A_{k} - B_{k} - 2l_{k}) = (A_{k-1} - B_{k-1} - 2l'_{k-1}) - (A_{k} - B_{k} - 2l'_{k})
        \]
        i.e.,
        \[
        l_{k} - l_{k-1} = l'_{k} - l'_{k-1}.
        \]
        \\       
        
                              
        \end{enumerate}

\item If $\eta_{k} \neq (-1)^{A_{k-1} -B_{k-1}}\eta_{k-1}$ and $\eta'_{k-1} \neq (-1)^{A_{k} - B_{k}} \eta'_{k}$, then the following case is satisfied.
        \\
        \begin{enumerate}
        
        \item $\eta_{k-1} \neq (-1)^{A_{k} - B_{k}} \eta'_{k-1}$ and \\
        \[
        (A_{k-1} - B_{k-1} - 2l_{k-1}) + (A_{k} - B_{k} - 2l_{k}) = (A_{k-1} - B_{k-1} - 2l'_{k-1}) + (A_{k} - B_{k} - 2l'_{k})
        \]
        i.e.,
        \[
        l_{k} + l_{k-1} = l'_{k} + l'_{k-1}.
        \]         
                                             
        \end{enumerate}

\end{enumerate}
Since in our change of order formulas, we always have 
\[
\eta_{k-1} = (-1)^{A_{k} - B_{k}} \eta'_{k-1},
\]
it is enough to eliminate those cases in which this is not satisfied. This is not easy in general, but at least we can do this when $l_{k-1} = 0$.

{\bf Step Three:} We would like to derive the change of order formula when $l_{k-1} = 0$. Let us define $\q_{e}$ by
\[
Jord(\q_{e}) := \cup_{C_{k-1} \in [A_{k-1}, B_{k-1}]} \{(\rho, C_{k-1}, C_{k-1}, \zeta_{k-1})\} \cup Jord(\q) \backslash \{(\rho, A_{k-1}, B_{k-1}, \zeta_{k-1})\}.
\]
Then we can assume $\r^{\Sigma_{0}}_{M, >_{\q}}(\q, \ul, \ueta) = \r^{\Sigma_{0}}_{M, >_{\q}}(\q_{e}, \ul_{e}, \ueta_{e}) \neq 0$. Suppose 
\[
\r^{\Sigma_{0}}_{M, >_{\q}}(\q_{e}, \ul_{e}, \ueta_{e}) = \r^{\Sigma_{0}}_{M, >'_{\q}}(\q_{e}, \ul'_{e}, \ueta'_{e}).
\]
One can show as in {\bf Step one} that 
\[
\ul'_{e}(\cdot) = \ul_{e}(\cdot) \text{ and } \ueta'_{e}(\cdot) = \ueta_{e}(\cdot)
\]
over $Jord(\q_{-})$. Moreover, by computing $\e^{M/W}_{\q_{e}}$ with respect to $>_{\q}$ and $>'_{\q}$, one finds $\ueta'_{e}$ is alternating over $\{\cup_{C_{k-1} \in [A_{k-1}, B_{k-1}]} (\rho, C_{k-1}, C_{k-1}, \zeta_{k-1})\}$. So from $(\ul'_{e}, \ueta'_{e})$, we can obtain $(\ul', \ueta')$ by letting $l'_{k-1} = 0$ and $\eta'_{k-1} = \ueta'_{e}(\rho, B_{k-1}, B_{k-1}, \zeta)$. Then 
\[
\r^{\Sigma_{0}}_{M, >'_{\q}}(\q, \ul', \ueta') = \r^{\Sigma_{0}}_{M, >'_{\q}}(\q_{e}, \ul'_{e}, \ueta'_{e}).
\]
It follows from {\bf Step two} that we have several restrictions on $(\ul', \ueta')$. To eliminate the case that $\eta_{k-1} \neq (-1)^{A_{k} - B_{k}} \eta'_{k-1}$, we would like to compute the difference between $\eta_{k-1}$ and $\eta'_{k-1}$ explicitly. The idea is again to compute $\e^{M/W}_{\q_{e}}$ with respect to $>_{\q}$ and $>'_{\q}$ (cf. \eqref{eq: M/W}). To distinguish this two orders, we write $\e^{M/W}_{\q_{e}}$ for $>_{\q}$ and $\e'^{M/W}_{\q_{e}}$ for $>'_{\q}$. Then
\[
\eta_{k-1} \e^{M/W}_{\q_{e}}(\rho, B_{k-1}, B_{k-1}, \zeta) = \eta'_{k-1} \e'^{M/W}_{\q_{e}}(\rho, B_{k-1}, B_{k-1}, \zeta).
\]
To apply the formula for $\e^{M/W}_{\q_{e}}$ (resp. $\e'^{M/W}_{\q_{e}}$), we need to write $(\rho, A_{k}, B_{k}, \zeta) = (\rho, a_{k}, b_{k})$.

\begin{itemize}

\item Suppose $\zeta = +1$.
        
        \begin{enumerate}
        
        \item $A_{k} \in \mathbb{Z}$, then 
        \(
        \begin{cases}
        a_{k}, b_{k} \text{ even } & \Rightarrow \eta_{k-1} = -\eta'_{k-1} \\
        a_{k}, b_{k} \text{ odd }  & \Rightarrow \eta_{k-1} = \eta'_{k-1}. 
        \end{cases}
        \)
        
        \item $A_{k} \notin \mathbb{Z}$, then 
        \(
        \begin{cases}
        a_{k} \text{ odd, } b_{k} \text{ even } & \Rightarrow \eta_{k-1} = -\eta'_{k-1} \\
        a_{k} \text{ even, } b_{k} \text{ odd }  & \Rightarrow \eta_{k-1} = \eta'_{k-1}. 
        \end{cases}
        \)        
        
        \end{enumerate}

\item Suppose $\zeta = -1$.

        \begin{enumerate}
        
        \item $A_{k} \in \mathbb{Z}$, then 
        \(
        \begin{cases}
        a_{k}, b_{k} \text{ even } & \Rightarrow \eta_{k-1} = -\eta'_{k-1} \\
        a_{k}, b_{k} \text{ odd }  & \Rightarrow \eta_{k-1} = \eta'_{k-1}. 
        \end{cases}
        \)
        
        \item $A_{k} \notin \mathbb{Z}$, then 
        \(
        \begin{cases}
        a_{k} \text{ even, } b_{k} \text{ odd } & \Rightarrow \eta_{k-1} = -\eta'_{k-1} \\
        a_{k} \text{ odd, } b_{k} \text{ even }  & \Rightarrow \eta_{k-1} = \eta'_{k-1}. 
        \end{cases}
        \)        
        
        \end{enumerate}

\end{itemize}
It follows from the computations here that 
\[
\eta_{k-1} = (-1)^{inf(a_{k}, b_{k})-1} \eta'_{k-1}.
\]
Recall $A_{k} - B_{k} + 1 = inf (a_{k}, b_{k})$, so this is exactly what we want. Adding this condition, the remaining cases in {\bf Step two} are as follows.
\begin{enumerate}

\item If $\eta_{k} \neq (-1)^{A_{k-1} -B_{k-1}}\eta_{k-1}$ and $\eta'_{k-1} = (-1)^{A_{k} - B_{k}} \eta'_{k}$, then $\eta_{k-1} = (-1)^{A_{k} - B_{k}} \eta'_{k-1}$ and 
        \[
        l_{k} - l'_{k} = A_{k-1} - B_{k-1} + 1.
        \]

\item If $\eta_{k} = (-1)^{A_{k-1} -B_{k-1}}\eta_{k-1}$ and $\eta'_{k-1} \neq (-1)^{A_{k} - B_{k}} \eta'_{k}$, then $\eta_{k-1} = (-1)^{A_{k} - B_{k}} \eta'_{k-1}$ and 
        \[
        l'_{k} - l_{k} = A_{k-1} - B_{k-1} + 1.
        \]

\item If $\eta_{k} = (-1)^{A_{k-1} -B_{k-1}}\eta_{k-1}$ and $\eta'_{k-1} = (-1)^{A_{k} - B_{k}} \eta'_{k}$, then $\eta_{k-1} = (-1)^{A_{k} - B_{k}} \eta'_{k-1}$ and 
        \[
        l_{k} + l'_{k} = (A_{k} - B_{k}) - (A_{k-1} - B_{k-1}) .
        \]

\end{enumerate}
So this finishes the proof of the change of order formula in the case $l_{k-1} = 0$.

{\bf Step Four:} In this last step, we want to show that if $\r^{\Sigma_{0}}_{M, >_{\q}}(\q, \ul, \ueta) = \r^{\Sigma_{0}}_{M, >'_{\q}}(\q, \ul', \ueta') \neq 0$ with $\l_{k-1} \neq 0$, then $(\ul', \ueta') \sim_{\Sigma_{0}} S^{+}(\ul, \ueta)$. Note when $[A_{k}, B_{k}] = [A_{k-1}, B_{k-1}]$, this is obvious. So from now on, we will assume 
\[
[A_{k}, B_{k}] \neq [A_{k-1}, B_{k-1}].
\] 
By the nonvanishing condition in Lemma~\ref{lemma: sufficient condition}, we have $l_{k} \geqslant l_{k-1}$. Since we have assumed $l_{k-1} \neq 0$, then $l_{k} \neq 0$.

First we would like to reduce it to the case $B_{k} = B_{k-1}$. Suppose $B_{k-1} > B_{k}$, let us define $\q^{*}$ from $\q$ by shifting $(\rho, A_{k-1}, B_{k-1}, \zeta)$ to $(\rho, A_{k-1} - B_{k-1} + B_{k}, B_{k}, \zeta)$. Then we have 
\[
\r^{\Sigma_{0}}_{M, >'_{\q}}(\q^{*}, \ul', \ueta') = \Jac_{(\rho, A_{k-1}, B_{k-1}, \zeta) \mapsto (\rho, A_{k-1} - B_{k-1} + B_{k}, B_{k}, \zeta)} \r^{\Sigma_{0}}_{M, >'_{\q}}(\q, \ul', \ueta').
\]
So $\Jac_{(\rho, A_{k-1}, B_{k-1}, \zeta) \mapsto (\rho, A_{k-1} - B_{k-1} + B_{k}, B_{k}, \zeta)}$ induces a bijection from $\Pkt{\q}^{\Sigma_{0}}$ to $\Pkt{\q^{*}}^{\Sigma_{0}}$ by Lemma~\ref{lemma: sufficient condition}. On the other side, we claim 
\[
\r^{\Sigma_{0}}_{M, >_{\q}}(\q^{*}, \ul, \ueta) = \Jac_{(\rho, A_{k-1}, B_{k-1}, \zeta) \mapsto (\rho, A_{k-1} - B_{k-1} + B_{k}, B_{k}, \zeta)} \r^{\Sigma_{0}}_{M, >_{\q}}(\q, \ul, \ueta).
\]
To see this, we let $\q_{\gg}$ be a dominating parameter with respect to $>_{\q}$, obtained from $\q$ by shifting $(\rho, A_{k}, B_{k}, \zeta)$ to $(\rho, A_{k} + T, B_{k} + T, \zeta)$, and $\q_{\gg}$ has discrete diagonal restriction. Then
\begin{align*}
\r^{\Sigma_{0}}_{M, >_{\q}}(\q_{\gg}, \ul, \ueta) & \hookrightarrow       
       \begin{pmatrix}
              \zeta (B_{k} + T) & \cdots & \zeta (B_{k} + 1)  \\
              \vdots &  & \vdots \\
              \zeta (A_{k} + T) & \cdots &  \zeta (A_{k} + 1)
       \end{pmatrix} 
 \rtimes \r^{\Sigma_{0}}_{M, >_{\q}}(\q, \ul, \ueta) \\
 & \hookrightarrow       
       \underbrace{\begin{pmatrix}
              \zeta (B_{k} + T) & \cdots & \zeta (B_{k} + 1)  \\
              \vdots &  & \vdots \\
              \zeta (A_{k} + T) & \cdots &  \zeta (A_{k} + 1)
       \end{pmatrix}}_{*-1} 
  \times 
      \underbrace{\begin{pmatrix}
              \zeta B_{k-1} & \cdots & \zeta (B_{k} + 1)  \\
              \vdots &  & \vdots \\
              \zeta A_{k-1} & \cdots &  \zeta (A_{k-1} - B_{k-1} + B_{k} + 1)
       \end{pmatrix}}_{*-2} 
   \rtimes \sigma    
\end{align*}
where 
\[
\sigma := \Jac_{(\rho, A_{k-1}, B_{k-1}, \zeta) \mapsto (\rho, A_{k-1} - B_{k-1} + B_{k}, B_{k}, \zeta)} \r^{\Sigma_{0}}_{M, >_{\q}}(\q, \ul, \ueta).
\]
Since we can interchange $(*-1)$ and $(*-2)$, it is easy to see $\r^{\Sigma_{0}}_{M, >_{\q}}(\q^{*}, \ul, \ueta) = \sigma$. This shows our claim. So if $\r^{\Sigma_{0}}_{M, >_{\q}}(\q, \ul, \ueta) = \r^{\Sigma_{0}}_{M, >'_{\q}}(\q, \ul', \ueta') \neq 0$, then $\r^{\Sigma_{0}}_{M, >_{\q}}(\q^{*}, \ul, \ueta) = \r^{\Sigma_{0}}_{M, >'_{\q}}(\q^{*}, \ul', \ueta')$. And suppose we know $(\ul, \ueta)$ is related to $(\ul', \ueta')$ according to our formula with respect to $\q^{*}$ modulo the equivalence relation $\sim_{\Sigma_{0}}$, then it is easy to see they are related in the same way with respect to $\q$. Hence $(\ul', \ueta') \sim_{\Sigma_{0}} S^{+}(\ul, \ueta)$.

Now we will only consider the case $B_{k-1} = B_{k}$, and by our previous assumption we have $A_{k} > A_{k-1}$. Let $\q^{**}$ be defined from $\q$ by changing $(\rho, A_{k}, B_{k}, \zeta)$ and $(\rho, A_{k-1}, B_{k-1}, \zeta)$ to $(\rho, A_{k} - 1, B_{k} + 1, \zeta)$ and $(\rho, A_{k-1} - 1, B_{k-1} + 1, \zeta)$ respectively. Then we claim for $l_{k-1} \neq 0$,
\begin{align*}
& \Jac_{\zeta B_{k-1}, \cdots, -\zeta A_{k-1}} \circ \Jac_{\zeta B_{k}, \cdots, -\zeta A_{k}} \r^{\Sigma_{0}}_{M, >_{\q}}(\q, \ul, \ueta) = \r^{\Sigma_{0}}_{M, >_{\q}}\Big(\q_{-}, \ul_{-}, \ueta_{-}; \\
& (\rho, A_{k} - 1, B_{k} + 1, l_{k} - 1, \eta_{k}, \zeta), (\rho, A_{k-1} - 1, B_{k-1} + 1, l_{k-1} - 1, \eta_{k-1}, \zeta)\Big).
\end{align*}
In particular, this means we get a bijection from $\Pkt{\q}^{\Sigma_{0}} \backslash \{\r^{\Sigma_{0}}_{M, >_{\q}}(\q, \ul, \ueta) : l_{k-1} = 0\}$ to $\Pkt{\q^{**}}^{\Sigma_{0}}$. To prove the claim, we will first show for $l_{k-1} \neq 0$,
\begin{align*}
& \Jac_{\zeta B_{k}, \cdots, -\zeta A_{k}} \r^{\Sigma_{0}}_{M, >_{\q}}(\q, \ul, \ueta) = \r^{\Sigma_{0}}_{M, >_{\q}}\Big(\q_{-}, \ul_{-}, \ueta_{-}; \\
& (\rho, A_{k} - 1, B_{k} + 1, l_{k} - 1, \eta_{k}, \zeta), (\rho, A_{k-1}, B_{k-1}, l_{k-1}, \eta_{k-1}, \zeta)\Big).
\end{align*}
Again let $\q_{\gg}$ be a dominating parameter with respect to $>_{\q}$, obtained from $\q$ by shifting $(\rho, A_{k}, B_{k}, \zeta)$ to $(\rho, A_{k} + T, B_{k} + T, \zeta)$, and $\q_{\gg}$ has discrete diagonal restriction. Then
\begin{align*}
\r^{\Sigma_{0}}_{M, >_{\q}}(\q_{\gg}, \ul, \ueta) & \hookrightarrow 
      \langle \zeta (B_{k} + T), \cdots, -\zeta (A_{k} + T) \rangle
 \times       
       \begin{pmatrix}
              \zeta (B_{k} + 1 + T) & \cdots & \zeta (B_{k} + 2)  \\
              \vdots &  & \vdots \\
              \zeta (A_{k} - 1 + T) & \cdots &  \zeta A_{k}
       \end{pmatrix} \\
& \rtimes \r^{\Sigma_{0}}_{M, >_{\q}}\Big(\q_{-}, \ul_{-}, \ueta_{-}; 
 (\rho, A_{k} - 1, B_{k} + 1, l_{k} - 1, \eta_{k}, \zeta), (\rho, A_{k-1}, B_{k-1}, l_{k-1}, \eta_{k-1}, \zeta)\Big) \\
& \hookrightarrow 
      \langle \zeta (B_{k} + T), \cdots, -\zeta A_{k} \rangle
 \times       
       \begin{pmatrix}
              \zeta (B_{k} + 1 + T) & \cdots & \zeta (B_{k} + 2)  \\
              \vdots &  & \vdots \\
              \zeta (A_{k} - 1 + T) & \cdots &  \zeta A_{k}
       \end{pmatrix} \\
& \times \underbrace{\langle -\zeta (A_{k} + 1), \cdots, -\zeta (A_{k} + T) \rangle}_{**-1}
   \rtimes \r^{\Sigma_{0}}_{M, >_{\q}}\Big(\q_{-}, \ul_{-}, \ueta_{-}; \\
& (\rho, A_{k} - 1, B_{k} + 1, l_{k} - 1, \eta_{k}, \zeta), (\rho, A_{k-1}, B_{k-1}, l_{k-1}, \eta_{k-1}, \zeta)\Big) 
\end{align*}
Since $A_{k} > A_{k-1}$, we can take the dual of $(**-1)$ by \eqref{eq: dualizing classical}. Therefore,  
\begin{align*}
\r^{\Sigma_{0}}_{M, >_{\q}}(\q, \ul, \ueta) & \hookrightarrow 
      \langle \zeta B_{k}, \cdots, -\zeta A_{k} \rangle
 \rtimes \r^{\Sigma_{0}}_{M, >_{\q}}\Big(\q_{-}, \ul_{-}, \ueta_{-}; \\
& (\rho, A_{k} - 1, B_{k} + 1, l_{k} - 1, \eta_{k}, \zeta), (\rho, A_{k-1}, B_{k-1}, l_{k-1}, \eta_{k-1}, \zeta)\Big) 
\end{align*}
By applying $\Jac_{\zeta B_{k}, \cdots, -\zeta A_{k}}$ to the full induced representation above, we get $\r^{\Sigma_{0}}_{M, >_{\q}}\Big(\q_{-}, \ul_{-}, \ueta_{-}; (\rho, A_{k} - 1, B_{k} + 1, l_{k} - 1, \eta_{k}, \zeta), (\rho, A_{k-1}, B_{k-1}, l_{k-1}, \eta_{k-1}, \zeta)\Big)$. So
\begin{align*}
& \Jac_{\zeta B_{k}, \cdots, -\zeta A_{k}} \r^{\Sigma_{0}}_{M, >_{\q}}(\q, \ul, \ueta) = \r^{\Sigma_{0}}_{M, >_{\q}}\Big(\q_{-}, \ul_{-}, \ueta_{-}; \\
& (\rho, A_{k} - 1, B_{k} + 1, l_{k} - 1, \eta_{k}, \zeta), (\rho, A_{k-1}, B_{k-1}, l_{k-1}, \eta_{k-1}, \zeta)\Big).
\end{align*}
Next for the same $T$, 
\begin{align*}
& \r^{\Sigma_{0}}_{M, >_{\q}}\Big(\q_{-}, \ul_{-}, \ueta_{-}; (\rho, A_{k} - 1 + T, B_{k} + 1 + T, l_{k} - 1, \eta_{k}, \zeta), (\rho, A_{k-1}, B_{k-1}, l_{k-1}, \eta_{k-1}, \zeta)\Big) \\
& \hookrightarrow \underbrace{\langle \zeta B_{k-1}, \cdots, -\zeta A_{k-1} \rangle}_{**-2} 
\times \underbrace{\begin{pmatrix}
              \zeta (B_{k} + 1 + T) & \cdots & \zeta (B_{k} + 2)  \\
              \vdots &  & \vdots \\
              \zeta (A_{k} - 1 + T) & \cdots &  \zeta A_{k}
           \end{pmatrix}}_{**-3} \\
& \rtimes \r^{\Sigma_{0}}_{M, >_{\q}}\Big(\q_{-}, \ul_{-}, \ueta_{-};  (\rho, A_{k} - 1, B_{k} + 1, l_{k} - 1, \eta_{k}, \zeta), (\rho, A_{k-1} - 1, B_{k-1} + 1, l_{k-1} - 1, \eta_{k-1}, \zeta)\Big).
\end{align*}
Since $B_{k} = B_{k-1}$, we can interchange $(**-2)$ and $(**-3)$. So
\begin{align*}
& \r^{\Sigma_{0}}_{M, >_{\q}}\Big(\q_{-}, \ul_{-}, \ueta_{-}; (\rho, A_{k} - 1, B_{k} + 1, l_{k} - 1, \eta_{k}, \zeta), (\rho, A_{k-1}, B_{k-1}, l_{k-1}, \eta_{k-1}, \zeta)\Big) \\
& \hookrightarrow \underbrace{\langle \zeta B_{k-1}, \cdots, -\zeta A_{k-1}\rangle}_{**-2} \rtimes \r^{\Sigma_{0}}_{M, >_{\q}}\Big(\q_{-}, \ul_{-}, \ueta_{-}; (\rho, A_{k} - 1, B_{k} + 1, l_{k} - 1, \eta_{k}, \zeta), \\
& (\rho, A_{k-1} - 1, B_{k-1} + 1, l_{k-1} - 1, \eta_{k-1}, \zeta)\Big).
\end{align*}
After applying $\Jac_{\zeta B_{k-1}, \cdots, -\zeta A_{k-1}}$ to the full induced representation above, we get $\r^{\Sigma_{0}}_{M, >_{\q}}\Big(\q_{-}, \ul_{-}, \ueta_{-}; (\rho, A_{k} - 1, B_{k} + 1, l_{k} - 1, \eta_{k}, \zeta), (\rho, A_{k-1} - 1, B_{k-1} + 1, l_{k-1} - 1, \eta_{k-1}, \zeta)\Big)$. So
\begin{align*}
& \Jac_{\zeta B_{k-1}, \cdots, -\zeta A_{k-1}} \r^{\Sigma_{0}}_{M, >_{\q}}\Big(\q_{-}, \ul_{-}, \ueta_{-}; (\rho, A_{k} - 1, B_{k} + 1, l_{k} - 1, \eta_{k}, \zeta), (\rho, A_{k-1}, B_{k-1}, l_{k-1}, \eta_{k-1}, \zeta)\Big) \\
& = \r^{\Sigma_{0}}_{M, >_{\q}}\Big(\q_{-}, \ul_{-}, \ueta_{-}; (\rho, A_{k} - 1, B_{k} + 1, l_{k} - 1, \eta_{k}, \zeta), (\rho, A_{k-1} - 1, B_{k-1} + 1, l_{k-1} - 1, \eta_{k-1}, \zeta)\Big).
\end{align*}
This finishes the proof of our claim. At last, we want to compute 
\[
\sigma^{**} := \Jac_{\zeta B_{k-1}, \cdots, -\zeta A_{k-1}} \circ \Jac_{\zeta B_{k}, \cdots, -\zeta A_{k}}  \r^{\Sigma_{0}}_{M, >'_{\q}}(\q, \ul', \ueta')
\]
for $l'_{k-1} \neq 0$. Let $\q'_{\gg}$ be a dominating parameter with respect to $>'_{\q}$, obtained from $\q$ by shifting $(\rho, A_{k-1}, B_{k-1}, \zeta)$ to $(\rho, A_{k-1} + T', B_{k-1} + T', \zeta)$, and $\q'_{\gg}$ has discrete diagonal restriction. Then
\begin{align*}
\r^{\Sigma_{0}}_{M, >'_{\q}}(\q'_{\gg}, \ul', \ueta') & \hookrightarrow 
      \langle \zeta (B_{k-1} + T'), \cdots, -\zeta (A_{k-1} + T') \rangle \times \langle \zeta B_{k}, \cdots, -\zeta A_{k} \rangle \\
& \times       
       \begin{pmatrix}
              \zeta (B_{k-1} + 1 + T') & \cdots & \zeta (B_{k-1} + 2)  \\
              \vdots &  & \vdots \\
              \zeta (A_{k-1} - 1 + T') & \cdots &  \zeta A_{k-1}
       \end{pmatrix} 
 \rtimes \r^{\Sigma_{0}}_{M, >'_{\q}}\Big(\q_{-}, \ul_{-}, \ueta_{-}; \\
& (\rho, A_{k} - 1, B_{k} + 1, l'_{k} - 1, \eta'_{k}, \zeta), (\rho, A_{k-1} - 1, B_{k-1} + 1, l'_{k-1} - 1, \eta'_{k-1}, \zeta)\Big) \\
& \hookrightarrow 
      \underbrace{\langle \zeta (B_{k-1} + T'), \cdots, \zeta(B_{k-1} + 1) \rangle}_{I} \times \underbrace{\langle \zeta B_{k-1}, \cdots, -\zeta (A_{k-1} + T')\rangle}_{II} \\
& \times \underbrace{\langle \zeta B_{k}, \cdots, -\zeta A_{k} \rangle}_{III} 
   \times \underbrace{\begin{pmatrix}
              \zeta (B_{k-1} + 1 + T') & \cdots & \zeta (B_{k-1} + 2)  \\
              \vdots &  & \vdots \\
              \zeta (A_{k-1} - 1 + T') & \cdots &  \zeta A_{k-1}
       \end{pmatrix}}_{IV} \\
&\rtimes \r^{\Sigma_{0}}_{M, >'_{\q}}\Big(\q_{-}, \ul_{-}, \ueta_{-}; (\rho, A_{k} - 1, B_{k} + 1, l'_{k} - 1, \eta'_{k}, \zeta), (\rho, A_{k-1} - 1, B_{k-1} + 1, l'_{k-1} - 1, \eta'_{k-1}, \zeta)\Big).       
\end{align*}
We can interchange $(IV)$ with $(III)$ and $(II)$. Also $(II)$ and $(III)$ are interchangeable. So
\begin{align*}
\r^{\Sigma_{0}}_{M, >'_{\q}}(\q'_{\gg}, \ul', \ueta') 
& \hookrightarrow 
      \underbrace{\langle \zeta (B_{k-1} + T'), \cdots, \zeta(B_{k-1} + 1)\rangle }_{I} 
   \times \underbrace{\begin{pmatrix}
              \zeta (B_{k-1} + 1 + T') & \cdots & \zeta (B_{k-1} + 2)  \\
              \vdots &  & \vdots \\
              \zeta (A_{k-1} - 1 + T') & \cdots &  \zeta A_{k-1}
       \end{pmatrix}}_{IV} \\
& \times \underbrace{\langle \zeta B_{k}, \cdots, -\zeta A_{k} \rangle}_{III}      
   \times \underbrace{\langle \zeta B_{k-1}, \cdots, -\zeta (A_{k-1} + T') \rangle}_{II} \\            
&\rtimes \r^{\Sigma_{0}}_{M, >'_{\q}}\Big(\q_{-}, \ul_{-}, \ueta_{-}; (\rho, A_{k} - 1, B_{k} + 1, l'_{k} - 1, \eta'_{k}, \zeta), (\rho, A_{k-1} - 1, B_{k-1} + 1, l'_{k-1} - 1, \eta'_{k-1}, \zeta)\Big) \\
& \hookrightarrow 
      \underbrace{\langle \zeta (B_{k-1} + T'), \cdots, \zeta(B_{k-1} + 1) \rangle}_{I} 
   \times \underbrace{\begin{pmatrix}
              \zeta (B_{k-1} + 1 + T') & \cdots & \zeta (B_{k-1} + 2)  \\
              \vdots &  & \vdots \\
              \zeta (A_{k-1} - 1 + T') & \cdots &  \zeta A_{k-1}
       \end{pmatrix}}_{IV} \\
& \times \underbrace{\langle \zeta B_{k}, \cdots, -\zeta A_{k-1} \rangle}_{III_{1}} \times \underbrace{\langle -\zeta (A_{k-1} + 1), \cdots - \zeta A_{k} \rangle}_{III_{2}}     
   \times \underbrace{\langle \zeta B_{k-1}, \cdots, -\zeta (A_{k-1} + T') \rangle}_{II} \\ 
& \times \underbrace{\begin{pmatrix}
              \zeta (B_{k} + 1) & \cdots & \zeta (B_{k} + A_{k-1} - A_{k}  + 2)  \\
              \vdots &  & \vdots \\
              \zeta (A_{k} - 1) & \cdots &  \zeta A_{k-1}
             \end{pmatrix}}_{V}          
   \rtimes \r^{\Sigma_{0}}_{M, >'_{\q}}\Big(\q_{-}, \ul_{-}, \ueta_{-}; \\
& (\rho, A_{k-1} - 1, B_{k} + A_{k-1} - A_{k} + 1, l'_{k} - 1, \eta'_{k}, \zeta), (\rho, A_{k-1} - 1, B_{k-1} + 1, l'_{k-1} - 1, \eta'_{k-1}, \zeta)\Big) 
\end{align*}
Since $\Jac_{\zeta (B_{k-1} + 1 + T')} \r^{\Sigma_{0}}_{M, >'_{\q}}(\q'_{\gg}, \ul', \ueta') = 0$, we can ``combine" $(I)$ and $(IV)$. We can also interchange $(III_{2})$ with $(II)$ and $(V)$, and then take dual of $(III_{2})$. As a result,
\begin{align*}
\r^{\Sigma_{0}}_{M, >'_{\q}}(\q'_{\gg}, \ul', \ueta') 
& \hookrightarrow 
      \underbrace{\begin{pmatrix}
              \zeta (B_{k-1} + T') & \cdots & \zeta (B_{k-1} + 1)  \\
              \vdots &  & \vdots \\
              \zeta (A_{k-1} - 1 + T') & \cdots &  \zeta A_{k-1}
       \end{pmatrix}}_{IV_{+}} 
   \times \underbrace{\langle \zeta B_{k}, \cdots, -\zeta A_{k-1} \rangle}_{III_{1}} \\
& \times \underbrace{\langle \zeta B_{k-1}, \cdots, -\zeta (A_{k-1} + T') \rangle}_{II} 
    \times \underbrace{\begin{pmatrix}
              \zeta (B_{k} + 1) & \cdots & \zeta (B_{k} + A_{k-1} - A_{k}  + 2)  \\
              \vdots &  & \vdots \\
              \zeta (A_{k} - 1) & \cdots &  \zeta A_{k-1}
             \end{pmatrix}}_{V} \\   
& \times \underbrace{\langle \zeta A_{k}, \cdots, \zeta (A_{k-1} + 1) \rangle}_{(III_{2})^{\vee}}          
   \rtimes \r^{\Sigma_{0}}_{M, >'_{\q}}\Big(\q_{-}, \ul_{-}, \ueta_{-}; \\
& (\rho, A_{k-1} - 1, B_{k} + A_{k-1} - A_{k} + 1, l'_{k} - 1, \eta'_{k}, \zeta), (\rho, A_{k-1} - 1, B_{k-1} + 1, l'_{k-1} - 1, \eta'_{k-1}, \zeta)\Big) 
\end{align*}
Since $A_{k} > A_{k-1} > B_{k-1}$ and $\Jac_{\zeta A_{k}} \r^{\Sigma_{0}}_{M, >'_{\q}}(\q'_{\gg}, \ul', \ueta') = 0$, we can further ``combine" $(III_{2})^{\vee}$ and $(V)$.
\begin{align*}
\r^{\Sigma_{0}}_{M, >'_{\q}}(\q'_{\gg}, \ul', \ueta') 
& \hookrightarrow 
      \underbrace{\begin{pmatrix}
              \zeta (B_{k-1} + T') & \cdots & \zeta (B_{k-1} + 1)  \\
              \vdots &  & \vdots \\
              \zeta (A_{k-1} - 1 + T') & \cdots &  \zeta A_{k-1}
       \end{pmatrix}}_{IV_{+}} 
   \times \underbrace{\langle \zeta B_{k}, \cdots, -\zeta A_{k-1} \rangle}_{III_{1}} \\
& \times \underbrace{\langle \zeta B_{k-1}, \cdots, -\zeta (A_{k-1} + T') \rangle}_{II} 
    \times \underbrace{\begin{pmatrix}
              \zeta (B_{k} + 1) & \cdots & \zeta (B_{k} + A_{k-1} - A_{k}  + 2)  \\
              \vdots &  & \vdots \\
              \zeta A_{k} & \cdots &  \zeta (A_{k-1} + 1)
             \end{pmatrix}}_{V_{+}} \\   
& \rtimes \r^{\Sigma_{0}}_{M, >'_{\q}}\Big(\q_{-}, \ul_{-}, \ueta_{-}; (\rho, A_{k-1} - 1, B_{k} + A_{k-1} - A_{k} + 1, l'_{k} - 1, \eta'_{k}, \zeta), \\
& (\rho, A_{k-1} - 1, B_{k-1} + 1, l'_{k-1} - 1, \eta'_{k-1}, \zeta)\Big) \\
& \hookrightarrow 
      \underbrace{\begin{pmatrix}
              \zeta (B_{k-1} + T') & \cdots & \zeta (B_{k-1} + 1)  \\
              \vdots &  & \vdots \\
              \zeta (A_{k-1} - 1 + T') & \cdots &  \zeta A_{k-1}
       \end{pmatrix}}_{IV_{+}} 
   \times \underbrace{\langle \zeta B_{k}, \cdots, -\zeta A_{k-1} \rangle}_{III_{1}} \\
& \times \underbrace{\langle \zeta B_{k-1}, \cdots, -\zeta A_{k-1} \rangle}_{II_{1}} \times \underbrace{\langle -\zeta (A_{k-1} + 1), \cdots -\zeta (A_{k-1} + T') \rangle}_{II_{2}} \\
& \times \underbrace{\begin{pmatrix}
              \zeta (B_{k} + 1) & \cdots & \zeta (B_{k} + A_{k-1} - A_{k}  + 2)  \\
              \vdots &  & \vdots \\
              \zeta A_{k} & \cdots &  \zeta (A_{k-1} + 1)
             \end{pmatrix}}_{V_{+}}  
  \rtimes \r^{\Sigma_{0}}_{M, >'_{\q}}\Big(\q_{-}, \ul_{-}, \ueta_{-}; \\
& (\rho, A_{k-1} - 1, B_{k} + A_{k-1} - A_{k} + 1, l'_{k} - 1, \eta'_{k}, \zeta), (\rho, A_{k-1} - 1, B_{k-1} + 1, l'_{k-1} - 1, \eta'_{k-1}, \zeta)\Big) 
\end{align*}
So we can interchange $(II_{2})$ with $(V_{+})$, and take dual of $(II_{2})$. Note $(II_{2})^{\vee}$ is interchangeable with $(V_{+})$. Since $A_{k-1} > B_{k-1}$, $(II_{2})^{\vee}$ is also interchangeable with $(II_{1})$ and $(III_{1})$. Therefore,
\begin{align*}
\r^{\Sigma_{0}}_{M, >'_{\q}}(\q'_{\gg}, \ul', \ueta') 
& \hookrightarrow 
      \underbrace{\begin{pmatrix}
              \zeta (B_{k-1} + T') & \cdots & \zeta (B_{k-1} + 1)  \\
              \vdots &  & \vdots \\
              \zeta (A_{k-1} - 1 + T') & \cdots &  \zeta A_{k-1}
       \end{pmatrix}}_{IV_{+}} 
  \times \underbrace{\langle \zeta (A_{k-1} + T'), \cdots \zeta (A_{k-1} + 1) \rangle}_{(II_{2})^{\vee}}  \\              
& \times \underbrace{\langle \zeta B_{k}, \cdots, -\zeta A_{k-1} \rangle}_{III_{1}} 
   \times \underbrace{\langle \zeta B_{k-1}, \cdots, -\zeta A_{k-1} \rangle}_{II_{1}} \\
& \times \underbrace{\begin{pmatrix}
              \zeta (B_{k} + 1) & \cdots & \zeta (B_{k} + A_{k-1} - A_{k}  + 2)  \\
              \vdots &  & \vdots \\
              \zeta A_{k} & \cdots &  \zeta (A_{k-1} + 1)
             \end{pmatrix}}_{V_{+}}  
  \rtimes \r^{\Sigma_{0}}_{M, >'_{\q}}\Big(\q_{-}, \ul_{-}, \ueta_{-}; \\
& (\rho, A_{k-1} - 1, B_{k} + A_{k-1} - A_{k} + 1, l'_{k} - 1, \eta'_{k}, \zeta), (\rho, A_{k-1} - 1, B_{k-1} + 1, l'_{k-1} - 1, \eta'_{k-1}, \zeta)\Big). 
\end{align*}
Consequently,
\begin{align*}
\r^{\Sigma_{0}}_{M, >'_{\q}}(\q, \ul', \ueta') 
& \hookrightarrow 
             \underbrace{\langle \zeta B_{k}, \cdots, -\zeta A_{k-1} \rangle}_{III_{1}} 
   \times \underbrace{\langle \zeta B_{k-1}, \cdots, -\zeta A_{k-1} \rangle}_{II_{1}} \\
& \times \underbrace{\begin{pmatrix}
              \zeta (B_{k} + 1) & \cdots & \zeta (B_{k} + A_{k-1} - A_{k}  + 2)  \\
              \vdots &  & \vdots \\
              \zeta A_{k} & \cdots &  \zeta (A_{k-1} + 1)
             \end{pmatrix}}_{V_{+}}  
  \rtimes \r^{\Sigma_{0}}_{M, >'_{\q}}\Big(\q_{-}, \ul_{-}, \ueta_{-}; \\
& (\rho, A_{k-1} - 1, B_{k} + A_{k-1} - A_{k} + 1, l'_{k} - 1, \eta'_{k}, \zeta), (\rho, A_{k-1} - 1, B_{k-1} + 1, l'_{k-1} - 1, \eta'_{k-1}, \zeta)\Big)  \\
& \hookrightarrow 
             \underbrace{\langle \zeta B_{k}, \cdots, -\zeta A_{k-1} \rangle}_{III_{1}} 
   \times \underbrace{\langle \zeta B_{k-1}, \cdots, -\zeta A_{k-1} \rangle}_{II_{1}} \\
& \times \underbrace{\begin{pmatrix}
              \zeta (B_{k} + 1) & \cdots & \zeta (B_{k} + A_{k-1} - A_{k}  + 2)  \\
              \vdots &  & \vdots \\
              \zeta (A_{k}-1) & \cdots &  \zeta A_{k-1}
             \end{pmatrix}}_{V}  
   \times \underbrace{\langle \zeta A_{k}, \cdots, \zeta (A_{k-1} + 1) \rangle}_{(III_{2})^{\vee}} \\
& \rtimes \r^{\Sigma_{0}}_{M, >'_{\q}}\Big(\q_{-}, \ul_{-}, \ueta_{-}; (\rho, A_{k-1} - 1, B_{k} + A_{k-1} - A_{k} + 1, l'_{k} - 1, \eta'_{k}, \zeta), \\
& (\rho, A_{k-1} - 1, B_{k-1} + 1, l'_{k-1} - 1, \eta'_{k-1}, \zeta)\Big). 
\end{align*}
Then we take dual of $(III_{2})^{\vee}$, and interchange $(III_{2})$ with $(V)$. 
\begin{align*}
\r^{\Sigma_{0}}_{M, >'_{\q}}(\q, \ul', \ueta') 
& \hookrightarrow 
             \underbrace{\langle \zeta B_{k}, \cdots, -\zeta A_{k-1} \rangle}_{III_{1}} 
   \times \underbrace{\langle \zeta B_{k-1}, \cdots, -\zeta A_{k-1} \rangle}_{II_{1}} \\
& \underbrace{\langle -\zeta (A_{k-1} + 1), \cdots, -\zeta A_{k} \rangle}_{III_{2}}
    \times \underbrace{\begin{pmatrix}
              \zeta (B_{k} + 1) & \cdots & \zeta (B_{k} + A_{k-1} - A_{k}  + 2)  \\
              \vdots &  & \vdots \\
              \zeta (A_{k}-1) & \cdots &  \zeta A_{k-1}
             \end{pmatrix}}_{V}  \\
& \rtimes \r^{\Sigma_{0}}_{M, >'_{\q}}\Big(\q_{-}, \ul_{-}, \ueta_{-}; (\rho, A_{k-1} - 1, B_{k} + A_{k-1} - A_{k} + 1, l'_{k} - 1, \eta'_{k}, \zeta), \\
& (\rho, A_{k-1} - 1, B_{k-1} + 1, l'_{k-1} - 1, \eta'_{k-1}, \zeta)\Big). 
\end{align*}
Suppose
\begin{align*}
\r^{\Sigma_{0}}_{M, >'_{\q}}(\q, \ul', \ueta') 
& \hookrightarrow 
             \underbrace{\langle \zeta B_{k}, \cdots, -\zeta A_{k-1} \rangle}_{III_{1}} 
   \times \underbrace{\langle \zeta B_{k-1}, \cdots, -\zeta A_{k} \rangle}_{II_{1}+III_{2}} \\
& \times \underbrace{\begin{pmatrix}
              \zeta (B_{k} + 1) & \cdots & \zeta (B_{k} + A_{k-1} - A_{k}  + 2)  \\
              \vdots &  & \vdots \\
              \zeta (A_{k}-1) & \cdots &  \zeta A_{k-1}
             \end{pmatrix}}_{V} 
   \rtimes \r^{\Sigma_{0}}_{M, >'_{\q}}\Big(\q_{-}, \ul_{-}, \ueta_{-}; \\
& (\rho, A_{k-1} - 1, B_{k} + A_{k-1} - A_{k} + 1, l'_{k} - 1, \eta'_{k}, \zeta), (\rho, A_{k-1} - 1, B_{k-1} + 1, l'_{k-1} - 1, \eta'_{k-1}, \zeta)\Big). 
\end{align*}
Since we can interchange $(III_{1})$ with $(II_{1}+III_{2})$, and $B_{k} = B_{k-1}$, we have 
\begin{align*}
\sigma^{**} 
& \hookrightarrow 
            \underbrace{\begin{pmatrix}
              \zeta (B_{k} + 1) & \cdots & \zeta (B_{k} + A_{k-1} - A_{k}  + 2)  \\
              \vdots &  & \vdots \\
              \zeta (A_{k}-1) & \cdots &  \zeta A_{k-1}
             \end{pmatrix}}_{V} 
   \rtimes \r^{\Sigma_{0}}_{M, >'_{\q}}\Big(\q_{-}, \ul_{-}, \ueta_{-}; \\
& (\rho, A_{k-1} - 1, B_{k} + A_{k-1} - A_{k} + 1, l'_{k} - 1, \eta'_{k}, \zeta), (\rho, A_{k-1} - 1, B_{k-1} + 1, l'_{k-1} - 1, \eta'_{k-1}, \zeta)\Big). 
\end{align*}
Otherwise, we would have
\begin{align*}
\r^{\Sigma_{0}}_{M, >'_{\q}}(\q, \ul', \ueta') 
& \hookrightarrow 
             \underbrace{\langle \zeta B_{k}, \cdots, -\zeta A_{k-1} \rangle}_{III_{1}} 
   \times \underbrace{\langle -\zeta (A_{k-1} + 1), \cdots, -\zeta A_{k} \rangle}_{III_{2}} \\
& \underbrace{\langle \zeta B_{k-1}, \cdots, -\zeta A_{k-1} \rangle}_{II_{1}}
    \times \underbrace{\begin{pmatrix}
              \zeta (B_{k} + 1) & \cdots & \zeta (B_{k} + A_{k-1} - A_{k}  + 2)  \\
              \vdots &  & \vdots \\
              \zeta (A_{k}-1) & \cdots &  \zeta A_{k-1}
             \end{pmatrix}}_{V}  \\
& \rtimes \r^{\Sigma_{0}}_{M, >'_{\q}}\Big(\q_{-}, \ul_{-}, \ueta_{-}; (\rho, A_{k-1} - 1, B_{k} + A_{k-1} - A_{k} + 1, l'_{k} - 1, \eta'_{k}, \zeta), \\
& (\rho, A_{k-1} - 1, B_{k-1} + 1, l'_{k-1} - 1, \eta'_{k-1}, \zeta)\Big). 
\end{align*}
Then we again have 
\begin{align*}
\sigma^{**} 
& \hookrightarrow 
            \underbrace{\begin{pmatrix}
              \zeta (B_{k} + 1) & \cdots & \zeta (B_{k} + A_{k-1} - A_{k}  + 2)  \\
              \vdots &  & \vdots \\
              \zeta (A_{k}-1) & \cdots &  \zeta A_{k-1}
             \end{pmatrix}}_{V} 
   \rtimes \r^{\Sigma_{0}}_{M, >'_{\q}}\Big(\q_{-}, \ul_{-}, \ueta_{-}; \\
& (\rho, A_{k-1} - 1, B_{k} + A_{k-1} - A_{k} + 1, l'_{k} - 1, \eta'_{k}, \zeta), (\rho, A_{k-1} - 1, B_{k-1} + 1, l'_{k-1} - 1, \eta'_{k-1}, \zeta)\Big). 
\end{align*}
Note the full induced representation above has a unique irreducible subrepresentation: 
\[
\r^{\Sigma_{0}}_{M, >'_{\q}}\Big(\q_{-}, \ul_{-}, \ueta_{-}; (\rho, A_{k} - 1, B_{k} + 1, l'_{k} - 1, \eta'_{k}, \zeta), (\rho, A_{k-1} - 1, B_{k-1} + 1, l'_{k-1} - 1, \eta'_{k-1}, \zeta)\Big).
\]
So it must be equal to $\sigma^{**}$. To summarize, if $\r^{\Sigma_{0}}_{M, >_{\q}}(\q, \ul, \ueta) = \r^{\Sigma_{0}}_{M, >'_{\q}}(\q, \ul', \ueta') \neq 0$ for $l_{k-1} \neq 0$, then we have $l'_{k-1} \neq 0$ by the previous step. After applying 
\[
\Jac_{\zeta B_{k-1}, \cdots, -\zeta A_{k-1}} \circ \Jac_{\zeta B_{k}, \cdots, -\zeta A_{k}}
\]
to both sides we get 
\begin{align*}
& \r^{\Sigma_{0}}_{M, >_{\q}}\Big(\q_{-}, \ul_{-}, \ueta_{-}; (\rho, A_{k} - 1, B_{k} + 1, l_{k} - 1, \eta_{k}, \zeta), (\rho, A_{k-1} - 1, B_{k-1} + 1, l_{k-1} - 1, \eta_{k-1}, \zeta)\Big) \\
= \, & \r^{\Sigma_{0}}_{M, >'_{\q}}\Big(\q_{-}, \ul_{-}, \ueta_{-}; (\rho, A_{k} - 1, B_{k} + 1, l'_{k} - 1, \eta'_{k}, \zeta), (\rho, A_{k-1} - 1, B_{k-1} + 1, l'_{k-1} - 1, \eta'_{k-1}, \zeta)\Big). 
\end{align*}
By induction on $l_{k-1}$, we can assume $(l_{k} - 1, \eta_{k}; l_{k-1} - 1, \eta_{k-1})$ is related to $(l'_{k} - 1, \eta'_{k}; l'_{k-1} - 1, \eta'_{k-1})$ according to our formula with respect to $\q^{**}$. Then it is easy to deduce that $(l_{k}, \eta_{k}; l_{k-1}, \eta_{k-1})$ and $(l'_{k}, \eta'_{k}; l'_{k-1}, \eta'_{k-1})$ are also related according to our formula with respect to $\q$. Hence $(\ul', \ueta') \sim_{\Sigma_{0}} S^{+}(\ul, \ueta)$.

\subsection{Case $\zeta_{k} \neq \zeta_{k-1}$}

In this case, there is no extra conditions on $[A_{k}, B_{k}], [A_{k-1}, B_{k-1}]$. For functions $\ul(\rho, A, B, \zeta) \in [0, [(A-B+1)/2]]$ and $\ueta(\rho, A, B, \zeta) \in \mathbb{Z}/2\mathbb{Z}$ on $Jord(\q)$, we denote 
\[
l_{k} = \ul(\rho, A_{k}, B_{k}, \zeta_{k}), \quad l_{k-1} = \ul(\rho, A_{k-1}, B_{k-1}, \zeta_{k-1}),
\] 
and 
\[
\eta_{k} = \ueta(\rho, A_{k}, B_{k}, \zeta_{k}), \quad \eta_{k-1} = \ueta(\rho, A_{k-1}, B_{k-1}, \zeta_{k-1}).
\] 
From $(\ul, \ueta)$, we want to construct another pair $(\ul', \ueta')$ such that 
\[
\ul'(\cdot) = \ul(\cdot) \text{ and } \ueta'(\cdot) = \ueta(\cdot)
\] 
over $Jord(\q) \backslash \{(\rho, A_{k}, B_{k}, \zeta_{k}), (\rho, A_{k-1}, B_{k-1}, \zeta_{k-1})\}$. Let us denote 
\[
l'_{k} = \ul'(\rho, A_{k}, B_{k}, \zeta_{k}), \quad l'_{k-1} = \ul(\rho, A_{k-1}, B_{k-1}, \zeta_{k-1}),
\] 
and 
\[
\eta'_{k} = \ueta'(\rho, A_{k}, B_{k}, \zeta_{k}), \quad \eta'_{k-1} = \ueta'(\rho, A_{k-1}, B_{k-1}, \zeta_{k-1}).
\] 
Then we define $l'_{k}, l'_{k-1}, \eta'_{k}, \eta'_{k-1}$ according to the following formulas.
\[
\begin{cases}
l'_{k} = l_{k} \\
l'_{k-1} = l_{k-1} \\
\eta_{k} = (-1)^{A_{k-1} - B_{k-1} + 1} \eta'_{k} \\
\eta_{k-1} = (-1)^{A_{k} - B_{k} + 1} \eta'_{k-1}
\end{cases}
\]
We denote this transformation by $U$. Since the situation is symmetric here, we have $U \circ U = id$.

\begin{theorem}
\label{thm: change order unequal sign}
Suppose $(\ul', \ueta') = U (\ul, \ueta)$, then
\[
\r^{\Sigma_{0}}_{M, >_{\q}}(\q, \ul, \ueta) = \r^{\Sigma_{0}}_{M, >'_{\q}}(\q, \ul', \ueta').
\]

\end{theorem}

Let $\q_{\gg}$ be a dominating parameter of $\q$ such that $Jord_{\rho}(\q_{\gg}) = Jord_{\rho}(\q)$, and $Jord_{\rho'}(\q_{\gg})$ has discrete diagonal restriction for $\rho' \neq \rho$. Then
\[
\r^{\Sigma_{0}}_{M, >_{\q}}(\q, \ul, \ueta) = \Jac_{X^{c}} \r^{\Sigma_{0}}_{M, >_{\q}}(\q_{\gg}, \ul, \ueta), 
\]
and
\[
\r^{\Sigma_{0}}_{M, >'_{\q}}(\q, \ul', \ueta') = \Jac_{X^{c}} \r^{\Sigma_{0}}_{M, >'_{\q}}(\q_{\gg}, \ul', \ueta').
\]
So it suffices to prove the proposition for such $\q_{\gg}$. Therefore, in the following discussions of the proof of this proposition, we will always assume $Jord_{\rho'}(\q)$ has discrete diagonal restriction for $\rho' \neq \rho$, and if we choose some dominating $\q_{\gg}$ of $\q$, we will always assume $Jord_{\rho'}(\q_{\gg}) = Jord_{\rho'}(\q)$ for $\rho' \neq \rho$.

\subsubsection{First reduction}

Let $(\ul', \ueta') = U (\ul, \ueta)$. We want to reduce the proposition to the following cases:
\begin{align}
\label{eq: first reduction change order unequal sign}
\text{ $(\rho, A_{i}, B_{i}, \zeta_{i}) \gg_{r} (\rho, A_{i-1}, B_{i-1}, \zeta_{i-1})$ for all $i$, and $(\rho, A_{k-1}, B_{k-1}, \zeta_{k-1}) \gg_{r} 0$. }
\end{align}
We denote the case with respect to $r$ by \eqref{eq: first reduction change order unequal sign}$_{r}$. We will do this in two steps. First we will reduce it to the cases:
\begin{align}
\label{eq: first reduction change order unequal sign 1}
\begin{cases}
\text{ $(\rho, A_{i}, B_{i}, \zeta_{i}) \gg_{r} (\rho, A_{i-1}, B_{i-1}, \zeta_{i-1})$ for $i > k - 1$, } \\ \\
\text{ $(\rho, A_{k-1}, B_{k-1}, \zeta_{k-1}) \gg_{r} \cup_{j=1}^{k-2}\{(\rho, A_{j}, B_{j}, \zeta_{j})\}$ and $0$. }
\end{cases}
\end{align}
We denote the case with respect to $r$ by \eqref{eq: first reduction change order unequal sign 1}$_{r}$. Let us choose a dominating parameter $\q_{\gg}$ with respect to $>_{\q}$ such that $T_{i} =  0$ for $i < k-1$,  
\[
\text{ $(\rho, A_{i} + T_{i}, B_{i} + T_{i}, \zeta_{i}) \gg_{r} (\rho, A_{i-1} + T_{i-1}, B_{i-1} + T_{i-1}, \zeta_{i-1})$ for $i \geqslant k$. }
\]
We further require the existence of $T$ such that $0 \leqslant T < T_{k}$,
\[
(\rho, A_{k-1} + T_{k-1}, B_{k-1} + T_{k-1}, \zeta_{k-1}) \gg_{r} (\rho, A_{k} + T_{k} - T, B_{k} + T_{k} - T, \zeta_{k}) \gg_{r} \cup_{j=1}^{k-2}\{(\rho, A_{j}, B_{j}, \zeta_{j})\} \text{ and } 0.
\]
Let $\q^{(k)}_{\gg}$ be obtained from $\q_{\gg}$ by changing $T_{k}, T_{k-1}$ to zero.
Suppose $\r^{\Sigma_{0}}_{M, >_{\q}}(\q, \ul, \ueta) \neq 0$, then
\begin{align*}
\r^{\Sigma_{0}}_{M, >_{\q}}(\q_{\gg}, \ul, \ueta) & \hookrightarrow 
       \begin{pmatrix}
              \zeta_{k-1} (B_{k-1} + T_{k-1}) & \cdots & \zeta_{k-1}(B_{k-1} + 1) \\
              \vdots &  & \vdots \\
              \zeta_{k-1} (A_{k-1} + T_{k-1}) & \cdots & \zeta_{k-1}(A_{k-1} + 1)
       \end{pmatrix} \times
       \begin{pmatrix}
              \zeta_{k} (B_{k} + T_{k}) & \cdots & \zeta_{k}(B_{k} + 1) \\
              \vdots &  & \vdots \\
              \zeta_{k} (A_{k} + T_{k}) & \cdots & \zeta_{k}(A_{k} + 1)
       \end{pmatrix} \\       
& \rtimes  \r^{\Sigma_{0}}_{M, >_{\q}}(\q^{(k)}_{\gg}, \ul, \ueta),
\end{align*}   
where the two generalized segments are interchangeable for $\zeta_{k} \neq \zeta_{k-1}$. Let $\q^{T}_{\gg}$ be obtained from $\q_{\gg}$ by changing $(\rho, A_{k} + T_{k}, B_{k} + T_{k}, \zeta_{k})$ to $(\rho, A_{k} + T_{k} - T, B_{k} + T_{k} - T, \zeta_{k})$. Then
\begin{align*}
\r^{\Sigma_{0}}_{M, >_{\q}}(\q^{T}_{\gg}, \ul, \ueta) & \hookrightarrow 
       \begin{pmatrix}
              \zeta_{k} (B_{k} + T_{k} -T) & \cdots & \zeta_{k}(B_{k} + 1) \\
              \vdots &  & \vdots \\
              \zeta_{k} (A_{k} + T_{k} - T) & \cdots & \zeta_{k}(A_{k} + 1)
       \end{pmatrix} \times
       \begin{pmatrix}
              \zeta_{k-1} (B_{k-1} + T_{k-1}) & \cdots & \zeta_{k-1}(B_{k-1} + 1) \\
              \vdots &  & \vdots \\
              \zeta_{k-1} (A_{k-1} + T_{k-1}) & \cdots & \zeta_{k-1}(A_{k-1} + 1)
       \end{pmatrix} \\
& \rtimes  \r^{\Sigma_{0}}_{M, >_{\q}}(\q^{(k)}_{\gg}, \ul, \ueta),
\end{align*}   
By \eqref{eq: first reduction change order unequal sign 1}$_{r}$, we have
\[
\r^{\Sigma_{0}}_{M, >_{\q}}(\q^{T}_{\gg}, \ul, \ueta) = \r^{\Sigma_{0}}_{M, >'_{\q}}(\q^{T}_{\gg}, \ul', \ueta').
\]
Since
\begin{align*}
\r^{\Sigma_{0}}_{M, >'_{\q}}(\q^{(k)}_{\gg}, \ul', \ueta')  = & \Jac_{(\rho, A_{k-1} + T_{k-1}, B_{k-1} + T_{k-1}, \zeta_{k-1}) \mapsto (\rho, A_{k-1}, B_{k-1}, \zeta_{k-1})} \circ \\
& \Jac_{(\rho, A_{k} + T_{k} - T, B_{k} + T_{k} - T, \zeta_{k}) \mapsto (\rho, A_{k}, B_{k}, \zeta_{k})} \r^{\Sigma_{0}}_{M, >'_{\q}}(\q^{T}_{\gg}, \ul', \ueta') 
\end{align*}
and $\r^{\Sigma_{0}}_{M, >_{\q}}(\q^{(k)}_{\gg}, \ul, \ueta)$ is contained in
\begin{align*}
& \Jac_{(\rho, A_{k-1} + T_{k-1}, B_{k-1} + T_{k-1}, \zeta_{k-1}) \mapsto (\rho, A_{k-1}, B_{k-1}, \zeta_{k-1})} \circ \\
& \Jac_{(\rho, A_{k} + T_{k} - T, B_{k} + T_{k} - T, \zeta_{k}) \mapsto (\rho, A_{k}, B_{k}, \zeta_{k})} \r^{\Sigma_{0}}_{M, >_{\q}}(\q^{T}_{\gg}, \ul, \ueta), 
\end{align*}
then
\[
\r^{\Sigma_{0}}_{M, >_{\q}}(\q^{(k)}_{\gg}, \ul, \ueta) = \r^{\Sigma_{0}}_{M, >'_{\q}}(\q^{(k)}_{\gg}, \ul', \ueta').
\]
After applying $\circ_{i>k} \Jac_{(\rho, A_{i} + T_{i}, B_{i} + T_{i}, \zeta_{i}) \mapsto (\rho, A_{i}, B_{i}, \zeta_{i})}$ to both sides, we get $\r^{\Sigma_{0}}_{M, >_{\q}}(\q, \ul, \ueta) = \r^{\Sigma_{0}}_{M, >'_{\q}}(\q, \ul', \ueta')$. 

Secondly we want to further reduce it to \eqref{eq: first reduction change order unequal sign}$_{r}$. Let us assume we are in case \eqref{eq: first reduction change order unequal sign 1}$_{r'}$ for $r'$ sufficiently large with respect to $r$. We can choose a dominating parameter $\q_{\gg}$ with respect to $>_{\q}$ such that $T_{i} =  0$ for $i > k$, and
\[
(\rho, A_{k+1}, B_{k+1}, \zeta_{k+1}) \gg_{r} (\rho, A_{i} + T_{i}, B_{i} + T_{i}, \zeta_{i}) \gg_{r} (\rho, A_{i-1} + T_{i-1}, B_{i-1} + T_{i-1}, \zeta_{i-1}) \text{ for $i \leqslant k$. }
\]
Suppose $\r^{\Sigma_{0}}_{M, >_{\q}}(\q, \ul, \ueta) \neq 0$, then 
\begin{align*}
\r^{\Sigma_{0}}_{M, >_{\q}}(\q_{\gg}, \ul, \ueta) & \hookrightarrow \times_{i<k-1}
       \begin{pmatrix}
              \zeta_{i} (B_{i} + T_{i}) & \cdots & \zeta_{i}(B_{i} + 1) \\
              \vdots &  & \vdots \\
              \zeta_{i} (A_{i} + T_{i}) & \cdots & \zeta_{i}(A_{i} + 1)
       \end{pmatrix} \times 
       \begin{pmatrix}
              \zeta_{k-1} (B_{k-1} + T_{k-1}) & \cdots & \zeta_{k-1}(B_{k-1} + 1) \\
              \vdots &  & \vdots \\
              \zeta_{k-1} (A_{k-1} + T_{k-1}) & \cdots & \zeta_{k-1}(A_{k-1} + 1)
       \end{pmatrix} \\
&\times
       \begin{pmatrix}
              \zeta_{k} (B_{k} + T_{k}) & \cdots & \zeta_{k}(B_{k} + 1) \\
              \vdots &  & \vdots \\
              \zeta_{k} (A_{k} + T_{k}) & \cdots & \zeta_{k}(A_{k} + 1)
       \end{pmatrix} 
 \rtimes  \r^{\Sigma_{0}}_{M, >_{\q}}(\q, \ul, \ueta),
\end{align*}   
where $i$ increases. We can also assume $B_{k-1} + 1 > A_{k-2} + T_{k-2} + 1$. Then we can change the order of the generalized segments as follows,
\begin{align*}
\r^{\Sigma_{0}}_{M, >_{\q}}(\q_{\gg}, \ul, \ueta) & \hookrightarrow        
       \begin{pmatrix}
              \zeta_{k-1} (B_{k-1} + T_{k-1}) & \cdots & \zeta_{k-1}(B_{k-1} + 1) \\
              \vdots &  & \vdots \\
              \zeta_{k-1} (A_{k-1} + T_{k-1}) & \cdots & \zeta_{k-1}(A_{k-1} + 1)
       \end{pmatrix} \times
       \begin{pmatrix}
              \zeta_{k} (B_{k} + T_{k}) & \cdots & \zeta_{k}(B_{k} + 1) \\
              \vdots &  & \vdots \\
              \zeta_{k} (A_{k} + T_{k}) & \cdots & \zeta_{k}(A_{k} + 1)
       \end{pmatrix} \\
&\times_{i<k-1}
       \begin{pmatrix}
              \zeta_{i} (B_{i} + T_{i}) & \cdots & \zeta_{i}(B_{i} + 1) \\
              \vdots &  & \vdots \\
              \zeta_{i} (A_{i} + T_{i}) & \cdots & \zeta_{i}(A_{i} + 1)
       \end{pmatrix} 
\rtimes  \r^{\Sigma_{0}}_{M, >_{\q}}(\q, \ul, \ueta) \\
& \cong         \begin{pmatrix}
              \zeta_{k} (B_{k} + T_{k}) & \cdots & \zeta_{k}(B_{k} + 1) \\
              \vdots &  & \vdots \\
              \zeta_{k} (A_{k} + T_{k}) & \cdots & \zeta_{k}(A_{k} + 1)
       \end{pmatrix} \times
       \begin{pmatrix}
              \zeta_{k-1} (B_{k-1} + T_{k-1}) & \cdots & \zeta_{k-1}(B_{k-1} + 1) \\
              \vdots &  & \vdots \\
              \zeta_{k-1} (A_{k-1} + T_{k-1}) & \cdots & \zeta_{k-1}(A_{k-1} + 1)
       \end{pmatrix} \\
&\times_{i<k-1}
       \begin{pmatrix}
              \zeta_{i} (B_{i} + T_{i}) & \cdots & \zeta_{i}(B_{i} + 1) \\
              \vdots &  & \vdots \\
              \zeta_{i} (A_{i} + T_{i}) & \cdots & \zeta_{i}(A_{i} + 1)
       \end{pmatrix} 
\rtimes  \r^{\Sigma_{0}}_{M, >_{\q}}(\q, \ul, \ueta).
\end{align*}  
We can choose $0 \leqslant T < T_{k}$ such that
\[
(\rho, A_{k-1} + T_{k-1}, B_{k-1} + T_{k-1}, \zeta_{k-1}) \gg_{r} (\rho, A_{k} + T_{k} - T, B_{k} + T_{k} - T, \zeta_{k}) \gg_{r}  (\rho, A_{k-2} + T_{k-2} , B_{k-2} + T_{k-2}, \zeta_{k-2}) \text{ and } 0.
\]
Let $\q^{T}_{\gg}$ be obtained from $\q_{\gg}$ by changing $(\rho, A_{k} + T_{k}, B_{k} + T_{k}, \zeta_{k})$ to $(\rho, A_{k} + T_{k} - T, B_{k} + T_{k} - T, \zeta_{k})$.
Then 
\begin{align*}
\r^{\Sigma_{0}}_{M, >_{\q}}(\q^{T}_{\gg}, \ul, \ueta) & \hookrightarrow 
       \begin{pmatrix}
              \zeta_{k} (B_{k} + T_{k} -T) & \cdots & \zeta_{k}(B_{k} + 1) \\
              \vdots &  & \vdots \\
              \zeta_{k} (A_{k} + T_{k} - T) & \cdots & \zeta_{k}(A_{k} + 1)
       \end{pmatrix} \times
       \begin{pmatrix}
              \zeta_{k-1} (B_{k-1} + T_{k-1}) & \cdots & \zeta_{k-1}(B_{k-1} + 1) \\
              \vdots &  & \vdots \\
              \zeta_{k-1} (A_{k-1} + T_{k-1}) & \cdots & \zeta_{k-1}(A_{k-1} + 1)
       \end{pmatrix} \\
&\times_{i<k-1}
       \begin{pmatrix}
              \zeta_{i} (B_{i} + T_{i}) & \cdots & \zeta_{i}(B_{i} + 1) \\
              \vdots &  & \vdots \\
              \zeta_{i} (A_{i} + T_{i}) & \cdots & \zeta_{i}(A_{i} + 1)
       \end{pmatrix}  
 \rtimes  \r^{\Sigma_{0}}_{M, >_{\q}}(\q, \ul, \ueta) \\
& \cong \times_{i<k-1}
       \begin{pmatrix}
              \zeta_{i} (B_{i} + T_{i}) & \cdots & \zeta_{i}(B_{i} + 1) \\
              \vdots &  & \vdots \\
              \zeta_{i} (A_{i} + T_{i}) & \cdots & \zeta_{i}(A_{i} + 1)
       \end{pmatrix} \times 
       \begin{pmatrix}
              \zeta_{k} (B_{k} + T_{k} -T) & \cdots & \zeta_{k}(B_{k} + 1) \\
              \vdots &  & \vdots \\
              \zeta_{k} (A_{k} + T_{k} - T) & \cdots & \zeta_{k}(A_{k} + 1)
       \end{pmatrix} \\
& \times \begin{pmatrix}
              \zeta_{k-1} (B_{k-1} + T_{k-1}) & \cdots & \zeta_{k-1}(B_{k-1} + 1) \\
              \vdots &  & \vdots \\
              \zeta_{k-1} (A_{k-1} + T_{k-1}) & \cdots & \zeta_{k-1}(A_{k-1} + 1)
       \end{pmatrix} 
 \rtimes  \r^{\Sigma_{0}}_{M, >_{\q}}(\q, \ul, \ueta).
\end{align*}   
By \eqref{eq: first reduction change order unequal sign}$_{r}$, we have
\[
\r^{\Sigma_{0}}_{M, >_{\q}}(\q^{T}_{\gg}, \ul, \ueta) = \r^{\Sigma_{0}}_{M, >'_{\q}}(\q^{T}_{\gg}, \ul', \ueta').
\]
Since
\begin{align*}
\r^{\Sigma_{0}}_{M, >'_{\q}}(\q, \ul', \ueta')  = & \Jac_{(\rho, A_{k-1} + T_{k-1}, B_{k-1} + T_{k-1}, \zeta_{k-1}) \mapsto (\rho, A_{k-1}, B_{k-1}, \zeta_{k-1})} \circ \\
& \Jac_{(\rho, A_{k} + T_{k} - T, B_{k} + T_{k} - T, \zeta_{k}) \mapsto (\rho, A_{k}, B_{k}, \zeta_{k})} \circ \\
& \circ_{i<k-1}\Jac_{(\rho, A_{i} + T_{i}, B_{i} + T_{i}, \zeta_{i}) \mapsto (\rho, A_{i}, B_{i}, \zeta_{i})} \r^{\Sigma_{0}}_{M, >'_{\q}}(\q^{T}_{\gg}, \ul', \ueta')
\end{align*}
and $\r^{\Sigma_{0}}_{M, >_{\q}}(\q, \ul, \ueta)$ is contained in
\begin{align*}
& \Jac_{(\rho, A_{k-1} + T_{k-1}, B_{k-1} + T_{k-1}, \zeta_{k-1}) \mapsto (\rho, A_{k-1}, B_{k-1}, \zeta_{k-1})} \circ \\
& \Jac_{(\rho, A_{k} + T_{k} - T, B_{k} + T_{k} - T, \zeta_{k}) \mapsto (\rho, A_{k}, B_{k}, \zeta_{k})} \circ \\
& \circ_{i<k-1}\Jac_{(\rho, A_{i} + T_{i}, B_{i} + T_{i}, \zeta_{i}) \mapsto (\rho, A_{i}, B_{i}, \zeta_{i})} \r^{\Sigma_{0}}_{M, >_{\q}}(\q^{T}_{\gg}, \ul, \ueta), 
\end{align*}
then
\[
\r^{\Sigma_{0}}_{M, >_{\q}}(\q, \ul, \ueta) = \r^{\Sigma_{0}}_{M, >'_{\q}}(\q, \ul', \ueta').
\]
This finishes the first reduction.

\subsubsection{Second reduction}

We want to reduce the proposition further to the cases:
\begin{align}
\label{eq: second reduction change order unequal sign}
\text{ $(\rho, A_{i}, B_{i}, \zeta_{i}) \gg_{r} (\rho, A_{i-1}, B_{i-1}, \zeta_{i-1})$, and $l_{i} = 0$ for all $i$. } 
\end{align}
Let us denote the case with respect to $r$ by \eqref{eq: second reduction change order unequal sign}$_{r}$. Suppose we are in case \eqref{eq: first reduction change order unequal sign}$_{r'}$ for $r'$ sufficiently large with respect to $r$. Let $\q^{T}$ be obtained by changing $(\rho, A_{k-1}, B_{k-1}, \zeta_{k-1})$ to $(\rho, A_{k-1} + T, B_{k-1} + T, \zeta_{k-1})$ such that 
\[
B_{k-1} + T > A_{k} \text { and } B_{k+1} > A_{k-1} + T.
\]
Let
\[
Jord(\q_{-}) = \{(\rho, A_{i} - l_{i}, B_{i} + l_{i}, \zeta_{i}): i \neq k, k-1\}.
\]
We define $(\ul_{-}, \ueta_{-})$ such that $\ul_{-}(\rho, A_{i} - l_{i}, B_{i} + l_{i}, \zeta_{i}) = 0$ and $\ueta_{-}(\rho, A_{i} - l_{i}, B_{i} + l_{i}, \zeta_{i}) = \eta_{i}$. Then
\begin{align*}
\r^{\Sigma_{0}}_{M, >'_{\q}}(\q^{T}, \ul', \ueta') & \hookrightarrow 
       \begin{pmatrix}
              \zeta_{k-1} (B_{k-1} + T) & \cdots & -\zeta_{k-1}(A_{k-1} + T) \\
              \vdots &  & \vdots \\
              \zeta_{k-1} (B_{k-1} + l_{k-1} - 1 + T) & \cdots & -\zeta_{k-1}(A_{k-1} - l_{k-1} + 1 + T)
       \end{pmatrix} \\
& \times_{i \neq k-1}
       \begin{pmatrix}
              \zeta_{i} B_{i} & \cdots & -\zeta_{i}A_{i} \\
              \vdots &  & \vdots \\
              \zeta_{i} (B_{i} + l_{i} - 1) & \cdots & - \zeta_{i}(A_{i} - l_{i} + 1)
       \end{pmatrix} \\
& \rtimes  \r^{\Sigma_{0}}_{M, >'_{\q}}\Big(\q_{-}, \ul_{-}, \eta_{-}; (\rho, A_{k-1} - l_{k-1} + T, B_{k-1} + l_{k-1} + T, 0, \eta'_{k-1}, \zeta_{k-1}), \\
& (\rho, A_{k} - l_{k} , B_{k} + l_{k}, 0, \eta'_{k}, \zeta_{k})\Big).
\end{align*}   
We choose $t$ such that 
\[
(\rho, A_{k-1} - l_{k-1}, B_{k-1} + l_{k-1}, \zeta_{k-1}) \gg_{r} (\rho, A_{k} - l_{k} -t , B_{k} + l_{k} - t, \zeta_{k}) \gg_{r} (\rho, A_{k-2} - l_{k-2}, B_{k-2} + l_{k-2}, \zeta_{k-2}).
\]
Then
\begin{align*}
\r^{\Sigma_{0}}_{M, >'_{\q}}(\q^{T}, \ul', \ueta') & \hookrightarrow 
       \begin{pmatrix}
              \zeta_{k-1} (B_{k-1} + T) & \cdots  & -\zeta_{k-1}(A_{k-1} + T) \\
              \vdots &  & \vdots \\
              \zeta_{k-1} (B_{k-1} + l_{k-1} - 1 + T) & \cdots & -\zeta_{k-1}(A_{k-1} - l_{k-1} + 1 + T)
       \end{pmatrix} \\
& \times_{i \neq k-1}
      \underbrace{ \begin{pmatrix}
              \zeta_{i} B_{i} & \cdots & -\zeta_{i}A_{i} \\
              \vdots &  & \vdots \\
              \zeta_{i} (B_{i} + l_{i} - 1) & \cdots & - \zeta_{i}(A_{i} - l_{i} + 1)
       \end{pmatrix} }_{I_{i}}
 \times 
      \underbrace{\begin{pmatrix}
              \zeta_{k} (B_{k} + l_{k}) & \cdots & \zeta_{k}(B_{k} + l_{k} - t + 1) \\
              \vdots &  & \vdots \\
              \zeta_{k} (A_{k} - l_{k}) & \cdots & \zeta_{k}(A_{k} - l_{k} - t + 1)
       \end{pmatrix}}_{II}\\
&\times
       \underbrace{\begin{pmatrix}
              \zeta_{k-1} (B_{k-1} + l_{k-1} + T) & \cdots & \zeta_{k-1}(B_{k-1} + l_{k-1} + 1) \\
              \vdots &  & \vdots \\
              \zeta_{k-1} (A_{k-1} - l_{k-1} + T) & \cdots & \zeta_{k-1}(A_{k-1} - l_{k-1} + 1)
       \end{pmatrix}}_{III} \\
& \rtimes  \r^{\Sigma_{0}}_{M, >'_{\q}}\Big(\q_{-}, \ul_{-}, \eta_{-}; (\rho, A_{k-1} - l_{k-1}, B_{k-1} + l_{k-1}, 0, \eta'_{k-1}, \zeta_{k-1}), \\
& (\rho, A_{k} - l_{k} -t , B_{k} + l_{k} - t, 0, \eta'_{k}, \zeta_{k})\Big).
\end{align*} 
It is clear that the generalized segments $(III)$ and $(II)$ are interchangeable. We would like to show $(III)$ and $(I_{i})$ are also interchangeable for $i \neq k-1$. It suffices to make the following observations:

\begin{enumerate}

\item If $\zeta_{i} = \zeta_{k-1}$ 

        \begin{enumerate}
        \item $i > k$, one observes $B_{i} > B_{k-1} + l_{k-1} + T$ 
        \item $i < k-1$, one observes $B_{k-1} + l_{k-1} > B_{i} + l_{i}$
        \end{enumerate} 

\item If $\zeta_{i} \neq \zeta_{k-1}$

        \begin{enumerate}
        \item $i > k$, one observes $A_{i} > A_{k-1} - l_{k-1} + T$ 
        \item $i < k-1$, one observes $B_{k-1} + l_{k-1} > A_{i}$
        \item $i = k$, one observes $[A_{k}, A_{k} - l_{k} + 1] \subseteq [A_{k-1} + T - l_{k-1}, A_{k-1} - l_{k-1} + 1]$
        \end{enumerate}

\end{enumerate}
Therefore,
\begin{align*}
\r^{\Sigma_{0}}_{M, >'_{\q}}(\q^{T}, \ul', \ueta') & \hookrightarrow 
       \begin{pmatrix}
              \zeta_{k-1} (B_{k-1} + T) & \cdots & -\zeta_{k-1}(A_{k-1} + T) \\
              \vdots &  & \vdots \\
              \zeta_{k-1} (B_{k-1} + l_{k-1} - 1 + T) & \cdots & -\zeta_{k-1}(A_{k-1} - l_{k-1} + 1 + T)
       \end{pmatrix} \\
&\times
       \underbrace{\begin{pmatrix}
              \zeta_{k-1} (B_{k-1} + l_{k-1} + T) & \cdots & \zeta_{k-1}(B_{k-1} + l_{k-1} + 1) \\
              \vdots &  & \vdots \\
              \zeta_{k-1} (A_{k-1} - l_{k-1} + T) & \cdots & \zeta_{k-1}(A_{k-1} - l_{k-1} + 1)
       \end{pmatrix}}_{III} \\      
& \times_{i \neq k-1}
      \underbrace{ \begin{pmatrix}
              \zeta_{i} B_{i} & \cdots & -\zeta_{i}A_{i} \\
              \vdots &  & \vdots \\
              \zeta_{i} (B_{i} + l_{i} - 1) & \cdots & - \zeta_{i}(A_{i} - l_{i} + 1)
       \end{pmatrix} }_{I_{i}}
 \times 
      \underbrace{\begin{pmatrix}
              \zeta_{k} (B_{k} + l_{k}) & \cdots & \zeta_{k}(B_{k} + l_{k} - t + 1) \\
              \vdots &  & \vdots \\
              \zeta_{k} (A_{k} - l_{k}) & \cdots & \zeta_{k}(A_{k} - l_{k} - t + 1)
       \end{pmatrix}}_{II}\\
& \rtimes  \r^{\Sigma_{0}}_{M, >'_{\q}}\Big(\q_{-}, \ul_{-}, \eta_{-}; (\rho, A_{k-1} - l_{k-1}, B_{k-1} + l_{k-1}, 0, \eta'_{k-1}, \zeta_{k-1}), \\
& (\rho, A_{k} - l_{k} -t , B_{k} + l_{k} - t, 0, \eta'_{k}, \zeta_{k})\Big).
\end{align*} 
Next we want to take dual of
\begin{align*}
\underbrace{\begin{pmatrix}
               -\zeta_{k-1}(A_{k-1} + 1) & \cdots & -\zeta_{k-1}(A_{k-1} + T) \\
              \vdots &  & \vdots \\
              -\zeta_{k-1} (A_{k-1} - l_{k-1} +2) & \cdots & -\zeta_{k-1}(A_{k-1} - l_{k-1} + 1 + T)
       \end{pmatrix}}_{IV}
\end{align*}
from
\[
\begin{pmatrix}
              \zeta_{k-1} (B_{k-1} + T) & \cdots & -\zeta_{k-1}(A_{k-1} + T) \\
              \vdots &  & \vdots \\
              \zeta_{k-1} (B_{k-1} + l_{k-1} - 1 + T) & \cdots & -\zeta_{k-1}(A_{k-1} - l_{k-1} + 1 + T)
       \end{pmatrix}.
\]
It is clear that $(IV)$ and $(III)$ are interchangeable. To see $(IV)$ and $(II)$ are interchangeable, one notes $\zeta_{k} \neq \zeta_{k-1}$ and $[A_{k-1} + T, A_{k-1} + 1] \supseteq [A_{k} - l_{k}, B_{k} + l_{k}]$. To see $(IV)$ and $(I_{i})$ are also interchangeable, it suffices to make the following observations:
\begin{enumerate}

\item If $\zeta_{i} = \zeta_{k-1}$ 

        \begin{enumerate}
        \item $i > k$, one observes $A_{i} > A_{k-1} + T$ 
        \item $i < k-1$, one observes $A_{k-1} + l_{k-1} > A_{i}$
        \end{enumerate} 

\item If $\zeta_{i} \neq \zeta_{k-1}$

        \begin{enumerate}
        \item $i \geqslant k$, one observes $B_{i} > A_{k-1} + 1$ 
        \item $i < k-1$, one observes $A_{k-1} - l_{k-1} > B_{i} + l_{i}$
        \end{enumerate}

\end{enumerate}
As a result, 
\begin{align*}
\r^{\Sigma_{0}}_{M, >'_{\q}}(\q^{T}, \ul', \ueta') & \hookrightarrow 
       \begin{pmatrix}
              \zeta_{k-1} (B_{k-1} + T) & \cdots & -\zeta_{k-1}A_{k-1} \\
              \vdots &  & \vdots \\
              \zeta_{k-1} (B_{k-1} + l_{k-1} - 1 + T) & \cdots & -\zeta_{k-1}(A_{k-1} - l_{k-1} + 1)
       \end{pmatrix} \\
&\times
       \underbrace{\begin{pmatrix}
              \zeta_{k-1} (B_{k-1} + l_{k-1} + T) & \cdots & \zeta_{k-1}(B_{k-1} + l_{k-1} + 1) \\
              \vdots &  & \vdots \\
              \zeta_{k-1} (A_{k-1} - l_{k-1} + T) & \cdots & \zeta_{k-1}(A_{k-1} - l_{k-1} + 1)
       \end{pmatrix}}_{III} \\      
& \times_{i \neq k-1}
      \underbrace{ \begin{pmatrix}
              \zeta_{i} B_{i} & \cdots & -\zeta_{i}A_{i} \\
              \vdots &  & \vdots \\
              \zeta_{i} (B_{i} + l_{i} - 1) & \cdots & - \zeta_{i}(A_{i} - l_{i} + 1)
       \end{pmatrix} }_{I_{i}}
 \times 
      \underbrace{\begin{pmatrix}
              \zeta_{k} (B_{k} + l_{k}) & \cdots & \zeta_{k}(B_{k} + l_{k} - t + 1) \\
              \vdots &  & \vdots \\
              \zeta_{k} (A_{k} - l_{k}) & \cdots & \zeta_{k}(A_{k} - l_{k} - t + 1)
       \end{pmatrix}}_{II}\\
& \times 
\underbrace{\begin{pmatrix}
               -\zeta_{k-1}(A_{k-1} + 1) & \cdots & -\zeta_{k-1}(A_{k-1} + T) \\
              \vdots &  & \vdots \\
              -\zeta_{k-1} (A_{k-1} - l_{k-1} +2) & \cdots & -\zeta_{k-1}(A_{k-1} - l_{k-1} + 1 + T)
       \end{pmatrix}}_{IV}\\
& \rtimes  \r^{\Sigma_{0}}_{M, >'_{\q}}\Big(\q_{-}, \ul_{-}, \eta_{-}; (\rho, A_{k-1} - l_{k-1}, B_{k-1} + l_{k-1}, 0, \eta'_{k-1}, \zeta_{k-1}), \\
& (\rho, A_{k} - l_{k} -t , B_{k} + l_{k} - t, 0, \eta'_{k}, \zeta_{k})\Big).
\end{align*} 
By \eqref{eq: dualizing classical}, we can take the dual of $(IV)$. Therefore
\begin{align*}
\r^{\Sigma_{0}}_{M, >'_{\q}}(\q^{T}, \ul', \ueta') & \hookrightarrow 
       \begin{pmatrix}
              \zeta_{k-1} (B_{k-1} + T) & \cdots & -\zeta_{k-1}A_{k-1} \\
              \vdots &  & \vdots \\
              \zeta_{k-1} (B_{k-1} + l_{k-1} - 1 + T) & \cdots & -\zeta_{k-1}(A_{k-1} - l_{k-1} + 1)
       \end{pmatrix} \\
&\times
       \underbrace{\begin{pmatrix}
              \zeta_{k-1} (B_{k-1} + l_{k-1} + T) & \cdots & \zeta_{k-1}(B_{k-1} + l_{k-1} + 1) \\
              \vdots &  & \vdots \\
              \zeta_{k-1} (A_{k-1} - l_{k-1} + T) & \cdots & \zeta_{k-1}(A_{k-1} - l_{k-1} + 1)
       \end{pmatrix}}_{III} \\      
& \times_{i \neq k-1}
      \underbrace{ \begin{pmatrix}
              \zeta_{i} B_{i} & \cdots & -\zeta_{i}A_{i} \\
              \vdots &  & \vdots \\
              \zeta_{i} (B_{i} + l_{i} - 1) & \cdots & - \zeta_{i}(A_{i} - l_{i} + 1)
       \end{pmatrix} }_{I_{i}}
 \times 
      \underbrace{\begin{pmatrix}
              \zeta_{k} (B_{k} + l_{k}) & \cdots & \zeta_{k}(B_{k} + l_{k} - t + 1) \\
              \vdots &  & \vdots \\
              \zeta_{k} (A_{k} - l_{k}) & \cdots & \zeta_{k}(A_{k} - l_{k} - t + 1)
       \end{pmatrix}}_{II}\\
& \times 
\underbrace{\begin{pmatrix}
              \zeta_{k-1}(A_{k-1} - l_{k-1} + 1 + T) & \cdots & \zeta_{k-1} (A_{k-1} - l_{k-1} +2) \\
              \vdots &  & \vdots \\
              \zeta_{k-1}(A_{k-1} + T) & \cdots & \zeta_{k-1}(A_{k-1} + 1)
       \end{pmatrix}}_{(IV)^{\vee}}\\
& \rtimes  \r^{\Sigma_{0}}_{M, >'_{\q}}\Big(\q_{-}, \ul_{-}, \eta_{-}; (\rho, A_{k-1} - l_{k-1}, B_{k-1} + l_{k-1}, 0, \eta'_{k-1}, \zeta_{k-1}), \\
& (\rho, A_{k} - l_{k} -t , B_{k} + l_{k} - t, 0, \eta'_{k}, \zeta_{k})\Big).
\end{align*} 
As before, one can show $(IV)^{\vee}$ are interchangeable with $(II)$ and $(I_{i})$. Then
\begin{align*}
\r^{\Sigma_{0}}_{M, >'_{\q}}(\q^{T}, \ul', \ueta') & \hookrightarrow 
       \begin{pmatrix}
              \zeta_{k-1} (B_{k-1} + T) & \cdots & -\zeta_{k-1}A_{k-1} \\
              \vdots &  & \vdots \\
              \zeta_{k-1} (B_{k-1} + l_{k-1} - 1 + T) & \cdots & -\zeta_{k-1}(A_{k-1} - l_{k-1} + 1)
       \end{pmatrix} \\
&\times
       \underbrace{\begin{pmatrix}
              \zeta_{k-1} (B_{k-1} + l_{k-1} + T) & \cdots & \zeta_{k-1}(B_{k-1} + l_{k-1} + 1) \\
              \vdots &  & \vdots \\
              \zeta_{k-1} (A_{k-1} - l_{k-1} + T) & \cdots & \zeta_{k-1}(A_{k-1} - l_{k-1} + 1)
       \end{pmatrix}}_{III} \\     
& \times 
\underbrace{\begin{pmatrix}
              \zeta_{k-1}(A_{k-1} - l_{k-1} + 1 + T) & \cdots & \zeta_{k-1} (A_{k-1} - l_{k-1} +2) \\
              \vdots &  & \vdots \\
              \zeta_{k-1}(A_{k-1} + T) & \cdots & \zeta_{k-1}(A_{k-1} + 1)
       \end{pmatrix}}_{(IV)^{\vee}}\\       
& \times_{i \neq k-1}
      \underbrace{ \begin{pmatrix}
              \zeta_{i} B_{i} & \cdots & -\zeta_{i}A_{i} \\
              \vdots &  & \vdots \\
              \zeta_{i} (B_{i} + l_{i} - 1) & \cdots & - \zeta_{i}(A_{i} - l_{i} + 1)
       \end{pmatrix} }_{I_{i}}
 \times 
      \underbrace{\begin{pmatrix}
              \zeta_{k} (B_{k} + l_{k}) & \cdots & \zeta_{k}(B_{k} + l_{k} - t + 1) \\
              \vdots &  & \vdots \\
              \zeta_{k} (A_{k} - l_{k}) & \cdots & \zeta_{k}(A_{k} - l_{k} - t + 1)
       \end{pmatrix}}_{II}\\
& \rtimes  \r^{\Sigma_{0}}_{M, >'_{\q}}\Big(\q_{-}, \ul_{-}, \eta_{-}; (\rho, A_{k-1} - l_{k-1}, B_{k-1} + l_{k-1}, 0, \eta'_{k-1}, \zeta_{k-1}), \\
& (\rho, A_{k} - l_{k} -t , B_{k} + l_{k} - t, 0, \eta'_{k}, \zeta_{k})\Big).
\end{align*} 
This implies 
\begin{align*}
\r^{\Sigma_{0}}_{M, >'_{\q}}(\q, \ul', \ueta') & \hookrightarrow 
       \begin{pmatrix}
              \zeta_{k-1} B_{k-1} & \cdots & -\zeta_{k-1}A_{k-1} \\
              \vdots &  & \vdots \\
              \zeta_{k-1} (B_{k-1} + l_{k-1} - 1) & \cdots & -\zeta_{k-1}(A_{k-1} - l_{k-1} + 1)
       \end{pmatrix} \\      
& \times_{i \neq k-1}
      \underbrace{ \begin{pmatrix}
              \zeta_{i} B_{i} & \cdots & -\zeta_{i}A_{i} \\
              \vdots &  & \vdots \\
              \zeta_{i} (B_{i} + l_{i} - 1) & \cdots & - \zeta_{i}(A_{i} - l_{i} + 1)
       \end{pmatrix} }_{I_{i}}
 \times 
      \underbrace{\begin{pmatrix}
              \zeta_{k} (B_{k} + l_{k}) & \cdots & \zeta_{k}(B_{k} + l_{k} - t + 1) \\
              \vdots &  & \vdots \\
              \zeta_{k} (A_{k} - l_{k}) & \cdots & \zeta_{k}(A_{k} - l_{k} - t + 1)
       \end{pmatrix}}_{II}\\
& \rtimes  \r^{\Sigma_{0}}_{M, >'_{\q}}\Big(\q_{-}, \ul_{-}, \eta_{-}; (\rho, A_{k-1} - l_{k-1}, B_{k-1} + l_{k-1}, 0, \eta'_{k-1}, \zeta_{k-1}), \\
& (\rho, A_{k} - l_{k} -t , B_{k} + l_{k} - t, 0, \eta'_{k}, \zeta_{k})\Big).
\end{align*} 
One can further show $\r^{\Sigma_{0}}_{M, >'_{\q}}(\q, \ul', \ueta')$ is the unique irreducible subrepresentation. On the other hand,
\begin{align*}
\r^{\Sigma_{0}}_{M, >_{\q}}(\q, \ul, \ueta) & \hookrightarrow 
            \times_{i}
      \underbrace{\begin{pmatrix}
              \zeta_{i} B_{i} & \cdots & -\zeta_{i}A_{i} \\
              \vdots &  & \vdots \\
              \zeta_{i} (B_{i} + l_{i} - 1) & \cdots & - \zeta_{i}(A_{i} - l_{i} + 1)
       \end{pmatrix}}_{I_{i}} 
  \times 
      \underbrace{\begin{pmatrix}
              \zeta_{k} (B_{k} + l_{k}) & \cdots & \zeta_{k}(B_{k} + l_{k} - t + 1) \\
              \vdots &  & \vdots \\
              \zeta_{k} (A_{k} - l_{k}) & \cdots & \zeta_{k}(A_{k} - l_{k} - t + 1)
       \end{pmatrix}}_{II} \\
& \rtimes  \r^{\Sigma_{0}}_{M, >_{\q}}\Big(\q_{-}, \ul_{-}, \eta_{-}; (\rho, A_{k-1} - l_{k-1}, B_{k-1} + l_{k-1}, 0, \eta_{k-1}, \zeta_{k-1}), \\
& (\rho, A_{k} - l_{k} -t , B_{k} + l_{k} - t, 0, \eta_{k}, \zeta_{k})\Big).
\end{align*} 
By \eqref{eq: second reduction change order unequal sign}$_{r}$, 
\begin{align*}
&\r^{\Sigma_{0}}_{M, >'_{\q}}\Big(\q_{-}, \ul_{-}, \eta_{-}; (\rho, A_{k-1} - l_{k-1}, B_{k-1} + l_{k-1}, 0, \eta'_{k-1}, \zeta_{k-1}),(\rho, A_{k} - l_{k} -t , B_{k} + l_{k} - t, 0, \eta'_{k}, \zeta_{k})\Big) \\
= \, & \r^{\Sigma_{0}}_{M, >_{\q}}\Big(\q_{-}, \ul_{-}, \eta_{-}; (\rho, A_{k-1} - l_{k-1}, B_{k-1} + l_{k-1}, 0, \eta_{k-1}, \zeta_{k-1}), (\rho, A_{k} - l_{k} -t , B_{k} + l_{k} - t, 0, \eta_{k}, \zeta_{k})\Big).
\end{align*}
Hence $\r^{\Sigma_{0}}_{M, >_{\q}}(\q, \ul, \ueta) = \r^{\Sigma_{0}}_{M, >'_{\q}}(\q, \ul', \ueta')$. This finishes the second reduction.

\subsubsection{Final resolution}

Now we want to resolve the case \eqref{eq: second reduction change order unequal sign}$_{r}$. Since $l_{i} = 0$, we can also view $\q$ as an elementary parameter, denoted by $\q_{e}$. The function $\ueta$ over $Jord(\q)$ determines a function $\e_{e}$ over $Jord(\q_{e})$, i.e., for $C_{i} \in [A_{i}, B_{i}]$,
\[
\e_{e}(\rho, C_{i}, C_{i}, \zeta_{i}) = \eta_{i} (-1)^{C_{i} - B_{i}}.
\]
Similarly, we can define $\e'_{e}$. It is obvious that 
\[
\r^{\Sigma_{0}}_{M, >_{\q}}(\q, 0, \ueta) = \r^{\Sigma_{0}}_{M, >_{\q_{e}}}(\q_{e}, \e_{e}).
\]
Let $\q^{T}$ be obtained by changing $(\rho, A_{k-1}, B_{k-1}, \zeta_{k-1})$ to $(\rho, A_{k-1} + T, B_{k-1} + T, \zeta_{k-1})$ such that 
\[
B_{k-1} + T > A_{k} \text { and } B_{k+1} > A_{k-1} + T.
\]
Then 
\[
\r^{\Sigma_{0}}_{M, >'_{\q}}(\q, 0, \ueta') = \Jac_{(\rho, A_{k-1} + T, B_{k-1} + T, \zeta_{k-1}) \mapsto (\rho, A_{k-1}, B_{k-1}, \zeta_{k-1})} \r^{\Sigma_{0}}_{M, >'_{\q}}(\q^{T}, 0, \ueta')
\]
The order $>'_{\q}$ induces an order $>'_{\q_{e}}$ on $Jord(\q_{e})$, and we define
\[
\r^{\Sigma_{0}}_{M, >'_{\q_{e}}}(\q_{e}, \e'_{e}) := \circ_{C_{k-1} \in [B_{k -1}, A_{k-1}]} \Jac_{(\rho, C_{k-1} + T, C_{k-1} + T, \zeta_{k-1}) \mapsto (\rho, C_{k-1}, C_{k-1}, \zeta_{k-1})} \r^{\Sigma_{0}}_{M, >'_{\q_{e}}}(\q^{T}_{e}, \e'_{e})
\]
Since 
\[
\r^{\Sigma_{0}}_{M, >'_{\q_{e}}}(\q^{T}_{e}, \e'_{e}) = \r^{\Sigma_{0}}_{M, >'_{\q}}(\q^{T}, 0, \ueta'),
\] 
and 
\[
\Jac_{(\rho, A_{k-1} + T, B_{k-1} + T, \zeta_{k-1}) \mapsto (\rho, A_{k-1}, B_{k-1}, \zeta_{k-1})} = \circ_{C_{k-1} \in [B_{k -1}, A_{k-1}]} \Jac_{(\rho, C_{k-1} + T, C_{k-1} + T, \zeta_{k-1}) \mapsto (\rho, C_{k-1}, C_{k-1}, \zeta_{k-1})}
\]
we get
\[
\r^{\Sigma_{0}}_{M, >'_{\q}}(\q, 0, \ueta') = \r^{\Sigma_{0}}_{M, >'_{\q_{e}}}(\q_{e}, \e'_{e}).
\]
So it is enough to show 
\(
\r^{\Sigma_{0}}_{M, >_{\q_{e}}}(\q_{e}, \e_{e}) = \r^{\Sigma_{0}}_{M, >'_{\q_{e}}}(\q_{e}, \e'_{e}).
\) 
Note 
\begin{align*}
\r^{\Sigma_{0}}_{M, >_{\q_{e}}}(\q_{e}, \e_{e}) & = \r^{\Sigma_{0}}_{W}(\q_{e}, \e_{e} \e^{M/W}_{\q_{e}}) \\
\r^{\Sigma_{0}}_{M, >'_{\q_{e}}}(\q_{e}, \e'_{e}) & = \r^{\Sigma_{0}}_{W}(\q_{e}, \e'_{e} \e'^{M/W}_{\q_{e}}),
\end{align*}
where $\e^{M/W}_{\q_{e}}$ (resp. $\e'^{M/W}_{\q_{e}}$) is defined with respect to the order $>_{\q_{e}}$ (resp. $>'_{\q_{e}}$) (cf. \eqref{eq: M/W}). Then we just need to verify 
\[
\e_{e} \e^{M/W}_{\q_{e}} =  \e'_{e} \e'^{M/W}_{\q_{e}},
\]
or equivalently,
\begin{align}
\label{eq: final resolution change order unequal sign}
\frac{\e_{e}}{\e'_{e}} = \frac{\e^{M/W}_{\q_{e}}}{\e'^{M/W}_{\q_{e}}} = \frac{\e^{M/MW}_{\q_{e}}}{\e'^{M/MW}_{\q_{e}}} \cdot \frac{\e^{MW/W}_{\q_{e}}}{\e'^{MW/W}_{\q_{e}}}.
\end{align}
We divide it into two cases:

\begin{enumerate}

\item If $A_{i} \in \mathbb{Z}$, 
\[
\e^{MW/W}_{\q_{e}}(\rho, C_{i}, C_{i}, \zeta_{i}) = \e'^{MW/W}_{\q_{e}}(\rho, C_{i}, C_{i}, \zeta_{i}) = 1.
\]
And
\[
\e^{M/MW}_{\q_{e}}(\rho, C_{i}, C_{i}, \zeta_{i}) = \begin{cases}
                                              (-1)^{m} &\text{ if $\zeta_{i} = +1$, } \\
                                              (-1)^{m + n} &\text{ if $\zeta_{i} = -1$,}
                                              \end{cases}
\]
where
\[
m = \sharp \{C_{j} \in [A_{j}, B_{j}] \text{ for all } j:  \zeta_{j} = -1, (\rho, C_{j}, C_{j}, \zeta_{j}) >_{\q_{e}} (\rho, C_{i}, C_{i}, \zeta_{i}) \},
\] 

\[
n = \sharp \{C_{j} \in [A_{j}, B_{j}] \text{ for all } j:   (\rho, C_{i}, C_{i}, \zeta_{i}) >_{\q_{e}} (\rho, C_{j}, C_{j}, \zeta_{j})\}.
\] 
And
\[
\e'^{M/MW}_{\q_{e}}(\rho, C_{i}, C_{i}, \zeta_{i}) = \begin{cases}
                                              (-1)^{m'} &\text{ if $\zeta_{i} = +1$, } \\
                                              (-1)^{m' + n'} &\text{ if $\zeta_{i} = -1$,}
                                              \end{cases}
\]
where
\[
m' = \sharp \{C_{j} \in [A_{j}, B_{j}] \text{ for all } j:  \zeta_{j} = -1, (\rho, C_{j}, C_{j}, \zeta_{j}) >'_{\q_{e}} (\rho, C_{i}, C_{i}, \zeta_{i}) \},
\] 

\[
n' = \sharp \{C_{j} \in [A_{j}, B_{j}] \text{ for all } j:  (\rho, C_{i}, C_{i}, \zeta_{i}) >'_{\q_{e}} (\rho, C_{j}, C_{j}, \zeta_{j}) \}.
\]

  \begin{enumerate}
  
  \item $i \neq k, k-1$ \\
        
         \begin{itemize}
         
         \item $\e_{e}(\rho, C_{i}, C_{i}, \zeta_{i}) / \e'_{e}(\rho, C_{i}, C_{i}, \zeta_{i})= 1$ \\
         
         \item $\e^{M/MW}_{\q_{e}}(\rho, C_{i}, C_{i}, \zeta_{i})/\e'^{M/MW}_{\q_{e}}(\rho, C_{i}, C_{i}, \zeta_{i}) = 1$ \\
         
         \end{itemize}
         
  \item $i = k$ \\
        
         \begin{itemize}
         
         \item $\e_{e}(\rho, C_{k}, C_{k}, \zeta_{k}) / \e'_{e}(\rho, C_{k}, C_{k}, \zeta_{k})= (-1)^{A_{k-1} - B_{k-1} + 1}$ \\
         
         \item $\e^{M/MW}_{\q_{e}}(\rho, C_{k}, C_{k}, \zeta_{k}) / \e'^{M/MW}_{\q_{e}}(\rho, C_{k}, C_{k}, \zeta_{k}) = (-1)^{A_{k-1} - B_{k-1} + 1}$ \\
         
         \end{itemize}    
               
  \item $i = k-1$ \\
        
         \begin{itemize}
         
         \item $\e_{e}(\rho, C_{k-1}, C_{k-1}, \zeta_{k-1}) / \e'_{e}(\rho, C_{k-1}, C_{k-1}, \zeta_{k-1})= (-1)^{A_{k} - B_{k} + 1}$ \\
         
         \item $\e^{M/MW}_{\q_{e}}(\rho, C_{k-1}, C_{k-1}, \zeta_{k-1}) / \e'^{M/MW}_{\q_{e}}(\rho, C_{k-1}, C_{k-1}, \zeta_{k-1}) = (-1)^{A_{k} - B_{k} + 1}$ \\
         
         \end{itemize}       
    
    \end{enumerate}

\item $A_{i} \notin \mathbb{Z}$

\[
\e^{M/MW}_{\q_{e}}(\rho, C_{i}, C_{i}, \zeta_{i}) = \e'^{M/MW}_{\q_{e}}(\rho, C_{i}, C_{i}, \zeta_{i}) = 1.
\]
And
\[
\e^{MW/W}_{\q_{e}}(\rho, C_{i}, C_{i}, \zeta_{i}) = \begin{cases}
                                              (-1)^{m} &\text{ if $\zeta_{i} = +1$, } \\
                                              (-1)^{n} &\text{ if $\zeta_{i} = -1$,}
                                              \end{cases}
\]
where
\[
m = \sharp \{C_{j} \in [A_{j}, B_{j}] \text{ for all } j:  \zeta_{j} = -1, (\rho, C_{j}, C_{j}, \zeta_{j}) >_{\q_{e}} (\rho, C_{i}, C_{i}, \zeta_{i}) \},
\] 

\[
n = \sharp \{C_{j} \in [A_{j}, B_{j}] \text{ for all } j:   \zeta_{j} = +1, (\rho, C_{i}, C_{i}, \zeta_{i}) >_{\q_{e}} (\rho, C_{j}, C_{j}, \zeta_{j})\}.
\] 
And
\[
\e'^{MW/W}_{\q_{e}}(\rho, C_{i}, C_{i}, \zeta_{i}) = \begin{cases}
                                              (-1)^{m'} &\text{ if $\zeta_{i} = +1$, } \\
                                              (-1)^{n'} &\text{ if $\zeta_{i} = -1$,}
                                              \end{cases}
\]
where
\[
m' = \sharp \{C_{j} \in [A_{j}, B_{j}] \text{ for all } j:  \zeta_{j} = -1, (\rho, C_{j}, C_{j}, \zeta_{j}) >'_{\q_{e}} (\rho, C_{i}, C_{i}, \zeta_{i}) \},
\] 

\[
n' = \sharp \{C_{j} \in [A_{j}, B_{j}] \text{ for all } j:  \zeta_{j} = +1, (\rho, C_{i}, C_{i}, \zeta_{i}) >'_{\q_{e}} (\rho, C_{j}, C_{j}, \zeta_{j}) \}.
\]

  \begin{enumerate}
  
  \item $i \neq k, k-1$ \\
        
         \begin{itemize}
         
         \item $\e_{e}(\rho, C_{i}, C_{i}, \zeta_{i}) / \e'_{e}(\rho, C_{i}, C_{i}, \zeta_{i})= 1$ \\
         
         \item $\e^{MW/W}_{\q_{e}}(\rho, C_{i}, C_{i}, \zeta_{i})/\e'^{MW/W}_{\q_{e}}(\rho, C_{i}, C_{i}, \zeta_{i}) = 1$ \\
         
         \end{itemize}
         
  \item $i = k$ \\
        
         \begin{itemize}
         
         \item $\e_{e}(\rho, C_{k}, C_{k}, \zeta_{k}) / \e'_{e}(\rho, C_{k}, C_{k}, \zeta_{k})= (-1)^{A_{k-1} - B_{k-1} + 1}$ \\
         
         \item $\e^{MW/W}_{\q_{e}}(\rho, C_{k}, C_{k}, \zeta_{k}) / \e'^{MW/W}_{\q_{e}}(\rho, C_{k}, C_{k}, \zeta_{k}) = (-1)^{A_{k-1} - B_{k-1} + 1}$ \\
         
         \end{itemize}    
               
  \item $i = k-1$ \\
        
         \begin{itemize}
         
         \item $\e_{e}(\rho, C_{k-1}, C_{k-1}, \zeta_{k-1}) / \e'_{e}(\rho, C_{k-1}, C_{k-1}, \zeta_{k-1})= (-1)^{A_{k} - B_{k} + 1}$ \\
         
         \item $\e^{MW/W}_{\q_{e}}(\rho, C_{k-1}, C_{k-1}, \zeta_{k-1}) / \e'^{MW/W}_{\q_{e}}(\rho, C_{k-1}, C_{k-1}, \zeta_{k-1}) = (-1)^{A_{k} - B_{k} + 1}$ \\
         
         \end{itemize}

  \end{enumerate}

\end{enumerate}
It follows from the calculations above that \eqref{eq: final resolution change order unequal sign} holds, and this ends the proof of Theorem~\ref{thm: change order unequal sign}.

\section{Reduction operations}
\label{sec: reduction}

In this section, we want to give three operations, which will be used in our general procedure to reduce the problem of finding nonvanishing conditions for $\r^{\Sigma_{0}}_{M, >_{\q}}(\q, \ul, \ueta)$.

\subsection{Pull}

\subsubsection{Case of unequal length}

We choose an admissible order $>_{\q}$, and we also fix a self-dual unitary irreducible supercuspidal representation $\rho$ of $GL(d_{\rho})$. We index the Jordan blocks in $Jord_{\rho}(\q)$ such that 
\[
(\rho, A_{i+1}, B_{i+1}, \zeta_{i+1}) >_{\q} (\rho, A_{i}, B_{i}, \zeta_{i}).
\]
Suppose there exists $n$ such that for $i > n$, 
\[
(\rho, A_{i}, B_{i}, \zeta_{i}) \gg \cup_{j=1}^{n}\{(\rho, A_{j}, B_{j}, \zeta_{j})\}.
\]
Moreover 
\[
[A_{n}, B_{n}] \supsetneq [A_{n-1}, B_{n-1}] \text{ and } \zeta_{n} = \zeta_{n-1}.
\] 
We denote by $>'_{\q}$ the order obtained from $>_{\q}$ by switching $(\rho, A_{n}, B_{n}, \zeta_{n})$ and $(\rho, A_{n-1}, B_{n-1}, \zeta_{n-1})$. It is still admissible. Let $S^{+}_{n}$ be the corresponding transformation on $(\ul, \ueta)$. We define $\q_{-}$ by
\[
Jord(\q_{-}) = Jord(\q) \backslash \{(\rho, A_{n}, B_{n}, \zeta_{n}), (\rho, A_{n-1}, B_{n-1}, \zeta_{n-1})\}.
\]
We denote the restriction of $(\ul, \ueta)$ to $Jord(\q_{-})$ by $(\ul_{-}, \ueta_{-})$.

\begin{proposition}
\label{prop: pull}
For any $(\ul, \ueta)$, $\r^{\Sigma_{0}}_{M, >_{\q}}(\q, \ul, \ueta) \neq 0$ if the following three conditions are satisfied:

\begin{enumerate}

\item 
\[
\r^{\Sigma_{0}}_{M, >_{\q}}\Big(\q_{-}, \ul_{-}, \ueta_{-}; (\rho, A_{n} + T_{n}, B_{n} + T_{n}, l_{n}, \eta_{n}, \zeta_{n}), (\rho, A_{n-1} + T_{n-1}, B_{n-1} + T_{n-1}, l_{n-1}, \eta_{n-1}, \zeta_{n-1})\Big) \neq 0
\]
for {\bf some} $T_{n}, T_{n-1}$, such that 
\[
[A_{n} + T_{n}, B_{n} + T_{n}] \supsetneq [A_{n-1} + T_{n-1}, B_{n-1} + T_{n-1}]
\] 
and
\(
(\rho, A_{i}, B_{i}, \zeta_{i}) \gg (\rho, A_{n} + T_{n}, B_{n} + T_{n}, \zeta_{n})
\)
for $i > n$.

\item 
\[
\r^{\Sigma_{0}}_{M, >_{\q}}\Big(\q_{-}, \ul_{-}, \ueta_{-}; (\rho, A_{n} + T, B_{n} + T, l_{n}, \eta_{n}, \zeta_{n}), (\rho, A_{n-1}, B_{n-1}, l_{n-1}, \eta_{n-1}, \zeta_{n-1})\Big) \neq 0
\]
for {\bf some} $T$ such that $B_{i} > A_{n} + T$ for $i > n$.

\item 
\[
\r^{\Sigma_{0}}_{M, >'_{\q}}\Big(\q_{-}, \ul'_{-}, \ueta'_{-}; (\rho, A_{n}, B_{n}, l'_{n}, \eta'_{n}, \zeta_{n}), (\rho, A_{n-1} + T, B_{n-1} +T , l'_{n-1}, \eta'_{n-1}, \zeta_{n-1})\Big) \neq 0
\]
for {\bf some} $T$ such that $B_{i} > A_{n-1} + T$ for $i > n$, and $(\ul', \ueta') = S^{+}_{n}(\ul, \ueta)$.

\end{enumerate}
Conversely, if $\r^{\Sigma_{0}}_{M, >_{\q}}(\q, \ul, \ueta) \neq 0$, then (1), (2), (3) still hold after we replace ``{\bf some}" by ``{\bf all}".

\end{proposition}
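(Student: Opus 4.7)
The plan is to prove all three equivalences by a common strategy: in each case I will exhibit $\r^{\Sigma_{0}}_{M, >_{\q}}(\q, \ul, \ueta)$ as a Jacquet module of the relevant intermediate representation and then show that the Jacquet functor in question preserves nonvanishing in both directions. The intermediate parameters in (1), (2), (3) differ only in how the pair $\{(\rho, A_n, B_n, \zeta_n), (\rho, A_{n-1}, B_{n-1}, \zeta_{n-1})\}$ is pulled apart: (1) preserves the inclusion by shifting both blocks, (2) separates them by moving only the outer block, and (3) is the mirror of (2) under the switched order $>'_{\q}$.

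For condition (1), denote the intermediate parameter by $\q^{(1)}$ and choose a common dominating parameter $\q_{\gg}$ of $\q$ and $\q^{(1)}$ with discrete diagonal restriction, compatible with $>_{\q}$. By the definition \eqref{eq: M construction general} and the commutativity properties in Lemmas~\ref{lemma: permuting Jacquet} and \ref{lemma: permuting Jacquet 1}, the shifting functor $\Jac_{\q_{\gg}\to\q}$ factors as $\Jac_{\q^{(1)}\to\q} \circ \Jac_{\q_{\gg}\to\q^{(1)}}$, giving
\[
\r^{\Sigma_{0}}_{M, >_{\q}}(\q, \ul, \ueta) \;=\; \Jac_{\q^{(1)}\to\q}\, \r^{\Sigma_{0}}_{M, >_{\q^{(1)}}}(\q^{(1)}, \ul, \ueta).
\]
The ``only if'' direction is then immediate. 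For the ``if'' direction, I would argue in the spirit of Section~\ref{sec: basic}: using the separation hypothesis $(\rho, A_i, B_i, \zeta_i) \gg (\rho, A_n + T_n, B_n + T_n, \zeta_n)$ for $i > n$, one can realize $\r^{\Sigma_{0}}_{M, >_{\q^{(1)}}}(\q^{(1)}, \ul, \ueta)$ as the unique irreducible subrepresentation of a suitable induced representation, permute the associated generalized segments using Corollary~\ref{cor: permuting general linear} and Corollary~\ref{cor: dualizing classical}, and then verify that the Jacquet functor $\Jac_{\q^{(1)}\to\q}$ lands on the correct subrepresentation.

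For condition (2), the same strategy applies verbatim to the intermediate parameter $\q^{(2)}$ obtained by shifting only the outer block; here the separation $B_i > A_n + T$ for $i > n$ plays the analogous role. Condition (3) then follows by combining Proposition~\ref{prop: change order equal sign} (the change-of-order formula, which gives $\r^{\Sigma_{0}}_{M, >_{\q}}(\q, \ul, \ueta) = \r^{\Sigma_{0}}_{M, >'_{\q}}(\q, \ul', \ueta')$ for $(\ul', \ueta') = S^{+}_{n}(\ul, \ueta)$) with the analog of (2) applied under $>'_{\q}$; under $>'_{\q}$ the block $(\rho, A_{n-1}, B_{n-1}, \zeta_{n-1})$ assumes the role of the ``later'' block and is the one to be shifted.

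The main technical obstacle is the ``if'' direction in (1) and (2): given nonvanishing of the intermediate representation, one must rule out that the pullback Jacquet functor annihilates it. The argument is essentially the same permutation-of-segments calculation that drives the proofs of Proposition~\ref{prop: basic} and Lemmas~\ref{lemma: necessary condition sup}--\ref{lemma: necessary condition sub}, combined with Proposition~2.6 to suppress unwanted semisimplification contributions. The ``far away'' hypothesis is essential throughout, as it guarantees the relevant generalized segments are unlinked and the corresponding inductions commute.
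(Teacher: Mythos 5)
There is a genuine gap, and it lies exactly at the crux of the proposition: your plan treats the three conditions independently and claims, for each one separately, that "the Jacquet functor in question preserves nonvanishing in both directions." That statement is false for each condition taken alone (at positive shifts), and no segment-permutation argument can rescue it. Applying a shifting functor $\Jac_{(\rho, A+T, B+T, \zeta) \mapsto (\rho, A, B, \zeta)}$ to a nonzero element of a packet can perfectly well give zero; this is visible already in the basic case (Proposition~\ref{prop: basic}, the case $l_{n}=l_{n-1}=0$ with $\eta_{n} \neq (-1)^{A_{n-1}-B_{n-1}}\eta_{n-1}$, and more generally Lemma~\ref{lemma: necessary condition sup}): the widely separated configuration in condition (2) is nonzero for large $T$ whenever $B_{n}+T+l_{n} > A_{n-1}-l_{n-1}$, yet $\r^{\Sigma_{0}}_{M, >_{\q}}(\q, \ul, \ueta)=0$ unless $l_{n}+l_{n-1} > A_{n-1}-B_{n-1}$. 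So if your proof of the "if" direction for condition (2) (or (1)) alone went through, it would prove a false statement and would also render the other two conditions redundant. The sufficiency direction genuinely requires the conjunction of the conditions, and the difficulty you flag at the end — ruling out that the pull-back Jacquet functor annihilates the intermediate representation — cannot be "verified" from one condition; the nonvanishing of that Jacquet module has to be supplied by another of the three conditions.

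This coupling is the engine of the paper's proof, and it is where the change-of-order formula enters in an essential way (not merely to translate condition (3) as in your sketch). The paper proves two unit-step claims: e.g.\ when $[A_{n}+1, B_{n}+1] \supsetneq [A_{n-1}, B_{n-1}]$, nonvanishing of $\r^{\Sigma_{0}}_{M, >_{\q}}(\q, \ul, \ueta)$ is equivalent to the conjunction of a type-(1) condition with shift $1$ and a type-(3) condition with large $T$. For sufficiency one uses Proposition~\ref{prop: change order equal sign} with $(\ul', \ueta')=S^{+}_{n}(\ul, \ueta)$ to identify the type-(1) representation with its $>'_{\q}$-avatar; its nonvanishing then forces the auxiliary representation $\r^{\Sigma_{0}}_{M, >'_{\q}}(\q_{-}, \ul'_{-}, \ueta'_{-}; (\rho, A_{n}+1, B_{n}+1, l'_{n}, \eta'_{n}, \zeta_{n}), (\rho, A_{n-1}+T, B_{n-1}+T, l'_{n-1}, \eta'_{n-1}, \zeta_{n-1}))$ to be nonzero, while the type-(3) condition guarantees that $\Jac_{(\rho, A_{n}+1, B_{n}+1, \zeta)\mapsto(\rho, A_{n}, B_{n}, \zeta)}$ of this auxiliary representation is nonzero; a short computation (using that the shifted cuspidal exponents avoid the segment being removed) shows this Jacquet module can only be the far generalized segment times $\r^{\Sigma_{0}}_{M, >_{\q}}(\q, \ul, \ueta)$, whence the latter is nonzero. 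A symmetric claim handles $[A_{n}, B_{n}] \supsetneq [A_{n-1}+1, B_{n-1}+1]$, and the full statement, for arbitrary $T_{n}, T_{n-1}, T$, follows by alternating these two unit-step claims. Your outline contains none of this interplay between the two orders, so the decisive step is unsupported as written.
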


\begin{proof}
The converse is obvious. So we mainly need to show the sufficiency of the above three conditions. Let $\zeta = \zeta_{n} = \zeta_{n-1}$, and $(\ul', \ueta') = S^{+}_{n}(\ul, \ueta)$ as in the proposition. Since $[A_{n}, B_{n}] \supsetneq [A_{n-1}, B_{n-1}]$, we necessarily have $[A_{n} + 1, B_{n} + 1] \supsetneq [A_{n-1}, B_{n-1}]$ or $[A_{n}, B_{n}] \supsetneq [A_{n-1} + 1, B_{n-1} + 1]$. So we divide it into two cases.

Suppose $[A_{n} + 1, B_{n} + 1] \supsetneq [A_{n-1}, B_{n-1}]$, we claim $\r^{\Sigma_{0}}_{M, >_{\q}}(\q, \ul, \ueta) \neq 0$ if the following conditions are satisfied:

\begin{itemize}

\item 
\[
\r^{\Sigma_{0}}_{M, >_{\q}}\Big(\q_{-}, \ul_{-}, \ueta_{-}; (\rho, A_{n} + 1, B_{n} + 1, l_{n}, \eta_{n}, \zeta_{n}), (\rho, A_{n-1}, B_{n-1}, l_{n-1}, \eta_{n-1}, \zeta_{n-1})\Big) \neq 0;
\]

\item 
\[
\r^{\Sigma_{0}}_{M, >'_{\q}}\Big(\q_{-}, \ul'_{-}, \ueta'_{-}; (\rho, A_{n}, B_{n}, l'_{n}, \eta'_{n}, \zeta_{n}), (\rho, A_{n-1} + T, B_{n-1} +T , l'_{n-1}, \eta'_{n-1}, \zeta_{n-1})\Big) \neq 0
\]
for {\bf some} $T$ such that $B_{i} > A_{n-1} + T$ for $i > n$.

\end{itemize}

It suffices to take $T$ sufficiently large so that $B_{i} > A_{n-1} + T$ for $i > n$, and
\[
(\rho, A_{n-1} + T, B_{n-1} + T, \zeta_{n-1}) \gg \cup_{j=1}^{n-2}\{(\rho, A_{j}, B_{j}, \zeta_{j})\} \cup \{(\rho, A_{n}+1, B_{n}+1, \zeta_{n})\} .
\]
By Theorem~\ref{thm: change order equal sign}, we have
\begin{align*}
&\r^{\Sigma_{0}}_{M, >'_{\q}}\Big(\q_{-}, \ul'_{-}, \ueta'_{-}; (\rho, A_{n} + 1, B_{n} + 1, l'_{n}, \eta'_{n}, \zeta_{n}), (\rho, A_{n-1}, B_{n-1}, l'_{n-1}, \eta'_{n-1}, \zeta_{n-1})\Big) = \\
&\r^{\Sigma_{0}}_{M, >_{\q}}\Big(\q_{-}, \ul_{-}, \ueta_{-}; (\rho, A_{n} + 1, B_{n} + 1, l_{n}, \eta_{n}, \zeta_{n}), (\rho, A_{n-1}, B_{n-1}, l_{n-1}, \eta_{n-1}, \zeta_{n-1})\Big) \neq 0.
\end{align*}
So
\[
\r^{\Sigma_{0}}_{\gg} := \r^{\Sigma_{0}}_{M, >'_{\q}}\Big(\q_{-}, \ul'_{-}, \ueta'_{-}; (\rho, A_{n} + 1, B_{n} + 1, l'_{n}, \eta'_{n}, \zeta_{n}), (\rho, A_{n-1} + T, B_{n-1} +T , l'_{n-1}, \eta'_{n-1}, \zeta_{n-1})\Big) \neq 0
\]
and 
\begin{align*}
\r^{\Sigma_{0}}_{\gg} & \hookrightarrow 
       \begin{pmatrix}
              \zeta (B_{n-1} + T) & \cdots & \zeta(B_{n-1} + 1) \\
              \vdots &  & \vdots \\
              \zeta (A_{n-1} + T) & \cdots & \zeta(A_{n-1} + 1)
       \end{pmatrix} \\
& \rtimes \r^{\Sigma_{0}}_{M, >'_{\q}}\Big(\q_{-}, \ul'_{-}, \ueta'_{-}; (\rho, A_{n} + 1, B_{n} + 1, l'_{n}, \eta'_{n}, \zeta_{n}), (\rho, A_{n-1}, B_{n-1}, l'_{n-1}, \eta'_{n-1}, \zeta_{n-1})\Big) \\
& = \begin{pmatrix}
              \zeta (B_{n-1} + T) & \cdots & \zeta(B_{n-1} + 1) \\
              \vdots &  & \vdots \\
              \zeta (A_{n-1} + T) & \cdots & \zeta(A_{n-1} + 1)
       \end{pmatrix} \\
& \rtimes \r^{\Sigma_{0}}_{M, >_{\q}}\Big(\q_{-}, \ul_{-}, \ueta_{-}; (\rho, A_{n} + 1, B_{n} + 1, l_{n}, \eta_{n}, \zeta_{n}), (\rho, A_{n-1}, B_{n-1}, l_{n-1}, \eta_{n-1}, \zeta_{n-1})\Big).
\end{align*}   
Note 
\begin{align*}
&\Jac_{(\rho, A_{n} + 1, B_{n} + 1, \zeta) \mapsto (\rho, A_{n}, B_{n}, \zeta)} \r^{\Sigma_{0}}_{\gg} = \\
& \r^{\Sigma_{0}}_{M, >'_{\q}}\Big(\q_{-}, \ul'_{-}, \ueta'_{-}; (\rho, A_{n}, B_{n}, l'_{n}, \eta'_{n}, \zeta_{n}), (\rho, A_{n-1} + T, B_{n-1} +T , l'_{n-1}, \eta'_{n-1}, \zeta_{n-1})\Big) \neq 0.
\end{align*}
So after we apply the same Jacquet functor to the full induced representation above, we should get something nonzero. To compute this Jacquet module, one notes
\[
\zeta(B_{n-1} + T), -\zeta(A_{n-1} + 1) \notin \{\zeta(B_{n} + 1), \cdots, \zeta(A_{n} + 1)\},
\]
so it can only be  
\begin{align*}
 & \begin{pmatrix}
              \zeta (B_{n-1} + T) & \cdots & \zeta(B_{n-1} + 1) \\
              \vdots &  & \vdots \\
              \zeta (A_{n-1} + T) & \cdots & \zeta(A_{n-1} + 1)
       \end{pmatrix}
\rtimes \Jac_{(\rho, A_{n} + 1, B_{n} + 1, \zeta) \mapsto (\rho, A_{n}, B_{n}, \zeta)} \\
& \r^{\Sigma_{0}}_{M, >_{\q}}\Big(\q_{-}, \ul_{-}, \ueta_{-}; (\rho, A_{n} + 1, B_{n} + 1, l_{n}, \eta_{n}, \zeta_{n}), (\rho, A_{n-1}, B_{n-1}, l_{n-1}, \eta_{n-1}, \zeta_{n-1})\Big) \\
= & \begin{pmatrix}
              \zeta (B_{n-1} + T) & \cdots & \zeta(B_{n-1} + 1) \\
              \vdots &  & \vdots \\
              \zeta (A_{n-1} + T) & \cdots & \zeta(A_{n-1} + 1)
       \end{pmatrix}
\rtimes \r^{\Sigma_{0}}_{M, >_{\q}}(\q, \ul, \ueta) \neq 0.
\end{align*}       
Hence $\r^{\Sigma_{0}}_{M, >_{\q}}(\q, \ul, \ueta) \neq 0$. This shows our claim in the first case. 
 
Suppose $[A_{n}, B_{n}] \supsetneq [A_{n-1} + 1, B_{n-1} + 1]$, we claim $\r^{\Sigma_{0}}_{M, >_{\q}}(\q, \ul, \ueta) \neq 0$ if the following conditions are satisfied:

\begin{itemize}

\item 
\[
\r^{\Sigma_{0}}_{M, >'_{\q}}\Big(\q_{-}, \ul'_{-}, \ueta'_{-}; (\rho, A_{n}, B_{n}, l'_{n}, \eta'_{n}, \zeta_{n}), (\rho, A_{n-1} + 1, B_{n-1} + 1, l'_{n-1}, \eta'_{n-1}, \zeta_{n-1})\Big) \neq 0;
\]

\item 
\[
\r^{\Sigma_{0}}_{M, >_{\q}}\Big(\q_{-}, \ul_{-}, \ueta_{-}; (\rho, A_{n} + T, B_{n} + T, l_{n}, \eta_{n}, \zeta_{n}), (\rho, A_{n-1}, B_{n-1}, l_{n-1}, \eta_{n-1}, \zeta_{n-1})\Big) \neq 0
\]
for {\bf some} $T$ such that $B_{i} > A_{n} + T$ for $i > n$.

\end{itemize}
The argument of this case is essentially the same as the previous one. Again it suffices to take $T$ sufficiently large so that $B_{i} > A_{n} + T$ for $i > n$, and
\[
(\rho, A_{n} + T, B_{n} + T, \zeta_{n}) \gg \cup_{j=1}^{n-2}\{(\rho, A_{j}, B_{j}, \zeta_{j})\} \cup \{(\rho, A_{n-1}+1, B_{n-1}+1, \zeta_{n-1})\} .
\]
By Theorem~\ref{thm: change order equal sign}, we have
\begin{align*}
&\r^{\Sigma_{0}}_{M, >_{\q}}\Big(\q_{-}, \ul_{-}, \ueta_{-}; (\rho, A_{n}, B_{n}, l_{n}, \eta_{n}, \zeta_{n}), (\rho, A_{n-1} + 1, B_{n-1} + 1, l_{n-1}, \eta_{n-1}, \zeta_{n-1})\Big) = \\
&\r^{\Sigma_{0}}_{M, >'_{\q}}\Big(\q_{-}, \ul'_{-}, \ueta'_{-}; (\rho, A_{n}, B_{n}, l'_{n}, \eta'_{n}, \zeta_{n}), (\rho, A_{n-1} + 1, B_{n-1} + 1, l'_{n-1}, \eta'_{n-1}, \zeta_{n-1})\Big) \neq 0.
\end{align*}
So
\[
\r^{\Sigma_{0}}_{\gg} := \r^{\Sigma_{0}}_{M, >_{\q}}\Big(\q_{-}, \ul_{-}, \ueta_{-}; (\rho, A_{n} + T, B_{n} + T, l_{n}, \eta_{n}, \zeta_{n}), (\rho, A_{n-1} + 1, B_{n-1} + 1, l_{n-1}, \eta_{n-1}, \zeta_{n-1})\Big) \neq 0
\]
and
\begin{align*}
\r^{\Sigma_{0}}_{\gg} & \hookrightarrow 
       \begin{pmatrix}
              \zeta (B_{n} + T) & \cdots & \zeta(B_{n} + 1) \\
              \vdots &  & \vdots \\
              \zeta (A_{n} + T) & \cdots & \zeta(A_{n} + 1)
       \end{pmatrix} \\
& \rtimes \r^{\Sigma_{0}}_{M, >_{\q}}\Big(\q_{-}, \ul_{-}, \ueta_{-}; (\rho, A_{n}, B_{n}, l_{n}, \eta_{n}, \zeta_{n}), (\rho, A_{n-1} + 1, B_{n-1} + 1, l_{n-1}, \eta_{n-1}, \zeta_{n-1})\Big) \\
& = \begin{pmatrix}
              \zeta (B_{n} + T) & \cdots & \zeta(B_{n} + 1) \\
              \vdots &  & \vdots \\
              \zeta (A_{n} + T) & \cdots & \zeta(A_{n} + 1)
       \end{pmatrix} \\
& \rtimes \r^{\Sigma_{0}}_{M, >'_{\q}}\Big(\q_{-}, \ul'_{-}, \ueta'_{-}; (\rho, A_{n}, B_{n}, l'_{n}, \eta'_{n}, \zeta_{n}), (\rho, A_{n-1} + 1, B_{n-1} + 1, l'_{n-1}, \eta'_{n-1}, \zeta_{n-1})\Big). 
\end{align*}   
Note 
\begin{align*}
&\Jac_{(\rho, A_{n-1} + 1, B_{n-1} + 1, \zeta) \mapsto (\rho, A_{n-1}, B_{n-1}, \zeta)} \r^{\Sigma_{0}}_{\gg} = \\
& \r^{\Sigma_{0}}_{M, >_{\q}}\Big(\q_{-}, \ul_{-}, \ueta_{-}; (\rho, A_{n} + T, B_{n} + T, l_{n}, \eta_{n}, \zeta_{n}), (\rho, A_{n-1}, B_{n-1}, l_{n-1}, \eta_{n-1}, \zeta_{n-1})\Big) \neq 0.
\end{align*}
So after we apply the same Jacquet functor to the full induced representation above, we should get something nonzero. To compute this Jacquet module, one notes
\[
\zeta(B_{n} + T), -\zeta(A_{n} + 1) \notin \{\zeta(B_{n-1} + 1), \cdots, \zeta(A_{n-1} + 1)\},
\]
so it can only be  
\begin{align*}
& \begin{pmatrix}
              \zeta (B_{n} + T) & \cdots & \zeta(B_{n} + 1) \\
              \vdots &  & \vdots \\
              \zeta (A_{n} + T) & \cdots & \zeta(A_{n} + 1)
       \end{pmatrix}
\rtimes \Jac_{(\rho, A_{n-1} + 1, B_{n-1} + 1, \zeta) \mapsto (\rho, A_{n-1}, B_{n-1}, \zeta)} \\
& \r^{\Sigma_{0}}_{M, >'_{\q}}\Big(\q_{-}, \ul'_{-}, \ueta'_{-}; (\rho, A_{n}, B_{n}, l'_{n}, \eta'_{n}, \zeta_{n}), (\rho, A_{n-1} + 1, B_{n-1} +1, l'_{n-1}, \eta'_{n-1}, \zeta_{n-1})\Big) \\
= & \begin{pmatrix}
              \zeta (B_{n} + T) & \cdots & \zeta(B_{n} + 1) \\
              \vdots &  & \vdots \\
              \zeta (A_{n} + T) & \cdots & \zeta(A_{n} + 1)
       \end{pmatrix}
\rtimes \r^{\Sigma_{0}}_{M, >'_{\q}}(\q, \ul', \ueta') \neq 0.
\end{align*}       
Hence $\r^{\Sigma_{0}}_{M, >_{\q}}(\q, \ul, \ueta) = \r^{\Sigma_{0}}_{M, >'_{\q}}(\q, \ul', \ueta') \neq 0$. This shows our claim in the second case.

By combining our claims in both cases in some alternating way, we can shift both $[A_{n}, B_{n}], [A_{n-1}, B_{n-1}]$ to $[A_{n} + T_{n}, B_{n} + T_{n}], [A_{n-1} + T_{n-1}, B_{n-1} + T_{n-1}]$ for any $T_{n}, T_{n-1}$ such that 
\[
[A_{n} + T_{n}, B_{n} + T_{n}] \supsetneq [A_{n-1} + T_{n-1}, B_{n-1} + T_{n-1}]
\] 
and
\(
(\rho, A_{i}, B_{i}, \zeta_{i}) \gg (\rho, A_{n} + T_{n}, B_{n} + T_{n}, \zeta_{n})
\)
for $i > n$. Then the statement of this proposition is clear.
\end{proof}

\begin{remark}
The way we will use this proposition is to take all $T_{n}, T_{n-1}$ and $T$ to be large.
\end{remark}

\subsubsection{Case of equal length}
\label{equal length}

We choose an admissible order $>_{\q}$, and we also fix a self-dual unitary irreducible supercuspidal representation $\rho$ of $GL(d_{\rho})$. We index the Jordan blocks in $Jord_{\rho}(\q)$ such that 
\[
(\rho, A_{i+1}, B_{i+1}, \zeta_{i+1}) >_{\q} (\rho, A_{i}, B_{i}, \zeta_{i}).
\]
Suppose there exists $n$ such that for $i > n$, 
\[
(\rho, A_{i}, B_{i}, \zeta_{i}) \gg \cup_{j=1}^{n}\{(\rho, A_{j}, B_{j}, \zeta_{j})\}.
\]
Moreover 
\[
[A_{n}, B_{n}] = [A_{n-1}, B_{n-1}] \text{ and } \zeta_{n} = \zeta_{n-1}.
\] 
Note
\begin{align}
\label{eq: equal length fact}
\text{there exists no $i < n-1$ satisfying $\zeta_{i} = \zeta_n$, $A_i > A_n$ and $B_{i} > B_n$.}
\end{align}
We define $\q_{-}$ by
\[
Jord(\q_{-}) = Jord(\q) \backslash \{(\rho, A_{n}, B_{n}, \zeta_{n}), (\rho, A_{n-1}, B_{n-1}, \zeta_{n-1})\}.
\]
We denote the restriction of $(\ul, \ueta)$ to $Jord(\q_{-})$ by $(\ul_{-}, \ueta_{-})$.

\begin{proposition}
\label{prop: pull equal length}
For any $(\ul, \ueta)$, $\r^{\Sigma_{0}}_{M, >_{\q}}(\q, \ul, \ueta) \neq 0$ if the following two conditions are satisfied:

\begin{enumerate}

\item 
\[
\r^{\Sigma_{0}}_{M, >_{\q}}\Big(\q_{-}, \ul_{-}, \ueta_{-}; (\rho, A_{n} + T_{n}, B_{n} + T_{n}, l_{n}, \eta_{n}, \zeta_{n}), (\rho, A_{n-1} + T_{n-1}, B_{n-1} + T_{n-1}, l_{n-1}, \eta_{n-1}, \zeta_{n-1})\Big) \neq 0
\]
for {\bf some} $T_{n} = T_{n-1}$ such that
\(
(\rho, A_{i}, B_{i}, \zeta_{i}) \gg (\rho, A_{n} + T_{n}, B_{n} + T_{n}, \zeta_{n})
\)
for $i > n$.

\item 
\[
\r^{\Sigma_{0}}_{M, >_{\q}}\Big(\q_{-}, \ul_{-}, \ueta_{-}; (\rho, A_{n} + T, B_{n} + T, l_{n}, \eta_{n}, \zeta_{n}), (\rho, A_{n-1}, B_{n-1}, l_{n-1}, \eta_{n-1}, \zeta_{n-1})\Big) \neq 0
\]
for {\bf some} $T$ such that $B_{i} > A_{n} + T$ for $i > n$.

\end{enumerate}
Conversely, if $\r^{\Sigma_{0}}_{M, >_{\q}}(\q, \ul, \ueta) \neq 0$, then (1), (2) still hold after we replace ``{\bf some}" by ``{\bf all}".

\end{proposition}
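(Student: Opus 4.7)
The necessity of conditions~(1) and~(2) is immediate from the defining relation~\eqref{eq: M construction general}. Both $\q^{(\text{both})}_{\gg}$ (with both blocks shifted by $T_n = T_{n-1} = T$) and $\q^{(n\text{-only})}_{\gg}$ (with only block $n$ shifted by $T$) are admissible partial dominating parameters of $\q$ with respect to $>_{\q}$: for the former, shifting both equal-length blocks by the same amount preserves their equality, so property $(\mathcal{P})$ remains vacuous; for the latter, block $n$ now has strictly larger $A$ and $B$ than block $n-1$, consistent with the order. In both cases $\r^{\Sigma_0}_{M, >_{\q}}(\q, \ul, \ueta)$ is realized as a Jacquet image of the corresponding dominated representation, so nonvanishing is inherited.

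For sufficiency, fix $T$ large enough that both hypotheses apply and set
\[
\r^{\Sigma_0}_{\gg\gg} := \r^{\Sigma_0}_{M, >_{\q}}(\q^{(\text{both})}_{\gg}, \ul, \ueta), \qquad \sigma_2 := \r^{\Sigma_0}_{M, >_{\q}}(\q^{(n\text{-only})}_{\gg}, \ul, \ueta),
\]
both nonzero by (1) and (2). Since $\q^{(\text{both})}_{\gg}$ is obtained from $\q^{(n\text{-only})}_{\gg}$ by further shifting only the $(n-1)$-block, the M\oe{}glin embedding structure used throughout Sections~\ref{sec: basic} and~\ref{sec: change of order} gives
\[
\r^{\Sigma_0}_{\gg\gg} \hookrightarrow \begin{pmatrix} \zeta(B_{n-1}+T) & \cdots & \zeta(B_{n-1}+1) \\ \vdots & & \vdots \\ \zeta(A_{n-1}+T) & \cdots & \zeta(A_{n-1}+1) \end{pmatrix} \rtimes \sigma_2,
\]
and analogously, since $\q^{(n\text{-only})}_{\gg}$ dominates $\q$ by shifting only the $n$-block,
\[
\sigma_2 \hookrightarrow \begin{pmatrix} \zeta(B_n+T) & \cdots & \zeta(B_n+1) \\ \vdots & & \vdots \\ \zeta(A_n+T) & \cdots & \zeta(A_n+1) \end{pmatrix} \rtimes \r^{\Sigma_0}_{M, >_{\q}}(\q, \ul, \ueta).
\]
Composing these two embeddings produces $\r^{\Sigma_0}_{\gg\gg} \hookrightarrow (\text{two generalized segments}) \rtimes \r^{\Sigma_0}_{M, >_{\q}}(\q, \ul, \ueta)$. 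Since the left side is nonzero by (1) and parabolic induction of $0$ is $0$, this forces $\r^{\Sigma_0}_{M, >_{\q}}(\q, \ul, \ueta) \neq 0$.

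The main obstacle is justifying the second embedding $\sigma_2 \hookrightarrow (\cdots) \rtimes \r^{\Sigma_0}_{M, >_{\q}}(\q, \ul, \ueta)$ without circularly presupposing $\r^{\Sigma_0}_{M, >_{\q}}(\q, \ul, \ueta) \neq 0$. The plan is to argue contrapositively: if $\r^{\Sigma_0}_{M, >_{\q}}(\q, \ul, \ueta) = 0$, then by~\eqref{eq: M construction general} the Jacquet $\Jac_{(\rho, A_n + T, B_n + T) \mapsto (\rho, A_n, B_n)} \sigma_2 = 0$; yet a direct Jacquet computation applied to the first embedding---using the formulas recalled at the end of Section~\ref{sec: notation}, the separation hypothesis on the $\{i > n\}$ blocks, and the absence of smaller same-sign segments among the $\{i < n\}$ blocks---identifies this Jacquet image with $\sigma_2$, which is nonzero. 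Here Proposition~\ref{prop: change order equal sign} is invoked to relabel the two equal shifted blocks, and the transformation $S^+_n$ acts trivially in the equal-length case by Lemma~\ref{lemma: fundamental case}. The resulting contradiction yields the sufficiency.
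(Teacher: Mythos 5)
There is a genuine gap, and it sits exactly where you flag it. Your sufficiency argument rests on composing two embeddings, and the second one, $\sigma_2 \hookrightarrow (\cdots) \rtimes \r^{\Sigma_{0}}_{M, >_{\q}}(\q, \ul, \ueta)$, presupposes the nonvanishing you are trying to prove; the "contrapositive fix" you sketch is not a fix but the entire content of the proposition, and as written it does not even typecheck: the Jacquet image $\Jac_{(\rho, A_{n}+T, B_{n}+T,\zeta_{n}) \mapsto (\rho, A_{n}, B_{n},\zeta_{n})}\sigma_{2}$ should be compared with $\r^{\Sigma_{0}}_{M, >_{\q}}(\q, \ul, \ueta)$, not "identified with $\sigma_2$". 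Moreover your first embedding $\r^{\Sigma_{0}}_{\gg\gg} \hookrightarrow C \rtimes \sigma_{2}$ is itself unjustified: in $\q^{(\text{both})}_{\gg}$ the two blocks occupy the same segment $[A_{n}+T, B_{n}+T]$, so this parameter does not have discrete diagonal restriction, and relating the representation of condition (1) to that of condition (2) by a single block shift is not a definitional consequence of \eqref{eq: M construction general}; it is again part of what has to be proved. (Invoking Proposition~\ref{prop: change order equal sign} to "relabel" the equal blocks does not supply this, and the claim that $S^{+}_{n}$ is trivial in the equal-length case is not literally correct when $l_{n}=(A_{n}-B_{n}+1)/2$, though that is a side issue.)

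The reason the "direct Jacquet computation" cannot be waved through is that the two blocks carry identical coordinates: when you apply $\Jac_{\zeta(B+1), \cdots, \zeta(A+1)}$ to an induced representation built from two copies of the same segment data, the functor can act on either copy (or split between the inducing segment and the classical part), and one must rule out the unwanted constituents — including double occurrences such as $\Jac_{\zeta(B+1),\zeta(B+1)} \neq 0$. This is precisely where the standing hypothesis that there is no $[A_{i}, B_{i}] \subsetneq [A_{n}, B_{n}]$ with $\zeta_{i}=\zeta_{n}$ for $i<n$ enters, and your proposal never actually uses it beyond a passing mention. The paper's proof is organized around this difficulty: it first reduces condition (1) to the shift $T_{n}=T_{n-1}=1$, then works with the hybrid representation $\r^{\Sigma_{0}}_{\gg}$ (block $n$ shifted by a large $T$, block $n-1$ by $1$), extracts $\Jac_{\zeta(B+1),\cdots,\zeta(A+1)}\r^{\Sigma_{0}}_{\gg} \neq 0$ from condition (2), and then runs a chain of arguments (the representations $\sigma$, $\sigma'$, $\tau$, $\tau'$) in which the hypothesis forces every intermediate segment to start at $\zeta(B+1)$ and excludes the bad branches, before recovering $\r^{\Sigma_{0}}_{M, >_{\q}}(\q, \ul, \ueta)$ as a nonzero iterated Jacquet module of $\r^{\Sigma_{0}}_{\gg}$. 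To repair your argument you would have to carry out an analysis of exactly this kind; as it stands, the proposal establishes necessity but not sufficiency.
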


\begin{proof}
The converse is obvious. So we mainly need to show the sufficiency of the above two conditions. Let $[A_{n}, B_{n}] = [A, B]$ and $\zeta_{n} = \zeta$. It is enough to prove the proposition by taking $T_{n} = T_{n-1} = 1$ in the first condition. So let us suppose
\[
\r^{\Sigma_{0}}_{M, >_{\q}}\Big(\q_{-}, \ul_{-}, \ueta_{-}; (\rho, A_{n} + 1, B_{n} + 1, l_{n}, \eta_{n}, \zeta_{n}), (\rho, A_{n-1} + 1, B_{n-1} + 1, l_{n-1}, \eta_{n-1}, \zeta_{n-1})\Big) \neq 0.
\]
We can take $T$ sufficiently large such that $B_{i} > A_{n} + T$ for $i > n$, and
\[
(\rho, A_{n} + T, B_{n} + T, \zeta_{n}) \gg \cup_{j=1}^{n-2}\{(\rho, A_{j}, B_{j}, \zeta_{j})\} \cup \{(\rho, A_{n-1}+1, B_{n-1}+1, \zeta_{n-1})\}.
\]
Let
\[
\r^{\Sigma_{0}}_{\gg} := \r^{\Sigma_{0}}_{M, >_{\q}}\Big(\q_{-}, \ul_{-}, \ueta_{-}; (\rho, A_{n} + T, B_{n} + T, l_{n}, \eta_{n}, \zeta_{n}), (\rho, A_{n-1} + 1, B_{n-1} + 1, l_{n-1}, \eta_{n-1}, \zeta_{n-1})\Big). 
\]
Then
\begin{align*}
\r^{\Sigma_{0}}_{\gg} & \hookrightarrow 
       \begin{pmatrix}
              \zeta (B + T) & \cdots & \zeta(B + 2) \\
              \vdots &  & \vdots \\
              \zeta (A + T) & \cdots & \zeta(A + 2)
       \end{pmatrix} \\
& \rtimes \r^{\Sigma_{0}}_{M, >_{\q}}\Big(\q_{-}, \ul_{-}, \ueta_{-}; (\rho, A_{n} + 1, B_{n} + 1, l_{n}, \eta_{n}, \zeta_{n}), (\rho, A_{n-1} + 1, B_{n-1} + 1, l_{n-1}, \eta_{n-1}, \zeta_{n-1})\Big) 
\end{align*}   
Since 
\[
\r^{\Sigma_{0}}_{M, >_{\q}}\Big(\q_{-}, \ul_{-}, \ueta_{-}; (\rho, A_{n} + T, B_{n} + T, l_{n}, \eta_{n}, \zeta_{n}), (\rho, A_{n-1}, B_{n-1}, l_{n-1}, \eta_{n-1}, \zeta_{n-1})\Big) \neq 0,
\]
then 
\begin{align}
\label{eq: pull equal length 0}
\Jac_{\zeta(B+1), \cdots, \zeta(A+1)} \r^{\Sigma_{0}}_{\gg} \neq 0.
\end{align}
Note $\zeta(B+T) \notin [\zeta(B+1), \zeta(A+1)]$. So this implies 
\begin{align*}
& \Jac_{\zeta(B+1), \cdots, \zeta(A+1)}  \r^{\Sigma_{0}}_{M, >_{\q}}\Big(\q_{-}, \ul_{-}, \ueta_{-}; (\rho, A_{n} + 1, B_{n} + 1, l_{n}, \eta_{n}, \zeta_{n}), \\
& (\rho, A_{n-1} + 1, B_{n-1} + 1, l_{n-1}, \eta_{n-1}, \zeta_{n-1})\Big) \neq 0
\end{align*}
Then there exists $C \in [A+1, B+1]$ and an irreducible representation $\sigma$ such that 
\begin{align*}
& \r^{\Sigma_{0}}_{M, >_{\q}}\Big(\q_{-}, \ul_{-}, \ueta_{-}; (\rho, A_{n} + 1, B_{n} + 1, l_{n}, \eta_{n}, \zeta_{n}), (\rho, A_{n-1} + 1, B_{n-1} + 1, l_{n-1}, \eta_{n-1}, \zeta_{n-1})\Big) \\
& \hookrightarrow \langle \zeta C, \cdots, \zeta (A+1) \rangle \rtimes \sigma.
\end{align*}   
By \eqref{eq: equal length fact}, we must have $C = B+1$. Therefore,
\begin{align*}
\r^{\Sigma_{0}}_{\gg} & \hookrightarrow 
       \begin{pmatrix}
              \zeta (B + T) & \cdots & \zeta(B + 2) \\
              \vdots &  & \vdots \\
              \zeta (A + T) & \cdots & \zeta(A + 2)
       \end{pmatrix} 
   \times \begin{pmatrix}
              \zeta(B + 1) \\
              \vdots \\
              \zeta(A + 1)
       \end{pmatrix} 
 \rtimes \sigma
\end{align*}
Let us denote the full induced representation above by $(*-1)$. By Frobenius reciprocity, $\sigma$ is an irreducible constituent of
\[
\Jac_{\zeta(B+1), \cdots, \zeta(A+1)}  \r^{\Sigma_{0}}_{M, >_{\q}}\Big(\q_{-}, \ul_{-}, \ueta_{-}; (\rho, A_{n} + 1, B_{n} + 1, l_{n}, \eta_{n}, \zeta_{n}), \\
(\rho, A_{n-1} + 1, B_{n-1} + 1, l_{n-1}, \eta_{n-1}, \zeta_{n-1})\Big).
\]
In fact it is not hard to show the Jacquet module above consists of representations in 
\begin{align*}
& \Pkt{}^{\Sigma_{0}}\Big(\q_{-}, (\rho, A_{n} + 1, B_{n} + 1, \zeta_{n}), (\rho, A_{n-1}, B_{n-1}, \zeta_{n-1})\Big) \bigcup \\
& \Pkt{}^{\Sigma_{0}}\Big(\q_{-}, (\rho, A_{n} + 1, B_{n-1}, \zeta_{n}), (\rho, A_{n-1}, B_{n} + 1, \zeta_{n-1})\Big)
\end{align*}
So in particular, $\sigma$ is an element in the above packets. We claim 
\begin{align}
\label{eq: pull equal length}
\Jac_{\zeta(B+1), \cdots, \zeta(A+1)} \sigma \neq 0.
\end{align}
Otherwise, one finds 
\[
\Jac_{\zeta C', \cdots, \zeta C''} \sigma = 0
\]
for any $C' \in [B+1, A+T]$ and $C'' \in [A+1, A+T]$. This implies
\[
\Jac_{(\rho, A+T, B+T, \zeta) \mapsto (\rho, A, B, \zeta)} (*-1) = \sigma.
\]
So $(*-1)$ has a unique irreducible subrepresentation, and hence 
\begin{align*}
\r^{\Sigma_{0}}_{\gg} & \hookrightarrow 
       \begin{pmatrix}
              \zeta (B + T) & \cdots & \zeta(B + 1) \\
              \vdots &  & \vdots \\
              \zeta (A + T) & \cdots & \zeta(A + 1)
       \end{pmatrix} 
 \rtimes \sigma
\end{align*}
Then $\Jac_{\zeta(B+1), \cdots, \zeta(A+1)} \sigma = 0$ implies $\Jac_{\zeta(B+1), \cdots, \zeta(A+1)} \r^{\Sigma_{0}}_{\gg} = 0$, but this contradicts to \eqref{eq: pull equal length 0}. As a consequence, $\sigma$ can only be in 
\[
\Pkt{}^{\Sigma_{0}}\Big(\q_{-}, (\rho, A_{n} + 1, B_{n} + 1, \zeta_{n}), (\rho, A_{n-1}, B_{n-1}, \zeta_{n-1})\Big).
\]
Now by \eqref{eq: pull equal length},
we have 
\begin{align*}
\sigma & \hookrightarrow 
       \begin{pmatrix}
              \zeta C \\
              \vdots \\
              \zeta (A + 1)
       \end{pmatrix} 
 \rtimes \sigma'
\end{align*}
for some $C \in [B+1, A+1]$ and some irreducible representation $\sigma'$. For the same reason as before, we must have $C = B+1$. This also implies $\sigma' \in \Pkt{\q}^{\Sigma_{0}}$. Therefore
\begin{align*}
\r^{\Sigma_{0}}_{\gg} & \hookrightarrow 
       \underbrace{\begin{pmatrix}
              \zeta (B + T) & \cdots & \zeta(B + 2) \\
              \vdots &  & \vdots \\
              \zeta (A + T) & \cdots & \zeta(A + 2)
       \end{pmatrix} 
   \times \begin{pmatrix}
              \zeta(B + 1) \\
              \vdots \\
              \zeta(A + 1)
       \end{pmatrix}
   \times \begin{pmatrix}
              \zeta(B + 1) \\
              \vdots \\
              \zeta(A + 1)
       \end{pmatrix}}_{(*-2)} 
 \rtimes \sigma'
\end{align*}
There exists an irreducible constituent $\tau$ of $(*-2)$ such that 
\begin{align*}
\r^{\Sigma_{0}}_{\gg} & \hookrightarrow 
       \tau \rtimes \sigma'.
\end{align*}
By \eqref{eq: equal length fact}, 
\(
\Jac_{\zeta(C), \cdots, \zeta(A+1)} \sigma' = 0
\)
for all $C \in [B + 1, A +1]$. Then we can conclude from \eqref{eq: pull equal length 0} that
\[
\Jac_{\zeta(B+1), \cdots, \zeta(A+1)} \tau \neq 0.
\]
So there exists $C \in [B+1, A+1]$ and an irreducible representation $\tau'$ such that 
\begin{align*}
\tau & \hookrightarrow 
       \begin{pmatrix}
              \zeta C \\
              \vdots \\
              \zeta(A + 1)
       \end{pmatrix}
 \rtimes \tau'
\end{align*}
From $(*-2)$, we see $C$ can only be $B+1$. Hence, $\tau'$ is an irreducible constituent of
\begin{align*}
\Jac_{\zeta(B+1), \cdots, \zeta(A+1)} (*-2) = 2 \cdot 
     \begin{pmatrix}
              \zeta (B + T) & \cdots & \zeta(B + 2) \\
              \vdots &  & \vdots \\
              \zeta (A + T) & \cdots & \zeta(A + 2)
       \end{pmatrix} 
   \times \begin{pmatrix}
              \zeta(B + 1) \\
              \vdots \\
              \zeta(A + 1)
       \end{pmatrix}
\end{align*}
If $\Jac_{\zeta (B+1)} \tau' \neq 0$, then it is necessary that
\begin{align*}
\Jac_{\zeta (B+1)} \tau' =
     \begin{pmatrix}
              \zeta (B + T) & \cdots & \zeta(B + 2) \\
              \vdots &  & \vdots \\
              \zeta (A + T) & \cdots & \zeta(A + 2)
       \end{pmatrix} 
   \times \begin{pmatrix}
              \zeta(B + 2) \\
              \vdots \\
              \zeta(A + 1)
       \end{pmatrix}
\cong 
 \begin{pmatrix}
              \zeta(B + 2) \\
              \vdots \\
              \zeta(A + 1)
       \end{pmatrix}
 \times
 \begin{pmatrix}
              \zeta (B + T) & \cdots & \zeta(B + 2) \\
              \vdots &  & \vdots \\
              \zeta (A + T) & \cdots & \zeta(A + 2)
       \end{pmatrix}, 
\end{align*}
which is irreducible. Consequently,
\[
\Jac_{\zeta (B+1), \cdots \zeta (A+1)} \circ \Jac_{\zeta (B+1), \cdots \zeta (A+1)} \r^{\Sigma_{0}}_{\gg} \neq 0.
\]
But this is impossible by \eqref{eq: equal length fact}.
Therefore, we must have 
\begin{align*}
\tau' = \begin{pmatrix}
              \zeta (B + T) & \cdots & \zeta(B + 1) \\
              \vdots &  & \vdots \\
              \zeta (A + T) & \cdots & \zeta(A + 1)
       \end{pmatrix}.
\end{align*}
To summarize, we get
\begin{align*}
\r^{\Sigma_{0}}_{\gg} & \hookrightarrow 
         \begin{pmatrix}
              \zeta(B + 1) \\
              \vdots \\
              \zeta(A + 1)
           \end{pmatrix}
\times
         \begin{pmatrix}
              \zeta (B + T) & \cdots & \zeta(B + 1) \\
              \vdots &  & \vdots \\
              \zeta (A + T) & \cdots & \zeta(A + 1)
          \end{pmatrix} 
    \rtimes \sigma'. 
\end{align*}
Hence, 
\[
\r^{\Sigma_{0}}_{M, >_{\q}}(\q, \ul, \ueta) = \Jac_{(\rho, A+T, B+T, \zeta) \mapsto (\rho, A, B, \zeta)} \circ \Jac_{\zeta (B+1), \cdots, \zeta (A+1)} \r^{\Sigma_{0}}_{\gg} \neq 0.
\]
This finishes the proof.

\end{proof}

\subsection{Expand}

We choose an admissible order $>_{\q}$, and we also fix a self-dual unitary irreducible supercuspidal representation $\rho$ of $GL(d_{\rho})$. We index the Jordan blocks in $Jord_{\rho}(\q)$ such that 
\[
(\rho, A_{i+1}, B_{i+1}, \zeta_{i+1}) >_{\q} (\rho, A_{i}, B_{i}, \zeta_{i}).
\]
Suppose there exists $n$ such that for $i > n$, 
\[
(\rho, A_{i}, B_{i}, \zeta_{i}) \gg_{2} \cup_{j=1}^{n}\{(\rho, A_{j}, B_{j}, \zeta_{j})\}.
\]
Moreover, for $i < n$, 
\[
\text{ $A_{n} \geqslant A_{i}$ and there exists no $[A_{i}, B_{i}] \subseteq [A_{n}, B_{n}]$ with $\zeta_{i} = \zeta_{n}$.}
\] 
Let $t_{n}$ be the smallest integer such that $B_{n} - t_{n} = B_{i}$ for some $i < n$ and $\zeta_{i} = \zeta_{n}$. If such $t_{n}$ does not exist, we let $t_{n} := [B_{n}]$. We define $\q_{-}$ by
\[
Jord(\q_{-}) = Jord(\q) \backslash \{(\rho, A_{n}, B_{n}, \zeta_{n})\}.
\]
We denote the restriction of $(\ul, \ueta)$ to $Jord(\q_{-})$ by $(\ul_{-}, \ueta_{-})$.
\begin{proposition}
\label{prop: expand}
We fix a positive integer $t \leqslant t_{n}$. Then for any $(\ul, \ueta)$, $\r^{\Sigma_{0}}_{M, >_{\q}}(\q, \ul, \ueta) \neq 0$ if and only if
\[
\r^{\Sigma_{0}}_{M, >_{\q}}\Big(\q_{-}, \ul_{-}, \ueta_{-}; (\rho, A_{n} + t, B_{n} -t, l_{n} + t, \eta_{n}, \zeta_{n})\Big) \neq 0.
\]
Moreover,
\begin{align*}
& \r^{\Sigma_{0}}_{M, >_{\q}}\Big(\q_{-}, \ul_{-}, \ueta_{-}; (\rho, A_{n} + t, B_{n} -t, l_{n} + t, \eta_{n}, \zeta_{n})\Big) \hookrightarrow  \\
&       \begin{pmatrix}
              \zeta_{n} (B_{n} - t) & \cdots & -\zeta_{n}(A_{n} + t) \\
              \vdots &  & \vdots \\
              \zeta_{n} (B_{n} - 1)  & \cdots & -\zeta_{n}(A_{n} + 1)
       \end{pmatrix} \rtimes  \r^{\Sigma_{0}}_{M, >_{\q}}(\q, \ul, \ueta).      
\end{align*}   
as the unique irreducible subrepresentation, and
\begin{align*}
\r^{\Sigma_{0}}_{M, >_{\q}}(\q, \ul, \ueta) = \circ_{i \in [1, t]} \Jac_{\zeta_{n} (B_{n} -i), \cdots, - \zeta_{n}(A_{n} + i)} \r^{\Sigma_{0}}_{M, >_{\q}}\Big(\q_{-}, \ul_{-}, \ueta_{-}; (\rho, A_{n} + t, B_{n} -t, l_{n} + t, \eta_{n}, \zeta_{n})\Big).
\end{align*}
\end{proposition}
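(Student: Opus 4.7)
The plan is to prove the embedding statement and the Jacquet-functor identity simultaneously by induction on $t$, and then deduce the nonvanishing equivalence as a corollary of the Jacquet-functor identity. The base case $t=0$ is tautological, so the work is in the inductive step from $t-1$ to $t \leq t_n$.

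For the inductive step, I would choose a dominating parameter $\q^{(t)}_{\gg}$ for the enlarged parameter $\q^{(t)}$ (obtained from $\q$ by replacing $(\rho, A_n, B_n, \zeta_n)$ with $(\rho, A_n+t, B_n-t, \zeta_n)$, and $l_n$ with $l_n+t$) of a form compatible with the chosen dominating parameter for $\q^{(t-1)}$: shift $(\rho, A_n+t, B_n-t, \zeta_n)$ to $(\rho, A_n+t+T, B_n-t+T, \zeta_n)$ for $T$ large, and use the same shifts elsewhere. Applying the definition \eqref{eq: M construction general}, the generalized segment attached to the $n$-th block for $\q^{(t)}$ is obtained from that for $\q^{(t-1)}$ by prepending the extra row $\bigl(\zeta_n(B_n-t),\ldots,-\zeta_n(A_n+t)\bigr)$. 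The hypothesis $(\rho,A_i,B_i,\zeta_i) \gg_2 \cup_{j\le n}\{(\rho,A_j,B_j,\zeta_j)\}$ for $i>n$ lets me use Corollary~\ref{cor: permuting general linear} (and its analogue for $\rtimes$, via Proposition~\ref{prop: general irreducibility}) to commute this extra row past all other inducing factors, so that I land on
\[
\r^{\Sigma_{0}}_{M, >_{\q}}(\q^{(t)}, \ul^{(t)}, \ueta) \hookrightarrow \begin{pmatrix} \zeta_n(B_n-t) \\ \vdots \\ -\zeta_n(A_n+t) \end{pmatrix} \rtimes \r^{\Sigma_{0}}_{M, >_{\q}}(\q^{(t-1)}, \ul^{(t-1)}, \ueta).
\]

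The main obstacle is uniqueness of the irreducible subrepresentation and the identification of the Jacquet module. For these I would apply $\Jac_{\zeta_n(B_n-t),\ldots,-\zeta_n(A_n+t)}$ to the right-hand induced representation, compute term by term using the formulas recalled in Section~\ref{sec: notation}, and appeal to Proposition~8.3 of \cite{Xu:preprint4}. The crucial input is the choice $t\leq t_n$, together with the assumptions that no $[A_i,B_i]\subseteq [A_n,B_n]$ with $\zeta_i=\zeta_n$ exists for $i<n$ and that $A_n\geq A_i$ for $i<n$: these ensure that $\Jac_{\zeta_n(B_n-t)}\r^{\Sigma_0}_{M,>_{\q}}(\q^{(t-1)},\ul^{(t-1)},\ueta)=0$, since there is no Jordan block $(\rho,A',B',\zeta_n)\in Jord(\q^{(t-1)})$ with $\zeta_nB' = \zeta_n(B_n-t)$ at the start of any admissible chain (by Proposition~8.3(1) and the definition of $t_n$). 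Together with the $\gg_2$ separation, which kills any contribution from the blocks with $i>n$, this forces all Jacquet terms to come entirely from the prepended row, yielding $\Jac_{\zeta_n(B_n-t),\ldots,-\zeta_n(A_n+t)}$ of the right-hand side equal to $\r^{\Sigma_0}_{M,>_{\q}}(\q^{(t-1)},\ul^{(t-1)},\ueta)$, which by the induction hypothesis is irreducible or zero. This gives both the uniqueness of the subrepresentation and the Jacquet identity for the single step.

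Composing the Jacquet identities from $t$ down to $0$ yields
\[
\r^{\Sigma_{0}}_{M, >_{\q}}(\q, \ul, \ueta) = \circ_{i \in [1, t]} \Jac_{\zeta_{n} (B_{n} -i), \cdots, - \zeta_{n}(A_{n} + i)} \r^{\Sigma_{0}}_{M, >_{\q}}(\q^{(t)}, \ul^{(t)}, \ueta),
\]
and iterating the one-step embeddings gives the full embedding as the unique irreducible subrepresentation. The nonvanishing equivalence is then immediate: one direction follows from the embedding (if the smaller representation is nonzero, so is the larger one it embeds into), while the converse follows from the Jacquet identity (if the larger representation is nonzero, its Jacquet module equals the smaller one, which is therefore nonzero). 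The upshot is that the entire proposition reduces to a careful one-step Jacquet-module computation, controlled by the separation hypothesis ($\gg_2$), the order hypothesis on blocks with $i<n$, and the bound $t\leq t_n$.
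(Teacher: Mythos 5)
Your skeleton (induct on $t$, prove a one-step embedding plus a one-step Jacquet computation, then deduce the equivalence) is the same as the paper's, and you have correctly identified the auxiliary inputs: $t\leqslant t_{n}$ forcing $\Jac_{\zeta_{n}(B_{n}-t)}=0$ via the vanishing criterion recalled at the end of Section~\ref{sec: notation}, the $\gg_{2}$ separation for $i>n$, and $A_{n}\geqslant A_{i}$ for $i<n$. But the core of the inductive step is missing. The expanded representation $\r^{\Sigma_{0}}_{M, >_{\q}}(\q_{-}, \ul_{-}, \ueta_{-}; (\rho, A_{n}+t, B_{n}-t, l_{n}+t, \eta_{n}, \zeta_{n}))$ is only defined through \eqref{eq: M construction general}, i.e.\ by applying the descent functor $\Jac_{(\rho, A_{n}+t+T, B_{n}-t+T, \zeta_{n}) \mapsto (\rho, A_{n}+t, B_{n}-t, \zeta_{n})}$ (together with the functors for the other blocks) to a dominating parameter with discrete diagonal restriction. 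At that level the ``prepended row'' is $(\zeta_{n}(B_{n}-t+T), \cdots, -\zeta_{n}(A_{n}+t+T))$, not $(\zeta_{n}(B_{n}-t), \cdots, -\zeta_{n}(A_{n}+t))$, and the classical factor is the dominating, not the actual, representation of the smaller parameter. The embedding you want is therefore not obtained by ``prepending a row and commuting GL factors''; one has to track how the descent functor distributes over the induced representation. This is exactly where the paper's work lies: in the case $t=1$ the functor is split into three pieces, the hypotheses ``no $[A_{i},B_{i}]\subseteq[A_{n},B_{n}]$ with $\zeta_{i}=\zeta_{n}$, $i<n$'' guarantee the outer pieces land on the long segment and the middle piece on the classical factor, and in the converse direction one must in addition peel off $<-\zeta_{n}(A_{n}+2), \cdots, -\zeta_{n}(A_{n}+T+1)>$ and replace it by its dual via Corollary~\ref{cor: dualizing classical} (hence Proposition~\ref{prop: general irreducibility} and the separation hypothesis). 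None of this bookkeeping appears in your plan, and it cannot be bypassed.

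There are two further problems downstream of this. First, your nonvanishing equivalence is circular in one direction: you derive the one-step Jacquet identity from the one-step embedding, but if the expanded representation vanishes that embedding carries no information, so ``$\r^{\Sigma_{0}}_{M,>_{\q}}(\q,\ul,\ueta)\neq 0$ implies the expanded representation is nonzero'' is never established. In the paper this is a separate argument that starts from $\r^{\Sigma_{0}}_{M,>_{\q}}(\q,\ul,\ueta)\neq 0$, embeds the (automatically nonzero) dominating expanded representation into an induced representation built from $\r^{\Sigma_{0}}_{M,>_{\q}}(\q,\ul,\ueta)$, and shows the descent functor does not kill it. Second, iterating the one-step embeddings only puts the expanded representation inside a product of $t$ ordinary segments; to get the stated embedding under the generalized segment (and hence uniqueness), one still has to identify the relevant irreducible constituent of that product with the matrix representation, which is done by showing any other constituent would force $\Jac_{x}\neq 0$ for some $x\in\, ]\zeta_{n}(B_{n}-t), \zeta_{n}(B_{n}-1)]$, contradicting $t\leqslant t_{n}$; this identification step is asserted but not argued in your proposal.
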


\begin{proof}
We will first consider the case $t = 1$. Let $\q_{\gg}$ dominates $\q$ with discrete diagonal restriction such that 
\[
(\rho, A_{n+1}, B_{n+1}, \zeta_{n+1}) \gg (\rho, A_{n} + T_{n} + 1, B_{n} + T_{n} -1, \zeta_{n}) \gg (\rho, A_{n-1} + T_{n-1}, B_{n} + T_{n-1}, \zeta_{n-1}).
\]
Let $\q_{\gg, -}$ be obtained from $\q_{\gg}$ by removing $(\rho, A_{n} + T_{n}, B_{n} + T_{n}, \zeta_{n})$. Then
\begin{align*}
& \r^{\Sigma_{0}}_{M, >_{\q}}\Big(\q_{\gg, -}, \ul_{-}, \ueta_{-}; (\rho, A_{n} + T_{n} + 1, B_{n} + T_{n} - 1, l_{n} + 1, \eta_{n}, \zeta_{n})\Big) \hookrightarrow \\
& \langle \zeta_{n} (B_{n} + T_{n} -1), \cdots, -\zeta_{n}(A_{n} + T_{n} + 1) \rangle \rtimes  \r^{\Sigma_{0}}_{M, >_{\q}}\Big(\q_{\gg, -}, \ul_{-}, \ueta_{-}; (\rho, A_{n} + T_{n}, B_{n} + T_{n}, l_{n}, \eta_{n}, \zeta_{n})\Big).      
\end{align*}   
Suppose 
\[
\r^{\Sigma_{0}}_{M, >_{\q}}\Big(\q_{-}, \ul_{-}, \ueta_{-}; (\rho, A_{n} + 1, B_{n} - 1, l_{n} + 1, \eta_{n}, \zeta_{n})\Big) \neq 0.
\]
Let
\begin{align*}
\Jac_{X_{>n}} & := \circ_{i > n} \Jac_{(\rho, A_{i} + T_{i}, B_{i} + T_{i}, \zeta_{i}) \mapsto (\rho, A_{i}, B_{i}, \zeta_{i})} \\
\Jac_{X'_{n}} & := \Jac_{(\rho, A_{n} + T_{n} + 1, B_{n} + T_{n} - 1, \zeta_{n}) \mapsto (\rho, A_{n} +1, B_{n} -1, \zeta_{n})} \\
\Jac_{X_{<n}} & := \circ_{i < n} \Jac_{(\rho, A_{i} + T_{i}, B_{i} + T_{i}, \zeta_{i}) \mapsto (\rho, A_{i}, B_{i}, \zeta_{i})} 
\end{align*}
where $i$ decreases. Then after we apply 
\[
\Jac_{X_{>n}} \circ \Jac_{X'_{n}} \circ \Jac_{X_{<n}}
\]
and $\Jac_{X^{c}}$ to the full induced representation
\begin{align}
\label{eq: expand}
\langle \zeta_{n} (B_{n} + T_{n} -1), \cdots, -\zeta_{n}(A_{n} + T_{n} + 1) \rangle \rtimes  \r^{\Sigma_{0}}_{M, >_{\q}}\Big(\q_{\gg, -}, \ul_{-}, \ueta_{-}; (\rho, A_{n} + T_{n}, B_{n} + T_{n}, l_{n}, \eta_{n}, \zeta_{n})\Big),    
\end{align}
we should get something nonzero. 

For $i < n$, one notes
\[
\zeta_{n} (B_{n} + T_{n} - 1), \zeta_{n}(A_{n} + T_{n} + 1) \notin [\zeta_{i} (A_{i} + T_{i}), \zeta_{i} (B_{i} + 1)].
\] 
So $\Jac_{X_{<n}} \eqref{eq: expand}$ becomes
\begin{align*}
& \langle \zeta_{n} (B_{n} + T_{n} -1), \cdots, -\zeta_{n}(A_{n} + T_{n} + 1) \rangle  \rtimes \Jac_{X_{<n}} \r^{\Sigma_{0}}_{M, >_{\q}}\Big(\q_{\gg, -}, \ul_{-}, \ueta_{-}; (\rho, A_{n} + T, B_{n} + T, l_{n}, \eta_{n}, \zeta_{n})\Big) \\
& = \langle \zeta_{n} (B_{n} + T_{n} -1), \cdots, -\zeta_{n}(A_{n} + T_{n} + 1) \rangle \rtimes \r^{\Sigma_{0}}_{M, >_{\q}}\Big(\q^{(n - 1)}_{\gg, -}, \ul_{-}, \ueta_{-}; (\rho, A_{n} + T, B_{n} + T, l_{n}, \eta_{n}, \zeta_{n})\Big).
\end{align*}
where $\q^{(n - 1)}_{\gg, -}$ is obtained from $\q_{\gg, -}$ by changing $(\rho, A_{i} + T_{i}, B_{i} + T_{i}, \zeta_{i})$ to $(\rho, A_{i}, B_{i}, \zeta_{i})$ for $i < n$.

For $i=n$, we can further write $\Jac_{X'_{n}}$ as
\[
\Jac_{\zeta_{n}(A_{n} + T_{n} +1), \cdots, \zeta_{n}(A_{n} + 2)} \circ \Jac_{(\rho, A_{n} + T_{n}, B_{n} + T_{n}, \zeta_{n}) \mapsto (\rho, A_{n}, B_{n}, \zeta_{n})} \circ \Jac_{\zeta_{n}(B_{n} + T_{n} - 1), \cdots, \zeta_{n}B_{n}}
\]
First, we claim $\Jac_{\zeta_{n}(B_{n} + T_{n} - 1), \cdots, \zeta_{n}B_{n}}$ can only apply to 
\[
\langle \zeta_{n} (B_{n} + T_{n} - 1), \cdots, -\zeta_{n}(A_{n} + T_{n} + 1) \rangle.
\] 
Otherwise, there exists $x \in [\zeta_{n}(B_{n} + T_{n} - 1), \zeta_{n}B_{n}]$ such that 
\[
\Jac_{x} \r^{\Sigma_{0}}_{M, >_{\q}}\Big(\q^{(n - 1)}_{\gg, -}, \ul_{-}, \ueta_{-}; (\rho, A_{n} + T, B_{n} + T, l_{n}, \eta_{n}, \zeta_{n})\Big) \neq 0. 
\]
This can only happen when there exists $i < n$ such that $B_{i} \geqslant B_{n}$ and $\zeta_{i} = \zeta_{n}$, but that contradicts to our assumption. As a result, 
\begin{align*}
& \Jac_{\zeta_{n}(B_{n} + T_{n} - 1), \cdots, \zeta_{n}B_{n}} \circ \Jac_{X_{<n}} \eqref{eq: expand} = \langle \zeta_{n} (B_{n} - 1), \cdots, -\zeta_{n}(A_{n} + T_{n} + 1) \rangle \\
& \rtimes \r^{\Sigma_{0}}_{M, >_{\q}}\Big(\q^{(n - 1)}_{\gg, -}, \ul_{-}, \ueta_{-}; (\rho, A_{n} + T, B_{n} + T, l_{n}, \eta_{n}, \zeta_{n})\Big).
\end{align*}
Secondly, we claim $\Jac_{(\rho, A_{n} + T_{n}, B_{n} + T_{n}, \zeta_{n}) \mapsto (\rho, A_{n}, B_{n}, \zeta_{n})}$ can only apply to 
\[
\r^{\Sigma_{0}}_{M, >_{\q}}\Big(\q^{(n - 1)}_{\gg, -}, \ul_{-}, \ueta_{-}; (\rho, A_{n} + T_{n}, B_{n} + T_{n}, l_{n}, \eta_{n}, \zeta_{n})\Big).
\] 
This is because
\[
\zeta_{n} (B_{n} - 1), \zeta_{n}(A_{n} + T_{n} + 1) \notin [\zeta_{n} (A_{n} + T_{n}), \zeta_{n}(B_{n} + 1)].
\]
So 
\begin{align*}
&\Jac_{(\rho, A_{n} + T_{n}, B_{n} + T_{n}, \zeta_{n}) \mapsto (\rho, A_{n}, B_{n}, \zeta_{n})} \circ \Jac_{\zeta_{n}(B_{n} + T_{n} - 1), \cdots, \zeta_{n}(B_{n})} \circ \Jac_{X_{<n}} \eqref{eq: expand} = \\
& \langle \zeta_{n} (B_{n} - 1), \cdots, -\zeta_{n}(A_{n} + T_{n} + 1) \rangle \rtimes \r^{\Sigma_{0}}_{M, >_{\q}}\Big(\q^{(n - 1)}_{\gg, -}, \ul_{-}, \ueta_{-}; (\rho, A_{n}, B_{n}, l_{n}, \eta_{n}, \zeta_{n})\Big).
\end{align*}
Thirdly, $\Jac_{\zeta_{n}(A_{n} + T_{n} +1), \cdots, \zeta_{n}(A_{n} + 2)}$ can only apply to 
\[
\langle \zeta_{n} (B_{n} - 1), \cdots, -\zeta_{n}(A_{n} + T_{n} + 1) \rangle
\] 
for the same reason as before, so
\begin{align*}
\Jac_{X'_{n}} \circ \Jac_{X_{<n}} \eqref{eq: expand} = \langle \zeta_{n} (B_{n} - 1), \cdots, -\zeta_{n}(A_{n} + 1) \rangle \rtimes \r^{\Sigma_{0}}_{M, >_{\q}}\Big(\q^{(n - 1)}_{\gg, -}, \ul_{-}, \ueta_{-}; (\rho, A_{n}, B_{n}, l_{n}, \eta_{n}, \zeta_{n})\Big).
\end{align*}

For $i > n$, $\Jac_{X_{>n}}$ can only apply to $\r^{\Sigma_{0}}_{M, >_{\q}}\Big(\q^{n - 1}_{\gg, -}, \ul_{-}, \ueta_{-}; (\rho, A_{n}, B_{n}, l_{n}, \eta_{n}, \zeta_{n})\Big)$ as $B_{i} > A_{n} + 1$. Therefore, 
\[
\Jac_{X^{c}} \circ \Jac_{X_{>n}} \circ \Jac_{X'_{n}} \circ \Jac_{X_{<n}} \eqref{eq: expand} = \langle \zeta_{n} (B_{n} - 1), \cdots, -\zeta_{n}(A_{n} + 1) \rangle \rtimes \r^{\Sigma_{0}}_{M, >_{\q}}(\q, \ul, \ueta) \neq 0.
\]
Hence $\r^{\Sigma_{0}}_{M, >_{\q}}(\q, \ul, \ueta) \neq 0$.

Next, we suppose $\r^{\Sigma_{0}}_{M, >_{\q}}(\q, \ul, \ueta) \neq 0$. Let
\begin{align*}
\mathcal{C}_{X_{>n}} & := \times_{i > n} \begin{pmatrix}
              \zeta_{i} (B_{i} + T_{i}) & \cdots & \zeta_{i}(B_{i} + 1) \\
              \vdots &  & \vdots \\
              \zeta_{i} (A_{i} + T_{i}) & \cdots & \zeta_{i}(A_{i} + 1)
       \end{pmatrix},
\end{align*}

\begin{align*}
\mathcal{C}_{X_{<n}} & := \times_{i < n} \begin{pmatrix}
              \zeta_{i} (B_{i} + T_{i}) & \cdots & \zeta_{i}(B_{i} + 1) \\
              \vdots &  & \vdots \\
              \zeta_{i} (A_{i} + T_{i}) & \cdots & \zeta_{i}(A_{i} + 1)
       \end{pmatrix},
\end{align*}
where $i$ increases, and
\begin{align*}
\mathcal{C}_{X_{n}} & := \begin{pmatrix}
              \zeta_{n} (B_{n} + T_{n}) & \cdots & \zeta_{n}(B_{n} + 1) \\
              \vdots &  & \vdots \\
              \zeta_{n} (A_{n} + T_{n}) & \cdots & \zeta_{n}(A_{n} + 1)
       \end{pmatrix}.
\end{align*}
Then 
\begin{align*}
& \r^{\Sigma_{0}}_{M, >_{\q}}\Big(\q_{\gg, -}, \ul_{-}, \ueta_{-}; (\rho, A_{n} + T_{n} + 1, B_{n} + T_{n} - 1, l_{n} + 1, \eta_{n}, \zeta_{n})\Big) \hookrightarrow \\ 
& \langle \zeta_{n} (B_{n} + T_{n} -1), \cdots, -\zeta_{n}(A_{n} + T_{n} + 1) \rangle \times  \mathcal{C}_{X_{<n}} \times \mathcal{C}_{X_{n}} \times \mathcal{C}_{X_{>n}} \times \mathcal{C}_{X^{c}} \rtimes \r^{\Sigma_{0}}_{M, >_{\q}}(\q, \ul, \ueta) \cong \\ 
& \mathcal{C}_{X^{c}} \times \mathcal{C}_{X_{<n}} \times  \langle \zeta_{n} (B_{n} + T_{n} -1), \cdots, \zeta_{n}B_{n} \rangle \times \mathcal{C}_{X_{n}} \times \mathcal{C}_{X_{>n}} \times \langle \zeta_{n} (B_{n} -1), \cdots, - \zeta_{n}(A_{n} + 1) \rangle \times \\ 
& \langle -\zeta_{n} (A_{n} + 2), \cdots, -\zeta_{n}(A_{n} + T_{n} + 1) \rangle \rtimes \r^{\Sigma_{0}}_{M, >_{\q}}(\q, \ul, \ueta).
\end{align*}  
By \eqref{eq: dualizing classical}, we can take the dual of $\langle -\zeta_{n} (A_{n} + 2), \cdots, -\zeta_{n}(A_{n} + T_{n} + 1) \rangle$. Hence
\begin{align*}
& \r^{\Sigma_{0}}_{M, >_{\q}}\Big(\q_{\gg, -}, \ul_{-}, \ueta_{-}; (\rho, A_{n} + T_{n} + 1, B_{n} + T_{n} - 1, l_{n} + 1, \eta_{n}, \zeta_{n})\Big) \hookrightarrow \\ 
& \mathcal{C}_{X^{c}} \times \mathcal{C}_{X_{<n}} \times \langle \zeta_{n} (B_{n} + T_{n} -1), \cdots, \zeta_{n}B_{n} \rangle \times \mathcal{C}_{X_{n}} \times 
 \mathcal{C}_{X_{>n}} \times \langle \zeta_{n} (B_{n} -1), \cdots, - \zeta_{n}(A_{n} + 1) \rangle  \times \\ 
 & \langle \zeta_{n}(A_{n} + T_{n} + 1), \cdots, \zeta_{n} (A_{n} + 2) \rangle  \rtimes \r^{\Sigma_{0}}_{M, >_{\q}}(\q, \ul, \ueta) \cong \\ 
& \mathcal{C}_{X^{c}} \times \mathcal{C}_{X_{<n}} \times  \langle \zeta_{n} (B_{n} + T_{n} -1), \cdots, \zeta_{n}B_{n} \rangle \times \mathcal{C}_{X_{n}} \times \langle \zeta_{n}(A_{n} + T_{n} + 1), \cdots, \zeta_{n} (A_{n} + 2) \rangle \times \\ 
& \mathcal{C}_{X_{>n}} \times \langle \zeta_{n} (B_{n} -1), \cdots, - \zeta_{n}(A_{n} + 1) \rangle  \rtimes \r^{\Sigma_{0}}_{M, >_{\q}}(\q, \ul, \ueta)
\end{align*} 
Therefore,
\begin{align*}
& \r^{\Sigma_{0}}_{M, >_{\q}}\Big(\q_{-}, \ul_{-}, \ueta_{-}; (\rho, A_{n} + 1, B_{n} -1, l_{n} + 1, \eta_{n}, \zeta_{n})\Big) \hookrightarrow  \\
& \langle \zeta_{n} (B_{n} -1), \cdots, - \zeta_{n}(A_{n} + 1)\rangle \rtimes \r^{\Sigma_{0}}_{M, >_{\q}}(\q, \ul, \ueta).    
\end{align*}  
This proves the proposition in case $t = 1$, except for the uniqueness and the statement about Jacquet modules.

In fact, the first part of the proposition follows easily from that of case $t = 1$. Moreover, we have 
\begin{align*}
& \r^{\Sigma_{0}}_{M, >_{\q}}\Big(\q_{-}, \ul_{-}, \ueta_{-}; (\rho, A_{n} + t, B_{n} -t, l_{n} + t, \eta_{n}, \zeta_{n})\Big) \hookrightarrow  \\
& \langle \zeta_{n} (B_{n} -t), \cdots, - \zeta_{n}(A_{n} + t) \rangle  \times \cdots \times \langle \zeta_{n} (B_{n} -1), \cdots, - \zeta_{n}(A_{n} + 1) \rangle  \rtimes \r^{\Sigma_{0}}_{M, >_{\q}}(\q, \ul, \ueta).    
\end{align*}  
Then there exists an irreducible constituent $\tau$ of  
\[
\langle \zeta_{n} (B_{n} -t), \cdots, - \zeta_{n}(A_{n} + t) \rangle  \times \cdots \times \langle \zeta_{n} (B_{n} -1), \cdots, - \zeta_{n}(A_{n} + 1) \rangle
\] 
such that 
\[
\r^{\Sigma_{0}}_{M, >_{\q}}\Big(\q_{-}, \ul_{-}, \ueta_{-}; (\rho, A_{n} + t, B_{n} -t, l_{n} + t, \eta_{n}, \zeta_{n})\Big) \hookrightarrow \tau \rtimes \r^{\Sigma_{0}}_{M, >_{\q}}(\q, \ul, \ueta).
\]
We claim
\[
\tau = \begin{pmatrix}
              \zeta_{n} (B_{n} - t) & \cdots & -\zeta_{n}(A_{n} + t) \\
              \vdots &  & \vdots \\
              \zeta_{n} (B_{n} - 1)  & \cdots & -\zeta_{n}(A_{n} + 1)
       \end{pmatrix}.
\]
Otherwise, $\Jac_{x} \tau \neq 0$ for some $x$ in $]\zeta_{n}(B_{n} - t), \zeta_{n} (B_{n} - 1)]$, and hence  
\[
\Jac_{x} \r^{\Sigma_{0}}_{M, >_{\q}}\Big(\q_{-}, \ul_{-}, \ueta_{-}; (\rho, A_{n} + t, B_{n} -t, l_{n} + t, \eta_{n}, \zeta_{n})\Big) \neq 0.
\]
This means there exists $i < n$ such that $B_{i} > B_{n} - t$ and $\zeta_{i} = \zeta_{n}$, which contradicts to our assumption. 

Finally, since $A_{n} \geqslant A_{i}$ for $i < n$, after we apply 
\[
\Jac_{\zeta_{n} (B_{n} -1), \cdots, - \zeta_{n}(A_{n} + 1)} \circ \cdots \circ \Jac_{\zeta_{n} (B_{n} -t), \cdots, - \zeta_{n}(A_{n} + t)}  
\]
to the full induced representation $\tau \rtimes \r^{\Sigma_{0}}_{M, >_{\q}}(\q, \ul, \ueta)$, we will get $\r^{\Sigma_{0}}_{M, >_{\q}}(\q, \ul, \ueta)$. So 
\[
\r^{\Sigma_{0}}_{M, >_{\q}}\Big(\q_{-}, \ul_{-}, \ueta_{-}; (\rho, A_{n} + t, B_{n} -t, l_{n} + 1, \eta_{n}, \zeta_{n})\Big) \hookrightarrow \tau \rtimes \r^{\Sigma_{0}}_{M, >_{\q}}(\q, \ul, \ueta)      
\] 
as the unique irreducible subrepresentation, and
\begin{align*}
\r^{\Sigma_{0}}_{M, >_{\q}}(\q, \ul, \ueta) = \circ_{i \in [1, t]} \Jac_{\zeta_{n} (B_{n} -i), \cdots, - \zeta_{n}(A_{n} + i)} \r^{\Sigma_{0}}_{M, >_{\q}}\Big(\q_{-}, \ul_{-}, \ueta_{-}; (\rho, A_{n} + t, B_{n} -t, l_{n} + t, \eta_{n}, \zeta_{n})\Big).
\end{align*}
So we have finished the proof.

\end{proof}

\subsection{Change sign}

We choose an admissible order $>_{\q}$, and we also fix a self-dual unitary irreducible supercuspidal representation $\rho$ of $GL(d_{\rho})$. We index the Jordan blocks in $Jord_{\rho}(\q)$ such that 
\[
(\rho, A_{i+1}, B_{i+1}, \zeta_{i+1}) >_{\q} (\rho, A_{i}, B_{i}, \zeta_{i}).
\]
Suppose there exists $n$ such that for $i > n$, 
\[
(\rho, A_{i}, B_{i}, \zeta_{i}) \gg \cup_{j=1}^{n}\{(\rho, A_{j}, B_{j}, \zeta_{j})\}.
\]
Moreover 
\[
\text{ for $1 < i \leqslant n$, $A_{1} \geqslant A_{i}$, $B_{1} = 1/2$ or $0$, and $\zeta_{i} \neq \zeta_{1}$.}
\]
We define $\q_{-}$ by
\[
Jord(\q_{-}) = Jord(\q) \backslash \{(\rho, A_{1}, B_{1}, \zeta_{1})\}.
\]
We denote the restriction of $(\ul, \ueta)$ to $Jord(\q_{-})$ by $(\ul_{-}, \ueta_{-})$.

\subsubsection{$B_{1} = 0$}

\begin{proposition}
\label{prop: change sign integral}
For any $(\ul, \ueta)$, $\r^{\Sigma_{0}}_{M, >_{\q}}(\q, \ul, \ueta) \neq 0$ if and only if
\[
\r^{\Sigma_{0}}_{M, >_{\q}}\Big(\q_{-}, \ul_{-}, \ueta_{-}; (\rho, A_{1}, 0, l_{1}, \eta_{1}, -\zeta_{1})\Big) \neq 0.
\]
Moreover,
\begin{align}
\label{eq: change sign integral}
\r^{\Sigma_{0}}_{M, >_{\q}}(\q, \ul, \ueta) = \r^{\Sigma_{0}}_{M, >_{\q}}\Big(\q_{-}, \ul_{-}, \ueta_{-}; (\rho, A_{1}, 0, l_{1}, \eta_{1}, -\zeta_{1})\Big).
\end{align}

\end{proposition}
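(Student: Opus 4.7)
The plan is to prove the equality \eqref{eq: change sign integral} directly, since the nonvanishing equivalence is an immediate consequence. The underlying conceptual point is that the Jordan block $(\rho, A_{1}, 0, \zeta_{1})$ encodes the data $(\rho, a_{1}, b_{1}) = (\rho, A_{1}+1, A_{1}+1)$ independently of $\zeta_{1}$, since the sign is only a labeling convention when $a = b$; correspondingly, the diagonal restriction $Jord(\q_{d})$ replaces every sign by $+1$ and is therefore insensitive to the flip $\zeta_{1} \to -\zeta_{1}$. Thus both sides of \eqref{eq: change sign integral} are built from the same elementary data, and only the generalized segments dressing the elementary representation and the auxiliary $\zeta$-labels in the M{\oe}glin recursion formally depend on $\zeta_{1}$.

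First I would reduce to the case $n = 1$, where $Jord_{\rho}(\q) = \{(\rho, A_{1}, 0, \zeta_{1})\}$. Because $\zeta_{i} \neq \zeta_{1}$ for $2 \leqslant i \leqslant n$, the $U$-transformation from Proposition~\ref{prop: change order unequal sign} can be applied repeatedly to move $(\rho, A_{1}, 0, \zeta_{1})$ upward past each $(\rho, A_{i}, B_{i}, \zeta_{i})$ without altering the representation. Since the $U$-formula only modifies $\eta$-values and is symmetric in the two signs being swapped, the same sequence of transpositions applied to the right-hand side of \eqref{eq: change sign integral} produces matching $(\ul', \ueta')$-data. Once $(\rho, A_{1}, 0, \zeta_{1})$ has been moved to the top of the order, the remaining blocks with $i \geqslant 2$ are far away by the standing hypothesis, so their contribution factors out uniformly on both sides via Corollary~\ref{cor: permuting general linear}.

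For the base case I would induct on $A_{1}$. When $A_{1} = 0$ there is nothing to prove, since $(\rho, 0, 0, +1)$ and $(\rho, 0, 0, -1)$ define the same Jordan block and give the same irreducible M{\oe}glin representation. For $A_{1} > 0$ I would shift $(\rho, A_{1}, 0, \zeta_{1})$ to $(\rho, A_{1}+T, T, \zeta_{1})$ with $T$ large, so that the shifted parameter has discrete diagonal restriction, and likewise for the right-hand side; the two dominating parameters have identical diagonal restrictions. Then I would apply the embedding formula for the dominating M{\oe}glin construction and show, via Corollary~\ref{cor: dualizing classical} together with the explicit Jacquet-module formulas of Section~\ref{sec: notation}, that $\Jac_{(\rho, A_{1}+T, T, \pm\zeta_{1}) \mapsto (\rho, A_{1}, 0, \pm\zeta_{1})}$ produces the same final irreducible subrepresentation on both sides.

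The main obstacle is the base case: although the diagonal restrictions of the two dominating parameters agree, the elementary base representations and the generalized segments in the embedding both formally carry a $\zeta_{1}$, and one must verify that these $\zeta_{1}$-dependences cancel upon taking the Jacquet functor back to $B_{1} = 0$. This requires delicate bookkeeping of cuspidal supports and careful use of dualization, similar in spirit to the arguments used in the proofs of Proposition~\ref{prop: basic} and Proposition~\ref{prop: expand}.
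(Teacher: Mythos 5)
Your conceptual framing (the flip $\zeta_{1}\mapsto-\zeta_{1}$ at $B_{1}=0$ is a change of the labeling convention at $a=b$) is correct, but the proof outline has two genuine gaps. First, the reduction to $n=1$ does not work. The standing hypothesis only places the blocks with $i>n$ far away; the blocks with $2\leqslant i\leqslant n$ satisfy $[B_{i},A_{i}]\subseteq[0,A_{1}]$ with $\zeta_{i}\neq\zeta_{1}$, so they are nested inside the very block you are flipping, and Corollary~\ref{cor: permuting general linear} (which only permutes non-linked GL-factors in an induced representation) cannot make them ``factor out'' of the parameter. Moreover, after replacing $\zeta_{1}$ by $-\zeta_{1}$ the flipped block has the \emph{same} sign as all $(\rho,A_{i},B_{i},\zeta_{i})$, $2\leqslant i\leqslant n$, so any change of order on the right-hand side of \eqref{eq: change sign integral} is governed by $S^{\pm}$ (Proposition~\ref{prop: change order equal sign}), which modifies $\ul$ as well as $\ueta$, and not by $U$ (Proposition~\ref{prop: change order unequal sign}), which is what you apply on the left-hand side. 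Asserting that the two sequences of transpositions produce ``matching $(\ul',\ueta')$-data'' is essentially the compatibility statement you are trying to prove; as written this step is circular.

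Second, the base case is where the whole content lies, and your plan defers it. Shifting $(\rho,A_{1},0,\zeta_{1})$ to $(\rho,A_{1}+T,T,\pm\zeta_{1})$ destroys the symmetry you rely on: once $B>0$ the sign determines whether $a>b$ or $a<b$, so the two dominating parameters are genuinely different Arthur parameters with different packets (equality of their diagonal restrictions does not help), and the claim that the $\zeta_{1}$-dependence ``cancels upon taking the Jacquet functor back to $B_{1}=0$'' is exactly the proposition itself, left unproved. The paper's argument avoids this entirely: it keeps $T_{1}=0$ (only the other blocks are moved to reach discrete diagonal restriction) and inducts on $l_{1}$, not on $A_{1}$. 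Concretely, one embeds $\r^{\Sigma_{0}}_{M,>_{\q}}(\q_{\gg},\ul,\ueta)$ into $<0,\cdots,-\zeta_{1}A_{1}>\times<\zeta_{1}1,\cdots,\zeta_{1}(A_{1}-1)>\rtimes\r^{\Sigma_{0}}_{M,>_{\q}}(\q_{\gg,-},\ul_{-},\ueta_{-};(\rho,A_{1}-2,0,l_{1}-1,\eta_{1},\zeta_{1}))$, peels off $\rho||^{-\zeta_{1}A_{1}}$, replaces it by $\rho||^{\zeta_{1}A_{1}}$ using the irreducibility of $\rho||^{-\zeta_{1}A_{1}}\rtimes(\cdot)$ in the spirit of Lemma~\ref{lemma: basic irreducibility} (this is where $\zeta_{i}\neq\zeta_{1}$ and $A_{1}\geqslant A_{i}$ enter), and uses the vanishing of $\Jac_{\zeta_{1}1}$ and $\Jac_{\zeta_{1}A_{1}}$ to reassemble the segment $<0,\cdots,\zeta_{1}A_{1}>$; the inductive hypothesis at $l_{1}-1$ together with the uniqueness of the irreducible subrepresentation for the $-\zeta_{1}$ side then yields \eqref{eq: change sign integral}. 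Your outline contains no mechanism playing the role of this $l_{1}$-induction, so the proposal does not yet constitute a proof.
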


\begin{proof}
Let $\q_{\gg}$ dominates $\q$ with discrete diagonal restriction such that $T_{1} = 0$. It suffices to prove \eqref{eq: change sign integral} for $\q_{\gg}$. When $l_{1} = 0$, \eqref{eq: change sign integral} is clear. So we can further assume $l_{1} \neq 0$. Let $\q_{\gg, -}$ be obtained from $\q_{\gg}$ by removing $(\rho, A_{1}, B_{1}, \zeta_{1})$. Then
\begin{align*}
& \r^{\Sigma_{0}}_{M, >_{\q}}\Big(\q_{\gg, -}, \ul_{-}, \ueta_{-}; (\rho, A_{1}, 0, l_{1}, \eta_{1}, -\zeta_{1})\Big) \hookrightarrow       
       \langle 0, \cdots, \zeta_{1} A_{1} \rangle \times \langle - \zeta_{1} 1, \cdots, - \zeta_{1} (A_{1}-1) \rangle \\ 
& \rtimes \r^{\Sigma_{0}}_{M, >_{\q}}\Big(\q_{\gg, -}, \ul_{-}, \ueta_{-}; (\rho, A_{1} - 2, 0, l_{1} - 1, \eta_{1}, - \zeta_{1})\Big).  
\end{align*}
as the unique irreducible subrepresentation. On the other hand,
\begin{align*}
\r^{\Sigma_{0}}_{M, >_{\q}}(\q_{\gg}, \ul, \ueta) & \hookrightarrow       
       \langle 0, \cdots, - \zeta_{1} A_{1}\rangle \times \langle \zeta_{1} 1, \cdots, \zeta_{1} (A_{1}-1) \rangle \\
& \rtimes \r^{\Sigma_{0}}_{M, >_{\q}}\Big(\q_{\gg, -}, \ul_{-}, \ueta_{-}; (\rho, A_{1} - 2, 0, l_{1} - 1, \eta_{1}, \zeta_{1})\Big) \\ 
& \hookrightarrow \rho \times \langle -\zeta_{1} 1 \cdots, - \zeta_{1} A_{1} \rangle \times \langle \zeta_{1} 1, \cdots, \zeta_{1} (A_{1}-1) \rangle \\
& \rtimes \r^{\Sigma_{0}}_{M, >_{\q}}\Big(\q_{\gg, -}, \ul_{-}, \ueta_{-}; (\rho, A_{1} - 2, 0, l_{1} - 1, \eta_{1}, \zeta_{1})\Big) \\ 
& \cong \rho \times \langle \zeta_{1} 1, \cdots, \zeta_{1} (A_{1}-1) \rangle \times \langle  -\zeta_{1} 1 \cdots, - \zeta_{1} A_{1} \rangle \\
& \rtimes \r^{\Sigma_{0}}_{M, >_{\q}}\Big(\q_{\gg, -}, \ul_{-}, \ueta_{-}; (\rho, A_{1} - 2, 0, l_{1} - 1, \eta_{1}, \zeta_{1})\Big) \\ 
& \hookrightarrow \rho \times \langle \zeta_{1} 1, \cdots, \zeta_{1} (A_{1}-1) \rangle \times \langle -\zeta_{1} 1 \cdots, - \zeta_{1} (A_{1} - 1)\rangle \times \rho||^{- \zeta_{1} A_{1}} \\
& \rtimes \r^{\Sigma_{0}}_{M, >_{\q}}\Big(\q_{\gg, -}, \ul_{-}, \ueta_{-}; (\rho, A_{1} - 2, 0, l_{1} - 1, \eta_{1}, \zeta_{1})\Big).
\end{align*}
Since $\rho||^{- \zeta_{1} A_{1}} \rtimes \r^{\Sigma_{0}}_{M, >_{\q}}\Big(\q_{\gg, -}, \ul_{-}, \ueta_{-}; (\rho, A_{1} - 2, 0, l_{1} - 1, \eta_{1}, \zeta_{1})\Big)$ is irreducible, we have
\begin{align*}
\r^{\Sigma_{0}}_{M, >_{\q}}(\q_{\gg}, \ul, \ueta) 
& \hookrightarrow \rho \times \langle \zeta_{1} 1, \cdots, \zeta_{1} (A_{1}-1)\rangle \times \langle -\zeta_{1} 1 \cdots, - \zeta_{1} (A_{1} - 1)\rangle \times \rho||^{\zeta_{1} A_{1}} \\
& \rtimes \r^{\Sigma_{0}}_{M, >_{\q}}\Big(\q_{\gg, -}, \ul_{-}, \ueta_{-}; (\rho, A_{1} - 2, 0, l_{1} - 1, \eta_{1}, \zeta_{1})\Big) \\
& \cong \rho \times \langle \zeta_{1} 1, \cdots, \zeta_{1} (A_{1}-1) \rangle \times \rho||^{\zeta_{1} A_{1}} \times \langle -\zeta_{1} 1 \cdots, - \zeta_{1} (A_{1} - 1) \rangle  \\
& \rtimes \r^{\Sigma_{0}}_{M, >_{\q}}\Big(\q_{\gg, -}, \ul_{-}, \ueta_{-}; (\rho, A_{1} - 2, 0, l_{1} - 1, \eta_{1}, \zeta_{1})\Big).
\end{align*}
Since $\Jac_{x}\r^{\Sigma_{0}}_{M, >_{\q}}(\q_{\gg}, \ul, \ueta) = 0$ for $x = \zeta_{1} 1, \zeta_{1} A_{1}$, then
\begin{align*}
\r^{\Sigma_{0}}_{M, >_{\q}}(\q_{\gg}, \ul, \ueta) 
& \hookrightarrow \langle 0, \cdots, \zeta_{1} A_{1} \rangle \times \langle -\zeta_{1} 1 \cdots, - \zeta_{1} (A_{1} - 1) \rangle  \\
& \rtimes \r^{\Sigma_{0}}_{M, >_{\q}}\Big(\q_{\gg, -}, \ul_{-}, \ueta_{-}; (\rho, A_{1} - 2, 0, l_{1} - 1, \eta_{1}, \zeta_{1})\Big).
\end{align*}
By induction on $l_{1}$, we can assume 
\[
\r^{\Sigma_{0}}_{M, >_{\q}}\Big(\q_{\gg, -}, \ul_{-}, \ueta_{-}; (\rho, A_{1} - 2, 0, l_{1} - 1, \eta_{1}, \zeta_{1})\Big) = \r^{\Sigma_{0}}_{M, >_{\q}}\Big(\q_{\gg, -}, \ul_{-}, \ueta_{-}; (\rho, A_{1} - 2, 0, l_{1} - 1, \eta_{1}, - \zeta_{1})\Big).
\]
Then we necessarily have
\[
\r^{\Sigma_{0}}_{M, >_{\q}}(\q_{\gg}, \ul, \ueta) = \r^{\Sigma_{0}}_{M, >_{\q}}\Big(\q_{\gg, -}, \ul_{-}, \ueta_{-}; (\rho, A_{1}, 0, l_{1}, \eta_{1}, -\zeta_{1})\Big).
\]
This finishes the proof.

\end{proof}

\subsubsection{$B_{1} = 1/2$}

\begin{proposition}
\label{prop: change sign half-integral}
For any $(\ul, \ueta)$, one can construct 
\begin{align*}
\r^{\Sigma_{0}}_{M, >_{\q}}(\q^{*}, \ul^{*}, \ueta^{*}) := 
                              \begin{cases}
                               \r^{\Sigma_{0}}_{M, >_{\q}}\Big(\q_{-}, \ul_{-}, \ueta_{-}; (\rho, A_{1} + 1, 1/2, l_{1} + 1, - \eta_{1}, -\zeta_{1})\Big) & \text{ if } \eta_{1} = +1, \\
                               \r^{\Sigma_{0}}_{M, >_{\q}}\Big(\q_{-}, \ul_{-}, \ueta_{-}; (\rho, A_{1} + 1, 1/2, l_{1}, - \eta_{1}, -\zeta_{1})\Big) & \text{ if } \eta_{1} = -1.
                              \end{cases}
\end{align*}
In case $l_{1} = (A_{1} + \frac{1}{2})/2$, we fix $\eta_{1} = -1$. Then 
\[
\r^{\Sigma_{0}}_{M, >_{\q}}(\q, \ul, \ueta) \neq 0 \text{ if and only if } \r^{\Sigma_{0}}_{M, >_{\q}}(\q^{*}, \ul^{*}, \ueta^{*}) \neq 0.
\] 
Moreover,
\begin{align*}
\r^{\Sigma_{0}}_{M, >_{\q}}(\q^{*}, \ul^{*}, \ueta^{*}) \hookrightarrow \langle -\zeta_{1}1/2, \cdots, -\zeta_{1}(A_{1} + 1) \rangle \rtimes  \r^{\Sigma_{0}}_{M, >_{\q}}(\q, \ul, \ueta)      
\end{align*}
as the unique irreducible subrepresentation, and
\[
\r^{\Sigma_{0}}_{M, >_{\q}}(\q, \ul, \ueta) = \Jac_{-\zeta_{1}1/2, \cdots, -\zeta_{1}(A_{1}+1)} \r^{\Sigma_{0}}_{M, >_{\q}}(\q^{*}, \ul^{*}, \ueta^{*}).
\]

\begin{proof}
Let us choose $\q_{\gg}$ dominating $\q$ with discrete diagonal restriction, and we require $T_{1} = 0$. Then
it determines $\q^{*}_{\gg}$ which dominates $\q^{*}$. We will assume the proposition for $\q_{\gg}$. Then
\begin{align*}
\r^{\Sigma_{0}}_{M, >_{\q}}(\q^{*}_{\gg}, \ul^{*}, \ueta^{*}) \hookrightarrow \langle -\zeta_{1}1/2, \cdots, -\zeta_{1}(A_{1} + 1) \rangle \rtimes  \r^{\Sigma_{0}}_{M, >_{\q}}(\q_{\gg}, \ul, \ueta).
\end{align*}

Suppose $\r^{\Sigma_{0}}_{M, >_{\q}}(\q^{*}, \ul^{*}, \ueta^{*}) \neq 0$, then after we apply 
\[
\circ_{i>1} \Jac_{(\rho, A_{i} + T_{i}, B_{i} + T_{i}, \zeta_{i}) \mapsto (\rho, A_{i}, B_{i}, \zeta_{i})}
\]
($i$ decreases) and $\Jac_{X^{c}}$ to the full induced representation
\begin{align}
\label{eq: change sign}
\langle -\zeta_{1}1/2, \cdots, -\zeta_{1}(A_{1} + 1) \rangle \rtimes  \r^{\Sigma_{0}}_{M, >_{\q}}(\q_{\gg}, \ul, \ueta)
\end{align} 
we should get something nonzero. Since 
\[
-\zeta_{1}1/2 \text{ and } \zeta_{1}(A_{1} + 1) \notin [\zeta_{i}(A_{i} + T_{i}), \zeta_{i}(B_{i} + 1)]
\]
for $i > 1$,  $\circ_{i>1} \Jac_{(\rho, A_{i} + T_{i}, B_{i} + T_{i}, \zeta_{i}) \mapsto (\rho, A_{i}, B_{i}, \zeta_{i})}$ and $\Jac_{X^{c}}$ can only apply to $\r^{\Sigma_{0}}_{M, >_{\q}}(\q_{\gg}, \ul, \ueta)$. Therefore
\[
\circ_{i>1} \Jac_{(\rho, A_{i} + T_{i}, B_{i} + T_{i}, \zeta_{i}) \mapsto (\rho, A_{i}, B_{i}, \zeta_{i})} \circ \Jac_{X^{c}} \eqref{eq: change sign} = \langle -\zeta_{1}1/2, \cdots, -\zeta_{1}(A_{1} + 1) \rangle \rtimes  \r^{\Sigma_{0}}_{M, >_{\q}}(\q, \ul, \ueta) \neq 0.
\]
This shows $\r^{\Sigma_{0}}_{M, >_{\q}}(\q, \ul, \ueta) \neq 0$.

Suppose $\r^{\Sigma_{0}}_{M, >_{\q}}(\q, \ul, \ueta) \neq 0$, let
\begin{align*}
\mathcal{C}_{X_{i}} & := \begin{pmatrix}
              \zeta_{i} (B_{i} + T_{i}) & \cdots & \zeta_{i}(B_{i} + 1) \\
              \vdots &  & \vdots \\
              \zeta_{i} (A_{i} + T_{i}) & \cdots & \zeta_{i}(A_{i} + 1)
       \end{pmatrix},
\end{align*}
then
\begin{align*}
\r^{\Sigma_{0}}_{M, >_{\q}}(\q^{*}_{\gg}, \ul^{*}, \ueta^{*}) \hookrightarrow \langle -\zeta_{1}1/2, \cdots, -\zeta_{1}(A_{1} + 1) \rangle \times (\times_{i>1} \mathcal{C}_{X_{i}}) \times \mathcal{C}_{X^{c}} \rtimes  \r^{\Sigma_{0}}_{M, >_{\q}}(\q, \ul, \ueta),
\end{align*}
where $i$ increases. For $i > n$, we have $B_{i} > A_{1} + 1$, so $\mathcal{C}_{X_{i}}$ and $<-\zeta_{1}1/2, \cdots, -\zeta_{1}(A_{1} + 1)>$ are interchangeable. For $i <n$, we have $A_{1} \geqslant A_{i}$ and $\zeta_{i} = -\zeta_{1}$, so
\[
[\zeta_{i}(B_{i} + 1), \zeta_{i}(A_{i}+1)] \subseteq [-\zeta_{1}1/2, -\zeta_{1}(A_{1} + 1)].
\]
It follows $\mathcal{C}_{X_{i}}$ and $\langle -\zeta_{1}1/2, \cdots, -\zeta_{1}(A_{1} + 1) \rangle$ are also interchangeable. Therefore,
\begin{align*}
\r^{\Sigma_{0}}_{M, >_{\q}}(\q^{*}_{\gg}, \ul^{*}, \ueta^{*}) \hookrightarrow (\times_{i>1} \mathcal{C}_{X_{i}}) \times \mathcal{C}_{X^{c}} \times \langle -\zeta_{1}1/2, \cdots, -\zeta_{1}(A_{1} + 1) \rangle  \rtimes  \r^{\Sigma_{0}}_{M, >_{\q}}(\q, \ul, \ueta).
\end{align*}
This implies $\r^{\Sigma_{0}}_{M, >_{\q}}(\q^{*}, \ul^{*}, \ueta^{*}) \neq 0$, and
\[
\r^{\Sigma_{0}}_{M, >_{\q}}(\q^{*}, \ul^{*}, \ueta^{*}) \hookrightarrow \langle -\zeta_{1}1/2, \cdots, -\zeta_{1}(A_{1} + 1) \rangle \rtimes  \r^{\Sigma_{0}}_{M, >_{\q}}(\q, \ul, \ueta).      
\]
To see $\r^{\Sigma_{0}}_{M, >_{\q}}(\q^{*}, \ul^{*}, \ueta^{*})$ is the unique irreducible subrepresentation, it suffices to check that
\[
\Jac_{-\zeta_{1}1/2, \cdots, -\zeta_{1}(A_{1}+1)} \Big(\langle -\zeta_{1}1/2, \cdots -\zeta_{1}(A_{1}+1) \rangle \times \r^{\Sigma_{0}}_{M, >_{\q}}(\q, \ul, \ueta)\Big) = \r^{\Sigma_{0}}_{M, >_{\q}}(\q, \ul, \ueta).
\]
As a consequence, we also get
\[
\r^{\Sigma_{0}}_{M, >_{\q}}(\q, \ul, \ueta) = \Jac_{-\zeta_{1}1/2, \cdots, -\zeta_{1}(A_{1}+1)} \r^{\Sigma_{0}}_{M, >_{\q}}(\q^{*}, \ul^{*}, \ueta^{*}).
\]
To complete the proof, we still need to show the proposition for $\q_{\gg}$, and we leave it to the next lemma.

\end{proof}

\begin{lemma}
Proposition~\ref{prop: change sign half-integral} holds for $\q_{\gg}$.
\end{lemma}

\begin{proof}
It is clear that $\r^{\Sigma_{0}}_{M, >_{\q}}(\q_{\gg}, \ul, \ueta) \neq 0$ and $\r^{\Sigma_{0}}_{M, >_{\q}}(\q^{*}_{\gg}, \ul^{*}, \ueta^{*}) \neq 0$ in this case. So we only need to show 
\begin{align*}
\r^{\Sigma_{0}}_{M, >_{\q}}(\q^{*}_{\gg}, \ul^{*}, \ueta^{*}) \hookrightarrow \langle -\zeta_{1}1/2, \cdots, -\zeta_{1}(A_{1} + 1) \rangle \rtimes  \r^{\Sigma_{0}}_{M, >_{\q}}(\q_{\gg}, \ul, \ueta)      
\end{align*}
as the unique irreducible subrepresentation, and
\[
\r^{\Sigma_{0}}_{M, >_{\q}}(\q_{\gg}, \ul, \ueta) = \Jac_{-\zeta_{1}1/2, \cdots, -\zeta_{1}(A_{1}+1)} \r^{\Sigma_{0}}_{M, >_{\q}}(\q^{*}_{\gg}, \ul^{*}, \ueta^{*}).
\]
Let $\q_{\gg, -}$ be obtained from $\q_{\gg}$ by removing $(\rho, A_{1}, B_{1}, \zeta_{1})$. When $l_{1} = 0$ and $\eta_{1} = -1$, the lemma is clear. When $A_{1} = 1/2$, then necessarily $l_{1} = 0$. In this case, if $\eta_{1} = +1$, then 
\begin{align*}
\r^{\Sigma_{0}}_{M, >_{\q}}(\q^{*}_{\gg}, \ul^{*}, \ueta^{*}) &\hookrightarrow \langle -\zeta_{1}1/2, \cdots, \zeta_{1}3/2 \rangle \rtimes  \r^{\Sigma_{0}}_{M, >_{\q}}(\q_{\gg, -}, \ul_{-}, \ueta_{-}) \\
&\hookrightarrow  \rho||^{-\zeta_{1}1/2} \times \rho||^{\zeta_{1}1/2} \times ||^{\zeta_{1}3/2} \rtimes \r^{\Sigma_{0}}_{M, >_{\q}}(\q_{\gg, -}, \ul_{-}, \ueta_{-}) \\
& \cong \rho||^{-\zeta_{1}1/2} \times \rho||^{\zeta_{1}1/2} \times ||^{-\zeta_{1}3/2} \rtimes \r^{\Sigma_{0}}_{M, >_{\q}}(\q_{\gg, -}, \ul_{-}, \ueta_{-}) \\
& \cong \rho||^{-\zeta_{1}1/2} \times ||^{-\zeta_{1}3/2} \times \rho||^{\zeta_{1}1/2} \rtimes \r^{\Sigma_{0}}_{M, >_{\q}}(\q_{\gg, -}, \ul_{-}, \ueta_{-}).
\end{align*}
There exists an irreducible constituent $\sigma$ of $\rho||^{\zeta_{1}1/2} \rtimes \r^{\Sigma_{0}}_{M, >_{\q}}(\q_{\gg, -}, \ul_{-}, \ueta_{-})$ such that 
\begin{align*}
\r^{\Sigma_{0}}_{M, >_{\q}}(\q^{*}_{\gg}, \ul^{*}, \ueta^{*}) \hookrightarrow  \rho||^{-\zeta_{1}1/2} \times ||^{-\zeta_{1}3/2} \rtimes \sigma.
\end{align*}
Since $\Jac_{-\zeta_{1}3/2}\r^{\Sigma_{0}}_{M, >_{\q}}(\q^{*}_{\gg}, \ul^{*}, \ueta^{*}) = 0$, we must have 
\begin{align*}
\r^{\Sigma_{0}}_{M, >_{\q}}(\q^{*}_{\gg}, \ul^{*}, \ueta^{*}) \hookrightarrow \langle-\zeta_{1}1/2, -\zeta_{1}3/2\rangle \rtimes \sigma.
\end{align*}
Suppose $\Jac_{-\zeta_{1}1/2} \sigma \neq 0$, then there exists an irreducible constituent $\sigma'$ of $\Jac_{-\zeta_{1}1/2} \sigma$ such that
\begin{align*}
\r^{\Sigma_{0}}_{M, >_{\q}}(\q^{*}_{\gg}, \ul^{*}, \ueta^{*}) & \hookrightarrow \langle-\zeta_{1}1/2, -\zeta_{1}3/2\rangle \times \rho||^{-\zeta_{1}1/2} \rtimes \sigma' \\
& \cong \rho||^{-\zeta_{1}1/2} \times  \langle-\zeta_{1}1/2, -\zeta_{1}3/2 \rangle \rtimes \sigma'.
\end{align*}
This implies $\Jac_{-\zeta_{1}1/2, -\zeta_{1}1/2} \r^{\Sigma_{0}}_{M, >_{\q}}(\q^{*}_{\gg}, \ul^{*}, \ueta^{*}) \neq 0$, which is impossible. Therefore, we must have $\Jac_{\zeta_{1}1/2} \sigma \neq 0$. In particular, this means 
\(
\sigma = 
\r^{\Sigma_{0}}_{M, >_{\q}}(\q_{\gg}, \ul, \ueta).
\)
So  
\begin{align*}
\r^{\Sigma_{0}}_{M, >_{\q}}(\q^{*}_{\gg}, \ul^{*}, \ueta^{*}) \hookrightarrow  \langle-\zeta_{1}1/2, -\zeta_{1}3/2\rangle \rtimes \r^{\Sigma_{0}}_{M, >_{\q}}(\q_{\gg}, \ul, \ueta).
\end{align*}
To see $\r^{\Sigma_{0}}_{M, >_{\q}}(\q^{*}_{\gg}, \ul^{*}, \ueta^{*})$ is the unique irreducible subrepresentation, it suffices to check that
\[
\Jac_{-\zeta_{1}1/2, -\zeta_{1}3/2} \Big(\langle -\zeta_{1}1/2, -\zeta_{1}3/2 \rangle \rtimes \r^{\Sigma_{0}}_{M, >_{\q}}(\q_{\gg}, \ul, \ueta)\Big) = \r^{\Sigma_{0}}_{M, >_{\q}}(\q_{\gg}, \ul, \ueta).
\]
As a consequence,
\[
\r^{\Sigma_{0}}_{M, >_{\q}}(\q_{\gg}, \ul, \ueta) = \Jac_{-\zeta_{1}1/2, -\zeta_{1}3/2} \r^{\Sigma_{0}}_{M, >_{\q}}(\q^{*}_{\gg}, \ul^{*}, \ueta^{*}).
\]

Next we would like to prove this lemma by induction on $A_{1}$. Let $A_{1} > 1/2$. Suppose $\eta_{1} = +1$, then
\begin{align*}
\r^{\Sigma_{0}}_{M, >_{\q}}(\q^{*}_{\gg}, \ul^{*}, \ueta^{*}) & \hookrightarrow \langle -\zeta_{1}1/2, \cdots, \zeta_{1}(A_{1} + 1) \rangle \rtimes  \r^{\Sigma_{0}}_{M, >_{\q}}\Big(\q_{\gg, -}, \ul_{-}, \ueta_{-}; (\rho, A_{1}, 3/2, l_{1}, -\eta_{1}, -\zeta_{1})\Big) \\
& \hookrightarrow \langle -\zeta_{1}1/2, \cdots, \zeta_{1}(A_{1} + 1) \rangle \times \langle -\zeta_{1}3/2, \cdots, -\zeta_{1}A_{1} \rangle \\
& \rtimes \r^{\Sigma_{0}}_{M, >_{\q}}\Big(\q_{\gg, -}, \ul_{-}, \ueta_{-}; (\rho, A_{1} - 1, 1/2, l_{1}, -\eta_{1}, -\zeta_{1})\Big) \\
& \hookrightarrow \rho||^{-\zeta_{1}1/2} \times \langle -\zeta_{1}3/2, \cdots, -\zeta_{1}A_{1} \rangle \times \langle \zeta_{1}1/2, \cdots, \zeta_{1}A_{1}\rangle \times \rho||^{\zeta_{1}(A_{1}+1)} \\
& \rtimes \r^{\Sigma_{0}}_{M, >_{\q}}\Big(\q_{\gg, -}, \ul_{-}, \ueta_{-}; (\rho, A_{1} - 1, 1/2, l_{1}, -\eta_{1}, -\zeta_{1})\Big) \\
& \cong \rho||^{-\zeta_{1}1/2} \times \langle -\zeta_{1}3/2, \cdots, -\zeta_{1}A_{1}\rangle \times \langle \zeta_{1}1/2, \cdots, \zeta_{1}A_{1}\rangle \times \rho||^{-\zeta_{1}(A_{1}+1)} \\
& \rtimes \r^{\Sigma_{0}}_{M, >_{\q}}\Big(\q_{\gg, -}, \ul_{-}, \ueta_{-}; (\rho, A_{1} - 1, 1/2, l_{1}, -\eta_{1}, -\zeta_{1})\Big) \\
& \cong \rho||^{-\zeta_{1}1/2} \times \langle -\zeta_{1}3/2, \cdots, -\zeta_{1}A_{1}\rangle \times \rho||^{-\zeta_{1}(A_{1}+1)} \times \langle \zeta_{1}1/2, \cdots, \zeta_{1}A_{1} \rangle \\
& \rtimes \r^{\Sigma_{0}}_{M, >_{\q}}\Big(\q_{\gg, -}, \ul_{-}, \ueta_{-}; (\rho, A_{1} - 1, 1/2, l_{1}, -\eta_{1}, -\zeta_{1})\Big) 
\end{align*}
There exists an irreducible constituent $\sigma$ of 
\[
\langle \zeta_{1}1/2, \cdots, \zeta_{1}A_{1}\rangle \rtimes \r^{\Sigma_{0}}_{M, >_{\q}}\Big(\q_{\gg, -}, \ul_{-}, \ueta_{-}; (\rho, A_{1} - 1, 1/2, l_{1}, -\eta_{1}, -\zeta_{1})\Big) 
\] 
such that 
\[
\r^{\Sigma_{0}}_{M, >_{\q}}(\q^{*}_{\gg}, \ul^{*}, \ueta^{*}) \hookrightarrow \rho||^{-\zeta_{1}1/2} \times \langle -\zeta_{1}3/2, \cdots, -\zeta_{1}A_{1} \rangle  \times \rho||^{-\zeta_{1}(A_{1}+1)} \rtimes \sigma
\]
Since $\Jac_{x} \r^{\Sigma_{0}}_{M, >_{\q}}(\q^{*}_{\gg}, \ul^{*}, \ueta^{*}) = 0$ for $x \in [-\zeta_{1}3/2, -\zeta_{1}(A_{1}+1)]$, then 
\begin{align}
\label{eq: change sign half-integral 1}
\r^{\Sigma_{0}}_{M, >_{\q}}(\q^{*}_{\gg}, \ul^{*}, \ueta^{*})  \hookrightarrow \langle -\zeta_{1}1/2, \cdots -\zeta_{1}(A_{1}+1) \rangle \rtimes \sigma.
\end{align}
If we apply $\Jac_{-\zeta_{1}1/2, \cdots -\zeta_{1}(A_{1}+1)}$ to the full induced representation in \eqref{eq: change sign half-integral 1}, then $\Jac_{-\zeta_{1}(A_{1}+1)}$ can only apply to $\langle -\zeta_{1}1/2, \cdots -\zeta_{1}(A_{1}+1) \rangle$. As a consequence, we must have the whole Jacquet functor $\Jac_{-\zeta_{1}1/2, \cdots -\zeta_{1}(A_{1}+1)}$ applied to $\langle -\zeta_{1}1/2, \cdots -\zeta_{1}(A_{1}+1) \rangle$, and hence
\[
\Jac_{-\zeta_{1}1/2, \cdots -\zeta_{1}(A_{1}+1)} \r^{\Sigma_{0}}_{M, >_{\q}}(\q^{*}_{\gg}, \ul^{*}, \ueta^{*}) = \sigma,
\]
which is irreducible. Therefore
\begin{align}
\label{eq: change sign half-integral 2}
\sigma \hookrightarrow \langle \zeta_{1}1/2, \cdots, \zeta_{1}A_{1} \rangle \rtimes \r^{\Sigma_{0}}_{M, >_{\q}}\Big(\q_{\gg, -}, \ul_{-}, \ueta_{-}; (\rho, A_{1} - 1, 1/2, l_{1}, -\eta_{1}, -\zeta_{1})\Big).
\end{align}
By induction, $\r^{\Sigma_{0}}_{M, >_{\q}}(\q_{\gg}, \ul, \ueta)$ is the unique irreducible subrepresentation of the induced representation in \eqref{eq: change sign half-integral 2}, so it has to be equal to $\sigma$. Hence
\[
\r^{\Sigma_{0}}_{M, >_{\q}}(\q^{*}_{\gg}, \ul^{*}, \ueta^{*})  \hookrightarrow \langle -\zeta_{1}1/2, \cdots -\zeta_{1}(A_{1}+1) \rangle \rtimes \r^{\Sigma_{0}}_{M, >_{\q}}(\q_{\gg}, \ul, \ueta).
\]
To see $\r^{\Sigma_{0}}_{M, >_{\q}}(\q^{*}_{\gg}, \ul^{*}, \ueta^{*})$ is the unique irreducible subrepresentation, it suffices to check that
\[
\Jac_{-\zeta_{1}1/2, \cdots -\zeta_{1}(A_{1}+1)} \Big(\langle -\zeta_{1}1/2, \cdots -\zeta_{1}(A_{1}+1) \rangle \rtimes \r^{\Sigma_{0}}_{M, >_{\q}}(\q_{\gg}, \ul, \ueta)\Big) = \r^{\Sigma_{0}}_{M, >_{\q}}(\q_{\gg}, \ul, \ueta).
\]
As a consequence,
\[
\r^{\Sigma_{0}}_{M, >_{\q}}(\q_{\gg}, \ul, \ueta) = \Jac_{-\zeta_{1}1/2, \cdots, -\zeta_{1}(A_{1}+1)} \r^{\Sigma_{0}}_{M, >_{\q}}(\q^{*}_{\gg}, \ul^{*}, \ueta^{*}).
\]

Suppose $\eta_{1} = -1$, we can also assume $l_{1} \neq 0$, then the proof is the same.

\end{proof}




\end{proposition}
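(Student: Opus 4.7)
The plan is as follows. Since $\q_{\gg}$ has discrete diagonal restriction, both $\r^{\Sigma_{0}}_{M, >_{\q}}(\q_{\gg}, \ul, \ueta)$ and $\r^{\Sigma_{0}}_{M, >_{\q}}(\q^{*}_{\gg}, \ul^{*}, \ueta^{*})$ are automatically nonzero (indeed, the set of nonvanishing pairs is parametrized explicitly by the combinatorial data in \eqref{eq: refined parameter}, \eqref{eq: endoscopic character formula}, and one checks $(\ul^{*}, \ueta^{*})$ satisfies the parity condition once $(\ul, \ueta)$ does). So the equivalence of nonvanishing is trivial, and only the embedding and the Jacquet-module identity remain. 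I will prove these together by induction on $A_{1}$, with $\eta_{1}$ treated separately at each stage.

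For the base case $A_{1} = 1/2$, necessarily $l_{1} = 0$. When $\eta_{1} = -1$, $(\q^{*}_{\gg}, \ul^{*}, \ueta^{*})$ is obtained simply by adjoining $(\rho, 3/2, 1/2, 0, +1, -\zeta_{1})$, and the definition of M{\oe}glin's construction gives the required embedding and Jacquet identity directly. When $\eta_{1} = +1$, I start from the defining embedding
\[
\r^{\Sigma_{0}}_{M, >_{\q}}(\q^{*}_{\gg}, \ul^{*}, \ueta^{*}) \hookrightarrow <-\zeta_{1}1/2, \zeta_{1}3/2> \rtimes \r^{\Sigma_{0}}_{M, >_{\q}}(\q_{\gg, -}, \ul_{-}, \ueta_{-}),
\]
break $<-\zeta_{1}1/2, \zeta_{1}3/2>$ into the three supercuspidal factors, and use the irreducibility results of Section~\ref{sec: irreducibility} to commute $\rho\|\cdot\|^{-\zeta_{1}3/2}$ past $\rho\|\cdot\|^{\zeta_{1}1/2}$ and extract a constituent $\sigma \subseteq \rho\|\cdot\|^{\zeta_{1}1/2} \rtimes \r^{\Sigma_{0}}_{M, >_{\q}}(\q_{\gg, -}, \ul_{-}, \ueta_{-})$. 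The cuspidal-support vanishing $\Jac_{-\zeta_{1}1/2, -\zeta_{1}1/2}\r^{\Sigma_{0}}_{M, >_{\q}}(\q^{*}_{\gg}, \ul^{*}, \ueta^{*}) = 0$ (Proposition in Section~\ref{sec: notation}) forces $\Jac_{\zeta_{1}1/2}\sigma \neq 0$, which identifies $\sigma = \r^{\Sigma_{0}}_{M, >_{\q}}(\q_{\gg}, \ul, \ueta)$ uniquely.

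For the inductive step $A_{1} > 1/2$, suppose $\eta_{1} = +1$ (the case $\eta_{1} = -1$, $l_{1} \neq 0$ is parallel and the case $l_{1} = 0$ is immediate). I embed $\r^{\Sigma_{0}}_{M, >_{\q}}(\q^{*}_{\gg}, \ul^{*}, \ueta^{*})$ into
\[
<-\zeta_{1}1/2, \cdots, \zeta_{1}(A_{1}+1)> \times <-\zeta_{1}3/2, \cdots, -\zeta_{1}A_{1}> \rtimes \r^{\Sigma_{0}}_{M, >_{\q}}(\q_{\gg, -}, \ul_{-}, \ueta_{-}; (\rho, A_{1}-1, 1/2, l_{1}, -\eta_{1}, -\zeta_{1})),
\]
peel off the endpoints $\rho\|\cdot\|^{-\zeta_{1}1/2}$ and $\rho\|\cdot\|^{\zeta_{1}(A_{1}+1)}$, and commute them using Corollary~\ref{cor: permuting general linear} to bring $<\zeta_{1}1/2, \cdots, \zeta_{1}A_{1}> \rtimes \r^{\Sigma_{0}}_{M, >_{\q}}(\q_{\gg, -}, \ul_{-}, \ueta_{-}; (\rho, A_{1}-1, 1/2, l_{1}, -\eta_{1}, -\zeta_{1}))$ to the tail. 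The vanishing $\Jac_{x}\r^{\Sigma_{0}}_{M, >_{\q}}(\q^{*}_{\gg}, \ul^{*}, \ueta^{*}) = 0$ for $x \in [-\zeta_{1}3/2, -\zeta_{1}(A_{1}+1)]$ forces the leading segment to collapse into $<-\zeta_{1}1/2, \cdots, -\zeta_{1}(A_{1}+1)>$, giving an embedding into $<-\zeta_{1}1/2, \cdots, -\zeta_{1}(A_{1}+1)> \rtimes \sigma$ for some irreducible $\sigma$. Computing $\Jac_{-\zeta_{1}1/2, \cdots, -\zeta_{1}(A_{1}+1)}$ of this induced representation and using that $\Jac_{-\zeta_{1}(A_{1}+1)}$ can only hit the head segment (by cuspidal support), I show $\sigma$ is exactly the Jacquet module and in particular $\sigma$ embeds into $<\zeta_{1}1/2, \cdots, \zeta_{1}A_{1}> \rtimes \r^{\Sigma_{0}}_{M, >_{\q}}(\q_{\gg, -}, \ul_{-}, \ueta_{-}; (\rho, A_{1}-1, 1/2, l_{1}, -\eta_{1}, -\zeta_{1}))$. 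The induction hypothesis, applied to $A_{1}-1$, identifies the unique irreducible subrepresentation of this induced module as $\r^{\Sigma_{0}}_{M, >_{\q}}(\q_{\gg}, \ul, \ueta)$, so $\sigma = \r^{\Sigma_{0}}_{M, >_{\q}}(\q_{\gg}, \ul, \ueta)$.

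Uniqueness in $<-\zeta_{1}1/2, \cdots, -\zeta_{1}(A_{1}+1)> \rtimes \r^{\Sigma_{0}}_{M, >_{\q}}(\q_{\gg}, \ul, \ueta)$ then reduces to verifying $\Jac_{-\zeta_{1}1/2, \cdots, -\zeta_{1}(A_{1}+1)}$ of this induction equals $\r^{\Sigma_{0}}_{M, >_{\q}}(\q_{\gg}, \ul, \ueta)$, which follows from Proposition in Section~\ref{sec: notation} since $\zeta_{1}1/2 = -B_{1}$ is the only instance of $-\zeta_{1}1/2$ in the Jordan block data; this simultaneously yields the Jacquet-module identity. The main obstacle throughout is the bookkeeping at each inductive step: ensuring that the repeated commutations of segments past Jacquet-vanishing constraints single out exactly the constituent $\r^{\Sigma_{0}}_{M, >_{\q}}(\q_{\gg}, \ul, \ueta)$ rather than any other irreducible subquotient, and this is where the hypothesis of discrete diagonal restriction on $\q_{\gg}$ is used most crucially, because it makes the relevant cuspidal-support conditions rigid.
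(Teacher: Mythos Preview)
Your argument for the case of $\q_{\gg}$ (i.e., the Lemma) is correct and follows essentially the same route as the paper: the same induction on $A_{1}$, the same handling of the base case $A_{1} = 1/2$ via segment manipulation and the vanishing $\Jac_{-\zeta_{1}1/2, -\zeta_{1}1/2}\r^{\Sigma_{0}}_{M, >_{\q}}(\q^{*}_{\gg}, \ul^{*}, \ueta^{*}) = 0$, and the same inductive step in which the embedding is forced into the form $<-\zeta_{1}1/2, \ldots, -\zeta_{1}(A_{1}+1)> \rtimes \sigma$ and $\sigma$ is identified with $\r^{\Sigma_{0}}_{M, >_{\q}}(\q_{\gg}, \ul, \ueta)$ via the hypothesis at level $A_{1}-1$.

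However, your proposal treats only the discrete-diagonal-restriction case and does not address the reduction from general $\q$ to $\q_{\gg}$, which is the content of the Proposition's own proof. That reduction is not entirely formal: one must verify that the segment $<-\zeta_{1}1/2, \ldots, -\zeta_{1}(A_{1}+1)>$ commutes with each shifting block $\mathcal{C}_{X_{i}}$ for $i > 1$, and this is exactly where the standing hypotheses of the ``Change sign'' setup are used (namely $\zeta_{i} = -\zeta_{1}$ and $A_{1} \geqslant A_{i}$ for $1 < i \leqslant n$, giving $[\zeta_{i}(B_{i}+1), \zeta_{i}(A_{i}+1)] \subseteq [-\zeta_{1}1/2, -\zeta_{1}(A_{1}+1)]$, and $B_{i} > A_{1}+1$ for $i > n$). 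One also needs the observation that $-\zeta_{1}1/2$ and $\zeta_{1}(A_{1}+1)$ avoid the intervals $[\zeta_{i}(A_{i}+T_{i}), \zeta_{i}(B_{i}+1)]$ so that the Jacquet functors pass through the induced representation to $\r^{\Sigma_{0}}_{M, >_{\q}}(\q_{\gg}, \ul, \ueta)$. These checks are routine with the paper's tools, but they are what carries the Lemma over to the general statement and should be included.
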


\section{General procedure}
\label{sec: procedure}

The three operations (``Pull", ``Expand", and ``Change sign") introduced in the previous section allow us to develop a procedure to find the combinatorial conditions for the nonvanishing of $\r^{\Sigma_{0}}_{M, >_{\q}}(\q, \ul, \ueta)$. 

\subsection{Step one}
\label{subsec: step 1}

We choose an admissible order $>_{\q}$, and we also fix a self-dual unitary irreducible supercuspidal representation $\rho$ of $GL(d_{\rho})$. We index the Jordan blocks in $Jord_{\rho}(\q)$ such that 
\[
(\rho, A_{i}, B_{i}, \zeta_{i}) >_{\q} (\rho, A_{i-1}, B_{i-1}, \zeta_{i-1}).
\]
We choose $n$ such that for $i > n$, 
\[
(\rho, A_{i}, B_{i}, \zeta_{i}) \gg_{2} \cup_{j=1}^{n}\{(\rho, A_{j}, B_{j}, \zeta_{j})\},
\]
and the Jordan blocks for $i>n$ are in ``good shape" (see Remark~\ref{rk: basic general}). Then for $i \leqslant n$, let us choose $(\rho, A, B, \zeta)$ so that $A$ is maximal. We consider the set
\begin{align}
\label{eq: pull set}
\{(\rho, A_{i}, B_{i}, \zeta_{i}) \text{ for } i \leqslant n:  [A_{i}, B_{i}] \subsetneq [A, B] \text{ and } \zeta_{i} = \zeta \}.
\end{align}
If this set is nonempty, we take $(\rho, A', B', \zeta')$ such that $A'$ is maximal within the set. We can rearrange the order $>_{\q}$ for $i \leqslant n$, so that
\[
(\rho, A_{n}, B_{n}, \zeta_{n}) = (\rho, A, B, \zeta) \text{ and } (\rho, A_{n-1}, B_{n-1}, \zeta_{n-1}) = (\rho, A', B', \zeta').
\]
Then we can ``Pull" the pairs $(\rho, A_{n}, B_{n}, \zeta_{n}), (\rho, A_{n-1}, B_{n-1}, \zeta_{n-1})$ using Proposition~\ref{prop: pull}. Suppose the set \eqref{eq: pull set} is empty, but there exists $(\rho, A', B', \zeta')$ such that 
\[
[A', B']  = [A, B] \text{ and } \zeta' = \zeta,
\]
then we can again rearrange the order $>_{\q}$ for $i \leqslant n$, so that
\[
(\rho, A_{n}, B_{n}, \zeta_{n}) = (\rho, A, B, \zeta) \text{ and } (\rho, A_{n-1}, B_{n-1}, \zeta_{n-1}) = (\rho, A', B', \zeta').
\]
And we can ``Pull" the pairs $(\rho, A_{n}, B_{n}, \zeta_{n}), (\rho, A_{n-1}, B_{n-1}, \zeta_{n-1})$ using Proposition~\ref{prop: pull equal length}.

\subsection{Step two}
\label{subsec: step 2}

Following Step one, we suppose the set 
\begin{align}
\label{eq: pull set equal length}
\{(\rho, A_{i}, B_{i}, \zeta_{i}) \text{ for } i \leqslant n:  [A_{i}, B_{i}] \subseteq [A, B] \text{ and } \zeta_{i} = \zeta \} \backslash \{(\rho, A, B, \zeta)\}
\end{align}
is empty. We can still rearrange the order $>_{\q}$ for $i \leqslant n$ such that
\[
(\rho, A_{n}, B_{n}, \zeta_{n}) = (\rho, A, B, \zeta).
\]
Then we ``Expand" $[A_{n}, B_{n}]$, and use Proposition~\ref{prop: expand}.

\subsection{Step three}
\label{subsec: step 3}

Following Step two, let us denote the ``Expansion" of $[A_{n}, B_{n}]$ by $[A^{*}_{n}, B^{*}_{n}]$. The set \eqref{eq: pull set} becomes
\begin{align*}
\label{eq: pull set expand}
\{(\rho, A_{i}, B_{i}, \zeta_{i}) \text{ for } i < n:  [A_{i}, B_{i}] \subsetneq [A^{*}_{n}, B^{*}_{n}] \text{ and } \zeta_{i} = \zeta_{n} \}.
\end{align*}
If this set is nonempty, then we are back to Step one. If this set is empty, then by our definition of ``Expand", it is necessary that $B^{*}_{n} = 1/2$ or $0$, and $\zeta_{i} \neq \zeta_{n}$ for all $i < n$. In this case, we can change the order for $i \leqslant n$ again by switching $(\rho, A^{*}_{n}, B^{*}_{n}, \zeta_{n})$ with $(\rho, A_{i}, B_{i}, \zeta_{i})$ one by one as $i$ goes from $n-1$ to $1$.
Then we can ``Change sign" of $(\rho, A^{*}_{n}, B^{*}_{n}, \zeta_{n})$, and use Proposition~\ref{prop: change sign integral} or Proposition~\ref{prop: change sign half-integral}. After that, we are back to Step one again.

\subsection{Step four}
\label{subsec: step 4}

By the above three steps, we will end up with a collection of parameters $\{\q^{\star}\}$ such that $Jord_{\rho}(\q^{\star})$ is in ``good shape" (cf. Proposition~\ref{prop: pull} and Proposition~\ref{prop: pull equal length}). 
Then we can change $\rho$ and repeat all the previous steps to $\{\q^{\star}\}$.

\appendix

\section{Proof of Proposition~\ref{prop: basic}}
\label{sec: basic}

In this appendix, we will give the proof of Proposition~\ref{prop: basic}, which also includes a proof of Lemma~\ref{lemma: fundamental case}. We will proceed by induction. So let us suppose the proposition holds when $(A_{1} - B_{1}) + (A_{2} - B_{2}) < L$ for some positive integer $L$. Note when $(A_{1} - B_{1}) + (A_{2} - B_{2}) = 0$, this is clear (cf. Theorem~\ref{thm: M/W}). When $(A_{1} - B_{1}) + (A_{2} - B_{2}) = L$, we will first prove the proposition, except for the necessity of condition \eqref{eq: basic condition} in the case that $A_2 = A_1$ and $B_2 = B_1$. This remaining case is actually part of Lemma~\ref{lemma: fundamental case} and will be treated in the end.

\subsection{Proof of Proposition~\ref{prop: basic}}

We first show the necessity of the condition \eqref{eq: basic condition}. So let us suppose $\r^{\Sigma_{0}}_{M, >_{\q}}(\q, \ul, \ueta) \neq 0$, and we take the following two reduction steps.

\begin{itemize}

\item {\bf First reduction}: we assume $A_{2} > A_{1}$ and $l_{2} \neq 0$.

Let us define $\q_{\gg}$ by shifting $(\rho, A_{2}, B_{2}, \zeta_{2})$ to $(\rho, A_{2} + T, B_{2} + T, \zeta_{2})$, such that $\q_{\gg}$ has discrete diagonal restriction and the natural order is the same as $>_{\q}$. Then
\begin{align*}
\r^{\Sigma_{0}}_{M, >_{\q}}(\q_{\gg}, \ul, \ueta) & \hookrightarrow \langle \zeta(B_{2} + T), \cdots, -\zeta(A_{2} + T) \rangle
      \rtimes \r^{\Sigma_{0}}_{M, >_{\q}}\Big(\q_{-}, \ul_{-}, \ueta_{-}; \\
      & (\rho, A_{2} + T - 1, B_{2} + T + 1, l_{2} - 1, \eta_{2}, \zeta), (\rho, A_{1}, B_{1}, l_{1}, \eta_{1}, \zeta)\Big).       
\end{align*}
Note 
\[
\Jac_{(\rho, A_{2} + T, B_{2} + T, \zeta) \mapsto (\rho, A_{2}, B_{2}, \zeta)} \r^{\Sigma_{0}}_{M, >_{\q}}(\q_{\gg}, \ul, \ueta) = \r^{\Sigma_{0}}_{M, >_{\q}}(\q, \ul, \ueta) \neq 0.
\]
So after applying $\Jac_{(\rho, A_{2} + T, B_{2} + T, \zeta) \mapsto (\rho, A_{2}, B_{2}, \zeta)}$ to the full induced representation above, we have
\[
\langle \zeta B_{2}, \cdots, -\zeta A_{2} \rangle \rtimes \r^{\Sigma_{0}}_{M, >_{\q}}\Big(\q_{-}, \ul_{-}, \ueta_{-}; (\rho, A_{2} - 1, B_{2} + 1, l_{2} - 1, \eta_{2}, \zeta), (\rho, A_{1}, B_{1}, l_{1}, \eta_{1}, \zeta)\Big),
\]
which is again nonzero. In particular, 
\[
\r^{\Sigma_{0}}_{M, >_{\q}}\Big(\q_{-}, \ul_{-}, \ueta_{-}; (\rho, A_{2} - 1, B_{2} + 1, l_{2} - 1, \eta_{2}, \zeta), (\rho, A_{1}, B_{1}, l_{1}, \eta_{1}, \zeta)\Big) \neq 0
\]
By our assumption, $A_{2} - 1 \geqslant A_{1}, B_{2} + 1 > B_{1}$ and $l_{2} - 1 \geqslant 0$, so we get by induction assumption
\begin{align*}
\begin{cases}
\eta_{2} = (-1)^{A_{1} - B_{1}}\eta_{1}       & \Rightarrow (A_{2}-1) - (l_{2}-1) \geqslant A_{1} - l_{1}, (B_{2}+1) + (l_{2}-1) \geqslant B_{1} + l_{1},  \\
\eta_{2} \neq (-1)^{A_{1} - B_{1}}\eta_{1}  & \Rightarrow (B_{2}+1) + (l_{2}-1) > A_{1} - l_{1}.   
\end{cases}                                                                                                                                                                                                                                                                                                                                
\end{align*}
This gives the condition \eqref{eq: basic condition}.

\item {\bf Second reduction}: we assume $B_{2} > B_{1}$ and $l_{1} \neq 0$.

We choose $\q_{\gg}$ as in the previous step. Then
\begin{align*}
\r^{\Sigma_{0}}_{M, >_{\q}}(\q_{\gg}, \ul, \ueta) & \hookrightarrow \langle \zeta B_{1}, \cdots, -\zeta A_{1} \rangle
      \rtimes \r^{\Sigma_{0}}_{M, >_{\q}}\Big(\q_{-}, \ul_{-}, \ueta_{-}; \\
      & (\rho, A_{2} + T, B_{2} + T, l_{2}, \eta_{2}, \zeta), (\rho, A_{1} - 1, B_{1} + 1, l_{1} - 1, \eta_{1}, \zeta)\Big).       
\end{align*}
Note 
\[
\Jac_{(\rho, A_{2} + T, B_{2} + T, \zeta) \mapsto (\rho, A_{2}, B_{2}, \zeta)} \r^{\Sigma_{0}}_{M, >_{\q}}(\q_{\gg}, \ul, \ueta) = \r^{\Sigma_{0}}_{M, >_{\q}}(\q, \ul, \ueta) \neq 0.
\]
So after applying $\Jac_{(\rho, A_{2} + T, B_{2} + T, \zeta) \mapsto (\rho, A_{2}, B_{2}, \zeta)}$ to the full induced representation above, we have
\[
\langle \zeta B_{1}, \cdots, -\zeta A_{1} \rangle \rtimes \r^{\Sigma_{0}}_{M, >_{\q}}\Big(\q_{-}, \ul_{-}, \ueta_{-}; (\rho, A_{2}, B_{2}, l_{2}, \eta_{2}, \zeta), (\rho, A_{1} - 1, B_{1} + 1, l_{1} - 1, \eta_{1}, \zeta)\Big),
\]
which is again nonzero. In particular, 
\[
\r^{\Sigma_{0}}_{M, >_{\q}}\Big(\q_{-}, \ul_{-}, \ueta_{-}; (\rho, A_{2}, B_{2}, l_{2}, \eta_{2}, \zeta), (\rho, A_{1} - 1, B_{1} + 1, l_{1} - 1, \eta_{1}, \zeta)\Big) \neq 0
\]
By our assumption, $A_{2} > A_{1} - 1, B_{2} \geqslant B_{1} + 1$ and $l_{1} - 1 \geqslant 0$, so we get by induction assumption
\begin{align*}
\begin{cases}
\eta_{2} = (-1)^{A_{1} - B_{1}}\eta_{1}       & \Rightarrow A_{2} - l_{2} \geqslant (A_{1} - 1) - (l_{1} - 1), B_{2} + l_{2} \geqslant (B_{1} + 1) + (l_{1} - 1),  \\
\eta_{2} \neq (-1)^{A_{1} - B_{1}}\eta_{1}  & \Rightarrow B_{2} + l_{2} > (A_{1} - 1) - (l_{1} - 1).   
\end{cases}                                                                                                                                                                                                                                                                                                                                 
\end{align*}
This again gives the condition \eqref{eq: basic condition}.

\end{itemize}

After these two steps, we are reduced to the following cases:

\begin{itemize}

\item {\bf Case 1}: $A_{2} = A_{1}$ and $B_{2} = B_{1}$. 

This is the remaining case, which will be treated in the end.

\item {\bf Case 2}: $A_{2} = A_{1}, B_{2} > B_{1}$ and $l_{1} = 0$.

In this case, the condition \eqref{eq: basic condition} becomes 
\[
\eta_{2} = (-1)^{A_{1} - B_{1}}\eta_{1} \text{ and } l_{2} = 0.
\]
Note
\begin{align*}
\r^{\Sigma_{0}}_{M, >_{\q}}(\q, \ul, \ueta) & =  \r^{\Sigma_{0}}_{M, >_{\q}}\Big(\q_{-}, \ul_{-}, \ueta_{-}; (\rho, A_{2}, B_{2}, l_{2}, \eta_{2}, \zeta), \\
& (\rho, A_{1}, B_{2}, 0, (-1)^{B_{2} - B_{1}} \eta_{1}, \zeta), (\rho, B_{2} - 1, B_{1}, 0, \eta_{1}, \zeta)\Big).
\end{align*}
Applying the induction assumption to $(\rho, A_{2}, B_{2}, l_{2}, \eta_{2}, \zeta)$ and $(\rho, A_{1}, B_{2}, 0, (-1)^{B_{2} - B_{1}} \eta_{1}, \zeta)$, we get 
\[
\eta_{2} = (-1)^{A_{1} - B_{2}} \cdot (-1)^{B_{2} - B_{1}} \eta_{1} = (-1)^{A_{1} - B_{1}}\eta_{1}, 
\]
and $l_{2} = 0$. This is exactly what we want.

\item {\bf Case 3}: $A_{2} > A_{1}, B_{2} = B_{1}$ and $l_{2} = 0$.

In this case, the condition \eqref{eq: basic condition} becomes 
\[
\eta_{2} = (-1)^{A_{1} - B_{1}}\eta_{1} \text{ and } l_{1} = 0.
\]
Note 
\begin{align*}
\r^{\Sigma_{0}}_{M, >_{\q}}(\q_{\gg}, \ul, \ueta) & =  \r^{\Sigma_{0}}_{M, >_{\q}}\Big(\q_{-}, \ul_{-}, \ueta_{-}; (\rho, A_{2} + T, A_{1} + T + 1, 0, (-1)^{A_{1} - B_{2} + 1}\eta_{2}, \zeta), \\
& (\rho, A_{1} + T, B_{2} + T, 0, \eta_{2}, \zeta), (\rho, A_{1}, B_{1}, l_{1}, \eta_{1}, \zeta)\Big).
\end{align*}
Since 
\[
\Jac_{(\rho, A_{2} + T, B_{2} + T, \zeta) \mapsto (\rho, A_{2}, B_{2}, \zeta)} = \Jac_{(\rho, A_{2} + T, A_{1} + T + 1, \zeta) \mapsto (\rho, A_{2}, A_{1} + 1, \zeta)} \circ \Jac_{(\rho, A_{1} + T, B_{2} + T, \zeta) \mapsto (\rho, A_{1}, B_{2}, \zeta)},
\]
then
\begin{align*}
&\Jac_{(\rho, A_{1} + T, B_{2} + T, \zeta) \mapsto (\rho, A_{1}, B_{2}, \zeta)} \r^{\Sigma_{0}}_{M, >_{\q}}(\q_{\gg}, \ul, \ueta)  = \\
& \r^{\Sigma_{0}}_{M, >_{\q}}\Big(\q_{-}, \ul_{-}, \ueta_{-}; (\rho, A_{2} + T, A_{1} + T + 1, 0, (-1)^{A_{1} - B_{2} + 1}\eta_{2}, \zeta), \\
& (\rho, A_{1}, B_{2}, 0, \eta_{2}, \zeta), (\rho, A_{1}, B_{1}, l_{1}, \eta_{1}, \zeta)\Big) \neq 0.
\end{align*}
Applying the induction assumption to $(\rho, A_{1}, B_{2}, 0, \eta_{2}, \zeta)$ and $(\rho, A_{1}, B_{1}, l_{1}, \eta_{1}, \zeta)$, we get exactly
\[
\eta_{2} = (-1)^{A_{1} - B_{1}}\eta_{1} \text{ and } l_{1} = 0.
\]

\item {\bf Case 4}: $A_{2} > A_{1}, B_{2} > B_{1}$ and $l_{2} = l_{1} = 0$.

If $\eta_{2} = (-1)^{A_{1} - B_{1}}\eta_{1}$, the condition is automatically satisfied. 

If $\eta_{2} \neq (-1)^{A_{1} - B_{1}}\eta_{1}$. We can suppose $B_{2} \leqslant A_{1}$, and let $T = A_{1} - B_{2} + 1$. One observes 
\[
\r^{\Sigma_{0}}_{M, >_{\q}}(\q_{\gg}, \ul, \ueta)  =  \r^{\Sigma_{0}}_{M, >_{\q}}\Big(\q_{-}, \ul_{-}, \ueta_{-};  (\rho, A_{2} + T, B_{1}, 0, \eta_{1}, \zeta)\Big).
\]
So $\Jac_{\zeta(A_{1}+1)} \r^{\Sigma_{0}}_{M, >_{\q}}(\q_{\gg}, \ul, \ueta) = 0$. Therefore,
\[
\Jac_{(\rho, A_{2} +T, B_{2} + T, \zeta) \mapsto (\rho, A_{2}, B_{2}, \zeta)} \r^{\Sigma_{0}}_{M, >_{\q}}(\q_{\gg}, \ul, \ueta) = 0.
\]

\end{itemize}

Next we would like to show the sufficiency of condition~\eqref{eq: basic condition} by computing $\r^{\Sigma_{0}}_{M, >_{\q}}(\q, \ul, \ueta)$ directly. We will take $\q_{\gg}$ to be defined as before.

\begin{itemize}

\item Suppose $l_{1} = l_{2} = 0$. If $\eta_{2} \neq (-1)^{A_{1} - B_{1}}\eta_{1}$, then $B_{2} > A_{1}$ and there is nothing to prove. So let us also assume $\eta_{2} = (-1)^{A_{1} - B_{1}}\eta_{1}$.

  \begin{enumerate}
  
  \item $A_{2} - B_{2} \leqslant A_{1} - B_{1}$.
  
           \begin{align*}
            \r^{\Sigma_{0}}_{M, >_{\q}}(\q_{\gg}, \ul, \ueta) & \hookrightarrow 
               \begin{pmatrix}
              \zeta (B_{2} + T) & \cdots & -\zeta A_{1} \\
              \vdots &  & \vdots \\
              \zeta (A_{2} + T) & \cdots & - \zeta (A_{1} - A_{2} + B_{2})
              \end{pmatrix} \\        
       & \rtimes \r^{\Sigma_{0}}_{M, >_{\q}}\Big(\q_{-}, \ul_{-}, \ueta_{-}; (\rho, A_{1} - A_{2} + B_{2} -1, B_{1}, 0, \eta_{1}, \zeta)\Big).       
           \end{align*}
   It is clear that $\r^{\Sigma_{0}}_{M, >_{\q}}(\q, \ul, \ueta) \neq 0$.

  \item $A_{2} - B_{2} > A_{1} - B_{1}$.
        
        \begin{align*}
         \r^{\Sigma_{0}}_{M, >_{\q}}(\q_{\gg}, \ul, \ueta) & \hookrightarrow 
            \begin{pmatrix}
              \zeta (B_{2} + T) & \cdots & -\zeta A_{1} \\
              \vdots &  & \vdots \\
              \zeta (B_{2} + A_{1} - B_{1} + T) & \cdots & - \zeta B_{1}
             \end{pmatrix} \\
       & \times       
           \begin{pmatrix}
              \zeta (B_{2} + A_{1} - B_{1} + T + 1)  & \cdots & \zeta (B_{2} + A_{1} - B_{1} + 2)  \\
              \vdots &  & \vdots \\
              \zeta (A_{2} + T) & \cdots & \zeta (A_{2} + 1)
           \end{pmatrix} \\
       & \rtimes \r^{\Sigma_{0}}_{M, >_{\q}}\Big(\q_{-}, \ul_{-}, \ueta_{-}; (\rho, A_{2}, B_{2} + A_{1} - B_{1} + 1, 0, - \eta_{1}, \zeta) \Big).       
      \end{align*}        
     It is again clear that $\r^{\Sigma_{0}}_{M, >_{\q}}(\q, \ul, \ueta) \neq 0$.   
        
  \end{enumerate}

\item Suppose $l_{1} \neq 0$ or $l_{2} \neq 0$.

\begin{align*}
\r^{\Sigma_{0}}_{M, >_{\q}}(\q_{\gg}, \ul, \ueta) & \hookrightarrow 
      \begin{pmatrix}
              \zeta (B_{2} + T) & \cdots & -\zeta (A_{2} + T) \\
              \vdots &  & \vdots \\
              \zeta (B_{2} + l_{2} - 1 + T) & \cdots & - \zeta (A_{2} - l_{2} + 1 + T)
       \end{pmatrix} \\
& \times       
       \begin{pmatrix}
              \zeta B_{1} & \cdots & -\zeta A_{1} \\
              \vdots &  & \vdots \\
              \zeta (B_{1} + l_{1} - 1) & \cdots & - \zeta (A_{1} - l_{1} + 1)
       \end{pmatrix} \\
& \rtimes \r^{\Sigma_{0}}_{M, >_{\q}}\Big(\q_{-}, \ul_{-}, \ueta_{-}; (\rho, A_{2} - l_{2} + T, B_{2} + l_{2} + T, 0, \eta_{2}, \zeta), (\rho, A_{1} - l_{1}, B_{1} + l_{1}, 0, \eta_{1}, \zeta)\Big).       
\end{align*}
From our previous discussion, we know 
\[
\r^{\Sigma_{0}}_{M, >_{\q}}\Big(\q_{-}, \ul_{-}, \ueta_{-}; (\rho, A_{2} - l_{2}, B_{2} + l_{2}, 0, \eta_{2}, \zeta), (\rho, A_{1} - l_{1}, B_{1} + l_{1}, 0, \eta_{1}, \zeta)\Big) \neq 0,
\] 
so
\begin{align*}
&\r^{\Sigma_{0}}_{M, >_{\q}}\Big(\q_{-}, \ul_{-}, \ueta_{-}; (\rho, A_{2} - l_{2} + T, B_{2} + l_{2} + T, 0, \eta_{2}, \zeta), (\rho, A_{1} - l_{1}, B_{1} + l_{1}, 0, \eta_{1}, \zeta)\Big) \\
& \hookrightarrow 
      \begin{pmatrix}
              \zeta (B_{2} + l_{2} + T) & \cdots & \zeta (B_{2} + l_{2} + 1) \\
              \vdots &  & \vdots \\
              \zeta (A_{2} - l_{2} + T) & \cdots & \zeta (A_{2} - l_{2} + 1)
       \end{pmatrix} \\
& \rtimes \r^{\Sigma_{0}}_{M, >_{\q}}\Big(\q_{-}, \ul_{-}, \ueta_{-}; (\rho, A_{2} - l_{2}, B_{2} + l_{2}, 0, \eta_{2}, \zeta), (\rho, A_{1} - l_{1}, B_{1} + l_{1}, 0, \eta_{1}, \zeta)\Big).       
\end{align*}
Therefore,
\begin{align*}
\r^{\Sigma_{0}}_{M, >_{\q}}(\q_{\gg}, \ul, \ueta) & \hookrightarrow 
      \begin{pmatrix}
              \zeta (B_{2} + T) & \cdots & \zeta (B_{2} + 1) \\
              \vdots &  & \vdots  \\ 
              \zeta (B_{2} + l_{2} - 1 + T) & \cdots & \zeta(B_{2} + l_{2}) 
       \end{pmatrix} 
 \times
     \underbrace{\begin{pmatrix}
              \zeta B_{2} & \cdots & -\zeta (A_{2} + T) \\
              \vdots &  & \vdots  \\ 
              \zeta (B_{2} + l_{2} - 1) & \cdots & -\zeta (A_{2} - l_{2} + 1 + T)
       \end{pmatrix}}_{I}  \\
& \times       
       \underbrace{\begin{pmatrix}
              \zeta B_{1} & \cdots & -\zeta A_{1} \\
              \vdots &  & \vdots \\
              \zeta (B_{1} + l_{1} - 1) & \cdots & - \zeta (A_{1} - l_{1} + 1)
       \end{pmatrix}}_{II}
 \times       
       \underbrace{\begin{pmatrix}
              \zeta (B_{2} + l_{2} + T) & \cdots & \zeta (B_{2} + l_{2} + 1) \\
              \vdots &  & \vdots \\
              \zeta (A_{2} - l_{2} + T) & \cdots & \zeta (A_{2} - l_{2} + 1)
       \end{pmatrix}}_{III} \\
& \rtimes \r^{\Sigma_{0}}_{M, >_{\q}}\Big(\q_{-}, \ul_{-}, \ueta_{-}; (\rho, A_{2} - l_{2}, B_{2} + l_{2}, 0, \eta_{2}, \zeta), (\rho, A_{1} - l_{1}, B_{1} + l_{1}, 0, \eta_{1}, \zeta)\Big).       
\end{align*}
Since $[\zeta B_{2}, -\zeta(A_{2} + T)] \supseteq [\zeta B_{1}, -\zeta A_{1}]$, $(I)$ and $(II)$ are interchangeable. Also note $B_{2} + l_{2} + 1 > B_{1} + l_{1}$, so we can interchange $(II)$ and $(III)$. It is clear that $(I)$ and $(III)$ are interchangeable too. As a result, 
\begin{align*}
\r^{\Sigma_{0}}_{M, >_{\q}}(\q_{\gg}, \ul, \ueta) & \hookrightarrow 
      \begin{pmatrix}
              \zeta (B_{2} + T) & \cdots & \zeta (B_{2} + 1) \\
              \vdots &  & \vdots  \\ 
              \zeta (B_{2} + l_{2} - 1 + T) & \cdots & \zeta(B_{2} + l_{2}) 
       \end{pmatrix} \\
& \times       
       \underbrace{\begin{pmatrix}
              \zeta (B_{2} + l_{2} + T) & \cdots & \zeta (B_{2} + l_{2} + 1) \\
              \vdots &  & \vdots \\
              \zeta (A_{2} - l_{2} + T) & \cdots & \zeta (A_{2} - l_{2} + 1)
       \end{pmatrix}}_{III}        
 \times
     \underbrace{\begin{pmatrix}
              \zeta B_{1} & \cdots & -\zeta A_{1} \\
              \vdots &  & \vdots \\
              \zeta (B_{1} + l_{1} - 1) & \cdots & - \zeta (A_{1} - l_{1} + 1)
       \end{pmatrix}}_{II} \\
& \times       
       \underbrace{\begin{pmatrix}
              \zeta B_{2} & \cdots & -\zeta (A_{2} + 1) & \cdots & -\zeta (A_{2} + T) \\
              \vdots &  & \vdots   & & \vdots \\
              \zeta (B_{2} + l_{2} - 1) & \cdots & -\zeta (A_{2} - l_{2} + 2) & \cdots & -\zeta (A_{2} - l_{2} + 1 + T)
       \end{pmatrix}}_{I}  \\              
& \rtimes \r^{\Sigma_{0}}_{M, >_{\q}}\Big(\q_{-}, \ul_{-}, \ueta_{-}; (\rho, A_{2} - l_{2}, B_{2} + l_{2}, 0, \eta_{2}, \zeta), (\rho, A_{1} - l_{1}, B_{1} + l_{1}, 0, \eta_{1}, \zeta)\Big).       
\end{align*}
By Proposition~\ref{prop: general irreducibility}, 
\begin{align*}
&       \underbrace{\begin{pmatrix}
              -\zeta (A_{2} + 1) & \cdots & -\zeta (A_{2} + T) \\
              \vdots   & & \vdots \\
              -\zeta (A_{2} - l_{2} + 2) & \cdots & -\zeta (A_{2} - l_{2} + 1 + T)
       \end{pmatrix}}_{IV}  \\              
& \rtimes \r^{\Sigma_{0}}_{M, >_{\q}}\Big(\q_{-}, \ul_{-}, \ueta_{-}; (\rho, A_{2} - l_{2}, B_{2} + l_{2}, 0, \eta_{2}, \zeta), (\rho, A_{1} - l_{1}, B_{1} + l_{1}, 0, \eta_{1}, \zeta)\Big).       
\end{align*}
is irreducible. So we can take the dual of $(IV)$ (see \eqref{eq: dualizing classical}). Moreover, $(IV)^{\vee}$ is interchangeable with 
\begin{align*}
\underbrace{\begin{pmatrix}
             \zeta B_{2} & \cdots & -\zeta A_{2} \\
              \vdots   & & \vdots \\
              \zeta (B_{2} + l_{2} - 1) & \cdots & -\zeta (A_{2} - l_{2} + 1) 
       \end{pmatrix}}_{I_{-}}       
\end{align*}
and $(II)$. Then
\begin{align*}
\r^{\Sigma_{0}}_{M, >_{\q}}(\q_{\gg}, \ul, \ueta) & \hookrightarrow 
      \begin{pmatrix}
              \zeta (B_{2} + T) & \cdots & \zeta (B_{2} + 1) \\
              \vdots &  & \vdots  \\ 
              \zeta (B_{2} + l_{2} - 1 + T) & \cdots & \zeta(B_{2} + l_{2}) 
       \end{pmatrix} \\
& \times       
       \underbrace{\begin{pmatrix}
              \zeta (B_{2} + l_{2} + T) & \cdots & \zeta (B_{2} + l_{2} + 1) \\
              \vdots &  & \vdots \\
              \zeta (A_{2} - l_{2} + T) & \cdots & \zeta (A_{2} - l_{2} + 1)
       \end{pmatrix}}_{III} \\      
& \times       
        \underbrace{\begin{pmatrix}
              \zeta (A_{2} - l_{2} + 1 + T)  & \cdots & \zeta (A_{2} - l_{2} + 2) \\
              \vdots   & & \vdots \\
              \zeta (A_{2} + T)  & \cdots & \zeta (A_{2} + 1)
        \end{pmatrix}}_{(IV)^{\vee}} \\
& \times
     \underbrace{\begin{pmatrix}
              \zeta B_{1} & \cdots & -\zeta A_{1} \\
              \vdots &  & \vdots \\
              \zeta (B_{1} + l_{1} - 1) & \cdots & - \zeta (A_{1} - l_{1} + 1)
       \end{pmatrix}}_{II} 
\times       
       \underbrace{\begin{pmatrix}
              \zeta B_{2} & \cdots & -\zeta A_{2} \\
              \vdots &  & \vdots \\
              \zeta (B_{2} + l_{2} - 1) & \cdots & -\zeta (A_{2} - l_{2} + 1) 
       \end{pmatrix}}_{I_{-}}  \\              
& \rtimes \r^{\Sigma_{0}}_{M, >_{\q}}\Big(\q_{-}, \ul_{-}, \ueta_{-}; (\rho, A_{2} - l_{2}, B_{2} + l_{2}, 0, \eta_{2}, \zeta), (\rho, A_{1} - l_{1}, B_{1} + l_{1}, 0, \eta_{1}, \zeta)\Big).       
\end{align*}
It follows $\r^{\Sigma_{0}}_{M, >_{\q}}(\q, \ul, \ueta) \neq 0$, and
\begin{align*}
\r^{\Sigma_{0}}_{M, >_{\q}}(\q, \ul, \ueta) & \hookrightarrow 
      \underbrace{\begin{pmatrix}
              \zeta B_{1} & \cdots & -\zeta A_{1} \\
              \vdots &  & \vdots \\
              \zeta (B_{1} + l_{1} - 1) & \cdots & - \zeta (A_{1} - l_{1} + 1)
       \end{pmatrix}}_{II} 
\times       
       \underbrace{\begin{pmatrix}
              \zeta B_{2} & \cdots & -\zeta A_{2} \\
              \vdots &  & \vdots \\
              \zeta (B_{2} + l_{2} - 1) & \cdots & -\zeta (A_{2} - l_{2} + 1) 
       \end{pmatrix}}_{I_{-}}  \\              
& \rtimes \r^{\Sigma_{0}}_{M, >_{\q}}\Big(\q_{-}, \ul_{-}, \ueta_{-}; (\rho, A_{2} - l_{2}, B_{2} + l_{2}, 0, \eta_{2}, \zeta), (\rho, A_{1} - l_{1}, B_{1} + l_{1}, 0, \eta_{1}, \zeta)\Big).       
\end{align*}
Finally, one just needs to observe $(II)$ and $(I_{-})$ are interchangeable.  

\end{itemize}

\subsection{The remaining case}

We show the necessity of condition \eqref{eq: basic condition} in the remaining case, i.e., $A_2 = A_1$ and $B_2 = B_1$. Let us define $\q_{\gg}$ by shifting $(\rho, A_{2}, B_{2}, \zeta_{2})$ to $(\rho, A_{2} + T, B_{2} + T, \zeta_{2})$, such that $\q_{\gg}$ has discrete diagonal restriction and it admits the same order $>_{\q}$. Suppose $\r^{\Sigma_{0}}_{M, >_{\q}}(\q, \ul, \ueta) \neq 0$, we first want to show 
\begin{align}
\label{eq: rough condition}
\begin{cases}
\eta_{2} = (-1)^{A_{1} - B_{1}}\eta_{1}       & \Rightarrow |l_{1} - l_{2}| \leqslant 1,  \\
\eta_{2} \neq (-1)^{A_{1} - B_{1}}\eta_{1}  & \Rightarrow l_{1} + l_{2} + 1 > A_{1} - B_{1}.   
\end{cases}   
\end{align}
Let us consider the following situations.

\begin{enumerate}

\item If $l_{1} = l_{2} = 0$, it is clear that one must have $\eta_{2} = (-1)^{A_{1} - B_{1}}\eta_{1}$.

\item If $l_{1} \neq 0$, then

\begin{align*}
\r^{\Sigma_{0}}_{M, >_{\q}}(\q_{\gg}, \ul, \ueta) & \hookrightarrow \langle \zeta B_{1}, \cdots -\zeta A_{1} \rangle
      \rtimes \r^{\Sigma_{0}}_{M, >_{\q}}\Big(\q_{-}, \ul_{-}, \ueta_{-}; \\
      & (\rho, A_{2} + T, B_{2} + T, l_{2}, \eta_{2}, \zeta), (\rho, A_{1} - 1, B_{1} + 1, l_{1} - 1, \eta_{1}, \zeta)\Big).       
\end{align*}
Note 
\[
\Jac_{(\rho, A_{2} + T, B_{2} + T, \zeta) \mapsto (\rho, A_{2}, B_{2}, \zeta)} \r^{\Sigma_{0}}_{M, >_{\q}}(\q_{\gg}, \ul, \ueta) = \r^{\Sigma_{0}}_{M, >_{\q}}(\q, \ul, \ueta) \neq 0.
\]
So after applying $\Jac_{(\rho, A_{2} + T, B_{2} + T, \zeta) \mapsto (\rho, A_{2}, B_{2}, \zeta)}$ to the full induced representation above, we have
\[
\langle \zeta B_{1}, \cdots -\zeta A_{1} \rangle \rtimes \r^{\Sigma_{0}}_{M, >_{\q}}\Big(\q_{-}, \ul_{-}, \ueta_{-}; (\rho, A_{2}, B_{2}, l_{2}, \eta_{2}, \zeta), (\rho, A_{1} - 1, B_{1} + 1, l_{1} - 1, \eta_{1}, \zeta)\Big),
\]
which is again nonzero. In particular, 
\[
\r^{\Sigma_{0}}_{M, >_{\q}}\Big(\q_{-}, \ul_{-}, \ueta_{-}; (\rho, A_{2}, B_{2}, l_{2}, \eta_{2}, \zeta), (\rho, A_{1} - 1, B_{1} + 1, l_{1} - 1, \eta_{1}, \zeta)\Big) \neq 0.
\]
Here we will only need the weak fact that
\[
\r^{\Sigma_{0}}_{M, >_{\q}}\Big(\q_{-}, \ul_{-}, \ueta_{-}; (\rho, A_{2} + 1, B_{2} + 1, l_{2}, \eta_{2}, \zeta), (\rho, A_{1} - 1, B_{1} + 1, l_{1} - 1, \eta_{1}, \zeta)\Big) \neq 0.
\]
By our induction assumption, we can conclude 
\begin{align*}
\begin{cases}
\eta_{2} = (-1)^{(A_{1}-1) - (B_{1}+1)}\eta_{1}       & \Rightarrow (A_{2}+1) - l_{2} \geqslant (A_{1} -1 ) - ( l_{1} - 1), \\ 
& (B_{2} + 1) + l_{2} \geqslant (B_{1}+1) + (l_{1}-1);  \\
\eta_{2} \neq (-1)^{(A_{1}-1) - (B_{1}+1)}\eta_{1}  & \Rightarrow (B_{2} + 1) + l_{2} > (A_{1} -1 ) - ( l_{1} - 1).   
\end{cases}                                                                                                                                                                                                                                                                                                                                
\end{align*}
In the first case, we get $\eta_{2} = (-1)^{A_{1} - B_{1}}\eta_{1}$ and $-1 \leqslant l_{2} - l_{1} \leqslant 1$. In the second case, we have $\eta_{2} \neq (-1)^{A_{1} - B_{1}}\eta_{1}$ and $l_{1} + l_{2} + 1> A_{1} - B_{2} = A_{1} - B_{1}$. 

\item If $l_{2} \neq 0$, then
\begin{align*}
\r^{\Sigma_{0}}_{M, >_{\q}}(\q_{\gg}, \ul, \ueta) & \hookrightarrow \langle \zeta(B_{2} + T), \cdots -\zeta(A_{2} + T) \rangle
      \rtimes \r^{\Sigma_{0}}_{M, >_{\q}}\Big(\q_{-}, \ul_{-}, \ueta_{-}; \\
      & (\rho, A_{2} + T - 1, B_{2} + T + 1, l_{2} - 1, \eta_{2}, \zeta), (\rho, A_{1}, B_{1}, l_{1}, \eta_{1}, \zeta)\Big).       
\end{align*}
Note 
\[
\Jac_{(\rho, A_{2} + T, B_{2} + T, \zeta) \mapsto (\rho, A_{2}, B_{2}, \zeta)} \r^{\Sigma_{0}}_{M, >_{\q}}(\q_{\gg}, \ul, \ueta) = \r^{\Sigma_{0}}_{M, >_{\q}}(\q, \ul, \ueta) \neq 0.
\]
So after applying $\Jac_{(\rho, A_{2} + T, B_{2} + T, \zeta) \mapsto (\rho, A_{2}, B_{2}, \zeta)}$ to the full induced representation above, we have
\[
\langle \zeta B_{2}, \cdots -\zeta A_{2} \rangle \rtimes \r^{\Sigma_{0}}_{M, >_{\q}}\Big(\q_{-}, \ul_{-}, \ueta_{-}; (\rho, A_{2} - 1, B_{2} + 1, l_{2} - 1, \eta_{2}, \zeta), (\rho, A_{1}, B_{1}, l_{1}, \eta_{1}, \zeta)\Big),
\]
which is again nonzero. In particular, 
\[
\r^{\Sigma_{0}}_{M, >_{\q}}\Big(\q_{-}, \ul_{-}, \ueta_{-}; (\rho, A_{2} - 1, B_{2} + 1, l_{2} - 1, \eta_{2}, \zeta), (\rho, A_{1}, B_{1}, l_{1}, \eta_{1}, \zeta)\Big) \neq 0.
\]
Here we will only need the weak fact that
\[
\r^{\Sigma_{0}}_{M, >_{\q}}\Big(\q_{-}, \ul_{-}, \ueta_{-}; (\rho, A_{2}, B_{2} + 2, l_{2} - 1, \eta_{2}, \zeta), (\rho, A_{1}, B_{1}, l_{1}, \eta_{1}, \zeta)\Big) \neq 0.
\]
By our induction assumption, we can conclude 
\begin{align*}
\begin{cases}
\eta_{2} = (-1)^{A_{1} - B_{1}}\eta_{1}       & \Rightarrow A_{2} - (l_{2}-1) \geqslant A_{1} - l_{1}, \\ 
& (B_{2} + 2) + (l_{2} - 1) \geqslant B_{1} + l_{1};  \\
\eta_{2} \neq (-1)^{A_{1} - B_{1}}\eta_{1}  & \Rightarrow (B_{2} + 2) + (l_{2} - 1) > A_{1} - l_{1}.   
\end{cases}                                                                                                                                                                                                                                                                                                                                
\end{align*}
In the first case, we get $\eta_{2} = (-1)^{A_{1} - B_{1}}\eta_{1}$ and $-1 \leqslant l_{2} - l_{1} \leqslant 1$. In the second case, we have $\eta_{2} \neq (-1)^{A_{1} - B_{1}}\eta_{1}$ and $l_{1} + l_{2} + 1> A_{1} - B_{2} = A_{1} - B_{1}$. 

\end{enumerate}

Now we will assume \eqref{eq: rough condition}. If $\eta_{2} = (-1)^{A_{1} - B_{1}}\eta_{1}$, we still need to eliminate the case $|l_{1} - l_{2}| = 1$. If $\eta_{2} \neq (-1)^{A_{1} - B_{1}}\eta_{1}$, we need to eliminate the following cases:
\begin{enumerate}
\item $|l_{1} - l_{2}| = 1$, max $\{l_{1}, l_{2}\} = (A_{1} - B_{1} + 1)/2$.
\item $l_{1} = l_{2} = (A_{1} - B_{1})/2$. 
\end{enumerate}
To simplify the notations, we let $A = A_{1} = A_{2}$ and $B = B_{1} = B_{2}$.

\subsubsection{{\bf Case:} $l_{1} = l_{2} + 1$}



Let us denote $l_{2}$ by $l$. Since $A -l_{1} + 1> B + l_{1} - 1$, then $A - l > B + l$.

  \begin{enumerate}
  
  \item $A - l > B + l +1$.  
  
  \begin{align*}
  \r^{\Sigma_{0}}_{M, >_{\q}}(\q_{\gg}, \ul, \ueta) & \hookrightarrow 
      \begin{pmatrix}
              \zeta B_{1} & \cdots & -\zeta A_{1} \\
              \vdots &  & \vdots \\
              \zeta (B_{1} + l_{1} - 1) & \cdots & - \zeta (A_{1} - l_{1} + 1)
       \end{pmatrix}  \\
  & \times            
      \begin{pmatrix}
              \zeta (B_{2} + T) & \cdots & -\zeta (A_{2} + T) \\
              \vdots &  & \vdots \\
              \zeta (B_{2} + l_{2} - 1 + T) & \cdots & - \zeta (A_{2} - l_{2} + 1 + T)
       \end{pmatrix} \\
  & \times
      \begin{pmatrix}
              \zeta (B_{2} + l_{2} + T) & \cdots & -\zeta (A_{1} - l_{1}) \\
              \vdots &  & \vdots \\
              \zeta (A_{2} - l_{2} -2 + T) & \cdots & - \zeta (B_{1} + l_{1})
       \end{pmatrix}  \\     
  & \rtimes \r^{\Sigma_{0}}_{M, >_{\q}}\Big(\q_{-}, \ul_{-}, \ueta_{-}; (\rho, A_{2} - l_{2} + T, A_{2} - l_{2} - 1 + T, 0, (-1)^{A_{2} - B_{2} - 1}\eta_{2}, \zeta)\Big) \\
  & \hookrightarrow 
   \underbrace{\begin{pmatrix}
              \zeta B & \cdots & -\zeta A \\
              \vdots &  & \vdots \\
              \zeta (B + l) & \cdots & - \zeta (A - l)
       \end{pmatrix}}_{*-1} 
   \times            
     \underbrace{\begin{pmatrix}
              \zeta (B + T) & \cdots & -\zeta A \\
              \vdots &  & \vdots \\
              \zeta (B + l - 1 + T) & \cdots & - \zeta (A - l + 1)
       \end{pmatrix}}_{*-2} \\
  & \times
       \underbrace{\begin{pmatrix}
              -\zeta (A+1) & \cdots & -\zeta (A + T) \\
              \vdots &  & \vdots \\
              -\zeta (A - l + 2 ) & \cdots & - \zeta (A - l + 1 + T)
       \end{pmatrix}}_{I} \\  
  & \times
       \underbrace{\begin{pmatrix}
              \zeta (B + l + T) & \cdots & -\zeta (A - l - 1) \\
              \vdots &  & \vdots \\
              \zeta (A - l -2 + T) & \cdots & - \zeta (B + l + 1)
       \end{pmatrix}}_{II}  \\     
  & \times      
      \underbrace{\begin{pmatrix}
              \zeta (A - l - 1 + T) & \cdots & \zeta (A - l) \\
              \zeta (A - l + T) & \cdots &  \zeta (A - l + 1)
       \end{pmatrix}}_{III}  \\   
  & \rtimes \r^{\Sigma_{0}}_{M, >_{\q}}\Big(\q_{-}, \ul_{-}, \ueta_{-}; (\rho, A - l, A - l - 1, 0, (-1)^{A_{2} - B_{2} - 1}\eta_{2}, \zeta)\Big)        
   \end{align*}  
  We can interchange $(I)$ with $(II)$ and $(III)$. Note 
  \[
     (I) \rtimes \r^{\Sigma_{0}}_{M, >_{\q}}\Big(\q_{-}, \ul_{-}, \ueta_{-}; (\rho, A - l, A - l - 1, 0, (-1)^{A_{2} - B_{2} - 1}\eta_{2}, \zeta)\Big)       
   \]
  is irreducible (see Proposition~\ref{prop: general irreducibility}), so we can also take dual of $(I)$ (see \eqref{eq: dualizing classical}). Moreover, $(*-1)$ and $(*-2)$ are interchangeable. Therefore,
  \begin{align*}
  \r^{\Sigma_{0}}_{M, >_{\q}}(\q_{\gg}, \ul, \ueta) & \hookrightarrow       
   \underbrace{\begin{pmatrix}
              \zeta (B + T) & \cdots & -\zeta A \\
              \vdots &  & \vdots \\
              \zeta (B + l - 1 + T) & \cdots & - \zeta (A - l + 1)
       \end{pmatrix}}_{*-2}   
  \times
   \underbrace{\begin{pmatrix}
              \zeta B & \cdots & -\zeta A \\
              \vdots &  & \vdots \\
              \zeta (B + l) & \cdots & - \zeta (A - l)
       \end{pmatrix}}_{*-1}   \\
  & \times       
      \underbrace{\begin{pmatrix}
              \zeta (B + l + T) & \cdots & -\zeta (A - l - 1) \\
              \vdots &  & \vdots \\
              \zeta (A - l -2 + T) & \cdots & - \zeta (B + l + 1)
       \end{pmatrix}}_{II}  \\     
  & \times      
      \underbrace{\begin{pmatrix}
              \zeta (A - l - 1 + T) & \cdots & \zeta (A - l) \\
              \zeta (A - l + T) & \cdots &  \zeta (A - l + 1)
       \end{pmatrix}}_{III}  \\   
   & \times    
      \underbrace{\begin{pmatrix}
              \zeta (A - l + 1 + T)  & \cdots & \zeta (A - l + 2 ) \\
              \vdots &  & \vdots \\
              \zeta (A + T)  & \cdots & \zeta (A+1)
       \end{pmatrix}}_{(I)^{\vee}} \\  
  & \rtimes \r^{\Sigma_{0}}_{M, >_{\q}}\Big(\q_{-}, \ul_{-}, \ueta_{-}; (\rho, A - l, A - l - 1, 0, (-1)^{A_{2} - B_{2} - 1}\eta_{2}, \zeta)\Big).        
   \end{align*}  
  We can ``combine" $(II)$ with $(III)$, for otherwise $\Jac_{\zeta (A - l + 1 + T)} \r^{\Sigma_{0}}_{M, >_{\q}}(\q_{\gg}, \ul, \ueta) \neq 0$, which is impossible. Here we have used the fact $A - l > B + l +1$, in order to switch $\rho||^{\zeta (A - l + 1 + T)}$ with $(*-2)$. For the same kind of reason, we can ``combine" $(III)$ with $(I)^{\vee}$. Consequently, 
  \begin{align*}
  \r^{\Sigma_{0}}_{M, >_{\q}}(\q_{\gg}, \ul, \ueta) & \hookrightarrow       
   \underbrace{\begin{pmatrix}
              \zeta (B + T) & \cdots & -\zeta A \\
              \vdots &  & \vdots \\
              \zeta (B + l - 1 + T) & \cdots & - \zeta (A - l + 1)
       \end{pmatrix}}_{*-2}   
  \times
   \underbrace{\begin{pmatrix}
              \zeta B & \cdots & -\zeta A \\
              \vdots &  & \vdots \\
              \zeta (B + l) & \cdots & - \zeta (A - l)
       \end{pmatrix}}_{*-1}   \\
  & \times       
      \underbrace{\begin{pmatrix}
              \zeta (B + l + T) & \cdots & \zeta (B + l + 1) & \cdots & -\zeta (A - l - 1) \\
              \vdots &  & \vdots & & \vdots \\
              \zeta (A - l -2 + T) & \cdots & \zeta (A - l - 1) & \cdots & -\zeta (B + l + 1) \\
              \zeta (A - l - 1 + T) & \cdots & \zeta (A - l) & & \\
               \vdots &  & \vdots & &  \\
               \zeta (A + T) & \cdots & \zeta (A + 1) & &
       \end{pmatrix}}_{IV}  \\     
  & \rtimes \r^{\Sigma_{0}}_{M, >_{\q}}\Big(\q_{-}, \ul_{-}, \ueta_{-}; (\rho, A - l, A - l - 1, 0, (-1)^{A_{2} - B_{2} - 1}\eta_{2}, \zeta)\Big).        
   \end{align*}    
  We can further ``combine" $(*-1)$ with $(IV)$, for otherwise $\Jac_{\zeta (A - l)} \r^{\Sigma_{0}}_{M, >_{\q}}(\q_{\gg}, \ul, \ueta) \neq 0$, which is again impossible. Here we have used the fact that 
  \begin{align}
  \label{eq: basic A1}
  \rho||^{-\zeta (A - l)} \rtimes \r^{\Sigma_{0}}_{M, >_{\q}}\Big(\q_{-}, \ul_{-}, \ueta_{-}; (\rho, A - l, A - l - 1, 0, (-1)^{A_{2} - B_{2} - 1}\eta_{2}, \zeta)\Big)
  \end{align}
  is irreducible (we will prove it in end of this case), and $A - l > B + l + 1$. As a result,
  \begin{align*}
  \r^{\Sigma_{0}}_{M, >_{\q}}(\q_{\gg}, \ul, \ueta) & \hookrightarrow       
   \underbrace{\begin{pmatrix}
              \zeta (B + T) & \cdots & -\zeta A \\
              \vdots &  & \vdots \\
              \zeta (B + l - 1 + T) & \cdots & - \zeta (A - l + 1)
       \end{pmatrix}}_{*-2}   \\
  & \times       
      \underbrace{\begin{pmatrix}
              && \zeta B & \cdots & -\zeta A \\
              && \vdots &  & \vdots \\
              && \zeta (B + l) & \cdots & - \zeta (A - l) \\
              \zeta (B + l + T) & \cdots & \zeta (B + l + 1) & \cdots & -\zeta (A - l - 1) \\
              \vdots &  & \vdots & & \vdots \\
              \zeta (A - l -2 + T) & \cdots & \zeta (A - l - 1) & \cdots & -\zeta (B + l + 1) \\
              \zeta (A - l - 1 + T) & \cdots & \zeta (A - l) & & \\
               \vdots &  & \vdots & &  \\
               \zeta (A + T) & \cdots & \zeta (A + 1) & &
       \end{pmatrix}}_{(*-1) + IV}  \\     
  & \rtimes \r^{\Sigma_{0}}_{M, >_{\q}}\Big(\q_{-}, \ul_{-}, \ueta_{-}; (\rho, A - l, A - l - 1, 0, (-1)^{A_{2} - B_{2} - 1}\eta_{2}, \zeta)\Big).        
   \end{align*}    
  Hence
  \begin{align*}
  &\r^{\Sigma_{0}}_{M, >_{\q}}\Big(\q_{-}, \ul_{-}, \ueta_{-}; (\rho, A_{2} + 1, B_{2} + 1, l_{2}, \eta_{2}, \zeta), (\rho, A_{1}, B_{1}, l_{1}, \eta_{1}, \zeta)\Big) \\
  & \hookrightarrow       
   \begin{pmatrix}
              \zeta (B + 1) & \cdots & -\zeta A \\
              \vdots &  & \vdots \\
              \zeta (B + l) & \cdots & - \zeta (A - l + 1)
       \end{pmatrix}  \\
  & \times       
      \begin{pmatrix}
              \zeta B & \cdots & -\zeta A \\
              \vdots &  & \vdots \\              
              \zeta (A - l - 1) & \cdots & -\zeta (B + l + 1) \\            
               \vdots &  &  \\
               \zeta (A + 1) &  & 
       \end{pmatrix}  \\     
  & \rtimes \r^{\Sigma_{0}}_{M, >_{\q}}\Big(\q_{-}, \ul_{-}, \ueta_{-}; (\rho, A - l, A - l - 1, 0, (-1)^{A_{2} - B_{2} - 1}\eta_{2}, \zeta)\Big).        
   \end{align*}      
  If we apply $\Jac_{(\rho, A+1, B+1, \zeta) \mapsto (\rho, A, B, \zeta)}$ to the full induced representation above, we should get zero. This means $\r^{\Sigma_{0}}_{M, >_{\q}}(\q, \ul, \ueta) = 0$.  
  
  To complete the discussion of this case, we still need to show \eqref{eq: basic A1} is irreducible. We will use the criterion of Lemma~\ref{lemma: strategy for irreducibility}. Since 
  \[
  \Jac_{-\zeta(A - l)} \eqref{eq: basic A1} = \r^{\Sigma_{0}}_{M, >_{\q}}\Big(\q_{-}, \ul_{-}, \ueta_{-}; (\rho, A - l, A - l - 1, 0, (-1)^{A_{2} - B_{2} - 1}\eta_{2}, \zeta)\Big),
  \] 
  we see $\eqref{eq: basic A1}$ has a unique subrepresentation $\sigma$, and $\sigma$ is multiplicity free in $s.s. \eqref{eq: basic A1}$. Since 
  \[
  \Jac_{\zeta(A - l)} \eqref{eq: basic A1} = \r^{\Sigma_{0}}_{M, >_{\q}}\Big(\q_{-}, \ul_{-}, \ueta_{-}; (\rho, A - l, A - l - 1, 0, (-1)^{A_{2} - B_{2} - 1}\eta_{2}, \zeta)\Big),
  \]  
  it suffices to show $\Jac_{\zeta(A - l)} \sigma \neq 0$. Note $A - l - 2 > B + l -1$, so 
  \begin{align*}
  \sigma \hookrightarrow \, & \rho||^{-\zeta(A - l)} \times \rho||^{\zeta(A - l - 1)} \rtimes \r^{\Sigma_{0}}_{M, >_{\q}}\Big(\q_{-}, \ul_{-}, \ueta_{-}; (\rho, A - l, A - l, 0, (-1)^{A_{2} - B_{2}}\eta_{2}, \zeta), \\ 
  & (\rho, A - l - 2, A - l - 2, 0, (-1)^{A_{2} - B_{2} - 1}\eta_{2}, \zeta)\Big)  \\
  \cong \, & \rho||^{\zeta(A - l - 1)} \times \rho||^{-\zeta(A - l)} \rtimes \r^{\Sigma_{0}}_{M, >_{\q}}\Big(\q_{-}, \ul_{-}, \ueta_{-}; (\rho, A - l, A - l, 0, (-1)^{A_{2} - B_{2}}\eta_{2}, \zeta), \\ 
  & (\rho, A - l - 2, A - l - 2, 0, (-1)^{A_{2} - B_{2} - 1}\eta_{2}, \zeta)\Big)  \\
  \cong \, & \rho||^{\zeta(A - l - 1)} \times \rho||^{\zeta(A - l)} \rtimes \r^{\Sigma_{0}}_{M, >_{\q}}\Big(\q_{-}, \ul_{-}, \ueta_{-}; (\rho, A - l, A - l, 0, (-1)^{A_{2} - B_{2}}\eta_{2}, \zeta), \\ 
  & (\rho, A - l - 2, A - l - 2, 0, (-1)^{A_{2} - B_{2} - 1}\eta_{2}, \zeta)\Big).     
  \end{align*}
  The last isomorphism does not follow from Lemma~\ref{lemma: basic irreducibility} exactly, but one can prove it using the same argument there together with (\cite{Moeglin:2006}, Proposition 2.7). If 
  \begin{align*}
  \sigma \hookrightarrow & \langle \zeta(A - l), \zeta(A - l - 1) \rangle \rtimes \r^{\Sigma_{0}}_{M, >_{\q}}\Big(\q_{-}, \ul_{-}, \ueta_{-}; (\rho, A - l, A - l, 0, (-1)^{A_{2} - B_{2}}\eta_{2}, \zeta), \\ 
  & (\rho, A - l - 2, A - l - 2, 0, (-1)^{A_{2} - B_{2} - 1}\eta_{2}, \zeta)\Big),   
  \end{align*}  
  then it is clear that $\Jac_{\zeta(A - l)} \sigma \neq 0$. Otherwise, we have
  \begin{align*}
  \sigma \hookrightarrow & \langle \zeta(A - l - 1), \zeta(A - l) \rangle \rtimes \r^{\Sigma_{0}}_{M, >_{\q}}\Big(\q_{-}, \ul_{-}, \ueta_{-}; \\ & (\rho, A - l, A - l, 0, (-1)^{A_{2} - B_{2}}\eta_{2}, \zeta), 
   (\rho, A - l - 2, A - l - 2, 0, (-1)^{A_{2} - B_{2} - 1}\eta_{2}, \zeta)\Big) \\
  \hookrightarrow & \langle \zeta(A - l - 1), \zeta(A - l) \rangle \times \rho||^{\zeta(A - l)} \rtimes \r^{\Sigma_{0}}_{M, >_{\q}}\Big(\q_{-}, \ul_{-}, \ueta_{-}; \\ & (\rho, A - l - 1, A - l -1 , 0, (-1)^{A_{2} - B_{2}}\eta_{2}, \zeta), 
  (\rho, A - l - 2, A - l - 2, 0, (-1)^{A_{2} - B_{2} - 1}\eta_{2}, \zeta)\Big) \\
   \cong \, & \rho||^{\zeta(A - l)} \times \langle \zeta(A - l - 1), \zeta(A - l) \rangle  \rtimes \r^{\Sigma_{0}}_{M, >_{\q}}\Big(\q_{-}, \ul_{-}, \ueta_{-}; \\ & (\rho, A - l - 1, A - l - 2, 0, (-1)^{A_{2} - B_{2} - 1}\eta_{2}, \zeta)\Big).  
  \end{align*} 
  So we again have $\Jac_{\zeta(A - l)} \sigma \neq 0$. This finishes the proof. \\

  \item $A - l = B + l +1$. \\
  
  Following the previous discussion, we find $(II)$ is ``missing", but we can still ``combine" $(III)$ and $(I)^{\vee}$. 
  \begin{align*}
  \r^{\Sigma_{0}}_{M, >_{\q}}(\q_{\gg}, \ul, \ueta) & \hookrightarrow       
   \underbrace{\begin{pmatrix}
              \zeta (B + T) & \cdots & -\zeta A \\
              \vdots &  & \vdots \\
              \zeta (B + l - 1 + T) & \cdots & - \zeta (A - l + 1)
       \end{pmatrix}}_{*-2}   
  \times
   \underbrace{\begin{pmatrix}
              \zeta B & \cdots & -\zeta A \\
              \vdots &  & \vdots \\
              \zeta (B + l) & \cdots & - \zeta (A - l)
       \end{pmatrix}}_{*-1}   \\
  & \times       
      \underbrace{\begin{pmatrix}              
              \zeta (A - l - 1 + T) & \cdots & \zeta (A - l) \\
               \vdots &  & \vdots   \\
               \zeta (A + T) & \cdots & \zeta (A + 1) 
       \end{pmatrix}}_{IV}  \\     
  & \rtimes \r^{\Sigma_{0}}_{M, >_{\q}}\Big(\q_{-}, \ul_{-}, \ueta_{-}; (\rho, A - l, A - l - 1, 0, (-1)^{A_{2} - B_{2} - 1}\eta_{2}, \zeta)\Big).        
   \end{align*}    
  Hence
  \begin{align*}
  &\r^{\Sigma_{0}}_{M, >_{\q}}\Big(\q_{-}, \ul_{-}, \ueta_{-}; (\rho, A_{2} + 1, B_{2} + 1, l_{2}, \eta_{2}, \zeta), (\rho, A_{1}, B_{1}, l_{1}, \eta_{1}, \zeta)\Big) \\
  & \hookrightarrow       
   \begin{pmatrix}
              \zeta (B + 1) & \cdots & -\zeta A \\
              \vdots &  & \vdots \\
              \zeta (B + l) & \cdots & - \zeta (A - l + 1)
       \end{pmatrix}  
   \times       
      \begin{pmatrix}
              \zeta B & \cdots & -\zeta A \\
              \vdots &  & \vdots \\              
              \zeta (B + l) & \cdots & -\zeta (A-l)              
       \end{pmatrix}  \\     
  & \times
       \begin{pmatrix}
              \zeta (A - l) \\
              \vdots \\              
              \zeta (A + 1)              
       \end{pmatrix}
     \rtimes \r^{\Sigma_{0}}_{M, >_{\q}}\Big(\q_{-}, \ul_{-}, \ueta_{-}; (\rho, A - l, A - l - 1, 0, (-1)^{A_{2} - B_{2} - 1}\eta_{2}, \zeta)\Big).        
   \end{align*}      
  We claim the induced representation above has a unique irreducible subrepresentation. It is clear that for any irreducible subrepresentation $\sigma$, one has
  \begin{align*}
  \sigma & \hookrightarrow       
   \langle \zeta B, \cdots -\zeta A \rangle \times \langle \zeta (B + 1), \cdots, - \zeta A \rangle \times \cdots  \times \langle \zeta (B + l -1), \cdots, - \zeta (A - l + 1) \rangle \\
  & \times \langle \zeta (B + l), \cdots, - \zeta (A - l + 1) \rangle \times \langle \zeta (B + l), \cdots, - \zeta (A - l) \rangle  \\   
  & \times
       \begin{pmatrix}
              \zeta (A - l) \\
              \vdots \\              
              \zeta (A + 1)              
       \end{pmatrix}
     \rtimes \r^{\Sigma_{0}}_{M, >_{\q}}\Big(\q_{-}, \ul_{-}, \ueta_{-}; (\rho, A - l, A - l - 1, 0, (-1)^{A_{2} - B_{2} - 1}\eta_{2}, \zeta)\Big).        
   \end{align*}    
  So the corresponding Jacquet module of $\sigma$ under
  \begin{align*}
  \Jac_{X} := & \Jac_{\zeta (A - l), \cdots, \zeta (A + 1)}  \circ \Jac_{\zeta (B + l), \cdots, - \zeta (A - l)} \circ \Jac_{\zeta (B + l), \cdots, - \zeta (A - l + 1)} \circ \\
  & \Jac_{\zeta (B + l -1), \cdots, - \zeta (A - l + 1)} \circ  \cdots \circ \Jac_{\zeta (B + 1), \cdots, - \zeta A} \circ \Jac_{\zeta B, \cdots, -\zeta A}    
  \end{align*}     
  contains the irreducible representation $\r^{\Sigma_{0}}_{M, >_{\q}}\Big(\q_{-}, \ul_{-}, \ueta_{-}; (\rho, A - l, A - l - 1, 0, (-1)^{A_{2} - B_{2} - 1}\eta_{2}, \zeta)\Big)$. On the other hand, we can also apply $\Jac_{X}$ to the full induced representation
  \begin{align*}       
  & \begin{pmatrix}
              \zeta (B + 1) & \cdots & -\zeta A \\
              \vdots &  & \vdots \\
              \zeta (B + l) & \cdots & - \zeta (A - l + 1)
       \end{pmatrix}  
   \times       
      \begin{pmatrix}
              \zeta B & \cdots & -\zeta A \\
              \vdots &  & \vdots \\              
              \zeta (B + l) & \cdots & -\zeta (A-l)              
       \end{pmatrix}  \\     
  & \times
       \begin{pmatrix}
              \zeta (A - l) \\
              \vdots \\              
              \zeta (A + 1)              
       \end{pmatrix}
     \rtimes \r^{\Sigma_{0}}_{M, >_{\q}}\Big(\q_{-}, \ul_{-}, \ueta_{-}; (\rho, A - l, A - l - 1, 0, (-1)^{A_{2} - B_{2} - 1}\eta_{2}, \zeta)\Big),       
   \end{align*}  
   and we get $\r^{\Sigma_{0}}_{M, >_{\q}}\Big(\q_{-}, \ul_{-}, \ueta_{-}; (\rho, A - l, A - l - 1, 0, (-1)^{A_{2} - B_{2} - 1}\eta_{2}, \zeta)\Big)$. This proves our claim. As a result,
   \begin{align*}
  &\r^{\Sigma_{0}}_{M, >_{\q}}\Big(\q_{-}, \ul_{-}, \ueta_{-}; (\rho, A_{2} + 1, B_{2} + 1, l_{2}, \eta_{2}, \zeta), (\rho, A_{1}, B_{1}, l_{1}, \eta_{1}, \zeta)\Big) \\
  & \hookrightarrow       
   \begin{pmatrix}
              \zeta (B + 1) & \cdots & -\zeta A \\
              \vdots &  & \vdots \\
              \zeta (B + l) & \cdots & - \zeta (A - l + 1)
       \end{pmatrix}  
    \times       
      \begin{pmatrix}
              \zeta B & \cdots & -\zeta A \\
              \vdots &  & \vdots \\              
              \zeta (B + l) & \cdots & -\zeta (A-l)  \\
              \zeta (A - l) \\
              \vdots \\              
              \zeta (A + 1)           
       \end{pmatrix}   \\
    &\rtimes \r^{\Sigma_{0}}_{M, >_{\q}}\Big(\q_{-}, \ul_{-}, \ueta_{-}; (\rho, A - l, A - l - 1, 0, (-1)^{A_{2} - B_{2} - 1}\eta_{2}, \zeta)\Big).        
      \end{align*}   
   Therefore, if we apply $\Jac_{(\rho, A+1, B+1, \zeta) \mapsto (\rho, A, B, \zeta)}$ to the full induced representation above, we should get zero. This means $\r^{\Sigma_{0}}_{M, >_{\q}}(\q, \ul, \ueta) = 0$.

    \end{enumerate}

\subsubsection{{\bf Case:} $l_{2} = l_{1} + 1$}


Let us denote $l_{1}$ by $l$. Since $A -l_{2} + 1> B + l_{2} - 1$, then $A - l > B + l$.

    \begin{enumerate}
    
    \item $l \neq 0$ and $A - l > B + l + 1$.     
    
    \begin{align*}
  \r^{\Sigma_{0}}_{M, >_{\q}}(\q_{\gg}, \ul, \ueta) & \hookrightarrow 
      \begin{pmatrix}
              \zeta B_{1} & \cdots & -\zeta A_{1} \\
              \vdots &  & \vdots \\
              \zeta (B_{1} + l_{1} - 1) & \cdots & - \zeta (A_{1} - l_{1} + 1)
       \end{pmatrix}  \\
  & \times            
      \begin{pmatrix}
              \zeta (B_{2} + T) & \cdots & -\zeta (A_{2} + T) \\
              \vdots &  & \vdots \\
              \zeta (B_{2} + l_{2} - 1 + T) & \cdots & - \zeta (A_{2} - l_{2} + 1 + T)
       \end{pmatrix} \\
  & \times
      \begin{pmatrix}
              \zeta (B_{2} + l_{2} + T) & \cdots & -\zeta (A_{1} - l_{1}) \\
              \vdots &  & \vdots \\
              \zeta (A_{2} - l_{2} + T) & \cdots & - \zeta (B_{1} + l_{1} + 2)
       \end{pmatrix}  \\     
  & \rtimes \r^{\Sigma_{0}}_{M, >_{\q}}\Big(\q_{-}, \ul_{-}, \ueta_{-}; (\rho, B_{1} + l_{1} + 1, B_{1} + l_{1}, 0, \eta_{1}, \zeta)\Big) \\
  & \hookrightarrow 
   \underbrace{\begin{pmatrix}
              \zeta B & \cdots & -\zeta A \\
              \vdots &  & \vdots \\
              \zeta (B + l - 1) & \cdots & - \zeta (A - l + 1)
       \end{pmatrix}}_{*-1} 
   \times            
     \underbrace{\begin{pmatrix}
              \zeta (B + T) & \cdots & -\zeta (A + 1) \\
              \vdots &  & \vdots \\
              \zeta (B + l + T) & \cdots & - \zeta (A - l + 1)
       \end{pmatrix}}_{I} \\
  & \times
       \underbrace{\begin{pmatrix}
              -\zeta (A+2) & \cdots & -\zeta (A + T) \\
              \vdots &  & \vdots \\
              -\zeta (A - l + 2 ) & \cdots & - \zeta (A - l + T)
       \end{pmatrix}}_{II} \\  
  & \times
       \underbrace{\begin{pmatrix}
              \zeta (B + l + 1+ T) & \cdots & -\zeta (A - l) \\
              \vdots &  & \vdots \\
              \zeta (A - l -1 + T) & \cdots & - \zeta (B + l + 2)
       \end{pmatrix}}_{III}  \\        
  & \rtimes \r^{\Sigma_{0}}_{M, >_{\q}}\Big(\q_{-}, \ul_{-}, \ueta_{-}; (\rho, B + l + 1, B + l, 0, \eta_{1}, \zeta)\Big). 
   \end{align*}  
   We first interchange $(II)$ and $(III)$, then take dual of $(II)$ (see \eqref{eq: dualizing classical}).    
   \begin{align*}
  \r^{\Sigma_{0}}_{M, >_{\q}}(\q_{\gg}, \ul, \ueta) & \hookrightarrow 
         \underbrace{\begin{pmatrix}
              \zeta B & \cdots & -\zeta A \\
              \vdots &  & \vdots \\
              \zeta (B + l - 1) & \cdots & - \zeta (A - l + 1)
       \end{pmatrix}}_{*-1} 
   \times            
     \underbrace{\begin{pmatrix}
              \zeta (B + T) & \cdots & -\zeta (A + 1) \\
              \vdots &  & \vdots \\
              \zeta (B + l + T) & \cdots & - \zeta (A - l + 1)
       \end{pmatrix}}_{I} \\  
  & \times
       \underbrace{\begin{pmatrix}
              \zeta (B + l + 1+ T) & \cdots & -\zeta (A - l) \\
              \vdots &  & \vdots \\
              \zeta (A - l -1 + T) & \cdots & - \zeta (B + l + 2)
       \end{pmatrix}}_{III}  \\
   & \times
       \underbrace{\begin{pmatrix}
              \zeta (A - l + T)  & \cdots & \zeta (A - l + 2)  \\
              \vdots &  & \vdots \\
              \zeta (A + T)  & \cdots &  \zeta (A+2)
       \end{pmatrix}}_{(II)^{\vee}} \\          
  & \rtimes \r^{\Sigma_{0}}_{M, >_{\q}}\Big(\q_{-}, \ul_{-}, \ueta_{-}; (\rho, B + l + 1, B + l, 0, \eta_{1}, \zeta)\Big). 
   \end{align*}  
  Since $\Jac_{\zeta (B + l + 1+ T)} \r^{\Sigma_{0}}_{M, >_{\q}}(\q_{\gg}, \ul, \ueta) = 0$, we can ``combine" $(I)$ and $(III)$. For the same kind of reason, we can further ``combine" them with $(II)^{\vee}$. So  
    \begin{align*}
  \r^{\Sigma_{0}}_{M, >_{\q}}(\q_{\gg}, \ul, \ueta) & \hookrightarrow 
         \underbrace{\begin{pmatrix}
              \zeta B & \cdots & -\zeta A \\
              \vdots &  & \vdots \\
              \zeta (B + l - 1) & \cdots & - \zeta (A - l + 1)
       \end{pmatrix}}_{*-1}  \\   
  & \times
       \underbrace{\begin{pmatrix}
              \zeta (B + T) & \cdots & \zeta (B + 2) & \cdots & -\zeta (A + 1) \\
              \vdots &  & \vdots & & \vdots \\
              \zeta (A - l -1 + T) & \cdots & \zeta (A - l + 1) & \cdots & - \zeta (B + l + 2) \\
              \zeta (A - l + T)  & \cdots & \zeta (A - l + 2) & &  \\
              \vdots &  & \vdots & & \\
              \zeta (A + T)  & \cdots &  \zeta (A+2) & &
       \end{pmatrix}}_{*-2} \\               
  & \rtimes \r^{\Sigma_{0}}_{M, >_{\q}}\Big(\q_{-}, \ul_{-}, \ueta_{-}; (\rho, B + l + 1, B + l, 0, \eta_{1}, \zeta)\Big) \\
  & \hookrightarrow
       \underbrace{\begin{pmatrix}
              \zeta B & \cdots & -\zeta A \\
              \vdots &  & \vdots \\
              \zeta (B + l - 1) & \cdots & - \zeta (A - l + 1)
       \end{pmatrix}}_{*-1} \\
  & \times 
       \underbrace{\begin{pmatrix}
              \zeta (B + T) & \cdots & \zeta (B + 2) & \cdots & -\zeta A \\
              \vdots &  & \vdots & & \vdots \\
              \zeta (A - l -1 + T) & \cdots & \zeta (A - l + 1) & \cdots & - \zeta (B + l + 1) \\
              \zeta (A - l + T)  & \cdots & \zeta (A - l + 2) & &  \\
              \vdots &  & \vdots & & \\
              \zeta (A + T)  & \cdots &  \zeta (A+2) & &
       \end{pmatrix}}_{(*-2)_{-}}       \\
   & \times                       
       \underbrace{\begin{pmatrix}
               -\zeta (A+1) \\
              \vdots \\
              - \zeta (B + l + 2) 
       \end{pmatrix}}_{IV} 
      \times
        \underbrace{\begin{pmatrix}
               \zeta (B+l) \\
               \zeta (B + l + 1) 
       \end{pmatrix}}_{V} \\
  & \rtimes \r^{\Sigma_{0}}_{M, >_{\q}}\Big(\q_{-}, \ul_{-}, \ueta_{-}; (\rho, B + l, B + l - 1, 0, \eta_{1}, \zeta)\Big)
    \end{align*}  
   We interchange $(*-1)$ and $(*-2)_{-}$, also $(IV)$ and $(V)$. Then we take the dual of $(IV)$.
    \begin{align*}
  \r^{\Sigma_{0}}_{M, >_{\q}}(\q_{\gg}, \ul, \ueta) & \hookrightarrow 
     \underbrace{\begin{pmatrix}
              \zeta (B + T) & \cdots & \zeta (B + 2) & \cdots & -\zeta A \\
              \vdots &  & \vdots & & \vdots \\
              \zeta (A - l -1 + T) & \cdots & \zeta (A - l + 1) & \cdots & - \zeta (B + l + 1) \\
              \zeta (A - l + T)  & \cdots & \zeta (A - l + 2) & &  \\
              \vdots &  & \vdots & & \\
              \zeta (A + T)  & \cdots &  \zeta (A+2) & &
       \end{pmatrix}}_{(*-2)_{-}} \\
   & \times
       \underbrace{\begin{pmatrix}
              \zeta B & \cdots & -\zeta A \\
              \vdots &  & \vdots \\
              \zeta (B + l - 1) & \cdots & - \zeta (A - l + 1)
       \end{pmatrix}}_{*-1}   \\
   & \times
        \underbrace{\begin{pmatrix}
               \zeta (B+l) \\
               \zeta (B + l + 1) \\
       \end{pmatrix}}_{V}
      \times
           \underbrace{\begin{pmatrix}
               \zeta (B + l + 2) \\
              \vdots \\
               \zeta (A + 1) \\
       \end{pmatrix}}_{(IV)^{\vee}}      \\
   & \rtimes \r^{\Sigma_{0}}_{M, >_{\q}}\Big(\q_{-}, \ul_{-}, \ueta_{-}; (\rho, B + l, B + l - 1, 0, \eta_{1}, \zeta)\Big).     
     \end{align*}      
       We can ``combine" $(*-1)$ and $(V)$ for $\Jac_{\zeta (B + l)} \r^{\Sigma_{0}}_{M, >_{\q}}(\q_{\gg}, \ul, \ueta) = 0$. We can also ``combine" $(V)$ and $(IV)^{\vee}$ for $\Jac_{\zeta (B + l + 2)} \r^{\Sigma_{0}}_{M, >_{\q}}(\q_{\gg}, \ul, \ueta) = 0$. Here we have used the fact that $A - l > B + l +1$. So
   \begin{align*}      
   \r^{\Sigma_{0}}_{M, >_{\q}}(\q_{\gg}, \ul, \ueta) & \hookrightarrow 
       \underbrace{\begin{pmatrix}
              \zeta (B + T) & \cdots & \zeta (B + 2) & \cdots & -\zeta A \\
              \vdots &  & \vdots & & \vdots \\
              \zeta (A - l -1 + T) & \cdots & \zeta (A - l + 1) & \cdots & - \zeta (B + l + 1) \\
              \zeta (A - l + T)  & \cdots & \zeta (A - l + 2) & &  \\
              \vdots &  & \vdots & & \\
              \zeta (A + T)  & \cdots &  \zeta (A+2) & &
       \end{pmatrix}}_{(*-2)_{-}} \\
   & \times
       \underbrace{\begin{pmatrix}
              \zeta B & \cdots & -\zeta A \\
              \vdots &  & \vdots \\
              \zeta (B + l - 1) & \cdots & - \zeta (A - l + 1) \\
              \zeta (B+l) & & \\              
              \vdots \\
               \zeta (A + 1)         
       \end{pmatrix}}_{(*-1)_{+}}   \\
    &  \rtimes \r^{\Sigma_{0}}_{M, >_{\q}}\Big(\q_{-}, \ul_{-}, \ueta_{-}; (\rho, B + l, B + l - 1, 0, \eta_{1}, \zeta)\Big).          
    \end{align*}    
    Then
    \begin{align*}
  &\r^{\Sigma_{0}}_{M, >_{\q}}\Big(\q_{-}, \ul_{-}, \ueta_{-}; (\rho, A_{2} + 1, B_{2} + 1, l_{2}, \eta_{2}, \zeta), (\rho, A_{1}, B_{1}, l_{1}, \eta_{1}, \zeta)\Big) \\
  & \hookrightarrow       
       \begin{pmatrix}
              \zeta (B + 1) & \cdots & -\zeta A \\
              \vdots &  & \vdots \\
              \zeta (A - l) & \cdots & - \zeta (B + l + 1) \\
       \end{pmatrix}
    \times
       \begin{pmatrix}
              \zeta B & \cdots & -\zeta A \\
              \vdots &  & \vdots \\
              \zeta (B + l - 1) & \cdots & - \zeta (A - l + 1) \\
              \zeta (B+l) & & \\              
              \vdots \\
               \zeta (A + 1)         
       \end{pmatrix}   \\
    &  \rtimes \r^{\Sigma_{0}}_{M, >_{\q}}\Big(\q_{-}, \ul_{-}, \ueta_{-}; (\rho, B + l, B + l - 1, 0, \eta_{1}, \zeta)\Big).          
    \end{align*}     
    Therefore, if we apply $\Jac_{(\rho, A+1, B+1, \zeta) \mapsto (\rho, A, B, \zeta)}$ to the full induced representation above, we should get zero. This means $\r^{\Sigma_{0}}_{M, >_{\q}}(\q, \ul, \ueta) = 0$.     \\

    \item $l \neq 0$ and $A - l = B + l + 1$. \\
    
    It follows from the previous discussion that
    \begin{align*}
  \r^{\Sigma_{0}}_{M, >_{\q}}(\q_{\gg}, \ul, \ueta) & \hookrightarrow 
         \underbrace{\begin{pmatrix}
              \zeta B & \cdots & -\zeta A \\
              \vdots &  & \vdots \\
              \zeta (B + l - 1) & \cdots & - \zeta (A - l + 1)
       \end{pmatrix}}_{*-1}  \\   
  & \times
       \underbrace{\begin{pmatrix}
              \zeta (B + T) & \cdots & \zeta (B + 2) & \cdots & -\zeta (A + 1) \\
              \vdots &  & \vdots & & \vdots \\
              \zeta (A - l -1 + T) & \cdots & \zeta (A - l + 1) & \cdots & - \zeta (B + l + 2) \\
              \zeta (A - l + T)  & \cdots & \zeta (A - l + 2) & &  \\
              \vdots &  & \vdots & & \\
              \zeta (A + T)  & \cdots &  \zeta (A+2) & &
       \end{pmatrix}}_{*-2} \\               
  & \rtimes \r^{\Sigma_{0}}_{M, >_{\q}}\Big(\q_{-}, \ul_{-}, \ueta_{-}; (\rho, B + l + 1, B + l, 0, \eta_{1}, \zeta)\Big)
  \end{align*}
    Since $(*-1)$ and $(*-2)$ are interchangeable, then we have
    \begin{align*}
  &\r^{\Sigma_{0}}_{M, >_{\q}}\Big(\q_{-}, \ul_{-}, \ueta_{-}; (\rho, A_{2} + 1, B_{2} + 1, l_{2}, \eta_{2}, \zeta), (\rho, A_{1}, B_{1}, l_{1}, \eta_{1}, \zeta)\Big) \\
  & \hookrightarrow       
            \begin{pmatrix}
              \zeta (B + 1) & \cdots & -\zeta (A + 1) \\
              \vdots &  & \vdots \\
              \zeta (A - l) & \cdots & - \zeta (B + l + 2) \\
       \end{pmatrix}
    \times
       \begin{pmatrix}
              \zeta B & \cdots & -\zeta A \\
              \vdots &  & \vdots \\
              \zeta (B + l - 1) & \cdots & - \zeta (A - l + 1) 
              \end{pmatrix}   \\
    & \rtimes \r^{\Sigma_{0}}_{M, >_{\q}}\Big(\q_{-}, \ul_{-}, \ueta_{-}; (\rho, B + l + 1, B + l, 0, \eta_{1}, \zeta)\Big).          
    \end{align*}      
    It follows
    \begin{align*}
    &\r^{\Sigma_{0}}_{M, >_{\q}}\Big(\q_{-}, \ul_{-}, \ueta_{-}; (\rho, A_{2} + 1, B_{2} + 1, l_{2}, \eta_{2}, \zeta), (\rho, A_{1}, B_{1}, l_{1}, \eta_{1}, \zeta)\Big) \\
    & \hookrightarrow
           \langle \zeta B, \cdots -\zeta A \rangle \times \langle \zeta (B + 1), \cdots, - \zeta (A + 1) \rangle \times \cdots  \times \langle \zeta (B + l -1), \cdots, - \zeta (A - l + 1) \rangle \\
  & \times \langle \zeta (A - l - 1), \cdots, - \zeta (B + l + 3) \rangle \times \langle \zeta (A - l), \cdots, - \zeta (B + l + 2) \rangle  \\   
  & \rtimes \r^{\Sigma_{0}}_{M, >_{\q}}\Big(\q_{-}, \ul_{-}, \ueta_{-}; (\rho, B + l + 1, B + l, 0, \eta_{1}, \zeta)\Big)  
     \end{align*}
    Therefore
    \begin{align}
    \label{eq: basic A2}
    \Jac_{X} \, \r^{\Sigma_{0}}_{M, >_{\q}}\Big(\q_{-}, \ul_{-}, \ueta_{-}; (\rho, A_{2} + 1, B_{2} + 1, l_{2}, \eta_{2}, \zeta), (\rho, A_{1}, B_{1}, l_{1}, \eta_{1}, \zeta)\Big) \neq 0,
    \end{align}
    where   
  \begin{align*}
  \Jac_{X}  := & \Jac_{\zeta (A - l), \cdots, - \zeta (B + l + 2)} \circ \Jac_{\zeta (A - l - 1), \cdots, - \zeta (B + l + 3)} \circ \\
  & \Jac_{\zeta (B + l -1), \cdots, - \zeta (A - l + 1)} \circ \cdots \circ  \Jac_{\zeta (B + 1), \cdots, - \zeta (A + 1)} \circ \Jac_{\zeta B, \cdots, -\zeta A}.  
  \end{align*}     
  On the other hand, we can rewrite
  \begin{align*}
  &\r^{\Sigma_{0}}_{M, >_{\q}}\Big(\q_{-}, \ul_{-}, \ueta_{-}; (\rho, A_{2} + 1, B_{2} + 1, l_{2}, \eta_{2}, \zeta), (\rho, A_{1}, B_{1}, l_{1}, \eta_{1}, \zeta)\Big) \\
  & \hookrightarrow       
            \begin{pmatrix}
              \zeta (B + 1) & \cdots & -\zeta A \\
              \vdots &  & \vdots \\
              \zeta (A - l) & \cdots & - \zeta (B + l + 1) \\
       \end{pmatrix}
      \times
        \underbrace{\begin{pmatrix}
               -\zeta (A+1) \\
               \vdots \\
              - \zeta (B + l + 2) \\
       \end{pmatrix}}_{IV}  \\
    & \times
       \begin{pmatrix}
              \zeta B & \cdots & -\zeta A \\
              \vdots &  & \vdots \\
              \zeta (B + l - 1) & \cdots & - \zeta (A - l + 1) 
              \end{pmatrix}   
     \rtimes \r^{\Sigma_{0}}_{M, >_{\q}}\Big(\q_{-}, \ul_{-}, \ueta_{-}; (\rho, B + l + 1, B + l, 0, \eta_{1}, \zeta)\Big) \\
    & \cong
          \begin{pmatrix}
              \zeta (B + 1) & \cdots & -\zeta A \\
              \vdots &  & \vdots \\
              \zeta (A - l) & \cdots & - \zeta (B + l + 1) \\
       \end{pmatrix}
      \times
       \begin{pmatrix}
              \zeta B & \cdots & -\zeta A \\
              \vdots &  & \vdots \\
              \zeta (B + l - 1) & \cdots & - \zeta (A - l + 1) 
              \end{pmatrix}   \\
     & \times
        \underbrace{\begin{pmatrix}
               -\zeta (A+1) \\
               \vdots \\
              - \zeta (B + l + 2) \\
       \end{pmatrix}}_{IV}         
         \rtimes \r^{\Sigma_{0}}_{M, >_{\q}}\Big(\q_{-}, \ul_{-}, \ueta_{-}; (\rho, B + l + 1, B + l, 0, \eta_{1}, \zeta)\Big)  
    \end{align*}   
     So there exits an irreducible constituent $\sigma$ of 
     \[
     (IV)  \rtimes \r^{\Sigma_{0}}_{M, >_{\q}}\Big(\q_{-}, \ul_{-}, \ueta_{-}; (\rho, B + l + 1, B + l, 0, \eta_{1}, \zeta)\Big),
     \]   
     such that
     \begin{align*}
  &\r^{\Sigma_{0}}_{M, >_{\q}}\Big(\q_{-}, \ul_{-}, \ueta_{-}; (\rho, A_{2} + 1, B_{2} + 1, l_{2}, \eta_{2}, \zeta), (\rho, A_{1}, B_{1}, l_{1}, \eta_{1}, \zeta)\Big) \\
  & \hookrightarrow                   
          \begin{pmatrix}
              \zeta (B + 1) & \cdots & -\zeta A \\
              \vdots &  & \vdots \\
              \zeta (A - l) & \cdots & - \zeta (B + l + 1) \\
       \end{pmatrix}
      \times
       \begin{pmatrix}
              \zeta B & \cdots & -\zeta A \\
              \vdots &  & \vdots \\
              \zeta (B + l - 1) & \cdots & - \zeta (A - l + 1) 
              \end{pmatrix}  \rtimes \sigma
    \end{align*}   
   We claim $\Jac_{\zeta(B + l + 2)} \sigma = 0$. Otherwise, we necessarily have
   \begin{align*}
   \Jac_{\zeta(B + l + 2)} \sigma = 
      \begin{pmatrix}
               -\zeta (A+1) \\
               \vdots \\
              - \zeta (B + l + 3) \\
       \end{pmatrix}         
         \rtimes \r^{\Sigma_{0}}_{M, >_{\q}}\Big(\q_{-}, \ul_{-}, \ueta_{-}; (\rho, B + l + 1, B + l, 0, \eta_{1}, \zeta)\Big)
    \end{align*}
    which is irreducible by Proposition~\ref{prop: general irreducibility}. Then
    \begin{align*}
    \sigma 
    & \hookrightarrow \rho||^{\zeta(B + l + 2)} 
      \times
      \begin{pmatrix}
               -\zeta (A+1) \\
               \vdots \\
              - \zeta (B + l + 3) \\
       \end{pmatrix}         
         \rtimes \r^{\Sigma_{0}}_{M, >_{\q}}\Big(\q_{-}, \ul_{-}, \ueta_{-}; (\rho, B + l + 1, B + l, 0, \eta_{1}, \zeta)\Big) \\
    & \cong
       \begin{pmatrix}
               -\zeta (A+1) \\
               \vdots \\
              - \zeta (B + l + 3) \\
       \end{pmatrix}  
       \times \rho||^{\zeta(B + l + 2)}     
       \rtimes \r^{\Sigma_{0}}_{M, >_{\q}}\Big(\q_{-}, \ul_{-}, \ueta_{-}; (\rho, B + l + 1, B + l, 0, \eta_{1}, \zeta)\Big).         
           \end{align*}
     It is necessary that
       \begin{align*}
    \sigma 
    & \hookrightarrow 
       \begin{pmatrix}
               -\zeta (A+1) \\
               \vdots \\
              - \zeta (B + l + 3) \\
       \end{pmatrix}  
       \rtimes \r^{\Sigma_{0}}_{M, >_{\q}}\Big(\q_{-}, \ul_{-}, \ueta_{-}; (\rho, B + l + 2, B + l + 2, 0, -\eta_{1}, \zeta), \\
     & (\rho, B + l, B + l, 0, \eta_{1}, \zeta)\Big).         
           \end{align*}   
     In particular
     \[
     \Jac_{-\zeta(B + l + 2)} \r^{\Sigma_{0}}_{M, >_{\q}}\Big(\q_{-}, \ul_{-}, \ueta_{-}; (\rho, B + l + 2, B + l + 2, 0, -\eta_{1}, \zeta), (\rho, B + l, B + l, 0, \eta_{1}, \zeta)\Big) = 0.
     \]         
     Now we have
      \begin{align*}
  &\r^{\Sigma_{0}}_{M, >_{\q}}\Big(\q_{-}, \ul_{-}, \ueta_{-}; (\rho, A_{2} + 1, B_{2} + 1, l_{2}, \eta_{2}, \zeta), (\rho, A_{1}, B_{1}, l_{1}, \eta_{1}, \zeta)\Big) \\
  & \hookrightarrow                   
          \begin{pmatrix}
              \zeta (B + 1) & \cdots & -\zeta A \\
              \vdots &  & \vdots \\
              \zeta (A - l) & \cdots & - \zeta (B + l + 1) \\
       \end{pmatrix}
      \times
       \begin{pmatrix}
              \zeta B & \cdots & -\zeta A \\
              \vdots &  & \vdots \\
              \zeta (B + l - 1) & \cdots & - \zeta (A - l + 1) 
              \end{pmatrix}  \\
    & \times 
      \begin{pmatrix}
               -\zeta (A+1) \\
               \vdots \\
              - \zeta (B + l + 3) \\
       \end{pmatrix}  
       \rtimes \r^{\Sigma_{0}}_{M, >_{\q}}\Big(\q_{-}, \ul_{-}, \ueta_{-}; (\rho, B + l + 2, B + l + 2, 0, -\eta_{1}, \zeta), \\
     & (\rho, B + l, B + l, 0, \eta_{1}, \zeta)\Big).         
    \end{align*} 
    If we apply 
    \begin{align*}
  \Jac_{X'}  := & \Jac_{\zeta (A - l - 1), \cdots, - \zeta (B + l + 3)} \circ  \Jac_{\zeta (B + l -1), \cdots, - \zeta (A - l + 1)} \circ \cdots \circ \\
  & \Jac_{\zeta (B + 1), \cdots, - \zeta (A + 1)} \circ \Jac_{\zeta B, \cdots, -\zeta A}.  
    \end{align*}    
     to the full induced representation above, we will get
     \begin{align*}                   
     & \langle \zeta (A - l), \cdots, - \zeta (B + l + 1) \rangle     
       \rtimes \r^{\Sigma_{0}}_{M, >_{\q}}\Big(\q_{-}, \ul_{-}, \ueta_{-}; (\rho, B + l + 2, B + l + 2, 0, -\eta_{1}, \zeta), \\
     & (\rho, B + l, B + l, 0, \eta_{1}, \zeta)\Big).         
    \end{align*}
    Here we have used the fact that $A - l = B + l + 1$. Note
    \[
    \Jac_{X} =  \Jac_{\zeta (A - l), \cdots, - \zeta (B + l + 2)} \circ \Jac_{X'}
    \]
    and
    \begin{align*}                   
     & \Jac_{\zeta (A - l), \cdots, - \zeta (B + l + 2)}  \langle \zeta (A - l), \cdots, - \zeta (B + l + 1) \rangle   
       \rtimes \r^{\Sigma_{0}}_{M, >_{\q}}\Big(\q_{-}, \ul_{-}, \ueta_{-}; \\
     & (\rho, B + l + 2, B + l + 2, 0, -\eta_{1}, \zeta), 
      (\rho, B + l, B + l, 0, \eta_{1}, \zeta)\Big) = 0.         
    \end{align*}    
    This contradicts to \eqref{eq: basic A2}. So we have shown our claim.
    
For $\r^{\Sigma_{0}}_{M, >_{\q}}(\q, \ul, \ueta)$ being nonzero, there necessarily exits $x \in [B + 1, A + 1]$ such that $\Jac_{\zeta x, \cdots, \zeta (A+1)} \sigma \neq 0$. By our claim, $\Jac_{\zeta x} \sigma \neq 0$ implies $x = B + l$. It follows 
\[
\sigma \hookrightarrow \langle \zeta (B + l), \cdots, \zeta(A + 1) \rangle \rtimes \r^{\Sigma_{0}}_{M, >_{\q}}\Big(\q_{-}, \ul_{-}, \ueta_{-}; (\rho, B + l, B + l - 1, 0, \eta_{1}, \zeta)\Big).
\]
Then
   \begin{align*}
  & \r^{\Sigma_{0}}_{M, >_{\q}}\Big(\q_{-}, \ul_{-}, \ueta_{-}; (\rho, A_{2} + 1, B_{2} + 1, l_{2}, \eta_{2}, \zeta), (\rho, A_{1}, B_{1}, l_{1}, \eta_{1}, \zeta)\Big) \\
  &   \begin{pmatrix}
              \zeta (B + 1) & \cdots & -\zeta A \\
              \vdots &  & \vdots \\
              \zeta (A - l) & \cdots & - \zeta (B + l + 1) \\
       \end{pmatrix}
      \times
       \underbrace{\begin{pmatrix}
              \zeta B & \cdots & -\zeta A \\
              \vdots &  & \vdots \\
              \zeta (B + l - 1) & \cdots & - \zeta (A - l + 1) 
              \end{pmatrix}}_{*-1}   \\
     & \times
        \underbrace{\begin{pmatrix}
               \zeta (B + l) \\
               \vdots \\
               \zeta (A + 1) \\
       \end{pmatrix}}_{VI}         
         \rtimes \r^{\Sigma_{0}}_{M, >_{\q}}\Big(\q_{-}, \ul_{-}, \ueta_{-}; (\rho, B + l, B + l - 1, 0, \eta_{1}, \zeta)\Big)  
    \end{align*}   
Since 
\begin{align*}
\Jac_{\zeta(B + l), \zeta(B+1)} \r^{\Sigma_{0}}_{M, >_{\q}}\Big(\q_{-}, \ul_{-}, \ueta_{-}; (\rho, A_{2} + 1, B_{2} + 1, l_{2}, \eta_{2}, \zeta), (\rho, A_{1}, B_{1}, l_{1}, \eta_{1}, \zeta)\Big) = 0,
\end{align*}    
we can ``combine" $(*-1)$ and $(VI)$. So
\begin{align*}
  & \r^{\Sigma_{0}}_{M, >_{\q}}\Big(\q_{-}, \ul_{-}, \ueta_{-}; (\rho, A_{2} + 1, B_{2} + 1, l_{2}, \eta_{2}, \zeta), (\rho, A_{1}, B_{1}, l_{1}, \eta_{1}, \zeta)\Big) \\
  & \hookrightarrow 
     \begin{pmatrix}
              \zeta (B + 1) & \cdots & -\zeta A \\
              \vdots &  & \vdots \\
              \zeta (A - l) & \cdots & - \zeta (B + l + 1) \\
       \end{pmatrix}
    \times
       \begin{pmatrix}
              \zeta B & \cdots & -\zeta A \\
              \vdots &  & \vdots \\
              \zeta (B + l - 1) & \cdots & - \zeta (A - l + 1) \\
              \zeta (B+l) & & \\              
              \vdots \\
               \zeta (A + 1)         
       \end{pmatrix}   \\
    &  \rtimes \r^{\Sigma_{0}}_{M, >_{\q}}\Big(\q_{-}, \ul_{-}, \ueta_{-}; (\rho, B + l, B + l - 1, 0, \eta_{1}, \zeta)\Big)
\end{align*}
If we apply $\Jac_{(\rho, A+1, B+1, \zeta) \mapsto (\rho, A, B, \zeta)}$ to the full induced representation above, we should get zero. This means $\r^{\Sigma_{0}}_{M, >_{\q}}(\q, \ul, \ueta) = 0$. \\

    \item $l = 0$ and $A > B + 1$. \\
    
    From the previous discussion in (1), we have
    \begin{align*}
  \r^{\Sigma_{0}}_{M, >_{\q}}(\q_{\gg}, \ul, \ueta) & \hookrightarrow 
      \underbrace{\begin{pmatrix}
              \zeta (B + T) & \cdots & \zeta (B + 2) & \cdots & -\zeta (A + 1) \\
              \vdots &  & \vdots & & \vdots \\
              \zeta (A -1 + T) & \cdots & \zeta (A + 1) & \cdots & - \zeta (B + 2) \\             
              \zeta (A + T)  & \cdots &  \zeta (A+2) & &
       \end{pmatrix}}_{(*-2)} \\                  
   & \rtimes \r^{\Sigma_{0}}_{M, >_{\q}}\Big(\q_{-}, \ul_{-}, \ueta_{-}; (\rho, B + 1, B, 0, \eta_{1}, \zeta)\Big) \\
   & \hookrightarrow
   \underbrace{\begin{pmatrix}
              \zeta (B + T) & \cdots & \zeta (B + 2) & \cdots & -\zeta A \\
              \vdots &  & \vdots & & \vdots \\
              \zeta (A -1 + T) & \cdots & \zeta (A + 1) & \cdots & - \zeta (B + 1) \\             
              \zeta (A + T)  & \cdots &  \zeta (A+2) & &
       \end{pmatrix}}_{(*-2)_{-}} \\
   & \times
       \underbrace{\begin{pmatrix}
               -\zeta (A+1) \\
              \vdots \\
              - \zeta (B + 2) 
       \end{pmatrix}}_{IV}   \rtimes \r^{\Sigma_{0}}_{M, >_{\q}}\Big(\q_{-}, \ul_{-}, \ueta_{-}; (\rho, B + 1, B, 0, \eta_{1}, \zeta)\Big) 
    \end{align*}  
   There exists an irreducible constituent $\sigma$ of 
   \[
   (IV) \rtimes \r^{\Sigma_{0}}_{M, >_{\q}}\Big(\q_{-}, \ul_{-}, \ueta_{-}; (\rho, B + 1, B, 0, \eta_{1}, \zeta)\Big)
   \]
   such that
  \begin{align*}
  \r^{\Sigma_{0}}_{M, >_{\q}}(\q_{\gg}, \ul, \ueta) & \hookrightarrow 
    \underbrace{\begin{pmatrix}
              \zeta (B + T) & \cdots & \zeta (B + 2) & \cdots & -\zeta A \\
              \vdots &  & \vdots & & \vdots \\
              \zeta (A -1 + T) & \cdots & \zeta (A + 1) & \cdots & - \zeta (B + 1) \\             
              \zeta (A + T)  & \cdots &  \zeta (A+2) & &
       \end{pmatrix}}_{(*-2)_{-}} \rtimes \sigma.
    \end{align*}   
    We claim $\Jac_{x} \sigma = 0$ for $x \in [\zeta (B + 1), \zeta (A + 1)]$. This is clear when $x \neq \zeta (B + 2)$. If $\Jac_{\zeta (B + 2)} \sigma \neq 0$, then $\Jac_{\zeta (B + 2)} \r^{\Sigma_{0}}_{M, >_{\q}}(\q_{\gg}, \ul, \ueta) \neq 0$, and we get a contradiction. Here we have used the fact that $A > B + 1$. Note
     \begin{align*}
  &\r^{\Sigma_{0}}_{M, >_{\q}}\Big(\q_{-}, \ul_{-}, \ueta_{-}; (\rho, A_{2} + 1, B_{2} + 1, l_{2}, \eta_{2}, \zeta), (\rho, A_{1}, B_{1}, l_{1}, \eta_{1}, \zeta)\Big) \\
  & \hookrightarrow       
   \begin{pmatrix}
              \zeta (B + 1) & \cdots & -\zeta A \\
              \vdots &  & \vdots \\
              \zeta A & \cdots & - \zeta (B + 1)
       \end{pmatrix}   \rtimes  \sigma.        
    \end{align*}     
    If we apply $\Jac_{(\rho, A+1, B+1, \zeta) \mapsto (\rho, A, B, \zeta)}$ to the full induced representation above, we should get zero. 
    This means $\r^{\Sigma_{0}}_{M, >_{\q}}(\q, \ul, \ueta) = 0$. \\

    \item $l = 0$ and $A = B + 1$. \\
    
    We can further simplify from the previous case (3) that 
    \begin{align*}
  \r^{\Sigma_{0}}_{M, >_{\q}}(\q_{\gg}, \ul, \ueta) & \hookrightarrow 
      \underbrace{\begin{pmatrix}
              \zeta (B + T) & \cdots & \zeta (B + 2) & \cdots & -\zeta (A + 1) \\           
              \zeta (A + T)  & \cdots &  \zeta ( A + 2 ) & &
       \end{pmatrix}}_{(*-2)} \\                  
   & \rtimes \r^{\Sigma_{0}}_{M, >_{\q}}\Big(\q_{-}, \ul_{-}, \ueta_{-}; (\rho, B + 1, B, 0, \eta_{1}, \zeta)\Big) \\
   & \hookrightarrow
      \underbrace{\begin{pmatrix}
              \zeta (B + T) & \cdots & \zeta (B + 2) & \cdots & -\zeta (B+1) \\
              \zeta (B + 1 + T)  & \cdots &  \zeta (B+3) & &
       \end{pmatrix}}_{(*-2)_{-}} \\
   & \times \rho||^{- \zeta (B + 2)}  \rtimes \r^{\Sigma_{0}}_{M, >_{\q}}\Big(\q_{-}, \ul_{-}, \ueta_{-}; (\rho, B + 1, B, 0, \eta_{1}, \zeta)\Big). 
    \end{align*}  
   So 
    \begin{align*}
  &\r^{\Sigma_{0}}_{M, >_{\q}}\Big(\q_{-}, \ul_{-}, \ueta_{-}; (\rho, A_{2} + 1, B_{2} + 1, l_{2}, \eta_{2}, \zeta), (\rho, A_{1}, B_{1}, l_{1}, \eta_{1}, \zeta)\Big) \\
  & \hookrightarrow       
   \langle \zeta (B + 1), \cdots, - \zeta (B+1) \rangle \times \rho||^{- \zeta (B + 2)} \\
  & \rtimes \r^{\Sigma_{0}}_{M, >_{\q}}\Big(\q_{-}, \ul_{-}, \ueta_{-}; (\rho, B + 1, B, 0, \eta_{1}, \zeta)\Big).        
    \end{align*}     
   There exists an irreducible constituent $\sigma$ of 
   \[
   \rho||^{- \zeta (B + 2)} \rtimes \r^{\Sigma_{0}}_{M, >_{\q}}\Big(\q_{-}, \ul_{-}, \ueta_{-}; (\rho, B + 1, B, 0, \eta_{1}, \zeta)\Big)
   \] 
   such that
   \begin{align*}
  &\r^{\Sigma_{0}}_{M, >_{\q}}\Big(\q_{-}, \ul_{-}, \ueta_{-}; (\rho, A_{2} + 1, B_{2} + 1, l_{2}, \eta_{2}, \zeta), (\rho, A_{1}, B_{1}, l_{1}, \eta_{1}, \zeta)\Big) \\
  & \hookrightarrow       
   \langle \zeta (B + 1), \cdots, - \zeta (B+1) \rangle \rtimes \sigma.     
    \end{align*}   
   We claim $\Jac_{x} \sigma = 0$ for $x \in [\zeta (B + 1), \zeta (B + 2)]$. It is clear $\Jac_{\zeta (B + 1)} \sigma = 0$. Suppose $\Jac_{\zeta (B + 2)} \sigma \neq 0$, then
   \[
   \sigma \hookrightarrow \rho||^{\zeta (B + 2)|} \rtimes \r^{\Sigma_{0}}_{M, >_{\q}}\Big(\q_{-}, \ul_{-}, \ueta_{-}; (\rho, B + 1, B, 0, \eta_{1}, \zeta)\Big)
   \]
   as the unique irreducible subrepresentation. So 
   \[
   \sigma =  \r^{\Sigma_{0}}_{M, >_{\q}}\Big(\q_{-}, \ul_{-}, \ueta_{-}; (\rho, B+2, B+2, -\eta_{1}, \zeta), (\rho, B, B, \eta_{1}, \zeta)\Big).
   \]
   This implies $\Jac_{- \zeta (B + 2)} \sigma = 0$. In particular, 
   \[
   \Jac_{\zeta (B + 1), \cdots, - \zeta (B + 2)} \Big( \langle \zeta (B + 1), \cdots, - \zeta (B+1) \rangle \rtimes \sigma \Big) = 0.
   \]
   On the other hand, 
   \[
   \Jac_{\zeta (B + 1), \cdots, - \zeta (B + 2)} \r^{\Sigma_{0}}_{M, >_{\q}}\Big(\q_{-}, \ul_{-}, \ueta_{-}; (\rho, A_{2} + 1, B_{2} + 1, l_{2}, \eta_{2}, \zeta), (\rho, A_{1}, B_{1}, l_{1}, \eta_{1}, \zeta)\Big) \neq 0.
   \]
   So we get a contradiction. As a result, 
   \[
   \Jac_{\zeta (B+1), \zeta (B+2)} \Big( \langle \zeta (B + 1), \cdots, - \zeta (B+1) \rangle \rtimes \sigma \Big) = 0,
   \]
   and hence $\r^{\Sigma_{0}}_{M, >_{\q}}(\q, \ul, \ueta) = 0$.

    \end{enumerate}

\subsubsection{{\bf Case:} $\eta_{2} \neq (-1)^{A_{1} - B_{1}}\eta_{1}$ and $l_{1} = l_{2} = (A_{1} - B_{1})/2$}

Let $l = l_{1} = l_{2} \neq 0$, then $A - l = B + l$.

   \begin{align*}
  \r^{\Sigma_{0}}_{M, >_{\q}}(\q_{\gg}, \ul, \ueta) & \hookrightarrow 
      \begin{pmatrix}
              \zeta B_{1} & \cdots & -\zeta A_{1} \\
              \vdots &  & \vdots \\
              \zeta (B_{1} + l_{1} - 1) & \cdots & - \zeta (A_{1} - l_{1} + 1)
       \end{pmatrix}  \\
  & \times            
      \begin{pmatrix}
              \zeta (B_{2} + T) & \cdots & -\zeta (A_{2} + T) \\
              \vdots &  & \vdots \\
              \zeta (B_{2} + l_{2} - 1 + T) & \cdots & - \zeta (A_{2} - l_{2} + 1 + T)
       \end{pmatrix} \\    
  & \times \langle \zeta (B_{2} + l_{2} + T), \cdots, \zeta (B_{2} + l_{2} + 2) \rangle \\
  & \rtimes \r^{\Sigma_{0}}_{M, >_{\q}}\Big(\q_{-}, \ul_{-}, \ueta_{-}; (\rho, B_{2} + l_{2} + 1, B_{2} + l_{2} + 1, \eta_{2}, \zeta), (\rho, B_{1} + l_{1}, B_{1} + l_{1}, \eta_{1}, \zeta)\Big) \\
  & \hookrightarrow 
   \underbrace{\begin{pmatrix}
              \zeta B & \cdots & -\zeta A \\
              \vdots &  & \vdots \\
              \zeta (B + l - 1) & \cdots & - \zeta (A - l + 1)
       \end{pmatrix}}_{*-1} 
   \times            
     \underbrace{\begin{pmatrix}
              \zeta (B + T) & \cdots & -\zeta A \\
              \vdots &  & \vdots \\
              \zeta (B + l - 1 + T) & \cdots & - \zeta (A - l + 1)
       \end{pmatrix}}_{*-2} \\
  & \times
       \underbrace{\begin{pmatrix}
              -\zeta (A+1) & \cdots & -\zeta (A + T) \\
              \vdots &  & \vdots \\
              -\zeta (A - l + 2 ) & \cdots & - \zeta (A - l + 1 + T)
       \end{pmatrix}}_{I} \\      
  & \times  \underbrace{\langle \zeta (B + l + T), \cdots \zeta (B + l + 2) \rangle}_{II}
     \times 
       \underbrace{\begin{pmatrix}
              \zeta (B + l)  \\
              \zeta (B + l + 1 ) 
       \end{pmatrix}}_{III} \\
  & \rtimes \r^{\Sigma_{0}}_{M, >_{\q}}\Big(\q_{-}, \ul_{-}, \ueta_{-}; (\rho, B + l, B + l, - \eta_{1}, \zeta), (\rho, B + l - 1, B + l - 1, \eta_{1}, \zeta)\Big).
   \end{align*}  
  We interchange $(I)$ with $(II)$ and $(III)$, then we take dual of $(I)$.
  \begin{align*}
  \r^{\Sigma_{0}}_{M, >_{\q}}(\q_{\gg}, \ul, \ueta) & \hookrightarrow 
      \underbrace{\begin{pmatrix}
              \zeta B & \cdots & -\zeta A \\
              \vdots &  & \vdots \\
              \zeta (B + l - 1) & \cdots & - \zeta (A - l + 1)
       \end{pmatrix}}_{*-1} 
   \times            
     \underbrace{\begin{pmatrix}
              \zeta (B + T) & \cdots & -\zeta A \\
              \vdots &  & \vdots \\
              \zeta (B + l - 1 + T) & \cdots & - \zeta (A - l + 1)
       \end{pmatrix}}_{*-2} \\      
  & \times  \underbrace{\langle \zeta (B + l + T), \cdots \zeta (B + l + 2) \rangle }_{II}
     \times 
       \underbrace{\begin{pmatrix}
              \zeta (B + l)  \\
              \zeta (B + l + 1 ) 
       \end{pmatrix}}_{III} \\
  & \times
       \underbrace{\begin{pmatrix}
              \zeta (A - l + 1 + T)  & \cdots & \zeta (A - l + 2 ) \\
              \vdots &  & \vdots \\
               \zeta (A + T)  & \cdots &  \zeta (A+1)
       \end{pmatrix}}_{(I)^{\vee}} \\
   & \rtimes \r^{\Sigma_{0}}_{M, >_{\q}}\Big(\q_{-}, \ul_{-}, \ueta_{-}; (\rho, B + l, B + l, - \eta_{1}, \zeta), (\rho, B + l - 1, B + l - 1, \eta_{1}, \zeta)\Big) \\
   & \hookrightarrow \underbrace{\begin{pmatrix}
              \zeta B & \cdots & -\zeta A \\
              \vdots &  & \vdots \\
              \zeta (B + l - 1) & \cdots & - \zeta (A - l + 1)
       \end{pmatrix}}_{*-1} 
   \times            
     \underbrace{\begin{pmatrix}
              \zeta (B + T) & \cdots & -\zeta A \\
              \vdots &  & \vdots \\
              \zeta (B + l - 1 + T) & \cdots & - \zeta (A - l + 1)
       \end{pmatrix}}_{*-2} \\      
  & \times  \underbrace{\langle \zeta (B + l + T), \cdots \zeta (B + l + 2) \rangle }_{II}
     \times 
       \underbrace{\begin{pmatrix}
              \zeta (B + l)  \\
              \zeta (B + l + 1 ) 
       \end{pmatrix}}_{III} \\
  & \times
       \underbrace{\begin{pmatrix}
              \zeta (A - l + 1 + T)  & \cdots & \zeta (A - l + 3) \\
              \vdots &  & \vdots \\
               \zeta (A + T)  & \cdots &  \zeta (A+2)
       \end{pmatrix}}_{(I)^{\vee}_{-}} 
     \times
       \underbrace{\begin{pmatrix}
              \zeta (A - l + 2) \\
              \vdots  \\
               \zeta (A + 1)
       \end{pmatrix}}_{IV} \\
   & \rtimes \r^{\Sigma_{0}}_{M, >_{\q}}\Big(\q_{-}, \ul_{-}, \ueta_{-}; (\rho, B + l, B + l, - \eta_{1}, \zeta), (\rho, B + l - 1, B + l - 1, \eta_{1}, \zeta)\Big).
   \end{align*}  
   Note $(*-1)$ is interchangeable with $(*-2)$, $(II)$ and $(I)^{\vee}_{-}$. And $(I)^{\vee}_{-}$ is also interchangeable with $(III)$. So
   \begin{align*}
  \r^{\Sigma_{0}}_{M, >_{\q}}(\q_{\gg}, \ul, \ueta) & \hookrightarrow          
     \underbrace{\begin{pmatrix}
              \zeta (B + T) & \cdots & -\zeta A \\
              \vdots &  & \vdots \\
              \zeta (B + l - 1 + T) & \cdots & - \zeta (A - l + 1)
       \end{pmatrix}}_{*-2}      
   \times  \underbrace{\langle \zeta (B + l + T), \cdots \zeta (B + l + 2) \rangle }_{II} \\
  & \times
       \underbrace{\begin{pmatrix}
              \zeta (A - l + 1 + T)  & \cdots & \zeta (A - l + 3) \\
              \vdots &  & \vdots \\
               \zeta (A + T)  & \cdots &  \zeta (A+2)
       \end{pmatrix}}_{(I)^{\vee}_{-}} 
   \times 
     \underbrace{\begin{pmatrix}
              \zeta B & \cdots & -\zeta A \\
              \vdots &  & \vdots \\
              \zeta (B + l - 1) & \cdots & - \zeta (A - l + 1)
       \end{pmatrix}}_{*-1} \\
  & \times        
       \underbrace{\begin{pmatrix}
              \zeta (B + l)  \\
              \zeta (B + l + 1 ) 
       \end{pmatrix}}_{III} 
   \times
       \underbrace{\begin{pmatrix}
              \zeta (A - l + 2) \\
              \vdots  \\
               \zeta (A + 1)
       \end{pmatrix}}_{IV} \\
   & \rtimes \r^{\Sigma_{0}}_{M, >_{\q}}\Big(\q_{-}, \ul_{-}, \ueta_{-}; (\rho, B + l, B + l, - \eta_{1}, \zeta), (\rho, B + l - 1, B + l - 1, \eta_{1}, \zeta)\Big).
   \end{align*}  
  Since $\Jac_{\zeta (B + l + T)} \r^{\Sigma_{0}}_{M, >_{\q}}(\q_{\gg}, \ul, \ueta) = \Jac_{\zeta (A - l + 1 + T)} \r^{\Sigma_{0}}_{M, >_{\q}}(\q_{\gg}, \ul, \ueta) = 0$, we can ``combine" $(*-2)$, $(II)$ and $(I)^{\vee}_{-}$.
  \begin{align*}
  \r^{\Sigma_{0}}_{M, >_{\q}}(\q_{\gg}, \ul, \ueta) & \hookrightarrow          
     \underbrace{\begin{pmatrix}
              \zeta (B + T) & \cdots & \zeta (B + 2) & \cdots & -\zeta A \\
              \vdots &  & \vdots \\
              \zeta (B + l - 1 + T) & \cdots &  \zeta (B + l + 1) & \cdots & - \zeta (A - l + 1) \\
              \zeta (B + l + T) & \cdots & \zeta (B + l + 2) \\
              \vdots &  & \vdots \\
               \zeta (A + T)  & \cdots &  \zeta (A+2)
       \end{pmatrix}}_{(*-2)_{+}} \\      
  & \times 
     \underbrace{\begin{pmatrix}
              \zeta B & \cdots & -\zeta A \\
              \vdots &  & \vdots \\
              \zeta (B + l - 1) & \cdots & - \zeta (A - l + 1)
       \end{pmatrix}}_{*-1} \\
  & \times        
       \underbrace{\begin{pmatrix}
              \zeta (B + l)  \\
              \zeta (B + l + 1 ) 
       \end{pmatrix}}_{III} 
   \times
       \underbrace{\begin{pmatrix}
              \zeta (A - l + 2) \\
              \vdots  \\
               \zeta (A + 1)
       \end{pmatrix}}_{IV} \\
   & \rtimes \r^{\Sigma_{0}}_{M, >_{\q}}\Big(\q_{-}, \ul_{-}, \ueta_{-}; (\rho, B + l, B + l, - \eta_{1}, \zeta), (\rho, B + l - 1, B + l - 1, \eta_{1}, \zeta)\Big).
   \end{align*}  
   Since $\Jac_{\zeta (B + l)} \r^{\Sigma_{0}}_{M, >_{\q}}(\q_{\gg}, \ul, \ueta) = \Jac_{\zeta (A - l + 2)} \r^{\Sigma_{0}}_{M, >_{\q}}(\q_{\gg}, \ul, \ueta) = 0$, we can ``combine" $(*-1)$, $(III)$ and $(IV)$.
   \begin{align*}
  \r^{\Sigma_{0}}_{M, >_{\q}}(\q_{\gg}, \ul, \ueta) & \hookrightarrow          
     \underbrace{\begin{pmatrix}
              \zeta (B + T) & \cdots & \zeta (B + 2) & \cdots & -\zeta A \\
              \vdots &  & \vdots \\
              \zeta (B + l - 1 + T) & \cdots &  \zeta (B + l + 1) & \cdots & - \zeta (A - l + 1) \\
              \zeta (B + l + T) & \cdots & \zeta (B + l + 2) \\
              \vdots &  & \vdots \\
               \zeta (A + T)  & \cdots &  \zeta (A+2)
       \end{pmatrix}}_{(*-2)_{+}} \\      
  & \times 
     \underbrace{\begin{pmatrix}
              \zeta B & \cdots & -\zeta A \\
              \vdots &  & \vdots \\
              \zeta (B + l - 1) & \cdots & - \zeta (A - l + 1) \\
              \zeta (B + l)  & & \\
              \vdots & & \\
              \zeta (A + 1) & &
       \end{pmatrix}}_{(*-1)_{+}} \\
   & \rtimes \r^{\Sigma_{0}}_{M, >_{\q}}\Big(\q_{-}, \ul_{-}, \ueta_{-}; (\rho, B + l, B + l, - \eta_{1}, \zeta), (\rho, B + l - 1, B + l - 1, \eta_{1}, \zeta)\Big).
   \end{align*}  
   Then
   \begin{align*}
  &\r^{\Sigma_{0}}_{M, >_{\q}}\Big(\q_{-}, \ul_{-}, \ueta_{-}; (\rho, A_{2} + 1, B_{2} + 1, l_{2}, \eta_{2}, \zeta), (\rho, A_{1}, B_{1}, l_{1}, \eta_{1}, \zeta)\Big) \\
  & \hookrightarrow       
   \begin{pmatrix}
              \zeta (B + 1) & \cdots & -\zeta A \\
              \vdots &  & \vdots \\
              \zeta (B + l) & \cdots & - \zeta (A - l + 1)
       \end{pmatrix}  
    \times       
      \begin{pmatrix}
              \zeta B & \cdots & -\zeta A \\
              \vdots &  & \vdots \\              
              \zeta (B + l - 1) & \cdots & -\zeta (A- l + 1)  \\
              \zeta (B + l) \\
              \vdots \\              
              \zeta (A + 1)           
       \end{pmatrix}   \\
   & \rtimes \r^{\Sigma_{0}}_{M, >_{\q}}\Big(\q_{-}, \ul_{-}, \ueta_{-}; (\rho, B + l, B + l, - \eta_{1}, \zeta), (\rho, B + l - 1, B + l - 1, \eta_{1}, \zeta)\Big).
      \end{align*}   
   Therefore, if we apply $\Jac_{(\rho, A+1, B+1, \zeta) \mapsto (\rho, A, B, \zeta)}$ to the full induced representation above, we should get zero. This means $\r^{\Sigma_{0}}_{M, >_{\q}}(\q, \ul, \ueta) = 0$.   
   

\section{Example}
\label{sec: example}

In this appendix, we want to demonstrate how our procedure (cf. Section~\ref{sec: procedure}) works in a simple example. We fix $\rho$ and choose $\q \in \cQ{G}$, such that
\[
Jord(\q) = \{(\rho, A_{3}, B_{3}, \zeta_{3}), (\rho, A_{2}, B_{2}, \zeta_{2}), (\rho, A_{1}, B_{1}, \zeta_{1})\}.
\] 
We also  assume 

\begin{itemize}

\item $A_{i}, B_{i} \in \mathbb{Z}$ for $i = 1, 2, 3$; 

\item $A_{3} \geqslant A_{2} \geqslant A_{1}$ and $B_{3} \geqslant B_{2} \geqslant B_{1}$;

\item $\zeta_{3} = \zeta_{1} = +1$ and $\zeta_{2} = -1$.

\end{itemize}
We put an order $>_{\q}$ on $Jord(\q)$ such that
\[
(\rho, A_{3}, B_{3}, \zeta_{3}) >_{\q} (\rho, A_{2}, B_{2}, \zeta_{2}) >_{\q} (\rho, A_{1}, B_{1}, \zeta_{1}).
\]
We would like to find all $(\ul, \ueta)$ such that $\r^{\Sigma_{0}}_{>_{\q}}(\q, \ul, \ueta) \neq 0$.

\subsection{Results} 

First of all, we have 
\[
0 \leqslant l_{i} \leqslant [(A_{i} - B_{i} + 1)/2],
\] 
and 
\begin{align*}
\prod_{i = 1}^{3} \eta_{i}^{A_{i} - B_{i} + 1} (-1)^{[(A_{i}-B_{i}+1)/2] + l_{i}} = 1
\end{align*}
(cf. \eqref{eq: quasisplit}). Next, we formulate the necessary and sufficient conditions as linear constraints on $\{l_{i}\}$ for each selection of signs $\{\eta_{i}\}$.

\begin{enumerate}

\item If $\eta_{3} = (-1)^{(A_{1} - B_{1}) + (A_{2} - B_{2})}\eta_{1}$ and $\eta_{2} = (-1)^{A_{1} - B_{1}}\eta_{1}$, then
\[
\begin{cases}
l_{3} + l_{1} > A_{1} - B_{3} \\

-B_{2} \leqslant l_{2} - l_{1} \leqslant A_{2} - (A_{1} - B_{1}) \\

(A_{1} - B_{1}) - B_{3} + 1 \leqslant l_{3} - l_{2} + 2l_{1} \leqslant A_{3} + (A_{1} - B_{1}) - (A_{2} - B_{2}) + 1

\end{cases}
\]

\item If $\eta_{3} = (-1)^{(A_{1} - B_{1}) + (A_{2} - B_{2})}\eta_{1}$ and $\eta_{2} \neq (-1)^{A_{1} - B_{1}}\eta_{1}$, then
\[
\begin{cases}
l_{3} + l_{1} > A_{1} - B_{3} \\

l_{1} + l_{2} > (A_{1} - B_{1}) - B_{2} \\

l_{3} + l_{2} + 2l_{1} >  (A_{1} - B_{1}) + (A_{2} - B_{2}) - B_{3} + 1

\end{cases}
\]
 
\item If $\eta_{3} \neq (-1)^{(A_{1} - B_{1}) + (A_{2} - B_{2})}\eta_{1}$, $\eta_{2} = (-1)^{A_{1} - B_{1}}\eta_{1}$, and
\[
l_{3} - l_{1} < (A_{3} - B_{3})/2 - (A_{1} - B_{1}) + l_{1} 
\]
then
\[
\begin{cases}
-(B_{3} - B_{1}) \leqslant l_{3} - l_{1} \leqslant  A_{3} - A_{1} \\

-B_{2} \leqslant l_{2} - l_{1} \leqslant A_{2} - (A_{1} - B_{1}) \\

l_{3} + l_{2} - 2l_{1} >  (A_{2} - B_{2}) - (A_{1} - B_{1}) - B_{3} - 1

\end{cases}
\]

\item If $\eta_{3} \neq (-1)^{(A_{1} - B_{1}) + (A_{2} - B_{2})}\eta_{1}$, $\eta_{2} = (-1)^{A_{1} - B_{1}}\eta_{1}$, and
\[
l_{3} - l_{1} \geqslant (A_{3} - B_{3})/2 - (A_{1} - B_{1}) + l_{1} 
\]
then
\[
\begin{cases}
-(B_{3} - B_{1}) \leqslant l_{3} - l_{1} \leqslant  A_{3} - A_{1} \\

-B_{2} \leqslant l_{2} - l_{1} \leqslant A_{2} - (A_{1} - B_{1}) \\

(A_{1} - B_{1}) -A_{3} \leqslant -l_{3} - l_{2} + 2l_{1} \leqslant (A_{1} - B_{1}) - (A_{2} - B_{2}) + B_{3}

\end{cases}
\]

\item If $\eta_{3} \neq (-1)^{(A_{1} - B_{1}) + (A_{2} - B_{2})}\eta_{1}$, $\eta_{2} \neq (-1)^{A_{1} - B_{1}}\eta_{1}$, and
\[
l_{3} - l_{1} < (A_{3} - B_{3})/2 - (A_{1} - B_{1}) + l_{1} 
\]
then
\[
\begin{cases}
-(B_{3} - B_{1}) \leqslant l_{3} - l_{1} \leqslant  A_{3} - A_{1} \\

l_{1} + l_{2} > (A_{1} - B_{1}) - B_{2} \\

-(A_{1} - B_{1}) - B_{3} - 1 \leqslant l_{3} - l_{2} - 2l_{1} \leqslant A_{3} - (A_{1} - B_{1}) - (A_{2} - B_{2}) - 1

\end{cases}
\]

\item If $\eta_{3} \neq (-1)^{(A_{1} - B_{1}) + (A_{2} - B_{2})}\eta_{1}$, $\eta_{2} \neq (-1)^{A_{1} - B_{1}}\eta_{1}$, and
\[
l_{3} - l_{1} \geqslant (A_{3} - B_{3})/2 - (A_{1} - B_{1}) + l_{1} 
\]
then
\[
\begin{cases}
-(B_{3} - B_{1}) \leqslant l_{3} - l_{1} \leqslant  A_{3} - A_{1} \\

l_{1} + l_{2} > (A_{1} - B_{1}) - B_{2} \\

-l_{3} + l_{2} + 2l_{1} > (A_{1} - B_{1}) + (A_{2} - B_{2}) - A_{3}

\end{cases}
\]

\end{enumerate}

\begin{example}
\label{eg: three intervals}
Let $[A_{3}, B_{3}] = [40, 10], [A_{2}, B_{2}] = [37, 7]$ and $[A_{1}, B_{1}] = [8, 4]$, i.e.,
\[
\q = \rho \otimes \nu_{51} \otimes \nu_{31} \, \+ \,  \rho \otimes \nu_{31} \otimes \nu_{45} \, \+ \, \rho \otimes \nu_{13} \otimes \nu_{5}.
\]
First, we have  
\(
0 \leqslant l_{1} \leqslant 2,
0 \leqslant l_{2} \leqslant 15,
0 \leqslant l_{3} \leqslant 15, 
\)
and $(-1)^{l_{1} + l_{2} + l_{3}} \eta_{1} \eta_{2} \eta_{3} = 1$. Note $B_{3} > A_{1}$, so the conditions from \eqref{eq: case 1} are always satisfied. Also note $B_{2} > A_{1} - B_{1}$, then the conditions from \eqref{eq: case 2} are always satisfied too. Therefore, we can simplify the nonvanishing conditions as follows:

\begin{enumerate}

\item If $\eta_{3} = \eta_{1}$ and $\eta_{2} = \eta_{1}$, then
\[
-5 \leqslant l_{3} - l_{2} + 2l_{1} \leqslant 15
\]

\item If $\eta_{3} = \eta_{1}$ and $\eta_{2} \neq \eta_{1}$, then
\[
l_{3} + l_{2} + 2l_{1} >  25
\]
 
\item If $\eta_{3} \neq \eta_{1}$, $\eta_{2} = \eta_{1}$, and
\[
l_{3} - l_{1} < 11 + l_{1} 
\]
then
\[
l_{3} + l_{2} - 2l_{1} >  15
\]

\item If $\eta_{3} \neq \eta_{1}$, $\eta_{2} = \eta_{1}$, and
\[
l_{3} - l_{1} \geqslant 11 + l_{1} 
\]
then
\[
-36 \leqslant -l_{3} - l_{2} + 2l_{1} \leqslant -16
\]

\item If $\eta_{3} \neq \eta_{1}$, $\eta_{2} \neq \eta_{1}$, and
\[
l_{3} - l_{1} < 11 + l_{1} 
\]
then
\[
-15 \leqslant l_{3} - l_{2} - 2l_{1} \leqslant 5
\]

\item If $\eta_{3} \neq \eta_{1}$, $\eta_{2} \neq \eta_{1}$, and
\[
l_{3} - l_{1} \geqslant 11 + l_{1} 
\]
then
\[
-l_{3} + l_{2} + 2l_{1} > -6
\]

\end{enumerate}
To find the size of $\Pkt{\q}^{\Sigma_{0}}$ is equivalent to counting integral points in certain polytopes for each of the above six cases. By running a simple computer program, we can get $|\Pkt{\q}^{\Sigma_{0}}| = 1651$.

\end{example}

\subsection{Deduction}

First, we ``Expand" $[A_{3}, B_{3}]$ to $[A^{*}_{3}, B^{*}_{3}]$ such that $B^{*}_{3} = B_{1}$, and we denote the resulting parameter by $\q^{*}$.
Then $\r^{\Sigma_{0}}_{>_{\q}}(\q, \ul, \ueta) = \r^{\Sigma_{0}}_{>_{\q}}(\q^{*}, \ul^{*}, \ueta^{*})$, where
\[
l_{1}^{*} = l_{1}, \quad l_{2}^{*} = l_{2}, \quad l_{3}^{*} = l_{3} + (B_{3} - B_{1}),
\]
and
\[
\eta_{1}^{*} = \eta_{1}, \quad \eta_{2}^{*} = \eta_{2}, \quad \eta_{3}^{*} = \eta_{3}.
\]
Next, we change the order $>_{\q}$ to $>'_{\q}$:
\[
(\rho, A_{3}, B_{3}, \zeta_{3}) >'_{\q} (\rho, A_{1}, B_{1}, \zeta_{1}) >'_{\q} (\rho, A_{2}, B_{2}, \zeta_{2}).
\]
So $\r^{\Sigma_{0}}_{>_{\q}}(\q^{*}, \ul^{*}, \ueta^{*}) = \r^{\Sigma_{0}}_{>'_{\q}}(\q^{*}, \ul^{'*}, \ueta^{'*})$, where
\[
l_{1}^{'*} = l_{1}^{*}, \quad l_{2}^{'*} = l_{2}^{*}, \quad l_{3}^{'*} = l_{3}^{*},
\]
and
\[
\eta_{1}^{'*} = (-1)^{A_{2} - B_{2} + 1}\eta_{1}^{*}, \quad \eta_{2}^{'*} = (-1)^{A_{1} - B_{1} + 1}\eta_{2}^{*}, \quad \eta_{3}^{'*} = \eta_{3}^{*}.
\]
Then we can ``Pull" $[A^{*}_{3}, B^{*}_{3}]$, $[A_{1}, B_{1}]$. As a consequence, we have $\r^{\Sigma_{0}}_{>'_{\q}}(\q^{*}, \ul^{'*}, \ueta^{'*}) \neq 0$ if and only if all the three conditions below are satisfied.
\begin{enumerate}

\item $\r^{\Sigma_{0}}_{>'_{\q}}(\q^{*}_{1}, \ul^{'*}, \ueta^{'*}) \neq 0$, \\

\item $\r^{\Sigma_{0}}_{>'_{\q}}(\q^{*}_{2}, \ul^{'*}, \ueta^{'*}) \neq 0$, \\

\item $\r^{\Sigma_{0}}_{>''_{\q}}(\q^{*}_{3}, \ul^{''*}, \ueta^{''*}) \neq 0$.

\end{enumerate}
From each of these cases, we will get some explicit conditions on $(\ul, \ueta)$.

{\bf Case (1):} $\q^{*}_{1}$ is obtained from $\q^{*}$ by shifting both $[A^{*}_{3}, B^{*}_{3}]$ and $[A_{1}, B_{1}]$ away by $T_{1}$, so that 
\(
(\rho, A_{1} + T_{1}, B_{1} + T_{1}, \zeta_{1}) \gg (\rho, A_{2}, B_{2}, \zeta_{2}).
\)
Since $Jord(\q^{*}_{1})$ is in ``good shape", we have $\r^{\Sigma_{0}}_{>'_{\q}}(\q^{*}_{1}, \ul^{'*}, \ueta^{'*}) \neq 0$ if and only if
\begin{align}
\label{eq: case 1}
\begin{cases}
\eta_{3}^{'*} = (-1)^{A_{1} - B_{1}}\eta_{1}^{'*}   & \Rightarrow 0 \leqslant l^{'*}_{3} - l^{'*}_{1} \leqslant (A^{*}_{3} - B^{*}_{3}) - (A_{1} - B_{1}),  \\
\eta_{3}^{'*} \neq (-1)^{A_{1} - B_{1}}\eta_{1}^{'*}   & \Rightarrow l^{'*}_{3} + l^{'*}_{1} > A_{1} - B_{1}.   
\end{cases} 
\end{align}
Now we want to translate these conditions to that on $(\ul, \ueta)$. Note
\[
\eta_{3}^{'*} = \eta_{3}^{*} = \eta_{3}, \quad \quad \quad  \eta_{1}^{'*} = (-1)^{A_{2} - B_{2} + 1}\eta_{1}^{*} = (-1)^{A_{2} - B_{2} + 1}\eta_{1},
\]
and
\[
l^{'*}_{3} = l^{*}_{3} = l_{3} + (B_{3} - B_{1}), \quad \quad \quad l^{'*}_{1} = l^{*}_{1} = l_{1}.
\]
So we will get the following conditions from \eqref{eq: case 1}. 

\begin{itemize}

\item

If $\eta_{3} \neq (-1)^{(A_{1} - B_{1}) + (A_{2} - B_{2})} \eta_{1}$, then
\[
0 \leqslant l_{3} + (B_{3} - B_{1}) - l_{1} \leqslant (A_{3} + (B_{3} - B_{1}) - B_{1}) - (A_{1} - B_{1}), 
\]
which implies
\[
-(B_{3} - B_{1}) \leqslant l_{3} - l_{1} \leqslant A_{3} - A_{1}.
\]

\item
If $\eta_{3} = (-1)^{(A_{1} - B_{1}) + (A_{2} - B_{2})} \eta_{1}$, then
\[
l_{3} + (B_{3} - B_{1}) + l_{1} > A_{1} - B_{1},
\]
which implies
\[
l_{3} + l_{1} > A_{1} - B_{3}.
\]
\end{itemize}

{\bf Case (2):} $\q^{*}_{2}$ is obtained from $\q^{*}$ by shifting $[A^{*}_{3}, B^{*}_{3}]$ away by $T_{2}$, so that 
\(
(\rho, A^{*}_{3} + T_{2}, B^{*}_{3} + T_{2}, \zeta_{3}) \gg \{ (\rho, A_{2}, B_{2}, \zeta_{2}), (\rho, A_{1}, B_{1}, \zeta_{1}) \}.
\)
Note $\r^{\Sigma_{0}}_{>'_{\q}}(\q^{*}_{2}, \ul^{'*}, \ueta^{'*}) = \r^{\Sigma_{0}}_{>_{\q}}(\q^{*}_{2}, \ul^{*}, \ueta^{*})$. So we can ``Expand" $[A_{2}, B_{2}]$ to $[A^{*}_{2}, B^{*}_{2}]$ such that $B^{*}_{2} = 0$, and we denote the resulting parameter by $\q^{**}_{2}$. Then $\r^{\Sigma_{0}}_{>_{\q}}(\q^{*}_{2}, \ul^{*}, \ueta^{*}) = \r^{\Sigma_{0}}_{>_{\q}}(\q^{**}_{2}, \ul^{**}, \ueta^{**})$, where
\[
l_{1}^{**} = l_{1}^{*}, \quad l_{2}^{**} = l_{2}^{*} + B_{2}, \quad l_{3}^{**} = l_{3}^{*},
\]
and
\[
\eta_{1}^{**} = \eta_{1}^{*}, \quad \eta_{2}^{**} = \eta_{2}^{*}, \quad \eta_{3}^{**} = \eta_{3}^{*}.
\]
Finally, we can change the order again 
\[
\r^{\Sigma_{0}}_{>_{\q}}(\q^{**}_{2}, \ul^{**}, \ueta^{**}) = \r^{\Sigma_{0}}_{>'_{\q}}(\q^{**}_{2}, \ul^{'**}, \ueta^{'**}), 
\]
where 
\[
l_{1}^{'**} = l_{1}^{**}, \quad l_{2}^{'**} = l_{2}^{**}, \quad l_{3}^{'**} = l_{3}^{**},
\]
and
\[
\eta_{1}^{'**} = (-1)^{A_{2} - B_{2} + 1}\eta_{1}^{**}, \quad \eta_{2}^{'**} = (-1)^{A_{1} - B_{1} + 1}\eta_{2}^{**}, \quad \eta_{3}^{'**} = \eta_{3}^{**}.
\]
Then
\[
\r^{\Sigma_{0}}_{>'_{\q}}(\q^{**}_{2}, \ul^{'**}, \ueta^{'**}) = \r^{\Sigma_{0}}_{>'_{\q}}\Big(\q^{**}_{2-}, \ul^{'**}_{-}, \ueta^{'**}_{-}; (\rho, A^{*}_{2}, B^{*}_{2}, \eta^{'**}_{2}, -\zeta_{2})\Big) \neq 0,
\]
if and only if
\begin{align}
\label{eq: case 2}
\begin{cases}
\eta^{'**}_{1} = (-1)^{A_{2} - B_{2}}\eta^{'**}_{2}       & \Rightarrow 0 \leqslant l^{'**}_{2} - l^{'**}_{1} \leqslant (A^{*}_{2} - B^{*}_{2}) - (A_{1} - B_{1}),  \\
\eta^{'**}_{1} \neq (-1)^{A_{2} - B_{2}}\eta^{'**}_{2}  & \Rightarrow l^{'**}_{2} + l^{'**}_{1} > A_{1} - B_{1}.   
\end{cases} 
\end{align}
 
Now we want to translate these conditions to that on $(\ul, \ueta)$. Note
\begin{align*}
\eta^{'**}_{1} = (-1)^{A_{2} - B_{2} + 1}\eta_{1}^{**} = (-1)^{A_{2} - B_{2} + 1}\eta_{1}^{*} = (-1)^{A_{2} - B_{2} + 1}\eta_{1} \\
\eta^{'**}_{2} = (-1)^{A_{1} - B_{1} + 1}\eta_{2}^{**} = (-1)^{A_{1} - B_{1} + 1}\eta_{2}^{*} = (-1)^{A_{1} - B_{1} + 1}\eta_{2}
\end{align*}
and
\[
l^{'**}_{2} = l_{2}^{**} = l_{2}^{*} + B_{2} = l_{2} + B_{2}, \quad \quad l^{'**}_{1} = l_{1}^{**} = l_{1}^{*} = l_{1}.
\]
So we will get the following conditions from \eqref{eq: case 2}.

\begin{itemize}

\item If $\eta_{2} = (-1)^{A_{1} - B_{1}}\eta_{1}$, then
\[
0 \leqslant l_{2} + B_{2} - l_{1} \leqslant (A_{2} + B_{2} - 0) - (A_{1} - B_{1}),
\]
which implies
\[
-B_{2} \leqslant l_{2} - l_{1} \leqslant A_{2} - (A_{1} - B_{1}).
\]

\item If $\eta_{2} \neq (-1)^{A_{1} - B_{1}}\eta_{1}$, then 
\[
l_{2} + B_{2} + l_{1} > A_{1} - B_{1},
\]
which implies
\[
l_{2} + l_{1} > (A_{1} - B_{1}) - B_{2}.
\]

\end{itemize}

{\bf Case (3):} $\q^{*}_{3}$ is obtained from $\q^{*}$ by shifting $[A_{1}, B_{1}]$ away by $T_{3}$, so that 
\(
(\rho, A_{1} + T_{3}, B_{1} + T_{3}, \zeta_{1}) \gg \{ (\rho, A^{*}_{3}, B^{*}_{3}, \zeta_{3}), (\rho, A_{2}, B_{2}, \zeta_{2}) \}.
\)
The order $>''_{\q}$ is given by
\[
(\rho, A_{1}, B_{1}, \zeta_{1}) >''_{\q} (\rho, A_{3}, B_{3}, \zeta_{3}) >''_{\q} (\rho, A_{2}, B_{2}, \zeta_{2}).
\]
And $\r^{\Sigma_{0}}_{>'_{\q}}(\q^{*}, \ul^{'*}, \ueta^{'*}) = \r^{\Sigma_{0}}_{>''_{\q}}(\q^{*}, \ul^{''*}, \ueta^{''*})$, where $(\ul^{''*}, \ueta^{''*}) = S^{+}(\ul^{'*}, \ueta^{'*})$. In particular,
\[
\eta^{''*}_{2} = \eta^{'*}_{2}, \quad \quad l^{''*}_{2} = l^{'*}_{2}.
\]

         
         
               
Then we can ``Expand" $[A^{*}_{3}, B^{*}_{3}]$ to $[A^{**}_{3}, B^{**}_{3}]$ such that $B^{**}_{3} = 0$, and we denote the resulting parameter by $\q^{**}_{3}$. 
It follows $\r^{\Sigma_{0}}_{>''_{\q}}(\q^{*}_{3}, \ul^{''*}, \ueta^{''*}) = \r^{\Sigma_{0}}_{>''_{\q}}(\q^{*}_{3}, \ul^{''**}, \ueta^{''**})$, where
\[
l_{1}^{''**} = l_{1}^{''*}, \quad l_{2}^{''**} = l_{2}^{''*}, \quad l_{3}^{''**} = l_{3}^{''*} + B_{1},
\]
and
\[
\eta_{1}^{''**} = \eta_{1}^{''*}, \quad \eta_{2}^{''**} = \eta_{2}^{''*}, \quad \eta_{3}^{''**} = \eta_{3}^{''*}.
\]
We change the order $>''_{\q}$ to $>'''_{\q}$:
\[
(\rho, A_{1}, B_{1}, \zeta_{1}) >'''_{\q} (\rho, A_{2}, B_{2}, \zeta_{2}) >'''_{\q} (\rho, A_{3}, B_{3}, \zeta_{3}),
\]
then
\[
\r^{\Sigma_{0}}_{>''_{\q}}(\q^{**}_{2}, \ul^{''**}, \ueta^{''**}) = \r^{\Sigma_{0}}_{>'''_{\q}}(\q^{**}_{2}, \ul^{'''**}, \ueta^{'''**}), 
\]
where 
\[
l_{1}^{'''**} = l_{1}^{''**}, \quad l_{2}^{'''**} = l_{2}^{''**}, \quad l_{3}^{'''**} = l_{3}^{''**},
\]
and
\[
\eta_{1}^{'''**} = \eta_{1}^{''**}, \quad \eta_{2}^{'''**} = (-1)^{A_{3} - B_{3} + 1}\eta_{2}^{''**}, \quad \eta_{3}^{'''**} = (-1)^{A_{2} - B_{2} + 1}\eta_{3}^{''**}.
\]
Then
\[
\r^{\Sigma_{0}}_{>'''_{\q}}(\q^{**}_{3}, \ul^{'''**}, \ueta^{'''**}) = \r^{\Sigma_{0}}_{>'''_{\q}}\Big(\q^{**}_{3 -}, \ul^{'''**}_{-}, \ueta^{'''**}_{-}; (\rho, A^{**}_{3}, B^{**}_{3}, \eta^{'''**}_{3}, -\zeta_{3})\Big) \neq 0,
\]
if and only if
\begin{align}
\label{eq: case 3}
\begin{cases}
\eta^{'''**}_{2} = (-1)^{A_{3} - B_{3}}\eta^{'''**}_{3}       & \Rightarrow 0 \leqslant l^{'''**}_{3} - l^{'''**}_{2} \leqslant (A^{**}_{3} - B^{**}_{3}) - (A_{2} - B_{2}),  \\
\eta^{'''**}_{2} \neq (-1)^{A_{3} - B_{3}}\eta^{'''**}_{3}   & \Rightarrow l^{'''**}_{3} + l^{'''**}_{2} > A_{2} - B_{2}.   
\end{cases} 
\end{align}

Now we want to translate these conditions to that on $(\ul, \ueta)$. Note
\begin{align*}
\eta^{'''**}_{2} & = (-1)^{A_{3} - B_{3} + 1}\eta_{2}^{''**} = (-1)^{A_{3} - B_{3} + 1}\eta_{2}^{''*} =  (-1)^{A_{3} - B_{3} + 1}\eta_{2}^{'*} \\
& = (-1)^{A_{3} - B_{3} + 1}(-1)^{A_{1} - B_{1} + 1}\eta_{2}^{*}
= (-1)^{(A_{3} - B_{3}) + (A_{1} - B_{1})}\eta_{2} \\
\eta^{'''**}_{3} & = (-1)^{A_{2} - B_{2} + 1}\eta_{3}^{''**}
\end{align*}
and
\begin{align*}
l^{'''**}_{2} & = l_{2}^{''**} = l_{2}^{''*} = l_{2}^{'*} = l_{2}^{*} = l_{2} \\ 
l^{'''**}_{3} & = l_{3}^{''**} = l_{3}^{''*} + B_{1}
\end{align*}
To proceed further, we need to use the formula for $(\ul^{''*}, \ueta^{''*}) = S^{+}(\ul^{'*}, \ueta^{'*})$.
\begin{itemize}

\item If $\eta^{'*}_{3} \neq (-1)^{A_{1} - B_{1}} \eta^{'*}_{1}$, then $\eta^{''*}_{1} = (-1)^{A_{3} - B_{3}} \eta^{''*}_{3}$ and 
         \[
         \begin{cases}
         l^{'*}_{1} = l^{''*}_{1} \\
          l^{'*}_{3} - l^{''*}_{3} = (A_{1} - B_{1} - 2l^{'*}_{1}) + 1 \\
         \eta^{''*}_{1} = (-1)^{A_{3} - B_{3}} \eta^{'*}_{1} 
         \end{cases}
         \]
It follows    
\begin{align*}
& \eta^{'*}_{3} \neq (-1)^{A_{1} - B_{1}} \eta^{'*}_{1} \\
\Rightarrow \quad &  \eta^{*}_{3} \neq (-1)^{A_{1} - B_{1}} (-1)^{A_{2} - B_{2} + 1}\eta^{*}_{1} \\
\Rightarrow \quad &  \eta_{3} \neq (-1)^{(A_{1} - B_{1}) + (A_{2} - B_{2}) + 1}\eta_{1} \\
\Rightarrow \quad &  \eta_{3} = (-1)^{(A_{1} - B_{1}) + (A_{2} - B_{2})}\eta_{1} \\
\end{align*}  
We also have
\begin{align*}
& (-1)^{A_{3} - B_{3}} \eta^{''*}_{3} = \eta^{''*}_{1} = (-1)^{A_{3} - B_{3}} \eta^{'*}_{1} \\
\Rightarrow \quad & \eta^{''*}_{3} = \eta^{'*}_{1} \\
\Rightarrow \quad & \eta^{''*}_{3} = (-1)^{A_{2} - B_{2} + 1}\eta^{*}_{1} \\
\Rightarrow \quad & \eta^{''*}_{3} = (-1)^{A_{2} - B_{2} + 1}\eta_{1} \\
\end{align*}
and
\begin{align*}
l_{3}^{''*} & = l^{'*}_{3} - (A_{1} - B_{1} - 2l^{'*}_{1}) - 1 \\
& = l^{*}_{3} - (A_{1} - B_{1} - 2l^{*}_{1}) - 1 \\
& = l_{3} + (B_{3} - B_{1}) - (A_{1} - B_{1} - 2l_{1}) - 1 \\
& = l_{3} + B_{3} - A_{1} + 2l_{1} - 1
\end{align*}         
So we will get the following conditions from \eqref{eq: case 3}.         
\begin{itemize}
\item If $\eta_{2} = (-1)^{A_{1} - B_{1}}\eta_{1}$, then
\[
0 \leqslant (l_{3} + B_{3} - A_{1} + 2l_{1} - 1) + B_{1} - l_{2} \leqslant (A_{3} + B_{3} - 0) - (A_{2} - B_{2})
\]
which implies
\[
(A_{1} - B_{1}) - B_{3} + 1 \leqslant l_{3} - l_{2} + 2l_{1} \leqslant A_{3} + (A_{1} - B_{1}) - (A_{2} - B_{2}) + 1
\]

\item If $\eta_{2} \neq (-1)^{A_{1} - B_{1}}\eta_{1}$, then
\[
(l_{3} + B_{3} - A_{1} + 2l_{1} - 1) + B_{1} + l_{2} > A_{2} - B_{2}
\]
which implies
\[
l_{3} + l_{2} + 2l_{1} >  (A_{1} - B_{1}) + (A_{2} - B_{2}) - B_{3} + 1
\]
\end{itemize}

\item If $\eta^{'*}_{3} = (-1)^{A_{1} - B_{1}} \eta^{'*}_{1}$ and 
         \[
         l^{'*}_{3} - l^{'*}_{1} < (A^{*}_{3} - B^{*}_{3})/2 - (A_{1} - B_{1}) + l^{'*}_{1},
         \] 
         then $\eta^{''*}_{1} \neq (-1)^{A_{3} - B_{3}}\eta^{''*}_{3}$ and 
         \[
         \begin{cases}
         l^{'*}_{1} = l^{''*}_{1} \\
         l^{''*}_{3} - l^{'*}_{3} = (A_{1} - B_{1} - 2l^{'*}_{1}) + 1 \\
         \eta^{''*}_{1} = (-1)^{A_{3} - B_{3}} \eta^{'*}_{1} 
         \end{cases}
         \]         
It follows
\begin{align*}
& \eta^{'*}_{3} = (-1)^{A_{1} - B_{1}} \eta^{'*}_{1} \\
\Rightarrow \quad &  \eta^{*}_{3} = (-1)^{A_{1} - B_{1}} (-1)^{A_{2} - B_{2} + 1}\eta^{*}_{1} \\
\Rightarrow \quad &  \eta_{3} = (-1)^{(A_{1} - B_{1}) + (A_{2} - B_{2}) + 1}\eta_{1} \\
\Rightarrow \quad &  \eta_{3} \neq (-1)^{(A_{1} - B_{1}) + (A_{2} - B_{2})}\eta_{1} 
\end{align*}         
and
\begin{align*}
& l^{'*}_{3} - l^{'*}_{1} < (A^{*}_{3} - B^{*}_{3})/2 - (A_{1} - B_{1}) + l^{'*}_{1} \\
\Rightarrow \quad & l^{*}_{3} - l^{*}_{1} < (A_{3} - B_{3})/2 + (B_{3} - B_{1}) - (A_{1} - B_{1}) + l^{*}_{1} \\
\Rightarrow \quad & l_{3} + (B_{3} - B_{1}) - l_{1} < (A_{3} - B_{3})/2 + (B_{3} - B_{1}) - (A_{1} - B_{1}) + l_{1} \\
\Rightarrow \quad & l_{3} - l_{1} < (A_{3} - B_{3})/2 - (A_{1} - B_{1}) + l_{1} 
\end{align*}
We also have
\begin{align*}
& (-1)^{A_{3} - B_{3}} \eta^{''*}_{3} \neq \eta^{''*}_{1} = (-1)^{A_{3} - B_{3}} \eta^{'*}_{1} \\
\Rightarrow \quad & \eta^{''*}_{3} = - \eta^{'*}_{1} \\
\Rightarrow \quad & \eta^{''*}_{3} = - (-1)^{A_{2} - B_{2} + 1}\eta^{*}_{1} \\
\Rightarrow \quad & \eta^{''*}_{3} = (-1)^{A_{2} - B_{2}}\eta_{1} \\
\end{align*}
and
\begin{align*}
l_{3}^{''*} & = l^{'*}_{3} + (A_{1} - B_{1} - 2l^{'*}_{1}) + 1 \\
& = l^{*}_{3} + (A_{1} - B_{1} - 2l^{*}_{1}) + 1 \\
& = l_{3} + (B_{3} - B_{1}) + (A_{1} - B_{1} - 2l_{1}) + 1 \\
& = l_{3} -2l_{1} + A_{1} + B_{3} - 2B_{1} + 1
\end{align*}    
So we will get the following conditions from \eqref{eq: case 3}.         
\begin{itemize}
\item If $\eta_{2} \neq (-1)^{A_{1} - B_{1}}\eta_{1}$, then
\[
0 \leqslant (l_{3} -2l_{1} + A_{1} + B_{3} - 2B_{1} + 1) + B_{1} - l_{2} \leqslant (A_{3} + B_{3} - 0) - (A_{2} - B_{2})
\]
which implies
\[
-(A_{1} - B_{1}) - B_{3} - 1 \leqslant l_{3} - l_{2} - 2l_{1} \leqslant A_{3} - (A_{1} - B_{1}) - (A_{2} - B_{2}) - 1
\]

\item If $\eta_{2} = (-1)^{A_{1} - B_{1}}\eta_{1}$, then
\[
(l_{3} -2l_{1} + A_{1} + B_{3} - 2B_{1} + 1) + B_{1} + l_{2} > A_{2} - B_{2}
\]
which implies
\[
l_{3} + l_{2} - 2l_{1} >  (A_{2} - B_{2}) - (A_{1} - B_{1}) - B_{3} - 1
\]
\end{itemize}

\item If $\eta^{'*}_{3} = (-1)^{A_{1} - B_{1}} \eta^{'*}_{1}$ and 
         \[
         l^{'*}_{3} - l^{'*}_{1} \geq (A^{*}_{3} - B^{*}_{3})/2 - (A_{1} - B_{1}) + l^{'*}_{1},
         \] 
         then $\eta^{''*}_{1} = (-1)^{A_{3} - B_{3}}\eta^{''*}_{3}$ and 
         \[
         \begin{cases}
         l^{'*}_{1} = l^{''*}_{1} \\
         (l^{''*}_{3} - l^{''*}_{1}) + (l^{'*}_{3} - l^{'*}_{1}) = (A^{*}_{3} - B^{*}_{3}) - (A_{1} - B_{1}) \\
         \eta^{''*}_{1} = (-1)^{A_{3} - B_{3}} \eta^{'*}_{1} 
         \end{cases}
         \]         
It follows
\begin{align*}
& \eta^{'*}_{3} = (-1)^{A_{1} - B_{1}} \eta^{'*}_{1} \\
\Rightarrow \quad &  \eta^{*}_{3} = (-1)^{A_{1} - B_{1}} (-1)^{A_{2} - B_{2} + 1}\eta^{*}_{1} \\
\Rightarrow \quad &  \eta_{3} = (-1)^{(A_{1} - B_{1}) + (A_{2} - B_{2}) + 1}\eta_{1} \\
\Rightarrow \quad &  \eta_{3} \neq (-1)^{(A_{1} - B_{1}) + (A_{2} - B_{2})}\eta_{1} 
\end{align*}         
and
\begin{align*}
& l^{'*}_{3} - l^{'*}_{1} \geqslant (A^{*}_{3} - B^{*}_{3})/2 - (A_{1} - B_{1}) + l^{'*}_{1} \\
\Rightarrow \quad & l^{*}_{3} - l^{*}_{1} \geqslant (A_{3} - B_{3})/2 + (B_{3} - B_{1}) - (A_{1} - B_{1}) + l^{*}_{1} \\
\Rightarrow \quad & l_{3} + (B_{3} - B_{1}) - l_{1} \geqslant (A_{3} - B_{3})/2 + (B_{3} - B_{1}) - (A_{1} - B_{1}) + l_{1} \\
\Rightarrow \quad & l_{3} - l_{1} \geqslant (A_{3} - B_{3})/2 - (A_{1} - B_{1}) + l_{1} 
\end{align*}
We also have
\begin{align*}
& (-1)^{A_{3} - B_{3}} \eta^{''*}_{3} = \eta^{''*}_{1} = (-1)^{A_{3} - B_{3}} \eta^{'*}_{1} \\
\Rightarrow \quad & \eta^{''*}_{3} = \eta^{'*}_{1} \\
\Rightarrow \quad & \eta^{''*}_{3} = (-1)^{A_{2} - B_{2} + 1}\eta^{*}_{1} \\
\Rightarrow \quad & \eta^{''*}_{3} = (-1)^{A_{2} - B_{2} + 1}\eta_{1} \\
\end{align*}
and
\begin{align*}
l_{3}^{''*} & =  l^{''*}_{1} - (l^{'*}_{3} - l^{'*}_{1}) + (A^{*}_{3} - B^{*}_{3}) - (A_{1} - B_{1}) \\
& = 2l^{'*}_{1} - l^{'*}_{3} + (A_{3} - B_{3}) + 2(B_{3} - B_{1}) - (A_{1} - B_{1}) \\
& = 2l^{*}_{1} - l^{*}_{3} + (A_{3} - B_{3}) + 2(B_{3} - B_{1}) - (A_{1} - B_{1}) \\
& = 2l_{1} - l_{3} - (B_{3} - B_{1}) + (A_{3} - B_{3}) + 2(B_{3} - B_{1}) - (A_{1} - B_{1}) \\
& = 2l_{1} - l_{3} + (A_{3} - B_{3}) + (B_{3} - B_{1}) - (A_{1} - B_{1}) \\
& = 2l_{1} - l_{3} + A_{3} - A_{1}  
\end{align*}    
So we will get the following conditions from \eqref{eq: case 3}.         
\begin{itemize}
\item If $\eta_{2} = (-1)^{A_{1} - B_{1}}\eta_{1}$, then
\[
0 \leqslant (2l_{1} - l_{3} + A_{3} - A_{1}) + B_{1} - l_{2} \leqslant (A_{3} + B_{3} - 0) - (A_{2} - B_{2})
\]
which implies
\[
(A_{1} - B_{1}) -A_{3} \leqslant -l_{3} - l_{2} + 2l_{1} \leqslant (A_{1} - B_{1}) - (A_{2} - B_{2}) + B_{3}
\]

\item If $\eta_{2} \neq (-1)^{A_{1} - B_{1}}\eta_{1}$, then
\[
(2l_{1} - l_{3} + A_{3} - A_{1}) + B_{1} + l_{2} > A_{2} - B_{2}
\]
which implies
\[
-l_{3} + l_{2} + 2l_{1} > (A_{1} - B_{1}) + (A_{2} - B_{2}) - A_{3}
\]
\end{itemize}

\end{itemize}

\bibliographystyle{amsalpha}

\bibliography{reps}

\providecommand{\bysame}{\leavevmode\hbox to3em{\hrulefill}\thinspace}
\providecommand{\MR}{\relax\ifhmode\unskip\space\fi MR }
\providecommand{\MRhref}[2]{%
  \href{http://www.ams.org/mathscinet-getitem?mr=#1}{#2}
}
\providecommand{\href}[2]{#2}
\begin{thebibliography}{M{\oe}g11b}

\bibitem[Art13]{Arthur:2013}
J.~Arthur, \emph{The endoscopic classification of representations: orthogonal
  and symplectic groups}, Colloquium Publications, vol.~61, American
  Mathematical Society, 2013.

\bibitem[Hen00]{Henniart:2000}
G.~Henniart, \emph{Une preuve simple des conjectures de {L}anglands pour {${\rm
  GL}(n)$} sur un corps {$p$}-adique}, Invent. Math. \textbf{139} (2000),
  no.~2, 439--455.

\bibitem[HT01]{HarrisTaylor:2001}
M.~Harris and R.~Taylor, \emph{The geometry and cohomology of some simple
  {S}himura varieties}, Annals of Mathematics Studies, vol. 151, Princeton
  University Press, Princeton, NJ, 2001, With an appendix by Vladimir G.
  Berkovich.

\bibitem[M{\oe}g06a]{Moeglin1:2006}
C.~M{\oe}glin, \emph{Paquets d'arthur pour les groupes classiques; point de vue
  combinatoire}, Preprint, arXiv:math/0610189 (2006).

\bibitem[M{\oe}g06b]{Moeglin:2006}
\bysame, \emph{Sur certains paquets d'{A}rthur et involution
  d'{A}ubert-{S}chneider-{S}tuhler g\'en\'eralis\'ee}, Represent. Theory
  \textbf{10} (2006), 86--129.

\bibitem[M{\oe}g11a]{Moeglin2:2011}
\bysame, \emph{Conjecture d'{A}dams pour la correspondance de {H}owe et
  filtration de {K}udla}, Arithmetic geometry and automorphic forms, Adv. Lect.
  Math. (ALM), vol.~19, Int. Press, Somerville, MA, 2011, pp.~445--503.

\bibitem[M{\oe}g11b]{Moeglin:2011}
\bysame, \emph{Image des op\'erateurs d'entrelacements normalis\'es et p\^oles
  des s\'eries d'{E}isenstein}, Adv. Math. \textbf{228} (2011), no.~2,
  1068--1134.

\bibitem[MW89]{MW:1989}
C.~M{\oe}glin and J.-L. Waldspurger, \emph{Le spectre r\'esiduel de {${\rm
  GL}(n)$}}, Ann. Sci. \'Ecole Norm. Sup. (4) \textbf{22} (1989), no.~4,
  605--674.

\bibitem[Sch13]{Scholze:2013}
P.~Scholze, \emph{The local {L}anglands correspondence for {${GL}_n$} over
  {$p$}-adic fields}, Invent. Math. \textbf{192} (2013), no.~3, 663--715.

\bibitem[Xu17a]{Xu:Apacket}
B.~Xu, \emph{On {M}{\oe}glin's parametrization of {A}rthur packets for p-adic
  quasisplit {$Sp(N)$} and {$SO(N)$}}, Can. J. Math. \textbf{69} (2017), no.~4,
  890 -- 960.

\bibitem[Xu17b]{Xu:cusp}
\bysame, \emph{On the cuspidal support of discrete series for p-adic quasisplit
  {$Sp(N)$} and {$SO(N)$}}, Manuscripta Math. \textbf{154} (2017), no.~3,
  441--502.

\bibitem[Zel80]{Zelevinsky:1980}
A.~V. Zelevinsky, \emph{Induced representations of reductive {p}-adic groups.
  {II}. {O}n irreducible representations of {${\rm GL}(n)$}}, Ann. Sci. \'Ecole
  Norm. Sup. (4) \textbf{13} (1980), no.~2, 165--210.

\end{thebibliography}

\end{document}